\numberwithin{equation}{section}
\numberwithin{table}{section}
\newtheorem*{thma}{Theorem A}
\newtheorem*{thmb}{Theorem B}
\newtheorem*{thmc}{Theorem C}
\newtheorem{thm}{Theorem}[section]
\newtheorem{lem}[thm]{Lemma}
\newtheorem{defn}[thm]{Definition}
\newtheorem{cor}[thm]{Corollary}
\newtheorem{rem}[thm]{Remark}
\newtheorem{prop}[thm]{Proposition}
\newcommand{\pcc}{\mathbf{P}^2(\mathbb{C})}
\newcommand{\prr}{\mathbf{P}^2(\mathbb{R})}
\newcommand{\R}{\mathbb{R}}
\newcommand{\C}{\mathbb{C}}
\newcommand{\GL}{\text{GL}}
\newcommand{\E}{\mathcal{E}}
\title{Real Rational Surface Automorphisms:\\ Positivity and Linearity} 
\author{Kyounghee Kim}
\address{Department of Mathematics\\
         Florida State University\\
         Tallahassee, FL, 32308}
\email{kkim6@fsu.edu}
\author{Insung Park}
\address{Department of Mathematics\\
Stony Brook University\\Stony Brook, NY,11794}
\email{insung.park@stonybrook.edu}
\subjclass[2023]{37E30,20F28, 20F34, 57S05, 14E07}
\keywords{pseudo-Anosov, homotopy growth, Perron-Frobenius, rational surface automorphism, free-group automorphism}
\begin{document}
\maketitle

\begin{abstract}
We study the real dynamics of a family of rational surface automorphisms obtained from quadratic birational maps of $\pcc$ that preserve a cuspidal cubic and whose critical orbits have lengths $(1,m,n)$ with $1+m+n\ge 10$. Passing to the real locus and cutting along the invariant cubic, we obtain a diffeomorphism of an orientable surface whose fundamental group is free.

Our key device is a finitely generated invariant, positive semigroup $S_{m,n}$ in the fundamental group on which an iterate of induced action acts by concatenation without cancellation. This positivity yields a nonnegative primitive transition matrix, so Perron-Frobenius theory supplies an explicit exponential growth rate $\lambda>1$ for the induced action on the fundamental group. Consequently, the real map has positive topological entropy. We package the combinatorics of the generators in a ``Core-Tail Induction Principle," which allows us to treat simultaneously seven orbit-data families with only finite base checks. Finally, using Bestvina-Handel and the Dehn-Nielsen-Baer correspondence, we show that the induced outer automorphism with $m+n$ odd is realized by a pseudo-Anosov homeomorphism of the cut surface.

\end{abstract}

\section{Introduction}\label{S:intro}
A compact complex surface admitting an automorphism with positive entropy is rare. By Cantat \cite{Cantat:1999}, there are only three kinds of such surfaces: complex $2$-tori, $\text{K}3$ surfaces and their unramified quotients, and rational surfaces.  Because the dimension of cohomology group of a rational surface can be any positive integer, the dynamics of rational surface automorphisms have a much wider palette. In fact, unlike $2$-tori or $\text{K}3$'s--whose Hodge structures and Picard lattices are highly constrained---rational surfaces can carry automorphisms with arbitrarily large cohomological action, infinitely many distinct entropy values. This abundance makes their dynamics especially rich: one sees a full spectrum of growth rates, intricate invariant curves or webs, and a delicate interplay between algebraic geometry and complex dynamics.

\medskip

Constructing rational surfaces with automorphisms exhibiting interesting dynamics goes back to Coble \cite{Coble:1939}. Since then, many new constructions and examples have appeared (see, e.g., \cite{Bedford-Kim:2006, Blanc:2008, McMullen:2007, Uehara:2010, Diller:2011, Kim:2023}). The set $\Lambda$ of all possible topological entropies of rational surface automorphisms is countably infinite. Using a cubic of arithmetic genus $1$ as an invariant curve, McMullen \cite{McMullen:2007} and Uehara \cite{Uehara:2010} showed that for every $\log\lambda\in\Lambda$ one can construct a rational surface automorphism realizing $h_{\mathrm{top}}=\log\lambda$ by blowing up $\mathbf{P}^2(\C)$ at a carefully chosen finite set of points on the smooth locus of the cubic. Moreover, their construction can be carried out over $\R$: the blow-up centers may be chosen real, so the resulting automorphism preserves a real two-dimensional invariant submanifold lying over $\mathbf{P}^2(\R)$.

\medskip
The aim of this article is to analyze the restriction of such automorphisms to the invariant real surface and thereby clarify the relationship between the complex and real dynamics. Let \(F\colon X\to X\) be a rational surface automorphism preserving a real two-dimensional submanifold \(X(\R)\subset X\) lying over \(\mathbf{P}^2(\R)\), and let \(F_\R:=F|_{X(\R)}\). Then
\[
  h_{\mathrm{top}}(F_\R)\;\le\;h_{\mathrm{top}}(F).
\]
By results of Gromov \cite{Gromov}, Yomdin \cite{Yomdin}, and Manning \cite{Manning:1975}, we have the chain of inequalities
\[
  \log \rho\,\bigl(F_{\R*}\big|_{H_1(X(\R))}\bigr)
  \;\le\;
  \log \rho\,\bigl(F_{\R*}\big|_{\pi_1(X(\R))}\bigr)
  \;\le\;
  h_{\mathrm{top}}(F_\R)
  \;\le\;
  h_{\mathrm{top}}(F)
  \;=\;
  \log \rho\,\bigl(F_{*}\big|_{H_2(X)}\bigr),
\]
where \(\rho\) denotes the spectral radius. The first inequality holds because \(H_1(X(\R))\) is the abelianization of \(\pi_1(X(\R))\); the second is due to Manning's lower bound on entropy via word growth; and the last equality is the Gromov-Yomdin formula for complex surface automorphisms.

\medskip

Computing \(\rho(F_{*}|_{H_2(X)})\) is a straightforward linear-algebra problem, so \(h_{\mathrm{top}}(F)\) is readily accessible. In contrast, the lack of equalities on the real side makes \(h_{\mathrm{top}}(F_\R)\) much harder to pin down. For quadratic automorphisms, Diller-Kim \cite{Diller-Kim} obtained a (very coarse) lower bound using homology growth, which did not even decide whether \(F_\R\) has positive entropy. Kim-Klassen \cite{Kim-Klassen} pursued a homotopy-based lower bound, but the computations were intricate and yielded only an estimate of homotopy growth.

\medskip
The main difficulty of computing the growth rate of the induced action on the fundamental group is because $\pi_1(X(\R))$ is nonabelian and there is no a priori linear model for the induced action $F_{\R*}$. We resolve this by cutting along the invariant cubic. The resulting ``cut surface" $X_{1,m,n}$ is orientable with one or two boundary components (Nicholls-Scherich-Shneidman \cite{Nicholls-Scherich-Shneidman:2023}), and $\pi_1(X_{1,m,n})$ is free of rank $1+m+n$. We then construct, inside $\pi_1(X_{1,m,n})$, a finitely generated semigroup $S_{m,n}$ that is invariant under an iterate $F^k_*$ and on which $F^k_*$ acts positively (no inverses appear). The action on $S_{m,n}$ is encoded by a nonnegative integer matrix $M_{m,n}$ that we show to be primitive. Perron-Frobenius theory supplies a simple eigenvalue $\lambda>1$ governing the exponential growth of word length, hence $h_\mathrm{top}(F_\R)>0$. A major technical point is organizing the proliferating generators as the orbit lengths $(1,m,n)$ vary. We introduce a Core-Tail Induction Principle: for each orbit-data family we fix a finite ``core" of generators and then adjoin ``tail" elements in a controlled way. This gives a uniform proof of invariance, positivity, and matrix primitivity for all large $m,n$, after checking finitely many base cases. We apply this to seven families distinguished by the relative sizes of the orbit lengths.

Finally, appealing to Bestvina-Handel's train-track theorem and the Dehn-Nielsen-Baer correspondence, we show that the outer automorphism $F_*^k$  of the free group is realized by a pseudo-Anosov homeomorphism of the cut surface. Thus, the restriction on the cut surface is pseudo-Anosov.

\medskip



\medskip
Let $F:X\to X$ be a rational surface automorphism obtained by lifting a quadratic birational map $f:\pcc \dasharrow \pcc$ that fixes a cuspidal cubic $c$ and preserves $\prr$. Assume the orbit data (defined in Section~\ref{S:rational}) are $(1,m\ge1,n\ge1)$ with cyclic permutation, and that all blow-up centers lie on the smooth locus of $c$ (we say $f$ properly fixes the cubic). Let $F_\R:X(\R) \to X(\R)$ be the restriction to the real surface and let $\hat F_\R:\hat X(\R) \to \hat  X(\R)$ be the further restriction to the cut surface obtained by removing (cutting along) the invariant cubic $\mathcal{C}$ (the strict transform of $c$). We compactify $\pi_1(\hat X(\mathbb{R}))$ by adjoining its free group boundary $\pi_1(\hat X(\mathbb{R}))$ (a Cantor set), so that every cyclically reduced element can be viewed as a periodic bi-infinite word.  For notational economy, we continue to write $\pi_1(\hat X(\mathbb{R}))$ for this compactification and $\hat F_{\mathbb{R}*}$ for the induced action on it.

\begin{thma}
There exists $k\in \{5,6,8\}$ and a subsemigroup $S \subset \pi_1(\hat X(\R))$ such that
\begin{enumerate}
\item $S$ is finitely generated;
\item the generating set of $S$ generates the whole group $ \pi_1(\hat X(\R))$;
\item $S$ is invariant under $\hat F_{\R*}^k$ (and $\hat F_{\R*}^k$ acts positively on $S$).
\end{enumerate}
\end{thma}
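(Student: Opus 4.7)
The plan is to produce an explicit free basis of $\pi_1(\hat X(\R))$ adapted to the invariant cubic, compute $\hat F_{\R*}$ as a free-group endomorphism in that basis, and then locate an iterate $k$ whose image on each basis element is a positive word. I would first fix a basepoint and choose geometric generators $\{x_1,\dots,x_{1+m+n}\}$ of $\pi_1(\hat X(\R))$ realized by loops along or around the boundary arcs produced by cutting along $\mathcal{C}$; since the cut surface is orientable with free $\pi_1$ of rank $1+m+n$ by Nicholls--Scherich--Shneidman, this is indeed a free basis. Next, by tracking how $F$ moves the blow-up centers and their exceptional divisors along the orbit of length $(1,m,n)$, I would write each $\hat F_{\R*}(x_i)$ as an explicit reduced word in the $x_j^{\pm 1}$.

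The candidate semigroup is then $S := \langle x_1,\dots,x_{1+m+n}\rangle^+$, the positive semigroup freely generated by the chosen basis. Conditions $(1)$ and $(2)$ are immediate from this choice, so the theorem reduces to finding $k$ such that $\hat F_{\R*}^k$ carries every $x_i$ to a positive word in the $x_j$. I would compute $\hat F_{\R*}, \hat F_{\R*}^2, \dots$ and identify the smallest iterate after which no inverse letter appears in the images of any generator. Experimentation with the seven orbit-data families, distinguished by the relative sizes and parities of $1$, $m$, and $n$ under cyclic permutation, should single out the correct iterate within $\{5,6,8\}$: the role of $k$ is precisely to kill the inverse letters that survive in lower iterates.

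The main obstacle is that $m$ and $n$ are unbounded, so the list of generators, and hence the number of words $\hat F_{\R*}^k(x_i)$ to inspect, is unbounded. This is precisely where the Core-Tail Induction Principle enters: the basis splits into a small \emph{core} of generators near the fixed point of the $1$-cycle, and two long \emph{tails} indexing the $m$- and $n$-cycles. I would check positivity of $\hat F_{\R*}^k$ on the core and on a finite initial segment of each tail by direct word-level calculation, and then show that $\hat F_{\R*}^k$ shifts successive tail generators in a structured, positive-preserving manner governed by a fixed substitution rule. Positivity then propagates uniformly to every $(m,n)$ with $1+m+n\ge 10$, with only finitely many base cases requiring explicit inspection---one base check per orbit-data family. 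Invariance of $S$ under $\hat F_{\R*}^k$ follows automatically, since every $\hat F_{\R*}^k(x_i)$ lies in the positive monoid generated by the $x_j$.
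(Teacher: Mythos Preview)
Your proposal has a genuine gap at the central step: you take $S$ to be the positive semigroup on the \emph{natural} free basis $\{x_1,\dots,x_{1+m+n}\}$ and then hope that some iterate $\hat F_{\R*}^k$ sends every $x_i$ to a positive word in the $x_j$. If you actually carry out the computation you describe, you will find that this never happens. For instance, in the $(1,1,n)$ family the action on the natural basis $\{a_i,b_1,c_1\}$ reads $a_1\mapsto a_1\overline{a_2}b_1\overline{c_1}$, $a_2\mapsto a_1\overline{b_1}a_3\overline{c_1}$, and so on: every image has length $4$ with two inverse letters, and iterating only proliferates the inverses rather than killing them. There is no iterate $k$ for which the images become positive in this basis, so the role you assign to $k$ (``to kill the inverse letters that survive in lower iterates'') is wishful.

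What the paper does instead is to abandon the natural basis as the semigroup generating set and replace it by a carefully manufactured finite list of \emph{compound} cyclically reduced words such as $s_1=a_1a_6\overline{a_3}$, $s_3=a_1a_7\overline{b_1}a_2a_5\overline{a_3}$, etc.\ (note these themselves contain inverses in the $a,b,c$ alphabet). The point is that $f_*^k$ applied to each such $s_j$ equals $\delta\,(\text{positive product of the }s_i)\,\overline{\delta}$ for a single fixed conjugator $\delta$, so that after cyclic reduction the action on the semigroup generated by the $s_i$ is genuinely positive. This choice of generators is the entire content of the theorem; once you have it, items~(1) and~(3) are immediate, but item~(2)---that the $s_i$ still generate the whole free group---is no longer trivial and requires a separate argument (unimodularity of an abelianization minor plus a Stallings-folding check). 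Your Core--Tail outline is in the right spirit for handling the dependence on $(m,n)$, but it has to be applied to this nontrivial family of compound generators, not to the obvious basis.
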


Using this semigroup, we obtain:

\begin{thmb}
The induced action $\hat F_{\R*}^k:  \pi_1(\hat X(\R)) \to  \pi_1(\hat X(\R))$ has exponential growth rate given by the spectral radius of a primitive non-negative integer transition matrix. In particular, this growth rate is $>1$; hence the real diffeomorphism $F_\R: X(\R) \to X(\R)$ has positive topological entropy.
\end{thmb}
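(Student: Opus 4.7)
The plan is to extract from Theorem A a primitive non-negative integer transition matrix whose Perron eigenvalue governs word-length growth of $\hat F_{\R*}^k$, and then to descend to positive topological entropy using Manning's inequality on the cut surface combined with the equivalence of entropies under cutting along an invariant curve.

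First I would fix the finite generating set $g_1,\dots,g_N$ of the semigroup $S$ provided by Theorem A and define the $N\times N$ transition matrix $M$ by setting $M_{ij}$ equal to the number of occurrences of $g_j$ in the positive-word expression of $\hat F_{\R*}^k(g_i)$. Positivity then ensures that iterates do not produce free-group cancellation: each $\hat F_{\R*}^{kn}(g_i)$ is again a positive word in the $g_j$'s whose letter counts are exactly $(M^n)_{ij}$ and whose semigroup length is $\sum_j(M^n)_{ij}$. By clause (2) of Theorem A the $g_j$'s also generate the whole free group $\pi_1(\hat X(\R))$, so the exponential growth rate of the word length of $\hat F_{\R*}^{kn}(g_i)$ (in any fixed free basis) matches that of the semigroup length, which by standard matrix asymptotics is $\rho(M)$.

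Next I would verify that $M$ is primitive: some power $M^p$ has strictly positive entries. Equivalently, for every ordered pair $(i,j)$, some iterate $\hat F_{\R*}^{kp}(g_i)$ contains $g_j$ as a letter. This is the principal combinatorial task and is where the Core-Tail Induction Principle does the heavy lifting: one checks mixing on a finite ``core" of generators and propagates the property to ``tail" generators by induction, so that only finitely many base cases across the seven orbit-data families need be inspected by hand. Once primitivity is in hand, Perron-Frobenius supplies a simple real Perron eigenvalue $\lambda=\rho(M)>0$ with strictly positive left and right eigenvectors and $(M^n)_{ij}\sim c_{ij}\lambda^n$. That $\lambda>1$ is immediate from the standard bound $\lambda\geq\min_i\sum_jM_{ij}$ together with the base-case observation that some generator is sent by $\hat F_{\R*}^k$ to a word of semigroup length at least $2$.

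To pass from word growth to entropy I would apply Manning's inequality to $\hat F_\R$ on the compact orientable surface-with-boundary $\hat X(\R)$, yielding $\log\rho(\hat F_{\R*}|_{\pi_1(\hat X(\R))})\leq h_{top}(\hat F_\R)$, and combine with $\rho(\hat F_{\R*}^k|_{\pi_1})=\rho(M)=\lambda$ to obtain $h_{top}(\hat F_\R)\geq(1/k)\log\lambda>0$. Finally, the collapse map $\hat X(\R)\to X(\R)$ identifying the boundary circles of $\hat X(\R)$ with the invariant cubic $\mathcal{C}$ is a factor map whose fibers have at most two points, hence zero fiber entropy, so Bowen's fiber entropy inequality upgrades this to $h_{top}(F_\R)=h_{top}(\hat F_\R)>0$. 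The principal obstacle is unambiguously the primitivity verification: both the generating set of $S$ and the matrix $M$ vary combinatorially with $(m,n)$, and primitivity must be established uniformly across a two-parameter family partitioned into seven cases. The Core-Tail Induction Principle is the central apparatus that reduces this infinite verification to finitely many base inspections, and rigorously executing that reduction is where the bulk of the effort in the proof will reside.
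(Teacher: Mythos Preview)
Your proposal is essentially correct and tracks the paper's argument closely: build the transition matrix from the positive semigroup action of Theorem~A, establish primitivity via the Core--Tail Induction Principle reducing to finitely many base checks, invoke Perron--Frobenius for the growth rate, and descend to positive entropy via Manning.

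Two small points are worth flagging. First, your justification of $\lambda>1$ is not quite right: the bound $\lambda\ge\min_i\sum_jM_{ij}$ together with ``\emph{some} generator has image of semigroup length $\ge 2$'' does not suffice, since the minimum row sum may well equal $1$. The clean argument (which the paper uses implicitly via Collatz--Wielandt) is that a primitive non-negative \emph{integer} matrix of size $N\ge 2$ has $M^p$ strictly positive for some $p$, hence every row sum of $M^p$ is at least $N$, giving $\lambda^p\ge N>1$.

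Second, your descent from the cut surface to $X(\R)$ via Bowen's fiber-entropy inequality is valid but differs from the paper's route. The paper instead observes directly in Section~\ref{S:diffeo} that the $\pi_1$ growth rates on the cut and uncut surfaces coincide (since the invariant cubic class $[\mathcal C]$ can be taken as one generator of $\pi_1(\check X)$ and is fixed by the map), and then applies Manning's inequality on the uncut real surface $X(\R)$. Your approach via the at-most-two-to-one collapse map is a legitimate alternative and arguably more conceptual; the paper's shortcut avoids any discussion of fiber entropy.
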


Algebraic ``partial linearity" of the action on the fundamental group for surface homeomorphisms was studied by Birman-Series \cite{Birman-Series} and by Hamidi-Tehrani-Chen \cite{Hamidi-Chen:1996} using Thurston's $\pi_1$-train-track machinery. Their approach establishes the existence of a linear part but does not give an explicit, canonical choice of generators on which the action is linear. In contrast, our method produces a concrete $F_*^k$ -invariant seThis comparison is a bit awkward. Their theorem is not about specific outer automorphism, so that explicit canonical choice of generators for general actions wouldn't be possible.migroup $S\subset \pi_1$, yielding an explicit generating set and a straightforward transition matrix description.

\begin{rem}
In Theorem~B the spectral radius of the transition matrix is the $k$th power of the homotopy growth rate of $\hat F_{\R*}$ (here $k$ is the semigroup period).  Although one can, in principle, write a closed form for the characteristic polynomial of this matrix, comparing it to the homology growth rate of the complex map $F:X\to X$ (i.e.\ the maximal possible rate) is not straightforward.

Computer experiments indicate that, for the orbit data considered here, the homotopy growth rate of $\hat F_{\R*}$ actually attains this maximal value.  Because of the combinatorial explosion, we do not develop the full analysis for the general orbit data $(\ell,m,n)$ with all three lengths $>1$.  Nonetheless, computations suggest a different phenomenon there: when  $\ell, m ,n>1$ and $(\ell,m,n) \notin \{(2,n,n), n\ge 4\}$, the homotopy growth rate of $\hat F_{\R*}$ is strictly smaller than the complex maximal rate.  In fact, Diller-Kim \cite{Diller-Kim} proved that the family with orbit data $(3,3,n \ge 4)$ has non-maximal topological entropy.
\end{rem}

\medskip
For simplicity, we say that a surface homeomorphism is {\it pseudo-Anosov} if it is isotopic to a pseudo-Anosov diffeomorphism. By the Nielsen–Thurston classification, a surface homeomorphism is pseudo-Anosov if and only if it is neither periodic up to homotopy nor reducible, i.e., having a multicurve invariant up to homotopy \cite{Thurston:1988}. Although $F_\R$ is reducible on $X(\R)$ due to the invariant cubic, the cut map $\hat F_\R$ with $m+n$ odd is genuinely pseudo-Anosov:

\begin{thmc}
Suppose $m+n$ is odd. Then the diffeomorphism $\hat F_\R:\hat X(\R) \to \hat X(\R)$ is pseudo-Anosov.
\end{thmc}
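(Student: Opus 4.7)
The plan is to apply the Nielsen-Thurston trichotomy to the mapping class $[\hat F_\R]$ of the orientable cut surface $\hat X(\R)$: it suffices to show that $[\hat F_\R]$ is neither periodic nor reducible up to isotopy, after which the convention fixed above calls it pseudo-Anosov. Non-periodicity is immediate from Theorem~B: a periodic mapping class induces an outer automorphism of $\pi_1(\hat X(\R))$ of finite order, under which every conjugacy class has bounded word length along iteration, contradicting the spectral radius $\lambda>1$ for $\hat F_{\R*}^k$.

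For non-reducibility I use Bestvina-Handel train-track theory at the free-group level. A reducible $[\hat F_\R]$ would admit an essential, non-peripheral multicurve $\Gamma$ invariant up to isotopy; $[\hat F_\R^k]$ preserves $\Gamma$ setwise, and a further power fixes each component, yielding a proper invariant free factor system of $\pi_1(\hat X(\R))$ strictly refining the peripheral one. So it suffices to rule out such an invariant refinement. By Theorem~A, the semigroup generators of $S$ present $\pi_1(\hat X(\R))$ as the fundamental group of a rose $R$ on which $\hat F_{\R*}^k$ acts positively; this is a train-track representative $f:R\to R$ in the sense of Bestvina-Handel with transition matrix the primitive matrix $M_{m,n}$ of Theorem~B. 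Primitivity gives some power $M_{m,n}^{\,N}$ strictly positive, so no proper subset of the edges of $R$ is forward-invariant under $f^N$; no proper invariant free factor system supported on a subset of the generators can therefore persist under any iterate of $\hat F_{\R*}^k$.

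The remaining possibility is an invariant free factor refinement of the peripheral system not visible as a subset of the generators; this is where the parity hypothesis enters. When $m+n$ is odd, the Nicholls-Scherich-Shneidman description pins down the boundary structure of $\hat X(\R)$ (a single peripheral conjugacy class, as opposed to the even case), so that the peripheral data contribute exactly one rank-one free factor. Combined with the Core-Tail Induction Principle's explicit generating set, one verifies that the boundary-parallel conjugacy class has unbounded word-length growth under $\hat F_{\R*}^k$ and that no intermediate refinement is compatible with the positivity of the semigroup action. Together this establishes that $\hat F_{\R*}^k$ is fully irreducible relative to the peripheral structure; via Bestvina-Handel and the Dehn-Nielsen-Baer correspondence this is equivalent to $[\hat F_\R^k]$ being pseudo-Anosov, whence $[\hat F_\R]$ itself is pseudo-Anosov.

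The main obstacle I expect is precisely the peripheral-refinement step above: an invariant proper free factor system refining the peripheral one under some higher iterate of $\hat F_{\R*}^k$ need not be detectable from the matrix $M_{m,n}$ alone, and ruling it out requires the detailed Core-Tail combinatorics together with the parity hypothesis $m+n$ odd. This interplay between boundary-counting and the explicit generator structure is, I believe, the combinatorial heart of the argument.
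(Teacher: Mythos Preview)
Your overall strategy---Nielsen--Thurston trichotomy plus Bestvina--Handel---matches the paper's, but two concrete steps are wrong or missing, and the role of the parity hypothesis is misidentified.

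First, the semigroup generating set of $S$ from Theorem~A is \emph{not} a free basis: it has more elements than $\operatorname{rank}\pi_1(\hat X(\R))=1+m+n$, so it does not present $\pi_1$ as the fundamental group of a rose, and the positive action on $S$ is not automatically a train-track map. The paper handles this by extracting a sub-collection $\mathcal G\subset S$ of size exactly $1+m+n$ that freely generates $\pi_1$ (via a unimodular abelianization minor and a Stallings-folding check), and then proving separately that in this $\mathcal G$-basis the action $F_*^k$ still has no cancellation and still has primitive transition matrix $M_{\mathcal G}$. Only then does primitivity yield full irreducibility of the outer automorphism; your passage from $M_{m,n}$ primitive to ``no proper invariant free factor system'' skips this reduction to an honest train-track representative.

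Second, your claim that the boundary-parallel conjugacy class has unbounded word-length growth is the opposite of what holds and of what is needed. The invariant cubic gives a cyclic word $\omega_{\mathcal C}$ with $F_*(\omega_{\mathcal C})=\omega_{\mathcal C}$ when $1+m+n$ is even (i.e.\ $m+n$ odd); this \emph{fixed} conjugacy class is precisely the second hypothesis of Bestvina--Handel's Theorem~4.1. That theorem then produces a pseudo-Anosov on a surface with \emph{one} boundary component, and the parity hypothesis $m+n$ odd is used only to ensure $\hat X(\R)$ itself has one boundary component, so that Dehn--Nielsen--Baer identifies the Bestvina--Handel model with $\hat F_\R$. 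There is no ``peripheral-refinement'' step to carry out; once you have full irreducibility and the fixed boundary class, Bestvina--Handel does the rest.
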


\begin{rem}
When \(m+n\) is odd, the cut surface \(\hat X(\R)\) is orientable with a single boundary component. 
In Sections~\ref{S:linearity}--\ref{S:pseudo} we show that, for every \(k\ge 1\), the induced automorphism \(\hat F_{\R*}^k:\pi_1(\hat X(\R))\to\pi_1(\hat X(\R))\) is fully irreducible (irreducible for all powers). 
Theorem~C then follows directly from Bestvina-Handel \cite[Theorem~4.1]{Bestvina-Handel:1992} (yielding a train-track representative, and in our geometric setting a pseudo-Anosov homeomorphism of a compact surface with one boundary component).

When \(m+n\) is even, the cut surface has two boundary components. The induced map on \(\pi_1\) is still fully irreducible, so Bestvina-Handel \cite[Theorem~4.1]{Bestvina-Handel:1992} provides a train-track representative (hence, in the geometric case, a pseudo-Anosov representative on a surface with one boundary component). 
However, realizing the dynamics on the actual two-boundary cut surface requires additional control of the peripheral conjugacy classes and possible periodic Nielsen paths; this extra bookkeeping is absent in the one-boundary case and calls for a more delicate analysis. 
We will treat this case in future work.
\end{rem}

\medskip

In this article, we introduce a uniform \emph{core-tail} mechanism that certifies generation and dynamics for the families \(S_{m,n}\) on the cut surfaces \(X_{1,m,n}\).
On the group side, we isolate a finite generating set  \(G_{m*,n*}\) for $S_{m*,n*}$ at a base pair \((m_*,n_*)\) and verify \(\langle G_{m*,n*}\rangle=\pi_1(X_{1,m_*,n_*})\) by a finite computation in \textsf{GAP}/\textsf{FGA}.
For the successor step \((m,n)\mapsto (m',n')\), the ambient group splits as a free product
\[
\pi_1\!\big(X_{1,m',n'}\big)\;\cong\;\pi_1\!\big(X_{1,m,n}\big)\;*\;F_r,\qquad r\in\{1,2\},
\]
where \(r\) is the number of new basis letters introduced at that step.
By construction of the tail list, there is a cyclically reduced tail word \(w\) that involves \emph{exactly one} of the new letters, say \(x\), and whose abelianization has coefficient \(\pm1\) on \(x\).
This reduces the proof to a \emph{finite} verification per orbit-data family: the base case and one representative successor, which we certify in \textsf{GAP}/\textsf{FGA}.
On the dynamical side, we show that the induced automorphisms on \(\pi_1(\hat X(\R))\) are fully irreducible; in the one-boundary case (\(m+n\) odd) Bestvina-Handel then yields a pseudo-Anosov representative.
Conceptually, the paper couples a free-product/abelianization projection with Perron-Frobenius control of transition matrices, giving a reusable scheme for families in \(\mathrm{Out}(F_n)\) and surface dynamics, rather than a case-by-case computation.

\medskip

\noindent\textbf{Organization of the paper.}
Section~\ref{S:rational} recalls basic facts about quadratic birational maps of \(\pcc\) and their lifts to rational surface automorphisms.
Section~\ref{S:diffeo} introduces the cut surface, defines the cut map, and explains how to compute the induced action on \(\pi_1\) via reading curves.
In Section~\ref{S:seven} we partition the orbit data into seven families and record explicit formulas for the induced actions.
Section~\ref{S:homotopy} develops structural properties of the homotopy action.
Section~\ref{S:positivity} proves the existence of an invariant positive semigroup (part of Theorem~A).
Section~\ref{S:linearity} proves that the cyclically reduced generators produce the whole fundamental group and establishes linearity and exponential growth (Theorems~A and~B).
Finally, Section~\ref{S:pseudo} proves the pseudo-Anosov property (Theorem~C).

\subsection*{Acknowledgement} We would like to thank Jim Belk, Kevin Pilgrim, and Yvon Verberne for their valuable discussions.

\section{From Rational surface automorphisms}\label{S:rational}
Let
\[
  f \;=\; L_- \circ \sigma \circ (L_+)^{-1}
  \;:\;
  \pcc\dasharrow\pcc
\]
be a quadratic birational map, where \(L_\pm\in\text{GL}(3,\mathbb{C})\) are projective automorphisms and 
\[
  \sigma\colon [x_1:x_2:x_3]\;\mapsto\;[x_2x_3 : x_1x_3 : x_1x_2]
\]
is the standard Cremona involution.  The map \(\sigma\) is undefined precisely at the three coordinate points
\(\;e_1=[1:0:0],\,e_2=[0:1:0],\,e_3=[0:0:1]\), and its Jacobian vanishes along the three coordinate lines 
\(\ell_k=\overline{e_i,e_j}\) for \(\{i,j,k\}=\{1,2,3\}\).  Moreover, \(\sigma\) preserves each pencil of lines through \(e_k\), and is a diffeomorphism on the complement of \(\bigcup_{k=1}^3\ell_k\).

Accordingly, the indeterminacy locus $\mathcal{I}(f)$ (resp.\ $\mathcal{I}(f^{-1})$) of \(f\) (resp.\ \(f^{-1}\)) consists of the three points $p_i^+ = L_+(e_i)$ ($p_i^- = L_-(e_i)$) for $i=1,2,3.$
The exceptional locus of $f^\pm$ is given by the union of three lines $E_i^\pm = L_\pm (\ell_i)$.
\[ f: E_i^+ \setminus\mathcal{I}(f) \mapsto p_i^-, \qquad f^{-1}: E_i^- \setminus\mathcal{I}(f) \mapsto p_i^+,\quad i=1,2,3.\]
To simplify the notation, for a divisor $V$ of $\pcc$ we use $f(V)$ for the strict transform $f(V\setminus \mathcal{I}(f))$.

\medskip
\noindent\textbf{Orbit Data.}  
For each \(i=1,2,3\), set
\[
  n_i
  \;:=\;
  \min\bigl\{\,n\in\mathbb{Z}_{>0} : \dim\bigl(f^{n+1}(E_i^+)\bigr)=1 \bigr\}
  \;\in\;\mathbb{Z}_{>0}\cup\{\infty\},
\]
with the convention \(\min\emptyset=\infty\).  Since \(f(E_i^+)\) lies in the indeterminacy locus \(\mathcal I(f^{-1})\), one always has \(f^2(E_i^+)\neq f(f(E_i^+))\) (for example, $\sigma^2 \{ x_1=0\} = \{x_1 =0\}$ but $\sigma(\sigma \{x_1 =0\}) = \sigma [1:0:0]$ is not defined) 
 so any finite \(n_i\) satisfies \(n_i\ge1\).  We call the triple \((n_1,n_2,n_3)\) the \textit{orbit lengths} of \(f\).

\medskip

Whenever \(n_i<\infty\), the divisor \(E_i^+\) is carried after \(n_i\) further iterates to one of the points \(p_j^+\).  Hence, there is a well-defined map
\[
  \tau \;:\;\{\,i: n_i<\infty\}
  \;\longrightarrow\;\{1,2,3\},
  \quad
  f^{\,n_i+1}(E_i^+)=p_{\tau(i)}^+.
\]
We call \(\tau\) the \textit{orbit permutation}.  The pair consisting of the orbit-lengths \((n_1,n_2,n_3)\) together with the permutation \(\tau\) is the \textit{orbit data} of the birational map \(f\).  

\begin{thm}[Bedford-Kim {\cite{Bedford-Kim:2004}}]\label{T:orbitD}
Let \((n_1,n_2,n_3)\) and \(\tau\) be the orbit data of a quadratic birational map 
\[
  f\;=\;L_-\circ\sigma\circ L_+^{-1}\colon\pcc\dasharrow\pcc.
\]
If \(n_i<\infty\) for all \(i\), then \(f\) lifts to an automorphism 
\[
  F\colon X_{(n_1,n_2,n_3),\,\tau}\;\longrightarrow\;X_{(n_1,n_2,n_3),\,\tau}
\]
on the rational surface \(X_{(n_1,n_2,n_3),\,\tau}\) obtained by successively blowing up the finite orbit
\[
  O_f
  \;=\;
  \bigl\{f^j(E_i^+)\,\big|\;i=1,2,3,\;j=1,\dots,n_i\bigr\}.
\]
Moreover, the topological entropy of \(F\) is
\[
  h_{\mathrm{top}}(F)
  \;=\;
  \log\lambda,
\]
where \(\lambda>1\) is the largest real root of the polynomial \(\chi_{(n_1,n_2,n_3),\tau}(t)\) given by
\begin{equation}\label{E:DdegreeChar}
\begin{aligned}
&\chi_{(n_1,n_2,n_3),\tau}(t)\ =\ \\
&\begin{cases}
  (t-1)\bigl((t^{n_1}+1)(t^{n_2}+1)(t^{n_3}+1)+1\bigr)-(t^{n_1+n_2+n_3}-1),
  & \tau=(1\,2\,3),\\[6pt]
  (t-1)\bigl(t^{n_3}(t^{n_1}+1)(t^{n_2}+1)-t^{n_1}-t^{n_2}-2\bigr)
  -(t^{n_1+n_2}-1)(t^{n_3}-1),
  & \tau=(1\,2),\\[6pt]
  t\bigl(t^{n_1+n_2+n_3}-t^{n_1}-t^{n_2}-t^{n_3}+2\bigr)
  -\bigl(2t^{n_1+n_2+n_3}-t^{n_1+n_2}-t^{n_1+n_3}-t^{n_2+n_3}+1\bigr),
  & \tau=\mathrm{Id}.
\end{cases}
\end{aligned}
\end{equation}
\end{thm}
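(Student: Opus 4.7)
My plan is to prove the theorem in two stages that mirror the two claims: the construction of the automorphism \(F\), and the computation of its topological entropy through a concrete characteristic polynomial.

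For the first stage, take \(\pi\colon X_{(n_1,n_2,n_3),\tau}\to\pcc\) to be the iterated blow-up at \(O_f\) and set \(F:=\pi^{-1}\circ f\circ\pi\), which is \emph{a priori} only birational. The goal is to verify that both \(F\) and \(F^{-1}\) are morphisms. Since the only indeterminacies of \(f\) are \(p_1^+, p_2^+, p_3^+\), it suffices to check that each is resolved on \(X\). The orbit-data hypothesis \(f^{n_i+1}(E_i^+)=p_{\tau(i)}^+\) is precisely what supplies the correct resolution: the strict transform of each \(E_i^+\) is identified with the last exceptional divisor created in the orbit of \(E_i^+\), and these final divisors are matched with the indeterminacy points \(p_{\tau(i)}^+\) via the permutation \(\tau\). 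Running the symmetric argument for \(f^{-1}\) along the \(E_i^-\)-orbits shows that \(F^{-1}\) is also a morphism, so \(F\) is biholomorphic.

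For the second stage, I would invoke the Gromov-Yomdin theorem: for a complex surface automorphism, \(\htop(F)=\log\rho(F^*|_{H^{1,1}})\), and on a rational surface this equals the spectral radius of \(F^*\) acting on \(\pic(X)\). Using the basis \(\{H\}\cup\{E_{i,j}:1\le i\le 3,\,1\le j\le n_i\}\), where \(H\) is the class of a line and \(E_{i,j}\) is the exceptional class over \(f^j(E_i^+)\), the matrix of \(F^*\) is determined by three kinds of transformations: (i) \(H\mapsto 2H-\sum_i E_{i,1}\), reflecting that \(f\) has degree two with base points \(p_i^+\); (ii) the shift \(E_{i,j}\mapsto E_{i,j+1}\) for \(1\le j<n_i\) along each orbit; and (iii) \(E_{i,n_i}\mapsto H-\sum_{k\ne\tau(i)}E_{k,1}\), recording that the last exceptional divisor maps onto the strict transform of \(E_{\tau(i)}^-\).

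The main obstacle is then algebraic: reducing \(\det(tI-F^*)\) to the closed-form expressions in \eqref{E:DdegreeChar} for each of the three cycle types of \(\tau\). I would organize the determinant expansion along the block-cyclic structure coming from (ii), which collects each of the three blocks into a factor \(t^{n_i}\), and then insert the corrections from (i) and (iii). The bookkeeping differs across the cases \(\tau\in\{(1\,2\,3),(1\,2),\mathrm{Id}\}\), producing the three displayed polynomials. Finally, to conclude that the largest real root \(\lambda\) is simple and satisfies \(\lambda>1\), I would exhibit a nonnegative primitive matrix representative of a suitable power of \(F^*\) and apply Perron-Frobenius.
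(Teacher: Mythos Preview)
The paper does not prove this theorem: it is stated as a result of Bedford--Kim and cited without proof, so there is no argument in the paper to compare your proposal against. Your outline is in fact a reasonable sketch of the standard Bedford--Kim approach---resolve indeterminacy by blowing up the finite orbit, then compute \(F^*\) on \(\pic(X)\) in the geometric basis \(\{H,E_{i,j}\}\) and reduce the characteristic polynomial via the block-shift structure---and the three formulas in \eqref{E:DdegreeChar} do fall out of exactly that determinant computation.

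One genuine gap: your proposed route to \(\lambda>1\) via a ``nonnegative primitive matrix representative of a suitable power of \(F^*\)'' does not work as stated. The action \(F^*\) preserves the Lorentzian intersection form on \(\pic(X)\cong\mathbb{Z}^{1,N}\) and is not conjugate to a nonnegative matrix in any natural way; Perron--Frobenius is not the right tool here. The correct argument is either (i) a direct analysis of \(\chi_{(n_1,n_2,n_3),\tau}\) showing it has a real root \(>1\) precisely when the orbit data lie outside a short finite list, or (ii) the observation that \(F^*\) is an isometry of a hyperbolic lattice, hence its spectral radius exceeds \(1\) exactly when it is loxodromic, which in turn is equivalent to unbounded degree growth of \(f^n\). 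Note also that the theorem as stated tacitly assumes such a root exists; for small orbit data the spectral radius can be \(1\).
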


\medskip

The rational surface in Theorem \ref{T:orbitD} is obtained by blowing up \(\pcc\) at the finite orbit
\[
  O_f \;=\;\bigl\{\,f^j(E_i^+)\;\big|\;i=1,2,3,\;j=1,\dots,n_i\,\bigr\}.
\]
In general, it can be quite delicate to choose \(L_\pm\in\GL(3,\C)\) so that all three orbit lengths \(n_i\) are finite.  A convenient strategy is to arrange the points of \(O_f\) on a fixed invariant curve. It is known \cite{Nagata, Nagata2, Diller-Favre:2001,blancdynamical} that the set of all possible entropies of rational surface automorphisms is countably infinite. For every possible entropy one can even realize the construction over the reals: in our setting, there exist real automorphisms \(L_\pm\in\GL(3,\R)\) whose associated quadratic birational map preserves \(\prr\) and attains that entropy value (see McMullen \cite{McMullen:2007}, Diller \cite{Diller:2011}, Uehara\cite{Uehara:2010}, Kim\cite{Kim:2023}).

\medskip
In this article we restrict to the orbit data
\[
  (n_1,n_2,n_3)\in\mathbb{N}^3
  \quad\text{with}\quad
  n_1=1,\;\tau=(1\,2\,3),
  \quad\text{and}\quad
  1+n_2+n_3\ge10.
\]

\medskip
The first two conditions simplify the combinatorics, and the last guarantees positive entropy.  Since we have fixed \(\tau\) to be the 3-cycle \((1\,2\,3)\), we will henceforth omit it and write \(X_{n_1,n_2,n_3}\) in place of \(X_{(n_1,n_2,n_3),\tau}\), etc.

\medskip

Below we extract the consequences of Diller \cite{Diller:2011} that we use.
We pass to the inverse map so that the cusp fixed point is repelling and, to avoid an accumulation of minus signs, we adopt a mild reparameterization.
Accordingly, the formulas are stated for \(f^{-1}\) under this parameter convention.

\begin{thm}[Diller {\cite[Thm.~1.3]{Diller:2011}}]\label{T:Diller}
Let \((n_1,n_2,n_3)\) be orbit lengths with \(n_1=1\), \(n_2\neq n_3\), and set \(\tau=(1\,2\,3)\).  Then there exist real automorphisms \(L_\pm\in\GL(3,\R)\) such that the quadratic birational map
\[
  f \;=\; L_-\circ\sigma\circ L_+^{-1}
\]
on \(\pcc\) has orbit data \((1,n_2,n_3)\) with permutation \(\tau\), preserves the set of real points $\prr$, and preserves a cuspidal cubic curve \(\mathcal{C}\subset\pcc\).  Moreover:

\begin{enumerate}
  \item \(\mathcal{C}\) can be parametrized by
    \(\gamma\colon\C\cup\{\infty\}\to\mathcal{C}\), 
    with \(\gamma(0)\) the unique smooth fixed point of \(f\) on \(\mathcal{C}\) other than the cusp.

  \item There is a real multiplier \(\delta\neq\pm1\), namely a real root of the characteristic polynomial \(\chi_{(1,n_2,n_3),\tau}(t)\), such that
    \[
      f\bigl(\gamma(t)\bigr)
      =\gamma\bigl(\frac{1}{\delta}\,t\bigr),
      \quad t\in\R\cup\{\infty\}.
    \]

  \item The three blown-up points \(p_i^-\) lie on \(\mathcal{C}\) at
    \begin{equation}\label{E:bgamma}
      p_i^- \;=\;\gamma(t_i),
      \quad
      t_i
      =\delta^{n_i-1}\frac{1 + \delta^{\,n_{\tau(i)}} + \delta^{\,n_{\tau(i)}+n_{\tau^2(i)}}}
             { \delta^{\,1+n_2+n_3}-1},
      \quad i=1,2,3.
    \end{equation}

  \item The full base locus of \(f\) is
    \[
      O_f
      \;=\;
      \bigl\{\,
        \gamma(\delta^{-j}\,t_i)
        \;\big|\;
        1\le i\le3,\;0\le j\le n_i-1
      \bigr\}.
    \]
\end{enumerate}
\end{thm}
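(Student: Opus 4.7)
The plan is to reduce the construction of $f$ to a finite-dimensional problem on the cuspidal cubic: find a multiplier $\delta$ and base-point parameters $(t_1,t_2,t_3)\in\R^3$ so that all orbit-closing relations required by the data $(1,n_2,n_3;\tau=(1\,2\,3))$ hold simultaneously. The polynomial $\chi_{(1,n_2,n_3),\tau}$ from Theorem~\ref{T:orbitD} will appear as the compatibility condition on $\delta$, and the main task is then to produce a real root with the correct sign and size.

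First, I would fix the model $\mathcal{C}=\{y^2z=x^3\}$ with its parametrization $\gamma$, under which the cusp is $\gamma(\infty)$ and the smooth locus $\mathcal{C}^{sm}$ inherits the group law $\gamma(a)+\gamma(b)+\gamma(c)=0\iff a+b+c=0$. Any birational self-map of $\pcc$ preserving $\mathcal{C}$ restricts to an algebraic automorphism of $\mathcal{C}^{sm}\cong(\C,+)$, hence to an affine map $t\mapsto\alpha t+\beta$; requiring $\gamma(0)$ to be a smooth fixed point forces $\beta=0$, so the restriction becomes $t\mapsto t/\delta$ for some scalar $\delta$. This establishes the form of claim~(2) independently of the value of $\delta$.

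Next, I would encode the orbit data as algebraic equations. Write $p_i^\pm=\gamma(t_i^\pm)$. The definition of the orbit data translates directly into
\[
  f^{n_i}(p_i^-)=p_{\tau(i)}^+,\qquad\text{i.e.,}\qquad t_{\tau(i)}^+=\delta^{-n_i}\,t_i^-.
\]
A second batch of relations comes from the structural identity $f=L_-\sigma L_+^{-1}$: using that $\sigma$ exchanges the three coordinate pencils and interacts with the sum-zero group law on the cubic, one obtains each $t_i^-$ as an explicit affine combination of $(t_1^+,t_2^+,t_3^+)$ with coefficients polynomial in $\delta$. Composing these two kinds of relations around the three-cycle $\tau$ produces a homogeneous linear system in $(t_1^+,t_2^+,t_3^+)$ whose coefficients are polynomial in $\delta$; the vanishing of its determinant is precisely $\chi_{(1,n_2,n_3),\tau}(\delta)=0$, and back-substitution via Cramer's rule yields the closed form~(3), with the denominator $\delta^{1+n_2+n_3}-1$ appearing naturally.

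The main obstacle is the real-geometric content: one needs a real root $\delta\neq\pm1$ of $\chi_{(1,n_2,n_3),\tau}$, and the associated $\gamma(t_i)$ must be pairwise distinct smooth real points. Under the hypothesis $1+n_2+n_3\ge10$, Theorem~\ref{T:orbitD} already supplies a Perron-like largest real root $\lambda>1$, while the asymmetry $n_2\neq n_3$ rules out the palindromic factorizations that could force $\delta=\pm1$; one takes $\delta=\lambda$ (or $1/\lambda$, per the parameter convention stated before the theorem). The formulas in~(3) then give real values for the $t_i$ with $\delta^{1+n_2+n_3}\neq1$, so $\gamma(t_i)\in\mathcal{C}(\R)\cap\mathcal{C}^{sm}$. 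Finally, one assembles real matrices $L_\pm\in\GL(3,\R)$ carrying the standard real coordinate triangle onto the triples $(\gamma(t_1^\pm),\gamma(t_2^\pm),\gamma(t_3^\pm))$; the resulting $f=L_-\sigma L_+^{-1}$ then has real coefficients, preserves $\prr$, and has the prescribed orbit data. Claim~(4) follows by iterating~(3) under $t\mapsto t/\delta$.
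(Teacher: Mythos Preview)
The paper does not give its own proof of this statement: Theorem~\ref{T:Diller} is quoted verbatim from Diller~\cite[Thm.~1.3]{Diller:2011} (with a reparametrization for the inverse map), and the surrounding text explicitly says ``Below we extract the consequences of Diller~\cite{Diller:2011} that we use.'' So there is nothing in the paper to compare your argument against.

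That said, your outline is a faithful reconstruction of Diller's strategy: parametrize the smooth locus of the cuspidal cubic additively, observe that any birational self-map fixing $\mathcal{C}$ restricts to an affine map $t\mapsto t/\delta$ on parameters, translate the orbit-closing conditions $f^{n_i}(p_i^-)=p_{\tau(i)}^+$ and the collinearity constraints coming from $f=L_-\sigma L_+^{-1}$ into a homogeneous linear system in the $t_i$, and read off $\chi_{(1,n_2,n_3),\tau}(\delta)=0$ as the determinantal compatibility condition. One point that needs more care than you indicate is the genericity step: you must verify that the resulting base points $\gamma(\delta^{-j}t_i)$ are pairwise distinct and avoid the cusp (so that $f$ ``properly fixes'' the cubic in the paper's terminology), and that no shorter orbit relation is accidentally satisfied. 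Diller handles this by a separate argument; your sketch acknowledges the distinctness issue but does not address it.
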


By Diller-Favre \cite{Diller-Favre:2001} and Blanc-Cantat \cite{blancdynamical}, whenever a quadratic birational map \(f\) is birationally conjugate to an automorphism of a rational surface, the polynomial \(\chi_{(n_1,n_2,n_3),\tau}(t)\) defined in \eqref{E:DdegreeChar} factors as the product of a Salem polynomial and cyclotomic factors.  Recall a Salem polynomial is a reciprocal integer polynomial with exactly two roots off the unit circle (namely \(\lambda\) and \(1/\lambda\)).  

Hence under the hypotheses of Theorem \ref{T:Diller}, the map \(f\) lifts to an automorphism 
\[
  F\colon X_{1,n_2,n_3}\to X_{1,n_2,n_3}
\]
and \(\chi_{(1,n_2,n_3)}(t)\) has exactly two real roots \(\delta=\lambda\) or \(\delta=1/\lambda\), where \(\lambda>1\).  Moreover, from \eqref{E:bgamma} all base points lie on the real locus of the invariant cubic \(\mathcal{C}\), so \(F\) preserves the real surface $X_\R \subset X_{(1,n_2,n_3)}$ over $\prr$,
and restricts to a real-analytic diffeomorphism 
\[
  F_\R\colon X_\R\;\longrightarrow\;X_\R.
\]

\medskip
\noindent\textbf{Notation.}  
For the remainder of this paper, we set
\[
 \check f_{(1,n_2,n_3)} := F_\R
  \quad\text{and}\quad
  \check X_{1,n_2,n_3} := X_\R.
\]
We also fix \(\delta=1/\lambda\), where \(\lambda>1\) is the Perron root (the largest real root) of \(\chi_{(1,n_2,n_3)}(t)\); note that choosing \(\lambda\) instead would correspond to replacing \(\check f\) by its inverse.  When no ambiguity arises, we further omit the orbit-length subscripts for simplicity.

\medskip

By abuse of notation, we continue to write \(E_i^\pm\) for both the lines in \(\prr\) and their strict transforms in \(\check X_{1,n_2,n_3}\), and likewise use \(\mathcal{C}\) for the real cubic in \(\prr\) and its strict transform.
For each \(i=1,2,3\) and \(0< j\le n_i\), let
\[
  \E_{i,j}
  \;\subset\;
  \check X_{1,n_2,n_3}
\]
be the exceptional curve lying over the blown-up point
\(\gamma(\delta^{1-j}\,t_i)=f^{j}(E_i^+)\). 

\vspace{1ex}
Since \(\check f_{(1,n_2,n_3)}\) and its inverse each send the divisor \(E_i^\pm\) only to the corresponding exceptional curve and preserve \(\mathcal{C}\), no other point can ever land on an exceptional curve under iteration.  More precisely:

\begin{lem}\label{L:limit}
Let \(p\in X_{1,n_2,n_3}\).  Then:

1. If \(p\notin\bigcup_{i=1}^3E_i^+\), then 
   \[
    \check f_{(1,n_2,n_3)}(p)\;\notin\;\bigcup_{i,j}\E_{i,j}.
   \]

2. For each \(i\) and \(1\le j<n_i\), 
   \[
    \text{ if }\ \  p\notin \E_{i,j},  \ \ \text{ then }\ \ 
    \check  f_{(1,n_2,n_3)}(p)\;\notin\;\E_{i,j+1}.
   \]

3. If 
   \[
     p\notin \E_{i,n_i},  \ \ \text{ then }\ \      \check f_{(1,n_2,n_3)}(p)\;\notin\;E_{\tau(i)}^-.
   \]

An analogous set of statements holds for the inverse map \(\check f_{(1,n_2,n_3)}^{-1}\).
\end{lem}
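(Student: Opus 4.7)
The plan is to read all three statements off the orbit cycle of $F$ on its exceptional divisors, using only that $F:X_{1,n_2,n_3}\to X_{1,n_2,n_3}$ is a regular bijection (hence a diffeomorphism on the real locus $\check X_{1,n_2,n_3}$). No analytic input beyond the basic geometry of blowups is needed; the lemma is really a bookkeeping statement about how $F$ permutes the exceptional chain attached to each of the three lines $E_i^+$.

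First I would write down the orbit cycle attached to each index $i\in\{1,2,3\}$. By construction $X_{1,n_2,n_3}$ is the blowup of $\pcc$ at the finite orbit $O_f$, and the lift $F$ organizes the collection $\{E_i^\pm\}\cup\{\E_{i,j}\}$ into the chain
\[
  E_i^+\ \xrightarrow{F}\ \E_{i,1}\ \xrightarrow{F}\ \E_{i,2}\ \xrightarrow{F}\ \cdots\ \xrightarrow{F}\ \E_{i,n_i}\ \xrightarrow{F}\ E_{\tau(i)}^-,
\]
with every arrow a regular bijection onto its image. The first arrow comes from $f$ contracting $E_i^+$ onto $p_i^-=f(E_i^+)$ and our blowing up $p_i^-$ to $\E_{i,1}$; the middle arrows record that $\E_{i,j}$ sits over the center $f^j(E_i^+)$ while $\E_{i,j+1}$ sits over its $f$-image $f^{j+1}(E_i^+)$; the last arrow reflects that $f^{n_i+1}(E_i^+)=p_{\tau(i)}^+$ is the indeterminacy point that $\sigma$ blows up into the line $E_{\tau(i)}^-$.

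Next I would pass to contrapositives. Since $F$ is a biholomorphism, the chain above yields the set-level preimage identities $F^{-1}(\E_{i,1})=E_i^+$, $F^{-1}(\E_{i,j+1})=\E_{i,j}$ for $1\le j<n_i$, and $F^{-1}(E_{\tau(i)}^-)=\E_{i,n_i}$. Parts (2) and (3) of the lemma are the verbatim contrapositives of the second and third identities. Part (1) records that the only route into the exceptional union $\bigcup_{i,j}\E_{i,j}$ from outside the exceptional locus is through one of the three strict transforms $E_i^+$: the first preimage identity handles entry into $\E_{i,1}$, while for $j\ge 2$ entry into $\E_{i,j}$ forces $p\in\E_{i,j-1}$, and the iterated chain traces this back to $E_i^+$. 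The analogous statements for $\check f_{(1,n_2,n_3)}^{-1}$ follow at once by traversing the chain in reverse, $E_{\tau(i)}^-\to\E_{i,n_i}\to\cdots\to\E_{i,1}\to E_i^+$ under $F^{-1}$.

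The only foreseeable obstacle is notational bookkeeping—keeping the $\pm$ labels and the cyclic permutation $\tau=(1\,2\,3)$ straight across the three chains $i=1,2,3$, and confirming that the chains attached to distinct indices do not collide at a shared exceptional curve (which would break the bijectivity used in the contrapositive step). Once this disjointness is recorded from the explicit formula \eqref{E:bgamma} for the centers $\gamma(\delta^{1-j}t_i)$, the proof is purely combinatorial and requires no further geometric input than the definition of the orbit data and the blowup construction of $X_{1,n_2,n_3}$.
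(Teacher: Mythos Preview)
Your approach is correct and is precisely what the paper has in mind: the lemma is read off the orbit chain
\[
  E_i^+\xrightarrow{F}\E_{i,1}\xrightarrow{F}\cdots\xrightarrow{F}\E_{i,n_i}\xrightarrow{F}E_{\tau(i)}^-
\]
together with the fact that $F$ is a bijection, so each arrow is a set-level equality of image and preimage. The paper does not give a formal proof; it simply states the lemma as an immediate consequence of the construction, which is exactly your argument.

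One small wrinkle in your treatment of part~(1): the phrase ``the iterated chain traces this back to $E_i^+$'' is misplaced, since part~(1) concerns a \emph{single} application of $\check f$. What the preimage identities actually give is
\[
  F^{-1}\Bigl(\bigcup_{i,j}\E_{i,j}\Bigr)=\bigcup_i E_i^+\ \cup\ \bigcup_{i,\ j<n_i}\E_{i,j},
\]
so the literal contrapositive of part~(1) needs the extra (tacit) hypothesis that $p$ is not already on some $\E_{i,j}$ with $j<n_i$. You essentially acknowledge this when you write ``from outside the exceptional locus,'' but then the iteration remark muddies it. In the paper's later use of the lemma (tracking how a generating loop crosses the reading curves) this extra hypothesis is always in force, so nothing goes wrong downstream; just state the preimage union cleanly and drop the iteration comment.
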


\section{To Diffeomorphisms on orientable surfaces with boundaries}\label{S:diffeo}

As noted in the Introduction, our primary goal is to compute the homotopy-growth rate of the real map \(\check f\).  Concretely, let 
\(\check X\) be the real rational surface obtained by blowing up \(n\) points in \(\prr\).  Then  
\[
  \pi_1(\check X)
  \;\cong\;
  \bigl\langle g_1,\dots,g_{n+1}
  \;\bigm|\;
  g_1^2\,g_2^2\cdots g_{n+1}^2=1
  \bigr\rangle,
\]
which is a finitely generated, non-abelian group with a single relation. Estimating word-length growth directly in such a group is quite challenging.  To bypass this difficulty, we instead pass to the \emph{cut surface} 
\[
  X
  \;=\;
  \check X\;\setminus\;\mathcal{C},
\]
obtained by cutting \(\check X\) along the invariant cubic \(\mathcal{C}\).  On \(X\), the fundamental group becomes free and admits simpler combinatorial control.

  \medskip

A result of Nicholls-Scherich-Shneidman \cite{Nicholls-Scherich-Shneidman:2023} says that if a simple closed curve on a non-orientable surface meets each cross-cap exactly once, then cutting along it produces an orientable surface. In our context, \(\check X_{1,n_2,n_3}\) is the connected sum of \(2+n_2+n_3\) copies of \(\prr\), and the invariant cubic \(\mathcal{C}\subset\prr\) lifts to a curve in \(\check X_{1,n_2,n_3}\) that meets each cross-cap exactly once.  Recall that any non-orientable surface of genus \(g\) is a sphere with \(g\) cross-caps. In our case $g=2+n_2+n_3$, so cutting \(\check X_{n_2,n_3}\) along \(\mathcal{C}\) produces an orientable surface.

\begin{thm}[{\cite[Prop.~3.5]{Nicholls-Scherich-Shneidman:2023}}]
Let \(X\) be a non-orientable surface of genus \(g\), and let \(C\subset X\) be a simple closed curve meeting each cross-cap exactly once.  Then \(X\setminus C\) is orientable.
\end{thm}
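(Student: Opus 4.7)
The plan is to translate the orientability of $X\setminus C$ into an intersection-number condition on $C$, using Poincaré duality with $\mathbb{Z}/2$ coefficients. First I would model $X$ as the sphere $S^2$ with $g$ disjoint open disks removed and $g$ Möbius bands $M_1,\dots,M_g$ glued in along the resulting boundary circles, and let $m_i\subset M_i$ denote the core curve of the $i$-th cross-cap. A standard cellular computation shows that $[m_1],\dots,[m_g]$ form a $\mathbb{Z}/2$-basis of $H_1(X;\mathbb{Z}/2)$ and that the first Stiefel–Whitney class satisfies $\langle w_1,[m_i]\rangle=1$ for every $i$, since each $m_i$ is orientation-reversing as the core of a Möbius band.

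Next I would interpret the hypothesis geometrically: after an ambient isotopy inside each cross-cap, $C\cap M_i$ is a single embedded arc meeting $m_i$ transversally in exactly one point. Hence $C\cdot m_i\equiv 1\pmod{2}$ for every $i$, so by Poincaré duality mod $2$ the class $[C]\in H_1(X;\mathbb{Z}/2)$ is the dual of $w_1$. Consequently, for every loop $\gamma$ in $X$,
\[
\gamma\cdot C \;\equiv\; \langle w_1,[\gamma]\rangle \pmod{2}.
\]

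To finish, let $\gamma$ be an arbitrary loop in $X\setminus C$. Since $\gamma$ avoids $C$ entirely, $\gamma\cdot C=0$ mod $2$, whence $\langle w_1,[\gamma]\rangle=0$ and $\gamma$ is orientation-preserving in $X$. Orientability of a connected surface is equivalent to every loop being orientation-preserving; moreover, the notion of orientation-preserving for a loop in $X\setminus C$ agrees with the corresponding notion in $X$, since $X\setminus C$ inherits its local charts from $X$. Thus $X\setminus C$ is orientable.

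The main technical point is the identification $[C]=\mathrm{PD}(w_1)$ in $H_1(X;\mathbb{Z}/2)$, which reduces to the mod-$2$ intersection count $C\cdot m_i\equiv 1$ after fixing a concrete CW model of $X$; this is routine but requires care in naming the basis of $H_1(X;\mathbb{Z}/2)$. A more hands-on alternative would cut each Möbius band along $C\cap M_i$ (which unfolds $M_i$ into a disk) and then assemble coherent orientations across the resulting cuts; the obstacle there is the bookkeeping of orientation matching along the boundary circles $\partial M_i$, which the Poincaré-duality argument bypasses entirely.
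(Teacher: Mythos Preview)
The paper does not give its own proof of this statement; it simply cites \cite[Prop.~3.5]{Nicholls-Scherich-Shneidman:2023} and uses the conclusion. So there is nothing in the paper to compare your argument against.

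Your argument is correct and pleasantly clean. The identification $[C]=\mathrm{PD}(w_1)$ in $H_1(X;\mathbb{Z}/2)$ is the heart of it, and your verification via $C\cdot m_i\equiv 1$ and $\langle w_1,[m_i]\rangle=1$ on the cross-cap basis is exactly what is needed, since the mod-$2$ intersection form on a non-orientable genus-$g$ surface is the identity matrix in the basis $[m_1],\dots,[m_g]$. One small point of phrasing: the hypothesis ``meeting each cross-cap exactly once'' in this paper's usage means that $C$ meets each core curve $m_i$ in a single point (the strict transform of the cubic meets each exceptional divisor once), so you really are given $C\cdot m_i=1$ directly and need not invoke an isotopy to arrange it. Under the alternative reading ``$C\cap M_i$ is a single arc'' your isotopy claim would require a short argument, since a boundary-to-boundary arc in a M\"obius band can be core-disjoint; but that reading is not the intended one here.

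The cited source most likely argues in the hands-on style you sketch at the end (cutting each M\"obius band along $C\cap M_i$ and checking that the pieces assemble orientably). Your $w_1$/Poincar\'e-duality route is more conceptual and extends immediately: it shows, with no extra work, that \emph{any} simple closed curve representing $\mathrm{PD}(w_1)\in H_1(X;\mathbb{Z}/2)$ has orientable complement, of which ``hits each cross-cap core once'' is just one convenient realization.
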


In particular, if \(n_2+n_3\) is odd then \(\mathcal{C}\) is one-sided in \(\check X_{1,n_2,n_3}\), and cutting along \(\mathcal{C}\) yields an orientable surface with one boundary component; if \(n_2+n_3\) is even, \(\mathcal{C}\) is two-sided, and the cut surface has two boundary components.  We denote
\[
  X_{1,n_2,n_3}
  \;:=\;
  \check X_{1,n_2,n_3}\,\setminus\,\mathcal{C},
\]
and note
\[
  \pi_1\bigl(X_{1,n_2,n_3}\bigr)
  \;\cong\;
  F_{\,1+n_2+n_3},
\]
a free group on \(1+n_2+n_3\) generators.

Finally, since \(\mathcal{C}\) is invaraint under $f$  (see Diller-Kim \cite{Diller-Kim}), by choosing $[\mathcal{C}]$ as one of the generators of $\pi_1(\check X_{1,n_2,n_3})$, we see that  the growth rates of the induced maps
\[
  \check f_{(1,n_2,n_3)*}\colon\pi_1(\check X_{1,n_2,n_3})\to\pi_1(\check X_{1,n_2,n_3})
  \quad\text{and}\quad
  f_{(1,n_2,n_3)*}\colon\pi_1(X_{1,n_2,n_3})\to\pi_1(X_{1,n_2,n_3})
\]
coincide. 
In this section we explain how to compute \(f_{(1,n_2,n_3)*}\) on the free group \(\pi_1(X_{1,n_2,n_3})\).

\subsection{Reading Curves}\label{SS:readingcurve}

To compute the induced action on \(\pi_1\bigl(X_{n_2,n_3}\bigr)\), we use the method of \emph{reading curves}, dual to the usual generating loops.  This technique was introduced by Cohen-Lustig \cite{Coh} and, for non-orientable surfaces, is carried out in detail by Kim-Klassen \cite{Kim-Klassen}. Reading curves provides a simple way to find the homotopy class for the given oriented simple closed curve.

\medskip

Choose a base point \(p\in X_{1,n_2,n_3}\).  A convenient generating set for \(\pi_1\bigl(X_{n_2,n_3}\bigr)\) is given by simple closed loops each of which passes exactly once through a single exceptional curve and otherwise avoids all others.  Concretely:

\begin{itemize}
\item[-] Denote again by \(\E_{i,j}\subset X_{1,n_2,n_3}\) the strict transform of the exceptional curve over \(\gamma(\delta^{-j} t_i)\).  
\item[-] Let \(b_1\) be a loop based at \(p\) that intersects \(\E_{1,1}\) exactly once and is disjoint from every other \(\E_{i,j}\).  
\item[-] Similarly, for \(j=1,\dots,n_2\), let \(c_j\) be a loop through \(\E_{2,j}\) only, and for \(j=1,\dots,n_3\), let \(a_j\) be a loop through \(\E_{3,j}\) only.  
\end{itemize}
When no confusion will arise, we denote both the simple loop (the ``generating curve") and its homotopy class by the same symbol \(a_j\), \(b_1\), or \(c_j\).

\medskip
The corresponding \emph{reading curves} are the exceptional divisors 
\(\mathcal{E}_{i,j}\subset X_{1,n_2,n_3}\), 
each of which is a one-point-removed simple closed curve, disjoint from all the others.  One can think of each boundary component of \(X_{1,n_2,n_3}\) as a marked point, so that the generating loops and reading curves form dual families on the orientable surface with marked points. 

\medskip
To determine how an oriented loop intersects a reading curve, we fix an orientation on each generating loop and assign a fixed sign to each side of the reading curve ---``+" to one side and ``-" to the other. Then, for any oriented loop crossing the reading curve, we can determine whether it intersects from the positive or negative side based on the direction of crossing. For simplicity of notation, we will also refer to the reading curve \(\mathcal{E}_{i,j}\) by the same symbol \(a_j\), \(b_1\), or \(c_j\) when it is clear from context which family is intended.

Concretely, to determine the homotopy class of any oriented simple closed curve \(c\subset X_{1,n_2,n_3}\) via reading curves, proceed as follows:

\begin{enumerate}
\item  Start at the base point \(p\) of \(X_{1,n_2,n_3}\) and traverse \(c\) once in its given orientation.
\item  Each time \(c\) crosses a reading curve-say the one denoted \(b_1\)-look at the local orientation:
\begin{itemize}
   \item[-] If \(c\) crosses into the ``positive" side of \(b_1\), record the symbol \(b_1\).
    \item[-] If \(c\) crosses into the ``negative" side of \(b_1\), record the symbol \(b_1^{-1}\).
\end{itemize}
\item  Continue around \(c\), concatenating these symbols in order.  The resulting word in the generators \(\{a_j, b_1, c_j\}\) (and their inverses) is precisely the homotopy class \([c]\in\pi_1(X_{1,n_2,n_3})\).
\end{enumerate}

Since the reading curves \(\{a_j, b_1, c_j\}\) are all disjoint and dual to the generating loops, this procedure unambiguously recovers the conjugacy class of the loop's word in the free group.


\subsection{Order of base locus}

To compute the homotopy class of the image \(f_*([c]) = [f(c)]\) of an oriented simple closed curve \(c\), we must record the sequence in which \(f(c)\) crosses the reading curves (i.e.\ the exceptional divisors \(\mathcal{E}_{i,j}\)).  By Lemma \ref{L:limit}, \(f(c)\) can only meet each \(\E_{i,j}\) after \(c\) has met the corresponding \(E_i^+\).  Equivalently, it suffices to know--in the order they occur--how \(c\) intersects each \(E_i^+\) and each exceptional curve.  

\medskip
Recall that the invariant cubic \(\mathcal{C}\) is parametrized by 
\[
  \mathcal{C} \;=\;\{\gamma(t)\colon t\in\R\cup\{\infty\}\},
\]
and that \(f\) restricts to 
\[
  f\bigl(\gamma(t)\bigr)
  \;=\;
  \gamma((1/\delta)\,t).
\]
Each line \(E_i^\pm\subset\prr\) meets \(\mathcal{C}\) in two points, so its strict transform meets \(\mathcal{C}\) in three points on \(X_{n_2,n_3}\).  Using Theorem \ref{T:Diller}, one checks that
\[
  E_i^+\cap \mathcal{C}
  \;=\;
  \bigl\{\gamma(\delta\,t_i),\,\gamma(\delta^{\,1-n_i}t_i),\,\gamma(\delta^{\,1-n_{i-1}}t_{i-1})\bigr\},
\]
\[
  E_i^-\cap \mathcal{C}
  \;=\;
  \bigl\{\gamma(\delta^{\,-n_{i-1}}t_{i-1}),\,\gamma(t_{i-1}),\,\gamma(t_{i})\bigr\},
\]
where indices are taken cyclically in \(\{1,2,3\}\), i.e.\ \(t_{j}\) is shorthand for \(t_{((j-1)\bmod3)+1}\).

Combining these intersection parameters with the formula for \(t_i\) in \eqref{E:bgamma}, one obtains the cyclic order of the three intersection points of each \(E_i^\pm\) along \(\mathcal{C}\).  Diller-Kim \cite[Table~1]{Diller-Kim} tabulates these orders; note that we are using the inverse map.

In total, there are six combinatorial types of how the three lines \(E_i^\pm\) sit on \(\mathcal{C}\).  In the remainder of this subsection, we describe each of these six cases in turn. 

\begin{rem}[Abuse of notation]
Each symbol
\[
  a_i,\;b_1,\;c_j
\]
will simultaneously denote
\begin{enumerate}
  \item the loop (generator) in \(\pi_1(X)\) based at our chosen base-point,
  \item the corresponding ``reading curve" in the cut surface,
  \item the exceptional divisor \(\mathcal E_{i,j}\subset X\), and
  \item the base-point \(\gamma(\delta^{-j}t_i)\in\mathcal C\) lying on that divisor.
\end{enumerate}
We trust that the context will make clear which of these four meanings is intended at any given moment.
\end{rem}

\noindent\textbf{Orbit data : $1,1,n\ge 8$.}
Recall that on the invariant cubic \[ \mathcal{C} \ = \ \{ \gamma(t): t \in \R \cup \{ \infty\}\}\]
the real map $f|_\mathcal{C}$ acts by
 \[ f|_\mathcal{C}(\gamma(t)) \ =\  \gamma((1/\delta)  t), \quad \delta =\lambda>1.\]
 For the $(1,1,n)$-family, the three orbits of exceptional curves -- or ``base"-- points on $\mathcal{C}$ are 
 \[ a_j \ = \ \gamma(\delta^{1-j} t_3), \quad j=1, \dots, n,\qquad \ \  b_1 \ =\  \gamma(t_1),\qquad \ \  c_1 \ =\  \gamma( t_2), \]
where $t_1, t_2, t_3$ come from the Diller parametrization in \eqref{E:bgamma}. (We continue to wirte $a_j, b_1, c_1$ for the corresponding base points on $\mathcal{C}$, their exceptional curves, and the associated generators/reading curves in $\pi_1(X)$.)  Since $1/\delta<1$, these points lie in strictly descending order of their $t$-values. Setting 
\[ a_0 \ =\  (f|_\mathcal{C})^{-1} (a_1),\qquad a_{n+1} \ =\  f|_\mathcal{C} (a_n),\]
one checks that each exceptional line $E_i^\pm$ meets $\mathcal{C}$ in exactly three of these points: 
\[ \begin{aligned}&E_1^+ \cap \mathcal{C} = \{ b_0,b_1,a_n\}, \quad E_2^+ \cap \mathcal{C} = \{ c_0,c_1,b_1\}, \quad E_3^+ \cap \mathcal{C} = \{ a_0,a_n,c_1\}, \\
 &E_1^-\cap \mathcal{C} = \{ a_1,b_1,a_{n+1}\}, \quad E_2^- \cap \mathcal{C} = \{ b_1,c_1,b_2\}, \quad E_3^- \cap \mathcal{C} = \{ a_1,c_1,c_2\}. \end{aligned}\]
Altogether, the full ordering of all intersection points on $\mathcal{C}$ (from $t=\infty$ down to $t=0$) is 
\[ \infty = \text{cusp} > a_0 >a_1  > b_0 > a_2> b_1> a_3>b_2=c_0 >a_4>c_1>a_5>c_2>a_6> \cdots a_n >a_{n+1} >0.\]
The inequality $>$ here means that the base points lie on $\mathcal{C}$ in the direction of decreasing parameter $t$. This determines the relative placement of the invariant cubic, the base points, and the exceptional lines. 

\medskip
\noindent\textbf{Notation:} Here we have
\begin{itemize}
\item the symbols $a_1, \dots, a_n, b_1, c_1$ denote the base points (centers of blow-up) of the exceptional curves in $X$
\item whereas \[a_0,b_0,c_0,a_{n+1},b_2,c_2\] denote the \textit{unique} intersection point of the correspondng exceptional curves $E_i^\pm$ with the invariant cubic $\mathcal{C}$. (Recall that each $E_i^\pm$ meets $\mathcal{C}$ in exactly one point after blowin up the base points.
\end{itemize}


\begin{center}
\begin{figure}
\includegraphics[scale=.38]{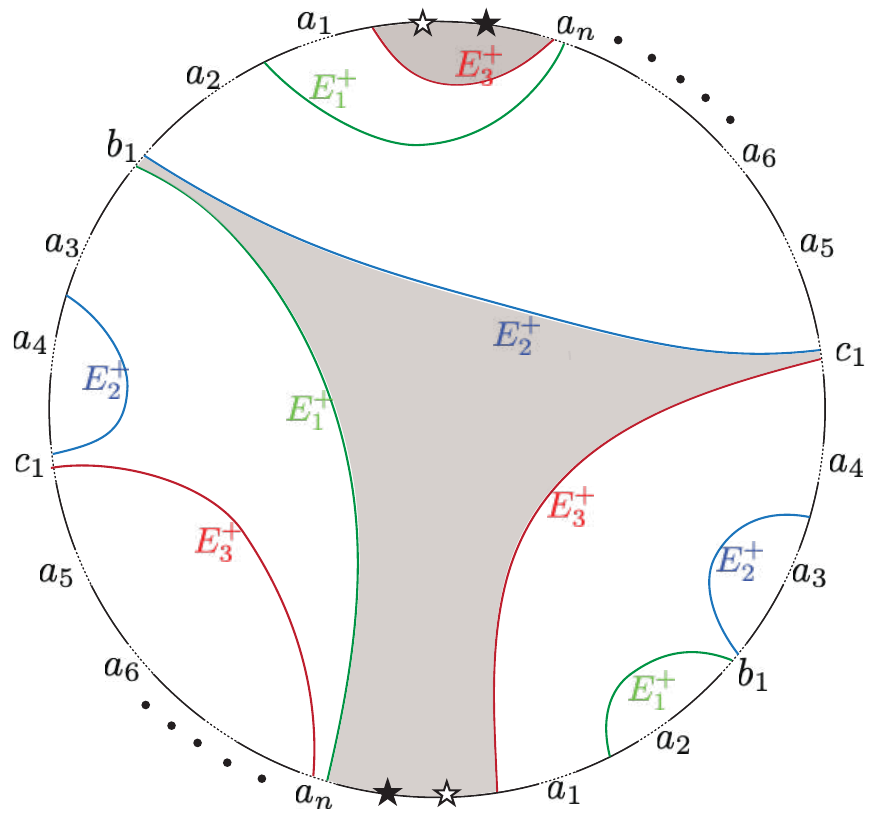} \quad\quad\ \   \includegraphics[scale=.38]{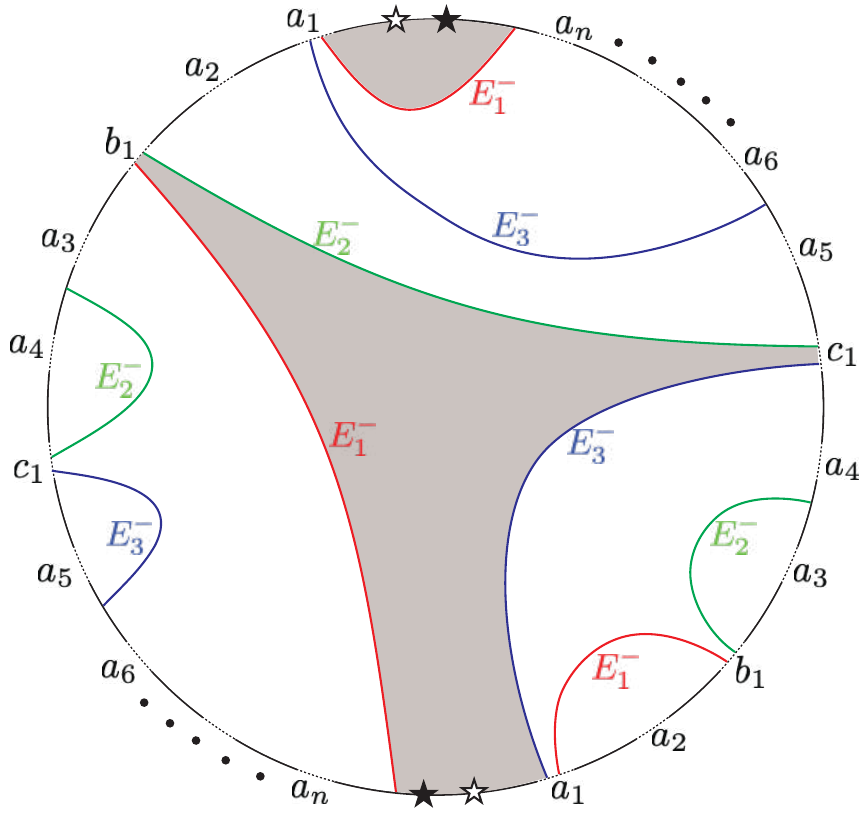} 
\caption{Reading curves sit on the boundary circle: when a path crosses a reading curve on the right semicircle (outward), we record the corresponding generator with exponent $+1$; when it crosses on the left semicircle (outward), we record it with exponent $-1$.  The hollow star is the fixed point $\gamma(\infty)$ at the cusp, and the filled star is the other fixed point $\gamma(0)$. The left panel depicts the exceptional curves $E_i^+$ for the forward map with respect to the reading curves, and the right panel depicts the exceptional curves $E_i^-$ for the inverse map.}\label{F:11nsetting}
\end{figure} 
\end{center}

Since the two fixed points lie in the shaded regions in Figure~\ref{F:11nsetting}, we choose our basepoint $p$ inside that shaded region. Under $f$, the left-hand shaded region (the one containing the two fixed points) is mapped diffeomorphically onto the right-hand shaded region, which also contains exactly two fixed points.

The left-hand shaded region is bounded by three exceptional curves $E_i^+$ and the curves lying over $a_n,b_1, c_1$. The map $f$ sends $E_i^+$ to the curves over $b_1,c_1,a_1$, and the curves over $a_n,b_1,c_1$ to $E_i^-$. Therefore, each left-hand bounded region is mapped to one of the right-hand bounded regions bounded by $E_i^-$ and the curves over $b_1,c_1,a_1$. In particular, the left-hand shaded region, which contains two fixed points, is mapped onto the right-hand shaded region, which also contains two fixed points.
By connecting $p$ to $f(p)$ within these triangles, we ensure that our choice of basepoint yields a well-defined homotopy class.

\medskip
Next, observe how $f$ acts on the exceptional curves in the $(1,1,n)$ family:
\[ f\ :\ \left\{ \ \begin{aligned} &E_1^+ \mapsto b_1 \mapsto E_2^-\\&E_2^+ \mapsto c_1 \mapsto E_3^-\\ &E_3^+ \mapsto a_1 \mapsto a_2 \mapsto \cdots \mapsto a_n \mapsto E_1^-\\ \end{aligned}\right. \]
It follws that for any simple closed curve $\gamma \subset X_{1,1,n}$, 
\begin{itemize}
\item $f(\gamma)$ will \textbf{not} intersect an exceptional curve $E_i^-$ unless $\gamma$ originally intersected its correponding exceptional curve ($a_n,b_1$ or $c_1$, respectively).
\item Conversely, $f(\gamma)$ will intersect an exceptional curve $a_1, b_1$ or $c_1$ only if $\gamma$ intersected the matching exceptional curve $E_3^+, E_1^+$ or $E_2^+$, respectively.
\end{itemize}
The coupling between how $f$ permutes the exceptional curves and reading curves is exactly what allows us to compute the induced action $f_*$ on $\pi_1(X_{1,1,n})$. In Figure \ref{F:11nmap}, we illustrate this method using a simple closed curve $g_{a_2}$ whose homotopy class is given by $a_2$-- the curve $g_{a_2}$ crosses $a_2$ on the right semicircle outward on the left-hand side. By choosing the minimum intersection, we see that the curve  $g_{a_2}$ passes successively through the exceptional curve $E_3^+, E_1^+$, the exceptional curve $a_2$, and then $E_2^+$. Thus its image $f(g_{a_2})$ must cross the exceptional curves $a_1,b_1, a_3$ and $c_1$ in that order -- avoiding all exceptional curves $E_i^-$. Since exceptional curves $ E_i^ -$ act as invisible barriers, there is only one possibility, which we show on the right-hand side in Figure \ref{F:11nmap}. From this, we conclude that the image curve $f(g_{a_2})$ enters $a_1$ from the positive side (the right semicircle), $b_1$ from the negative side, $a_3$ from the positive side, and then $c_1$ from the negative side. It follows that under $f_*$ we have 
\[ f_* (a_2) = a_1 \, \overline{b_1} \, a_3\, \overline{c_1}.\]
For each generator, the order in which the generating loop $g$ intersects the exceptional curves --- namely, the curves $E_i^+$ and the exceptional curve over the corresponding base point --- determines the homotopy class of its image under $f_*$. From Figure~\ref{F:11nsetting}, we see that the generators $a_i$ for $i=5, \dots, n-1$ have essentially the same pattern of intersection, and hence their images under $f_*$ are structurally similar. The only difference is that $f$ maps $a_i$ to $a_{i+1}$, reflecting the shift in base points. Therefore, the action of $f_{(1,1,n)*}$ can be readily determined for each $n\ge 7$.

\begin{equation}\label{E:haction11n}
f_{(1,1,n)*}:\left\{  \begin{aligned} & b_1 \ \mapsto \ a_1 \\ & c_1 \ \mapsto \ b_1 \\& a_1  \ \mapsto \ a_1\, \overline{a_2}  \,b_1\, \overline{c_1} \\&a_2  \ \mapsto \ a_1 \,\overline{b_1}\, a_3\, \overline{c_1} \\& a_3  \ \mapsto \ a_1\, c_1 \,\overline{a_4} \,b_1 \\& a_4  \ \mapsto \ a_1 \,a_5\, \overline{c_1} \,b_1\\& a_k  \ \mapsto \  c_1 \,\overline{a_{k+1}} \,\overline{a_1}\, b_1, \ \  5\le k\le n-1\\ &a_n  \ \mapsto \ c_1\end{aligned} \right.
\end{equation}

\begin{center}
\begin{figure}
\includegraphics[scale=.38]{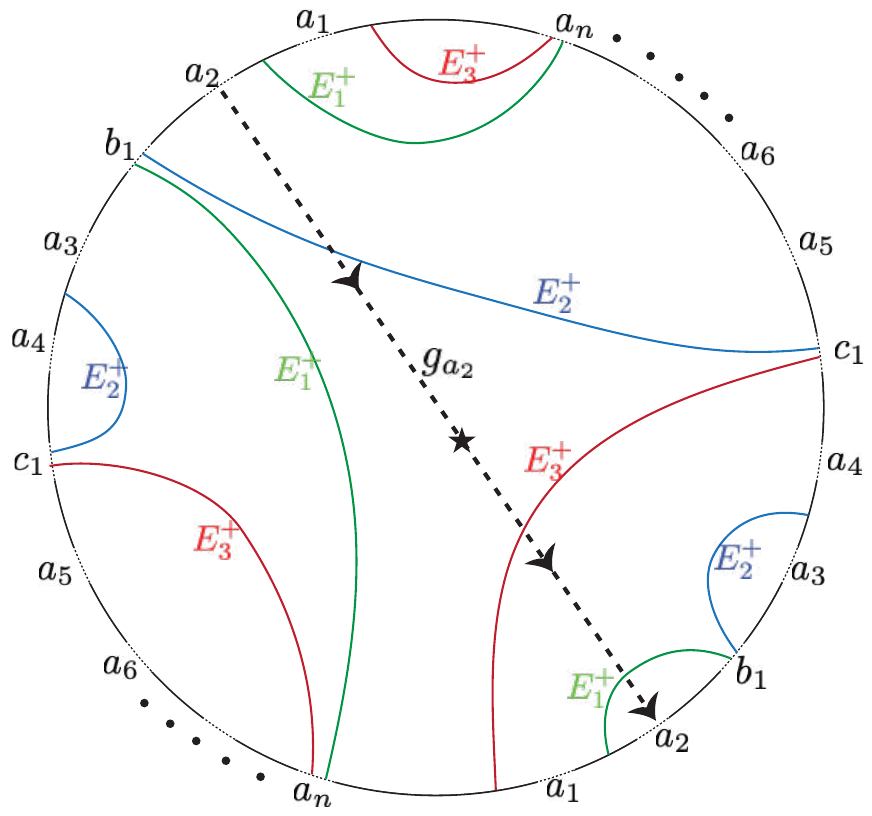} \quad\quad\ \   \includegraphics[scale=.38]{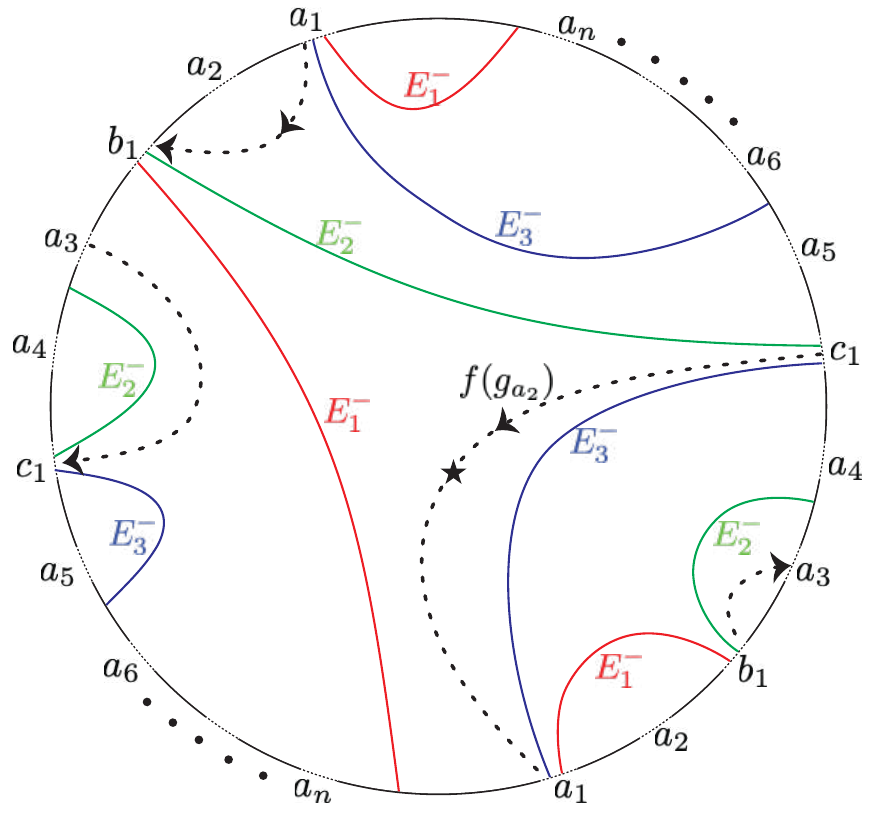} 
\caption{ Left: The loop $g_{a_2}$ (a dashed line with arrows) based at $\star$ corresponding to the generator $a_2$. Traversing with its chosen orientation, it passes successively through the exceptional curve $E_3^+, E_1^+$, the exceptional curve $a_2$, and then $E_2^+$. Right: Since $f$ carries each $E_i^+$ to the corresponding exceptional curve $b_1,c_1,a_1$, the image loop $f(g_{a_2})$ (a dashed curve with arrows) must cross the exceptional curves $a_1,b_1, a_3$ and $c_1$ in that order -- avoiding all exceptional curves $E_i^-$ because $g_{a_2}$ does not pass through $a_n, b_1$ or $c_1$. This path is illustrated on the right-hand side. Here is the $\star$ indicates the image of $\star$ on the left-hand side. By connecting $\star$ and $f(\star)$, we will obtain the same homotopy class. 
}\label{F:11nmap}
\end{figure} 
\end{center}

 \medskip

For any orbit datum $(1,m,n)$, each exceptional divisor $E_i^\pm$ meets the invariant cubic $\mathcal{C}$ in exactly three points:
\[ \begin{aligned}&E_1^+ \cap \mathcal{C} = \{ b_0,b_1,a_n\}, \quad E_2^+ \cap \mathcal{C} = \{ c_0,c_m,b_1\}, \quad E_3^+ \cap \mathcal{C} = \{ a_0,c_m,a_n\}, \\
 &E_1^-\cap \mathcal{C} = \{ a_1,b_1,a_{n+1}\}, \quad E_2^- \cap \mathcal{C} = \{ b_1,c_1,b_2\}, \quad E_3^- \cap \mathcal{C} = \{ a_1,c_1,c_{m+1}\}. \end{aligned}\]
 
 In order to compute the induced action on $\pi_1(X_{1,m,n})$ via reading curves, one must know the cyclic order in which these points (i.e.\ the labels $a_i,b_j,c_k$) appear around $\mathcal{C}$. Rather than repeating the same discussion in each case, we simply record those cyclic orderings below for all remaining families. The explicit action on the fundamental group---obtained by ``reading off" crossings in order---is then listed in Section \ref{S:seven}.  For the $(1,m,n)$-family, the three orbits of exceptional curves -- or ``base"-- points on $\mathcal{C}$ are 
 \[ a_i \ = \ \gamma(\delta^{1-i} t_3), \quad i=1, \dots, n,\qquad \ \  b_1 \ =\  \gamma( t_1),\qquad \ \  c_j \ =\  \gamma(\delta^{1-j} t_2) \quad j=1, \dots, m, \]
where $t_1, t_2, t_3$ come from the Diller parametrization \eqref{E:bgamma} in Theorem \ref{T:Diller}. Again, we want to emphasize that we wirte $a_i, b_1, c_j$ for the corresponding base points on $\mathcal{C}$, their exceptional curves, and the associated generators/reading curves in $\pi_1(X)$. 
 
 \medskip
\noindent\textbf{Orbit data : $1,2,n\ge 7$.}
 \[ 
 \begin{aligned}\infty = \text{cusp} > a_0 >a_1&  > b_0 > a_2> b_1> a_3>c_0>b_2>\\ & >a_4>c_1>a_5>c_2>a_6>c_3>a_7>a_8 \cdots a_n >a_{n+1} >0.\end{aligned}\]

  \medskip
\noindent\textbf{Orbit data : $1,m\ge 7,2$.}
 \[ 
 \begin{aligned}\infty = \text{cusp} > c_0 >a_0&  > c_1 > a_1> c_2> b_0>a_2>c_3>\\ & >b_1>a_3>c_4>b_2>c_5>c_6>c_7> \cdots c_m >c_{m+1} >0.\end{aligned}\]

  \medskip
\noindent\textbf{Orbit data : $1,n,n+1$.}
 \[ 
 \begin{aligned}\infty = \text{cusp} > a_0 >a_1&  > c_0 > b_0> a_2> c_1>b_1>a_3>\\ & >c_2>a_4>c_3> \cdots a_{n+1}>c_n>a_{n+2} >c_{n+1} >0.\end{aligned}\]
 
   \medskip
\noindent\textbf{Orbit data : $1,m,n \ge m+2$.}
 \[ 
 \begin{aligned}\infty = \text{cusp} > a_0 >a_1&  > b_0 > c_0> a_2> b_1>c_1>a_3>\\ & >b_2>c_2>a_4>c_3 >a_5> \cdots > c_m >a_{m+2} >\\& > c_{m+1} > a_{m+3} >a_{m+4} > \cdots> a_n>a_{n+1} >0.\end{aligned}\]
 
    \medskip
\noindent\textbf{Orbit data : $1,m\ge n+1,n$.}
 \[ 
 \begin{aligned}\infty = \text{cusp} > a_0 >c_0&  > a_1 > b_0> c_1> a_2>b_1>c_2>\\ & >a_3>b_2>c_3>a_4 >c_4> \cdots > c_{n-1} >a_{n} >\\& > c_{n} > a_{n+1} >c_{n+1}>c_{n+2}  > \cdots> c_m>c_{m+1} >0.\end{aligned}\]
 
Although both lie in the same combinatorial family, the cases $(1,n+1,n)$ and $(1,m,n)$ with $m\ge n+2$ behave differently. Accordinly, we split out analysis into two regimes:
\begin{enumerate}
\item \textbf{Adjacent case} : $(1,n+1,n)$ with $m-n=1$.
\item \textbf{Gap case} : $(1,m,n)$ with $m-n\ge 2$.
\end{enumerate}

\section{Seven Families}\label{S:seven}
The uniform behavior of the action on the fundamental group across all seven families is central to our argument. To highlight this, we present the formulas for each family here rather than relegating them to an appendix. For each case, we apply the same idea---determining the homotopy class of the image of a generating loop using reading curves---as we did for $f_{(1,1,n)*}$ in \eqref{E:haction11n}. The seven families are organized as follows:
\vspace{1ex}

\begin{center}
\begin{tikzpicture}[
  every node/.style={
    draw, rectangle, rounded corners,
    text width=2.5cm, align=center, font=\small
  },
  edge from parent/.style={->, draw},
  level distance=2cm,
  sibling distance=4.5cm
]
\node {Orbit datum\\\(\,(1,m,n)\,\)}
  child { node {\(m=1\) or \(n=1\)\\(Sec 4.1)} }
  child { node {\(m=2\) or \(n=2\)\\(Sec 4.2-4.3)} }
  child { node {\(m,n\ge3\)}
    child { node {\(\lvert m-n\rvert=1\)\\(Sec 4.4-4.5)} }
    child { node {\(\lvert m-n\rvert\ge2\)\\(Sec 4.6-4.7)} }
  };
\end{tikzpicture}
\end{center}


\begin{rem}
By the cyclic symmetry of the orbit-data triple, the case \((1,n,1)\) is obtained from the \((1,1,n)\) case by the relabeling
\[
  (a_i,b_1,c_1)\;\longmapsto\;(c_i,a_1,b_1).
\]
In particular, every formula for \(f_*\) above equation \eqref{E:11n} carries over under the substitution \(a\mapsto c\), \(b\mapsto a\), \(c\mapsto b\).  For example,
\[
  f_{(1,n,1)*}(c_1)
  \;=\;
  c_1\,\overline{c_2}\,a_1\,\overline{b_1}
  \quad
  \Longleftrightarrow
  \quad
  f_{(1,1,n)*}(a_1)
  \;=\;
  a_1\,\overline{a_2}\,b_1\,\overline{c_1}
\]
\end{rem}

\subsection{Orbit data : $1,1,n\ge 8$}\label{SS:11n}
\begin{equation}\label{E:11n}\pi_1(X) = \langle a_1, \dots, a_n, b_1, c_1 \rangle \cong F_{n+2}; \quad f_*:\left\{  \begin{aligned} & b_1 \ \mapsto \ a_1 \\ & c_1 \ \mapsto \ b_1 \\& a_1  \ \mapsto \ a_1\, \overline{a_2}  \,b_1\, \overline{c_1} \\&a_2  \ \mapsto \ a_1 \,\overline{b_1}\, a_3\, \overline{c_1} \\& a_3  \ \mapsto \ a_1\, c_1 \,\overline{a_4} \,b_1 \\& a_4  \ \mapsto \ a_1 \,a_5\, \overline{c_1} \,b_1\\& a_k  \ \mapsto \  c_1 \,\overline{a_{k+1}} \,\overline{a_1}\, b_1, \ \  5\le k\le n-1\\ &a_n  \ \mapsto \ c_1\end{aligned} \right.
\end{equation}


\subsection{Orbit data : $1,2,n\ge 7$}\label{SS:12n}
\begin{equation}\label{E:12n}\pi_1(X) = \langle a_1, \dots, a_n, b_1, c_1,c_2 \rangle \cong F_{n+3};\quad f_*: \left\{  \begin{aligned} & b_1 \ \mapsto \ a_1 \\ & c_1 \ \mapsto \ a_1\,c_2\,\overline{c_1}\,b_1 \\& c_2 \ \mapsto \ b_1 \\& a_1  \ \mapsto \ a_1\, \overline{a_2}  \,b_1\, \overline{c_1} \\&a_2  \ \mapsto \ a_1 \,\overline{b_1}\, a_3\, \overline{c_1} \\& a_3  \ \mapsto \ a_1\, c_1 \,\overline{a_4} \,b_1 \\& a_i  \ \mapsto \ a_1 \,a_{i+1}\, \overline{c_1} \,b_1,\ \ \ i=4,5\\& a_k  \ \mapsto \  c_1 \,\overline{a_{k+1}} \,\overline{a_1}\, b_1, \ \  6\le k\le  n-1\\ &a_n  \ \mapsto \ c_1\end{aligned} \right.
\end{equation}

\subsection{Orbit data : $1,n,2\ge 7$}\label{SS:1n2}
\begin{equation}\label{E:1n2} \pi_1(X) = \langle a_1, a_2,, b_1, c_1,\dots, c_n \rangle \cong F_{n+3};\quad f_*:\left\{  \begin{aligned} & b_1 \ \mapsto \ a_1 \\ & a_1 \ \mapsto \ c_1\,\overline{a_1}\,a_2\overline{b_1} \\& a_2 \ \mapsto \ c_1 \\& c_i  \ \mapsto \ c_1\, \overline{a_1}  \,c_{i+1}\, \overline{b_1},\ \  i=1,2 \\&c_3  \ \mapsto \ c_1 \,c_4\,\overline{b_1}\, a_1\\&  c_k  \ \mapsto \  b_1 \,\overline{c_{k+1}} \,\overline{c_1}\, a_1, \ \  4\le k \le n-1\\ &c_n  \ \mapsto \ b_1\end{aligned} \right.
\end{equation}

\subsection{Orbit data : $1,n,n+1$ with $n\ge 4$}\label{SS:1nnp}
\begin{equation}\label{E:1nnp1}\pi_1(X) = \langle a_1, c_1, \dots,a_n,c_n, a_{n+1}, b_1 \rangle \cong F_{2n+2} ;\quad f_*: \left\{ \begin{aligned} & b_1 \ \mapsto \ a_1 \\ & c_1 \ \mapsto \ a_1\,\overline{c_1}\,b_1\,\overline{c_2} \\& c_k \ \mapsto \ a_1\,c_{k+1}\,\overline{b_1}\, c_1,\ \  2\le k\le  n-1 \\& c_{n}  \ \mapsto \ b_1 \\& a_1 \ \mapsto \ a_1\, \overline{a_2}\, c_1 \,\overline{b_1} \\& a_2 \ \mapsto\ a_1\, \overline{c_1}\, b_1\, \overline{a_3}\\& a_k  \ \mapsto \  a_1 \, a_{k+1}\,\overline{b_1} \, c_1,\ \  3 \le k\le n\\ &a_{n+1}  \ \mapsto \ c_1\end{aligned} \right.
\end{equation}

\subsection{Orbit data : $1,n+1,n$ with $n\ge 4$}\label{SS:1npn}
\begin{equation}\label{E:1np1n}\pi_1(X) = \langle c_1, a_1, \dots,c_n,a_n, c_{n+1}, b_1 \rangle \cong F_{2n+2} ;\quad f_*: \left\{ \begin{aligned} & b_1 \ \mapsto \ a_1 \\ & c_1 \ \mapsto \ a_1\,\overline{c_1}\,b_1\,\overline{c_2} \\& c_k \ \mapsto \ a_1\,c_{k+1}\,\overline{b_1}\, c_1, \ \  2 \le k \le n-1 \\& c_{n}  \ \mapsto \ b_1\, \overline{c_{n+1}}\, \overline{a_1}\, c_1\\& c_{n+1}  \ \mapsto \ b_1 \\& a_1 \ \mapsto \ a_1\, \overline{c_1}\, a_2 \,\overline{b_1} \\& a_2 \ \mapsto\ a_1\, \overline{c_1}\, b_1\, \overline{a_3}\\& a_k  \ \mapsto \  a_1 \, a_{k+1}\,\overline{b_1} \, c_1,\ \ 3 \le k \le  n-1\\ &a_{n}  \ \mapsto \ c_1\end{aligned} \right.
\end{equation}

\subsection{Orbit data : $1,m,n$ with $m\ge 3, n\ge m+2$} \label{SS:1mn}
\begin{equation}\label{E:1mn}\pi_1(X) = \langle a_1, \dots, a_{n}, b_1, c_1,\dots,c_{m} \rangle \cong F_{1+m+n} ;\quad f_*: \left\{  \begin{aligned} & b_1 \ \mapsto \ a_1 \\ & c_k \ \mapsto \ a_1\, c_{k+1}\,\overline{c_1}\,b_1,\ \, 1\le k \le m-1 \\& c_m \ \mapsto \ b_1 \\& a_1 \ \mapsto \ a_1\, \overline{a_2}\, b_1 \,\overline{c_1} \\& a_2 \ \mapsto\ a_1\, \overline{b_1}\, c_1\, \overline{a_3}\\& a_i  \ \mapsto \  a_1 \, a_{i+1}\,\overline{c_1} \, b_1,\ \, \ 3\le i \le m+2\\& a_k  \ \mapsto \  c_1 \, \overline{a_{k+1}}\,\overline{a_1} \, b_1, \ \, m+2 \le k \le n-1\\ &a_{n}  \ \mapsto \ c_1\end{aligned} \right.
\end{equation}

\subsection{Orbit data : $1,m,n$ with $n\ge 3, m\ge n+2$} \label{SS:1mn}
\begin{equation}\label{E:1nm}\pi_1(X) = \langle a_1, \dots, a_{n}, b_1, c_1,\dots,c_{m} \rangle \cong F_{1+m+n} ;\quad f_*: \left\{ \begin{aligned} & b_1 \ \mapsto \ a_1 \\&c_1\ \mapsto\ a_1 \,\overline{c_1}\, b_1\, \overline{c_2}\\& c_i \ \mapsto\ a_1 \, c_{i+1}\, \overline{b_1}\, c_1, \ \, 2\le i\le  n\\ & c_k \ \mapsto \ b_1\, \overline{c_{k+1}}\,\overline{a_1}\,c_1,\ \, n+1\le k \le m-1 \\& c_m \ \mapsto \ b_1 \\& a_1 \ \mapsto \ a_1\, \overline{c_1}\, a_2 \,\overline{b_1} \\& a_2 \ \mapsto\ a_1\, \overline{c_1}\, b_1\, \overline{a_3}\\& a_k  \ \mapsto \  a_1 \, a_{k+1}\, \overline{b_1}\,c_1,  \ \, 3\le k \le n-1\\ &a_{n}  \ \mapsto \ c_1\end{aligned} \right.
\end{equation}

\section{Actions on the fundamental groups}\label{S:homotopy}

Let $X_{1,m,n}$ be the surface with one or two boundary components determined by the orbit data $(1,m,n)$, and let
 \[f_{(1,m,n)*}: \pi_1(X_{1,m,n}) \to \pi_1(X_{1,m,n})\] be the induced action where $\pi_1(X_{1,m,n})$ is a free group with $1+m+n$ generators:
 \[ \pi_1(X_{1,m,n}) = \langle a_1,a_2,\dots a_n, b_1, c_1,c_2,\dots, c_m \rangle.\]
 For any $\omega \in \pi_1(X_{1,m,n})$, let $\omega_\#$ denote its reduced word and set $\ell(\omega)= \ell(\omega_\#)$ for the length of that word. Write $\mathrm{Sp}\{ x_1, x_2, \dots, x_k\}$ for the set of all words in the letters $x_1^{\pm1}, \dots, x_k^{\pm1}$.   From equations \eqref{E:11n} -- \eqref{E:1nm}, we obtain:
 
 \noindent\textbf{Abbreviation.}  In what follows we set
\[
  f_* \;=\; f_{(1,m,n)*}\colon \pi_1(X_{1,m,n})\to\pi_1(X_{1,m,n}).
\]

\begin{lem} Under $f_*$, the images of generators satisfy:
\begin{enumerate}
\item $f_* b_1 = a_1$, $f_* c_m = b_1$, and $f_* a_n= c_1$.
\item For each $i= 1, \dots, m-1$, \[(f_* c_i)_\# \in \mathrm{Sp} \{ a_1, b_1, c_1, c_{i+1}\} \text{ and } \ell(f_*c_i) = 4.\] 
\item For each $i= 1, \dots, n-1$, \[(f_* a_i)_\# \in \mathrm{Sp} \{ a_1, b_1, c_1, a_{i+1}\} \text{ and } \ell(f_*a_i) = 4.\] 
\end{enumerate}
\end{lem}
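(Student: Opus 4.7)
The statement is an immediate consequence of the explicit formulas collected in Section~\ref{S:seven}. My plan is to verify each of the three claims by direct inspection across the seven orbit-data families in equations \eqref{E:11n}--\eqref{E:1nm}; no further geometric input is needed, since the work of Sections~\ref{S:rational}--\ref{S:seven} has already reduced every computation of $f_*$ to reading off finitely many assignments.

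For (1) I would simply point to the top lines of each of the seven formulas: in every family one sees $b_1 \mapsto a_1$, $c_m \mapsto b_1$ (with the convention $c_1 = c_m$ when $m=1$), and $a_n \mapsto c_1$. For (2) and (3) the key observation is uniform: in every formula, each non-terminal image $f_* c_i$ with $1 \le i \le m-1$ is displayed as a four-letter word in the alphabet $\{a_1^{\pm 1}, b_1^{\pm 1}, c_1^{\pm 1}, c_{i+1}^{\pm 1}\}$, and likewise each $f_* a_i$ with $1 \le i \le n-1$ uses only letters from $\{a_1^{\pm 1}, b_1^{\pm 1}, c_1^{\pm 1}, a_{i+1}^{\pm 1}\}$. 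To conclude $\ell(f_* c_i) = \ell(f_* a_i) = 4$ it suffices to note that $a_1, b_1, c_1, c_{i+1}$ (respectively $a_1, b_1, c_1, a_{i+1}$) are four \emph{distinct} basis elements of the free group $\pi_1(X_{1,m,n})$, so no two adjacent letters in the displayed word are mutually inverse; hence each word is already cyclically reduced and of length exactly~4.

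The main obstacle is purely bookkeeping: in Sections~\ref{SS:1mn} (both the $n \ge m+2$ and $m \ge n+2$ cases) the $a$-orbit, respectively $c$-orbit, formulas split into a ``middle block'' of exceptional indices, a general ``interior block,'' and a ``tail block,'' and one must confirm that the stated index ranges cover every intermediate $i$ with a four-letter image of the asserted alphabet. I would dispose of this by tabulating, for each of the seven families, the partition of the index set $\{1,\dots,n-1\}$ (resp.\ $\{1,\dots,m-1\}$) into the blocks appearing in the formula and checking the alphabet membership block-by-block. This is mechanical and, once carried out, simultaneously proves (2) and (3). No hidden cancellation can occur because the four-letter alphabet is a set of four distinct basis generators, which is the only nontrivial point to verify.
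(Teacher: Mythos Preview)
Your proposal is correct and follows exactly the same approach as the paper: the paper's own proof consists of the single sentence ``Immediate from the defining relations \eqref{E:11n}--\eqref{E:1nm},'' and your plan simply spells out that inspection in more detail. One minor remark: your justification that the four-letter words are reduced (``four distinct basis elements, so no two adjacent letters are mutually inverse'') tacitly uses that each displayed word actually involves all four letters once, which is true by inspection but not a formal consequence of the alphabet having size four.
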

\begin{proof}
Immediate from the defining relations \eqref{E:11n} -- \eqref{E:1nm}.
\end{proof}

\begin{lem}[Truncation-invariance]\label{L:TIa}
Fix integers \(m\ge1\), \(j\ge m\), and \(k\ge0\).  Then for every \(n\ge j+k\) and every
\[
  \omega\;\in\;\mathrm{Sp}\{\,b_1,\,c_1,\dots,c_m,\,
                         a_1,\dots,a_j\},
\]
one has
\[
  f_{(1,m,n)*}^{\,k}(\omega)
  \;=\;
  f_{(1,m,j+k)*}^{\,k}(\omega).
\]
We refer to this bounded-reach property as \textit{truncation-invariance}.
\end{lem}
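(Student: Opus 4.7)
The plan is induction on $k$, tracking two things in parallel: which generators can appear in the intermediate iterate, and the fact that the formulas defining $f_*$ on those generators coincide for the two orbit data being compared. For $\ell\ge 0$ set
\[
  H_\ell := \mathrm{Sp}\{b_1,\; c_1,\dots,c_m,\; a_1,\dots,a_{j+\ell}\}.
\]
Because $n\ge j+k$, the alphabet underlying $H_\ell$ sits simultaneously inside $\pi_1(X_{1,m,n})$ and $\pi_1(X_{1,m,j+k})$ for every $0\le\ell\le k$; by hypothesis $\omega\in H_0$.

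The crux is a \emph{single-step stability} claim: for every generator $x$ of $H_{\ell-1}$ with $1\le\ell\le k$, the words $f_{(1,m,n)*}(x)$ and $f_{(1,m,j+k)*}(x)$ are identical and lie in $H_\ell$. This is a direct inspection of \eqref{E:11n}--\eqref{E:1nm}. The image of $b_1$ and of each $c_i$ with $1\le i\le m$ is determined by $m$ alone, never mentioning any $a_p$ with $p>1$, hence is independent of the last orbit length and lies in $H_\ell$. For $a_i$ with $i$ strictly below the last orbit length, the formula depends only on where $i$ sits relative to $m$, and its maximal $a$-index is $i+1$. We apply this to $i\le j+\ell-1$, which satisfies $j+\ell-1\le j+k-1<\min\{n,\, j+k\}$, so neither map ever reaches the boundary generator whose image is the special rule $f_*(a_{\text{last}})=c_1$. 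Consequently $f_*(x)$ is common to the two maps and belongs to $H_\ell$.

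Given the single-step claim, the induction is routine. Assume $\omega_{\ell-1}:=f_{(1,m,n)*}^{\ell-1}(\omega)=f_{(1,m,j+k)*}^{\ell-1}(\omega)\in H_{\ell-1}$. Write $\omega_{\ell-1}$ as a reduced word in the alphabet of $H_{\ell-1}$ and substitute each letter by its $f_*$-image; the resulting word agrees for both maps and lies in $H_\ell$. Setting $\ell=k$ delivers the lemma.

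The main obstacle is the bookkeeping across the seven families of Section~\ref{S:seven}: the single-step claim requires $(1,m,n)$ and $(1,m,j+k)$ to be governed by the same set of formulas, so the images of the generators in $H_{\ell-1}$ have the same symbolic form. Once $k\ge 2$, the combined hypotheses $j\ge m$ and $n\ge j+k$ give $n,\,j+k\ge m+2$, placing both triples into the same ``gap'' regime (Sections~\ref{SS:11n}, \ref{SS:12n}, or \ref{SS:1mn}, according to $m$), where the relevant formulas are manifestly independent of the last orbit length for all non-boundary generators. The residual small-$k$ edge cases, where $j+k$ and $n$ could straddle the transition to an adjacent family (Sections~\ref{SS:1nnp}--\ref{SS:1npn}), reduce to a finite verification, and the same ``boundary-only dependence on $n$" pattern keeps the induction intact.
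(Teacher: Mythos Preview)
Your induction on $k$, tracking the maximal $a$-index appearing at each step, is exactly the paper's argument made explicit; the paper's own proof is the single sentence that no generator of index exceeding $j+k$ ever appears in the first $k$ iterates.

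You are right to flag the cross-family bookkeeping, which the paper elides entirely. However, your resolution of the small-$k$ edge cases---that they ``reduce to a finite verification, and the same pattern keeps the induction intact''---does not hold as stated. Take $m\ge 4$, $j=m$, $k=1$, and any $n\ge m+2$. Then $(1,m,j+k)=(1,m,m+1)$ lies in the adjacent family \eqref{E:1nnp1} while $(1,m,n)$ lies in the gap family \eqref{E:1mn}, and the single-step images differ on almost every generator: for instance $c_1\mapsto a_1\,\overline{c_1}\,b_1\,\overline{c_2}$ in \eqref{E:1nnp1} versus $c_1\mapsto a_1\,c_2\,\overline{c_1}\,b_1$ in \eqref{E:1mn}, and similarly $a_1,a_2,a_3,\dots$ all receive different words. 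So the equality genuinely fails here; the lemma as literally stated requires the additional hypothesis that $(1,m,n)$ and $(1,m,j+k)$ lie in the same family. Since $j\ge m$ forces $j+k\ge m+2$ whenever $k\ge 2$, this is automatic in every downstream application, and the paper's informal reading of the lemma is the intended one.
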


\begin{proof}
Immediate from the fact (seen in the defining formulas \eqref{E:11n} -- \eqref{E:1nm}) that no generator of index exceeding \(j+k\) ever appears in the first \(k\) iterates of any word in 
\(\{b_1,c_1,\dots,c_m,a_1,\dots,a_j\}.\)
\end{proof}

Similarly, we also have truncation-invariance in the other orbit length:

\begin{lem}\label{L:TIc}
Fix integers \(n\ge1\), \(i\ge n\), and \(k\ge0\).  Then for every \(m\ge i+k\) and every
\[
  \omega\;\in\;\mathrm{Sp}\{\,b_1,\,c_1,\dots,c_i,\,
                         a_1,\dots,a_n\},
\]
one has
\[
  f_{(1,m,n)*}^{\,k}(\omega)
  \;=\;
  f_{(1,i+k,n)*}^{\,k}(\omega).
\]
\end{lem}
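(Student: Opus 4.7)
The plan is to mirror the bounded-reach argument behind Lemma~\ref{L:TIa}, with the roles of the $a$- and $c$-directions interchanged. First I would extract from the formulas \eqref{E:11n}--\eqref{E:1nm} the symmetric key observation: for every $1\le j<m$, the reduced word $f_{(1,m,n)*}(c_j)$ lies in $\mathrm{Sp}\{a_1,b_1,c_1,c_{j+1}\}$ and, as an element of the free group, is determined by $n$ and $j$ alone—never by $m$. The only place $m$ appears explicitly is in the boundary relation $c_m\mapsto b_1$. Verifying this requires one pass through the seven families of Section~\ref{S:seven}: within each family the interior $c_j$ formulas depend only on whether $j$ sits below or above the cutoff $n$, so keeping $n$ fixed and enlarging $m$ preserves every interior formula.

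Next I would run induction on $k$. The case $k=0$ is immediate. For the inductive step, fix $\omega\in\mathrm{Sp}\{b_1,c_1,\dots,c_i,a_1,\dots,a_n\}$ with $i\ge n$ and $m\ge i+k$. One application of $f_*$ raises the maximum $c$-index by at most one (each $f_*(c_j)$ introduces only $c_{j+1}$) and leaves the $a$-alphabet bounded by $a_n$ (since $f_*(a_{n-1})$ is the only relation producing $a_n$, and $f_*(a_n)=c_1$ contains no $a$). Therefore
\[
f_{(1,m,n)*}^{\,k-1}(\omega)\;\in\;\mathrm{Sp}\{b_1,c_1,\dots,c_{i+k-1},a_1,\dots,a_n\},
\]
and by the inductive hypothesis this word equals $f_{(1,i+k,n)*}^{\,k-1}(\omega)$. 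Since every $c$-index appearing is at most $i+k-1$, strictly less than both $m$ and $i+k$, the key observation forces the final application of $f_*$ to produce the same reduced word in both parameter regimes, completing the step.

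The main obstacle is the finite case-check of the key observation across the seven families. In particular, the formulas \eqref{E:1mn} and \eqref{E:1nm} partition the range of $c$-indices at a cutoff tied to $n$, so one must confirm that enlarging $m$ while holding $n$ fixed never shifts which sub-formula applies to a given $c_j$ with $j<m$. This inspection is already implicit in the proof of Lemma~\ref{L:TIa}; once recorded in the symmetric direction, the inductive argument above is essentially mechanical.
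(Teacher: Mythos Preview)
Your proposal is correct and follows exactly the approach the paper intends: the paper's own proof of Lemma~\ref{L:TIc} is literally the word ``Similarly'' preceding the statement, pointing back to the one-sentence bounded-reach argument for Lemma~\ref{L:TIa}, and you have written out that symmetric argument in full. Your identification of the key observation (that for $j<m$ the formula for $f_*(c_j)$ depends only on $n$ and $j$) and of the inductive bound on indices is precisely what underlies the paper's ``immediate from the defining formulas'' claim.
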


For the tail generators, we have: 
\begin{cor}\label{C:tail}
Fix integers \(m,n\ge1\) and \(k\ge0\).  Then:

1.  If \(n\ge m+k\),  for each \(j=0,1,\dots,k-1\) we have
   \[
     f_{(1,m,n)*}^{\,k}\bigl(a_{n-j}\bigr)
     \;=\;
     f_{(1,m,m+k)*}^{\,k}\bigl(a_{\,m+k-j}\bigr).
   \]

2.  If \(m\ge n+k\),  for each \(j=0,1,\dots,k-1\) we have
   \[
     f_{(1,m,n)*}^{\,k}\bigl(c_{m-j}\bigr)
     \;=\;
     f_{(1,n+k,n)*}^{\,k}\bigl(c_{\,n+k-j}\bigr).
   \]
\end{cor}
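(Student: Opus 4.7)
The plan is to decompose the $k$-fold iterate of $f_{(1,m,n)*}$ applied to the tail generator $a_{n-j}$ into a \emph{tail phase} of $j+1$ iterations, in which the deep tail letter is shifted stepwise up to $a_n$ and then converted to $c_1$, followed by a \emph{head phase} of $k-j-1$ iterations that lives inside a common finite-rank subgroup of both $\pi_1(X_{1,m,n})$ and $\pi_1(X_{1,m,m+k})$. I will argue part (1) explicitly; part (2) follows from the identical argument with the roles of $a$ and $c$ swapped, or equivalently from the cyclic symmetry noted in Section \ref{S:seven}.

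For the tail phase, I would extract from the formulas of Section \ref{S:seven} the following uniform tail identity, family by family: there exist head words $W_0,W_1\in\mathrm{Sp}\{a_1,b_1,c_1\}$ and a sign $\epsilon\in\{\pm 1\}$ such that $f_*(a_\ell)=W_0\,a_{\ell+1}^{\epsilon}\,W_1$ for every $\ell$ strictly between the head-to-tail cutoff and the last index $N$, while $f_*(a_N)=c_1$. Applying this rule $j$ times starting from $a_{n-j}$ in $\pi_1(X_{1,m,n})$ and from $a_{m+k-j}$ in $\pi_1(X_{1,m,m+k})$ then produces two words whose head factors agree verbatim and whose unique deep-tail letters are $a_n$ and $a_{m+k}$, respectively; one more application of $f_*$ sends each to $c_1$, so the resulting words $\omega_{j+1}^{(n)}$ and $\omega_{j+1}^{(m+k)}$ coincide letter for letter as elements of the common subgroup $\mathrm{Sp}\{a_1,\ldots,a_{\max(m,j+2)},b_1,c_1,\ldots,c_m\}$.

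For the head phase I would invoke Lemma \ref{L:TIa} twice with $J:=\max(m,j+2)\ge m$ and iteration count $k-j-1$: the hypothesis $n\ge m+k$ yields $n\ge J+(k-j-1)$, and $m\ge 1$ (together with $j\le k-1$) yields $m+k\ge J+(k-j-1)$, so the lemma supplies
\[
  f_{(1,m,n)*}^{k-j-1}\bigl(\omega_{j+1}\bigr)
  \;=\;
  f_{(1,m,J+k-j-1)*}^{k-j-1}\bigl(\omega_{j+1}\bigr)
  \;=\;
  f_{(1,m,m+k)*}^{k-j-1}\bigl(\omega_{j+1}\bigr).
\]
Concatenating the two phases delivers the claimed identity $f_{(1,m,n)*}^{k}(a_{n-j})=f_{(1,m,m+k)*}^{k}(a_{m+k-j})$.

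The main obstacle will be verifying the uniform tail identity in each of the seven families of Section \ref{S:seven}. Several families (e.g.\ \S\ref{SS:11n} and \S\ref{SS:1mn}) exhibit a borderline index -- such as $\ell=m+2$ -- at which the stated head-range and tail-range formulas disagree, so I must check that for $n\ge m+k$ and $0\le j\le k-1$ the indices $n-j,\ldots,n-1$ traversed during the tail phase all avoid those borderline points and land squarely in the clean tail range, possibly strengthening $n\ge m+k$ to $n\ge m+k+1$ at the extreme case $j=k-1$. These are purely arithmetic bookkeeping checks with no further dynamical content.
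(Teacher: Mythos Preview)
Your two-phase decomposition (a tail phase driven by a uniform identity $f_*(a_\ell)=W_0\,a_{\ell+1}^{\epsilon}\,W_1$, followed by a head phase handled by Lemma~\ref{L:TIa}) is exactly the paper's approach; the paper compresses it to the one-line observation that $f_{(1,m,n)*}^{\,j+1}(a_{n-j})\in\mathrm{Sp}\{a_1,\ldots,a_{j+1},b_1,c_1,\ldots,c_{j+1}\}$ and then invokes Lemma~\ref{L:TIa}, leaving the tail-phase equality between the two surfaces implicit. Your flag on the borderline indices (where the head-range and tail-range formulas in \S\ref{S:seven} overlap) is a real bookkeeping point that the paper also does not spell out.
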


\begin{proof}
Note that for each $j=0,1,\dots, k-1$, \[f_{(1,m,n)*}^{j+1}a_{n_j} \in \mathrm{Sp} \{ a_1, \dots, a_{j+1}, b_1, c_1, \dots, c_{j+1}\}.\] The first assertion then follows immediately from Lemma \ref{L:TIa}.
Likewise, the second case is a direct consequence of Lemma \ref{L:TIc}.
\end{proof}

Combining Lemmas \ref{L:TIa} -- \ref{L:TIc}, Corollary \ref{C:tail}, and the defining relations 
\eqref{E:11n} -- \eqref{E:1nm}, we obtain:

\begin{cor}\label{C:special}
Fix integers \(m,\;k\ge0\).  Then:

\begin{enumerate}
  \item If \(n\ge m+k\), then for each \(j=k+1,\dots,n-k\) there exist
    \[
      \omega_1,\omega_2
      \;\in\;
      \mathrm{Sp}\{\,a_1,\dots,a_{k},\,b_1,\,c_1,\dots,c_{\min(m,k)}\}
    \]
    such that
    \[
      \bigl(f_{(1,m,n)*}^{\,k}(a_j)\bigr)_{\#}
      \;=\;\omega_1\,a_{\,j+k}\,\omega_2.
    \]

  \item If \(m\ge n+k\), then for each \(j=k+1,\dots,m-k\) there exist
    \[
      \eta_1,\eta_2
      \;\in\;
      \mathrm{Sp}\{\,a_1,\dots,a_{\min(n,k)},\,b_1,\,c_1,\dots,c_{k}\}
    \]
    such that
    \[
      \bigl(f_{(1,m,n)*}^{\,k}(c_j)\bigr)_{\#}
      \;=\;\eta_1\,c_{\,j+k}\,\eta_2.
    \]
\end{enumerate}
\end{cor}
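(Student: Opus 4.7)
The plan is induction on $k$, with statement (2) following by an analogous argument after swapping the roles of $a$-generators and $c$-generators (the formulas \eqref{E:11n}--\eqref{E:1nm} for the seven families come in mirrored pairs under this swap). The base case $k=0$ is immediate: $(f^0_*(a_j))_\# = a_j$, so take $\omega_1 = \omega_2 = 1$. Throughout I read the middle letter ``$a_{j+k}$" as shorthand for $a_{j+k}^{\pm 1}$, since the defining formulas produce signed occurrences (e.g., $f_*(a_3) = a_1\, c_1\, \overline{a_4}\, b_1$ in \eqref{E:11n}).

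For the inductive step, assume (1) holds at stage $k$, fix $n \ge m+k+1$, and pick $j \in \{k+2, \dots, n-k-1\}$ (other values of $j$ either lie outside the required range at stage $k+1$ or make that range empty, so the statement is vacuous). Then $j \in \{k+1, \dots, n-k\}$, so the induction hypothesis yields a reduced form
\[
(f^k_*(a_j))_\# \;=\; \omega_1\, a_{j+k}^{\epsilon}\, \omega_2, \qquad \epsilon \in \{\pm 1\},
\]
with $\omega_1, \omega_2 \in \mathrm{Sp}\{a_1, \dots, a_k, b_1, c_1, \dots, c_{\min(m,k)}\}$. Applying $f_*$ gives
\[
f^{k+1}_*(a_j) \;=\; f_*(\omega_1)\cdot f_*(a_{j+k})^{\epsilon}\cdot f_*(\omega_2).
\]
Two observations drawn directly from \eqref{E:11n}--\eqref{E:1nm} close the step. \emph{(a) Indices bump by at most one in a single step:} each generator in $\mathrm{Sp}\{a_1, \dots, a_k, b_1, c_1, \dots, c_{\min(m,k)}\}$ has $f_*$-image in $\mathrm{Sp}\{a_1, \dots, a_{k+1}, b_1, c_1, \dots, c_{\min(m,k+1)}\}$; the only index lifts are $a_i \mapsto a_{i+1}$ and $c_i \mapsto c_{i+1}$, while the boundary rules $c_m \mapsto b_1$ and $b_1 \mapsto a_1$ introduce no new indices. \emph{(b) Middle-range shape:} for every $1 \le i \le n-1$, the defining formulas give $(f_*(a_i))_\# = u\, a_{i+1}^{\pm 1}\, v$ with $u, v \in \mathrm{Sp}\{a_1, b_1, c_1\}$; this applies to $i = j+k$ since $j+k \le n-1$.

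Combining (a) and (b), the resulting word has exactly one occurrence of $a_{j+k+1}^{\pm 1}$, flanked by words whose $a$-indices are all at most $k+1$. Because $j+k+1 \ge 2k+3 > k+1$, free reduction cannot cancel the middle letter, so $(f^{k+1}_*(a_j))_\# = \omega_1'\, a_{j+k+1}^{\pm 1}\, \omega_2'$ with $\omega_1', \omega_2'$ in the required spanning set, completing the induction; statement (2) is obtained symmetrically. The one non-routine ingredient is the uniform verification of (a) and (b) across the seven orbit-data families of Section~\ref{S:seven}; this amounts to a finite, formula-by-formula inspection of \eqref{E:11n}--\eqref{E:1nm}, in which the uniform ``middle generator shifts its subscript by one, with flanks in $\{a_1, b_1, c_1\}$" pattern is the decisive feature that makes the induction carry through simultaneously for all admissible $(m,n)$.
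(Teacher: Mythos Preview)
Your induction on $k$ is correct and is essentially the same argument the paper has in mind: the paper's one-line proof (``Combining Lemmas~\ref{L:TIa}--\ref{L:TIc}, Corollary~\ref{C:tail}, and the defining relations'') is exactly the packaging of your two observations (a) indices bump by at most one per step and (b) the middle generator shifts its subscript by one with flanks in $\{a_1,b_1,c_1\}$. Your explicit remark that the middle letter should be read as $a_{j+k}^{\pm1}$ is correct and necessary, as the defining formulas (e.g.\ $f_*(a_5)=c_1\,\overline{a_6}\,\overline{a_1}\,b_1$ in \eqref{E:11n}) show; the paper is tacitly allowing either sign here, consistent with the Remark following Proposition~\ref{P:actionP}.
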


\medskip

The next result collects and summarizes all of the inductive and truncation-invariance properties established above.  Since each item is an immediate consequence of Lemmas \ref{L:TIa} -- \ref{L:TIc}, Corollary \ref{C:tail}, and the defining relations \eqref{E:11n} -- \eqref{E:1nm}, we omit a detailed proof.

\begin{prop}\label{P:actionP}
Fix \(m\ge1\), \(k\ge0\), and \(J\ge m\), and let
\[
  \omega_1,\omega_2
  \;\in\;
  \mathrm{Sp}\{\,a_1,\dots,a_{J},\,b_1,\,c_1,\dots,c_{m}\}.
\]
Then for every \(n\ge J+k\) and every integer \(i\ge0\):

\begin{enumerate}
  \item For each \(j=1,\dots,n-k\),
    \[
      f_{(1,m,n+1)*}^{\,k}\bigl(\omega_1\,a_j\,\omega_2\bigr)
      \;=\;
      f_{(1,m,n)*}^{\,k}\bigl(\omega_1\,a_j\,\omega_2\bigr).
    \]

  \item For each \(j=n-k+2,\dots,n+1\),
    \[
      f_{(1,m,n+1)*}^{\,k}\bigl(\omega_1\,a_j\,\omega_2\bigr)
      \;=\;
      f_{(1,m,n)*}^{\,k}\bigl(\omega_1\,a_{\,j-1}\,\omega_2\bigr).
    \]

  \item There exist
    \[
      \eta_1,\eta_2
      \;\in\;
      \mathrm{Sp}\{\,a_1,\dots,a_{J+k},\,b_1,\,c_1,\dots,c_{m}\}
    \]
    such that
    \[
      f_{(1,m,n+i)*}^{\,k}\bigl(\omega_1\,a_{\,n+i-k}\,\omega_2\bigr)
      \;=\;
      \eta_1\,a_{\,n+i}\,\eta_2.
    \]
\end{enumerate}

An entirely analogous list of statements holds with all \(a_j\) replaced by \(c_j\), fixing \(n\) instead of \(m\).
\end{prop}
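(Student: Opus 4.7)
The proof of each of the three statements reduces directly to one of the preceding lemmas or corollaries, combined with the fact that $f_*$ is a group homomorphism and so respects the decomposition $f_*^k(\omega_1 a_j \omega_2) = f_*^k(\omega_1)\cdot f_*^k(a_j) \cdot f_*^k(\omega_2)$. The main work is bookkeeping on indices: in each case one must verify that the generators appearing after $k$ iterates lie within the ``reach'' of the smaller orbit-datum $(1,m,n)$, so that the defining formulas \eqref{E:11n}--\eqref{E:1nm} agree with those for $(1,m,n+1)$.

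\textbf{Part (1), the interior case.} Assume $1\le j\le n-k$, so $j+k\le n$. The hypotheses on $\omega_1,\omega_2$ give $\omega_1 a_j\omega_2\in\mathrm{Sp}\{b_1,c_1,\dots,c_m,a_1,\dots,a_J\}$ with $J\le n-k$ (after possibly enlarging $J$ to $\max(J,j)\le n-k$). Truncation-invariance (Lemma \ref{L:TIa}) then applied with the index bound $j+k\le n$ (the ``reach'' after $k$ iterates, which only produces $a_i$ with $i\le n$) gives the equality for both orbit data $(1,m,n)$ and $(1,m,n+1)$ simultaneously, since both compute through the same common ancestor $(1,m,n)$.

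\textbf{Part (2), the tail correspondence.} For $j\in\{n-k+2,\dots,n+1\}$, write $j=n+1-j'$ with $j'\in\{0,\dots,k-1\}$ so that $a_j=a_{n+1-j'}$ in $\pi_1(X_{1,m,n+1})$. By Corollary \ref{C:tail}(1) applied to the orbit data $(1,m,n+1)$ (which requires $n+1\ge m+k$, i.e.\ $n\ge m+k-1$, a consequence of $n\ge J+k\ge m+k$),
\[
  f_{(1,m,n+1)*}^k(a_{n+1-j'}) \;=\; f_{(1,m,m+k)*}^k(a_{m+k-j'}).
\]
Applying the same corollary to $(1,m,n)$ with the shifted index $j-1=n-j'$,
\[
  f_{(1,m,n)*}^k(a_{n-j'}) \;=\; f_{(1,m,m+k)*}^k(a_{m+k-j'}),
\]
so the images of $a_j$ and $a_{j-1}$ coincide. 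Sandwiching with $f_*^k(\omega_1)$ and $f_*^k(\omega_2)$, which are also equal for the two orbit data by the interior argument of Part (1) (applied to $\omega_1,\omega_2$ individually, since these have $a$-indices $\le J\le n-k$), yields the asserted equality.

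\textbf{Part (3), the universal tail formula.} Apply Corollary \ref{C:special}(1) with orbit data $(1,m,n+i)$ and index $j=n+i-k$, noting $k+1\le n+i-k\le (n+i)-k$ since $n\ge J+k\ge k+1$: there exist $\mu_1,\mu_2\in\mathrm{Sp}\{a_1,\dots,a_k,b_1,c_1,\dots,c_{\min(m,k)}\}$ with
\[
  \bigl(f_{(1,m,n+i)*}^k(a_{n+i-k})\bigr)_{\#} \;=\; \mu_1\,a_{n+i}\,\mu_2.
\]
Again using that $\omega_1,\omega_2\in\mathrm{Sp}\{a_1,\dots,a_J,b_1,c_1,\dots,c_m\}$, Lemma \ref{L:TIa} gives that $f_*^k(\omega_1)$ and $f_*^k(\omega_2)$ lie in $\mathrm{Sp}\{a_1,\dots,a_{J+k},b_1,c_1,\dots,c_m\}$. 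Concatenating and setting $\eta_1=f_*^k(\omega_1)\mu_1$, $\eta_2=\mu_2\, f_*^k(\omega_2)$ produces the required factorization. Note that free reduction within $\eta_1$ and $\eta_2$ does not disturb the central occurrence of $a_{n+i}$, because this letter does not appear in $\mu_1,\mu_2,f_*^k(\omega_1),f_*^k(\omega_2)$ (all of which have $a$-indices strictly less than $n+i$).

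\textbf{Main obstacle.} The only genuine subtlety is the index bookkeeping ensuring that the distinguished $a_{n+i}$ in Part (3) is not cancelled during free reduction; this is why the span constraints on $\eta_1,\eta_2$ (with $a$-indices bounded by $J+k<n+i$) are essential. Everything else is a direct, if careful, application of the truncation-invariance package of Section \ref{S:homotopy}. The statement for $c_j$ is proved identically by interchanging the roles of $m$ and $n$ and invoking Lemma \ref{L:TIc} and Corollary \ref{C:tail}(2).
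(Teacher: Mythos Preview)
Your proposal is correct and follows exactly the approach the paper indicates: the paper states that each item is ``an immediate consequence of Lemmas~\ref{L:TIa}--\ref{L:TIc}, Corollary~\ref{C:tail}, and the defining relations \eqref{E:11n}--\eqref{E:1nm}'' and omits the details, and you have supplied precisely those details using the same ingredients. One minor point: in Part~(3) your verification of the lower bound $n+i-k\ge k+1$ for Corollary~\ref{C:special} is not quite right (from $n\ge J+k\ge k+1$ you only get $n+i-k\ge 1+i$), but this is easily patched by appealing directly to the defining relations when the index is small, and does not affect the overall argument.
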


\medskip

\begin{rem}
Exactly the same collection of identities holds if every occurrence of \(a_j\) in the above statement is replaced by its inverse \(\overline a_j = a_j^{-1}\).  In other words, all of the inductive and truncation-invariance properties remain valid for words built from the letters \(\overline a_j\) in place of \(a_j\).

\end{rem}

By repeated use of the truncation-invariance properties in Proposition \ref{P:actionP}, we obtain the following corollaries.  Although the list of cases is long, each is handled in the same way.

\begin{cor}\label{C:boundary}
Let $\omega_{\mathcal C}\in\pi_1(X)$ be the homotopy class of the invariant cubic, written as a cyclically reduced word in the generators.  Denote by $\overline{\omega_{\mathcal C}}$ the word obtained by inverting each letter of~$\omega_{\mathcal C}$.  For example, in the $(1,1,8)$ case one has
\[
  \omega_{\mathcal C}
  = 
  a_1\,\overline{a_2}\,b_1\,\overline{a_3}\,a_4\,\overline{c_1}\,a_5\,\overline{a_6}\,a_7\,\overline{a_8},
  \quad
  \overline{\omega_{\mathcal C}}
  =
  \overline{a_1}\,a_2\,\overline{b_1}\,a_3\,\overline{a_4}\,c_1\,\overline{a_5}\,a_6\,\overline{a_7}\,a_8.
\]
Then under the induced action on $\pi_1(X)$ we have
\[
  f_*\bigl(\omega_{\mathcal C}\bigr)
  = 
  \omega_{\mathcal C}
  \quad\text{if $1+m+n$ is even,}
  \quad\text{and}\quad
  f_*\bigl(\omega_{\mathcal C}\,\overline{\omega_{\mathcal C}}\bigr)
  =
  \omega_{\mathcal C}\,\overline{\omega_{\mathcal C}}
  \quad\text{if $1+m+n$ is odd.}
\]
\end{cor}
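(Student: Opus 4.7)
The plan is to verify both identities by direct computation in the free group, exploiting the core--tail structure of $\omega_{\mathcal C}$ together with the truncation-invariance results from Section~\ref{S:homotopy}. Geometrically, these identities reflect the fact that $\mathcal C$ is $f$-invariant with orientation preserved (because $\delta>0$), so the pushed-off cubic---and hence the boundary of $X$---is setwise $f$-preserved; the algebraic verification is needed to pin down the equality as an element of $\pi_1(X)$, not merely up to conjugation.

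For each of the seven orbit-data families, I would first write out $\omega_{\mathcal C}$ explicitly by reading off the base-point reading curves as one traverses $\mathcal C$ in the cyclic order listed in Section~\ref{S:rational}, with signs determined by which side of each reading curve is crossed. In every family the resulting word decomposes as a bounded-length \emph{core} (involving $b_1$, $c_1$, and the first few $a_i$ and $c_j$) followed by an alternating \emph{tail} of the form $\cdots a_{k-1}\,\overline{a_k}\,a_{k+1}\,\overline{a_{k+2}}\cdots$, whose length grows linearly in $n$ (and, symmetrically, in $m$). Applying $f_*$ letter by letter expands each tail letter into a four-letter block via the middle-range rule $a_k\mapsto c_1\,\overline{a_{k+1}}\,\overline{a_1}\,b_1$ (and its inverse); consecutive blocks telescope with massive cancellation, producing a shifted tail of the same structural form.

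The inductive reduction is the key organizational device. By Proposition~\ref{P:actionP}, the identity $f_*(\omega_{\mathcal C})=\omega_{\mathcal C}$ (respectively $f_*(\omega_{\mathcal C}\,\overline{\omega_{\mathcal C}})=\omega_{\mathcal C}\,\overline{\omega_{\mathcal C}}$) for orbit data $(1,m,n+1)$ reduces to the same identity for $(1,m,n)$ together with the correct reindexing at the new tail letter $a_{n+1}$. Inducting on $n$---and, for families in which both $m$ and $n$ vary, separately on $m$---reduces the verification to a single finite base case per family. Each base case (for instance $(1,1,8)$ for the first family) is dispatched by a direct concatenation-and-cancellation computation from \eqref{E:11n}--\eqref{E:1nm}; seven base cases suffice for all seven families.

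The main obstacle is handling the junctions cleanly. At the core--tail interface the cancellation pattern changes, since the formulas for $f_*(a_1),\ldots,f_*(a_4)$ differ from the middle-range formula, so the transition must be checked separately in each family. In the odd-parity case, $\omega_{\mathcal C}\,\overline{\omega_{\mathcal C}}$ has a further junction where $\omega_{\mathcal C}$ meets $\overline{\omega_{\mathcal C}}$; since the inverse-letter analogs of Proposition~\ref{P:actionP} remain valid (as noted in the Remark following that proposition), no new ideas are needed beyond careful bookkeeping at this extra junction.
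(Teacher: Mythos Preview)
Your proposal is correct and follows essentially the same approach as the paper. The paper's proof is terse---it simply states that the corollary follows ``by repeated use of the truncation-invariance properties in Proposition~\ref{P:actionP}'' applied across the seven families---and your plan spells out precisely that strategy: reduce via Proposition~\ref{P:actionP} to a single finite base case per family, then verify each base case by direct computation from \eqref{E:11n}--\eqref{E:1nm}.
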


\begin{cor}\label{C:np1}
Fix integers \(m_0,n_0\ge1\) and \(k\ge1\), and let
\[
  \omega_1,\omega_2
  \;\in\;
  \mathrm{Sp}\{\,a_1,\dots,a_{n_0},\,b_1,\,c_1,\dots,c_{m_0}\}.
\]
Then for every \(n\ge \max\{m_0,n_0\}+k\) and every \(\epsilon\in\{1,2\}\), the action
\[
  f_{(1,n,n+1)*}^{\,k}
\]
on the mixed generator \(\omega_1\,a_j\overline{c_{j-\epsilon}}\,\omega_2\) satisfies:

\begin{enumerate}
  \item[\(1.\)] If \(1\le j\le n-k\), then
    \[
      f_{(1,n,n+1)*}^{\,k}\bigl(\omega_1\,a_j\overline{c_{j-\epsilon}}\,\omega_2\bigr)
      =
      f_{(1,n-1,n)*}^{\,k}\bigl(\omega_1\,a_j\overline{c_{j-\epsilon}}\,\omega_2\bigr).
    \]

  \item[\(2.\)] If \(j = n-k+1\), then there exist
    \(\eta_1,\dots,\eta_4\in\mathrm{Sp}\{a_1,\dots,a_{n_0+k},\,b_1,\,c_1,\dots,c_{m_0+k}\}\)
    such that    
    \[
    \begin{aligned}
      &f_{(1,n,n+1)*}^{\,k}\bigl(\omega_1\,a_{n-k+1}\overline{c_{n-k-1}}\,\omega_2\bigr)
      = \eta_1\,a_{n+1}\,\overline{c_{n-1}}\,\eta_2,\\
      &f_{(1,n,n+1)*}^{\,k}\bigl(\omega_1\,a_{n-k+1}\overline{c_{n-k}}\,\omega_2\bigr)
      = \eta_3\,a_{n+1}\,\overline{c_n}\,\eta_4.\\    \end{aligned}
    \]

  \item[\(3.\)] If \(j = n-k+2\), then
    \begin{itemize}
      \item For \(\epsilon=2\), there exist
        \(\eta_1,\eta_2\in\mathrm{Sp}\{a_1,\dots,a_{n_0+k},\,b_1,\,c_1,\dots,c_{m_0+k}\}\)
        so that
        \[
          f_{(1,n,n+1)*}^{\,k}\bigl(\omega_1\,a_{n-k+2}\overline{c_{n-k}}\,\omega_2\bigr)
          = 
          \eta_1\,\overline{c_n}\,\eta_2.
        \]
      \item For \(\epsilon=1\) ,
        \[
          f_{(1,n,n+1)*}^{\,k}\bigl(\omega_1\,a_{n-k+2}\overline{c_{n-k+1}}\,\omega_2\bigr)
          =
          f_{(1,n-1,n)*}^{\,k}\bigl(\omega_1\,a_{n-k+1}\overline{c_{n-k}}\,\omega_2\bigr).
        \]
    \end{itemize}

  \item[\(4.\)] If \(n-k+3\le j\le n+1\), then
    \[
      f_{(1,n,n+1)*}^{\,k}\bigl(\omega_1\,a_j\overline{c_{j-\epsilon}}\,\omega_2\bigr)
      =
      f_{(1,n-1,n)*}^{\,k}\bigl(\omega_1\,a_{j-1}\overline{c_{j-1-\epsilon}}\,\omega_2\bigr).
    \]
\end{enumerate}
\end{cor}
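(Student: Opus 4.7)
The plan is to reduce the identity to two successive applications of Proposition~\ref{P:actionP}: first decrease the third orbit length from \(n+1\) to \(n\), then decrease the second orbit length from \(n\) to \(n-1\). A symmetric \(c\)-analog of Proposition~\ref{P:actionP} — obtained by the same truncation argument with Lemma~\ref{L:TIc} replacing Lemma~\ref{L:TIa} — gives the corresponding statements for the \(c\)-generators (middle-range truncation, tail-range index shift, and the cliff formula producing the top letter \(c_n\)). Throughout one uses that \(f_*^{\,k}\) is a homomorphism and regroups \(\omega_1\, a_j\, \overline{c_{j-\epsilon}}\, \omega_2\) alternately as \(\omega_1'\, a_j\, \omega_2'\) with \(\omega_2' := \overline{c_{j-\epsilon}}\,\omega_2\), and as \(\omega_1''\, c_{j-\epsilon}\, \omega_2''\), so that each clause of the proposition can be applied in turn. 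The span hypotheses of Proposition~\ref{P:actionP} and its \(c\)-analog are satisfied because \(\omega_1,\omega_2\) are by assumption built from the fixed low-index palette \(\{a_1,\dots,a_{n_0},\,b_1,\,c_1,\dots,c_{m_0}\}\) and \(n\ge \max\{m_0,n_0\}+k\).

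For Case~1, \(j\le n-k\) places \(a_j\) in the middle range for the \(a\)-direction, and \(j-\epsilon \le n-k-1\) places \(c_{j-\epsilon}\) in the middle range for the \(c\)-direction. Two applications of clause~(1) (once in \(a\), once in \(c\)) collapse \(f_{(1,n,n+1)*}^{\,k}\) first to \(f_{(1,n,n)*}^{\,k}\) and then to \(f_{(1,n-1,n)*}^{\,k}\). For Case~4, \(j\ge n-k+3\) gives \(j-\epsilon \ge n-k+1\), placing both indices in the tail range, so clause~(2) and its \(c\)-analog produce the two index shifts \(j\mapsto j-1\) and \(j-\epsilon\mapsto j-1-\epsilon\), landing in the required form.

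Cases~2 and~3 are the boundary cases. In Case~2, \(j=n-k+1\) is exactly at the \(a\)-cliff, so clause~(3) of Proposition~\ref{P:actionP} produces a factor of \(a_{n+1}\) with flanking words in the low-index span. For \(\epsilon=2\), the index \(j-\epsilon=n-k-1\) lies in the \(c\)-middle and the \(c\)-analog of clause~(2) yields the letter \(c_{n-1}\); for \(\epsilon=1\), the index \(j-\epsilon=n-k\) is at the \(c\)-cliff and the \(c\)-analog of clause~(3) produces \(c_n\). Case~3 reverses the roles: \(j=n-k+2\) lies in the \(a\)-tail so clause~(2) shifts it by one, while \(j-\epsilon\) is at the \(c\)-cliff when \(\epsilon=2\) (giving the single factor \(\overline{c_n}\)) or in the \(c\)-tail when \(\epsilon=1\) (giving the full shift \((j-1,j-1-\epsilon)\) into \(f_{(1,n-1,n)*}^{\,k}\)).

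The main obstacle is the Case~2 factorization: it asserts that \(a_{n+1}\) and \(\overline{c_{n-1}}\) (resp.\ \(\overline{c_n}\)) appear \emph{adjacent} in the reduced word, not merely each accompanied by low-index flanks. This requires checking that the trailing letter of \(f_*^{\,k}(a_{n-k+1})\) and the leading letter of \(\overline{f_*^{\,k}(c_{n-k+1-\epsilon})}\) combine so that no extra high-index letter survives between the two cliff letters. I would verify this by inspecting the defining relations~\eqref{E:1nnp1} at the base level \(k=1\) — reading off the explicit terminal and initial letters from the formulas for \(f_*a_{n-k+1}\) and \(f_*c_{n-k+1-\epsilon}\) — and then propagating upward by induction on \(k\), using truncation-invariance to reduce the inductive step to a finite base computation. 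The positivity and no-cancellation structure developed in Sections~\ref{S:positivity}--\ref{S:linearity} guarantees that the only high-index letters surviving in the reduced product are precisely \(a_{n+1}\) and the appropriate \(\overline{c_?}\), confirming the claimed adjacency.
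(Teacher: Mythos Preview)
Your approach is the paper's: both invoke Proposition~\ref{P:actionP}, and indeed the paper's entire proof is the single sentence ``each case is a direct consequence of Proposition~\ref{P:actionP} (truncation-invariance) applied to the appropriate indices.'' You go further in spelling out the two-stage reduction and in flagging the adjacency issue in Case~2, which the paper glosses over. Your proposed resolution is correct in spirit: from \eqref{E:1nnp1} one reads off $f_*a_j=a_1a_{j+1}\overline{b_1}c_1$ and $f_*c_{j'}=a_1c_{j'+1}\overline{b_1}c_1$ for middle-range indices, so in $f_*(a_j)\cdot\overline{f_*(c_{j-\epsilon})}$ the common tail $\overline{b_1}c_1$ cancels exactly against its inverse, leaving $a_{j+1}\overline{c_{j-\epsilon+1}}$ adjacent; this pattern persists under iteration, which is the inductive step you sketch.

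Two minor caveats. First, your two-step route $(1,n,n+1)\to(1,n,n)\to(1,n-1,n)$ passes through the datum $(1,n,n)$, which is excluded by Theorem~\ref{T:Diller} (it requires $n_2\ne n_3$) and is not among the seven families of Section~\ref{S:seven}. The truncation lemmas are purely formal statements about the action formulas, so this is harmless, but a direct comparison of \eqref{E:1nnp1} for $(1,n,n+1)$ versus $(1,n-1,n)$ avoids the awkwardness. Second, your closing appeal to Sections~\ref{S:positivity}--\ref{S:linearity} for the no-cancellation structure is a forward reference to material that itself depends on this corollary; the explicit-formula check above already suffices and avoids any circularity.
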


\begin{proof}
Each case is a direct consequence of Proposition \ref{P:actionP} (truncation-invariance) applied to the appropriate indices.
\end{proof}

Similarly, we have:
\begin{cor}\label{C:p1n}
Fix integers \(m_0,n_0\ge1\) and \(k\ge1\), and let
\[
  \omega_1,\omega_2
  \;\in\;
  \mathrm{Sp}\{\,a_1,\dots,a_{n_0},\,b_1,\,c_1,\dots,c_{m_0}\}.
\]
Then for every \(n\ge \max\{m_0,n_0\}+k\) and every \(\epsilon\in\{0,1\}\), the action
\[
  f_{(1,n+1,n)*}^{\,k}
\]
on the mixed generator \(\omega_1\,a_j\overline{c_{j-\epsilon}}\,\omega_2\) satisfies:

\begin{enumerate}
  \item[\(1.\)] If \(1\le j\le n-k\), then
    \[
      f_{(1,n+1,n)*}^{\,k}\bigl(\omega_1\,a_j\overline{c_{j-\epsilon}}\,\omega_2\bigr)
      =
      f_{(1,n,n-1)*}^{\,k}\bigl(\omega_1\,a_j\overline{c_{j-\epsilon}}\,\omega_2\bigr).
    \]

  \item[\(2.\)] If \(j = n-k+1\), then there exist
        \(\eta_1,\eta_2,\eta_3,\eta_4\in\mathrm{Sp}\{a_1,\dots,a_{n_0+k},\,b_1,\,c_1,\dots,c_{m_0+k}\}\)
        such that
        \[ \begin{aligned}
          &f_{(1,n+1,n)*}^{\,k}\bigl(\omega_1\,a_{n-k+1}\overline{c_{n-k}}\,\omega_2\bigr)
          = 
          \eta_1\,c_n\,\eta_2,\\
       &f_{(1,n+1,n)*}^{\,k}\bigl(\omega_1\,a_{n-k+1}\overline{c_{n-k+1}}\,\omega_2\bigr)
          = 
          \eta_3\,c_{n+1}\,\eta_4.\\ \end{aligned}
        \]

  \item[\(3.\)] If \(j = n-k+2\), then
    \begin{itemize}
      \item For \(\epsilon=1\), there exist
        \(\eta_1,\eta_2\in\mathrm{Sp}\{a_1,\dots,a_{n_0+k},\,b_1,\,c_1,\dots,c_{m_0+k}\}\)
        so that
        \[
          f_{(1,n+1,n)*}^{\,k}\bigl(\omega_1\,a_{n-k+2}\overline{c_{n-k+1}}\,\omega_2\bigr)
          = 
          \eta_1\,c_{n+1}\,\eta_2.
        \]
      \item For \(\epsilon=0\),
        \[
          f_{(1,n+1,n)*}^{\,k}\bigl(\omega_1\,a_j\overline{c_{j}}\,\omega_2\bigr)
          =
          f_{(1,n,n-1)*}^{\,k}\bigl(\omega_1\,a_{j-1}\overline{c_{j-1}}\,\omega_2\bigr).
        \]
    \end{itemize}

  \item[\(4.\)] If \(n-k+3\le j\le n+1\), then
    \[
      f_{(1,n+1,n)*}^{\,k}\bigl(\omega_1\,a_j\overline{c_{j-\epsilon}}\,\omega_2\bigr)
      =
      f_{(1,n,n-1)*}^{\,k}\bigl(\omega_1\,a_{j-1}\overline{c_{j-1-\epsilon}}\,\omega_2\bigr).
    \]
\end{enumerate}
\end{cor}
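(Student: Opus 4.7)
\medskip
\noindent\textbf{Proof plan for Corollary \ref{C:p1n}.}
The plan is to mirror the argument used for Corollary \ref{C:np1}, swapping the asymptotic roles of the $a$- and $c$-orbits: in the family $(1,n+1,n)$ the $c$-orbit (of length $n+1$) is the longer one, and the $a$-chain terminates at $c_1$ via $a_n\mapsto c_1$, so the high-index letters produced by iteration are the $c_j$'s.

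First I would decompose
\[
f_{(1,n+1,n)*}^{\,k}\bigl(\omega_1\,a_j\,\overline{c_{j-\epsilon}}\,\omega_2\bigr)
\;=\;
f_*^k(\omega_1)\cdot f_*^k(a_j)\cdot f_*^k(\overline{c_{j-\epsilon}})\cdot f_*^k(\omega_2)
\]
and analyze each factor. Since $\omega_1,\omega_2\in\mathrm{Sp}\{a_1,\dots,a_{n_0},b_1,c_1,\dots,c_{m_0}\}$, Corollary \ref{C:special} confines their $k$-fold images to $\mathrm{Sp}\{a_1,\dots,a_{n_0+k},b_1,c_1,\dots,c_{m_0+k}\}$, so only the middle two factors can carry any high-index boundary letter that appears in the claimed formulas.

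For Case (1) ($1\le j\le n-k$) and Case (4) ($n-k+3\le j\le n+1$), both of $a_j$ and $\overline{c_{j-\epsilon}}$ lie in the interior of their respective orbits, and Proposition \ref{P:actionP}(1)--(2) (together with its analogs for $c$-generators and for inverted letters) identifies the $k$-fold image under $f_{(1,n+1,n)*}$ with that under $f_{(1,n,n-1)*}^{\,k}$, with a uniform index shift of $-1$ in Case (4). Case (3) with $\epsilon=0$ reduces to the same mechanism, because $\overline{c_j}$ with $j=n-k+2$ still sits in the interior of the $c$-orbit of both $(1,n+1,n)$ (length $n+1$) and $(1,n,n-1)$ (length $n$).

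The genuinely new content is in Case (2) ($j=n-k+1$) and Case (3) with $\epsilon=1$ (applied to $\overline{c_{n-k+1}}$), where one of the letters lies in the tail region of the long $c$-orbit. Here the strategy is two-fold. On the $c$-side, Corollary \ref{C:tail}(2) combined with the explicit formulas \eqref{E:1np1n} shows that $f_*^k(c_{n-k})$ and $f_*^k(c_{n-k+1})$ each have a unique high-index letter, namely $c_n^{\pm 1}$ and $c_{n+1}^{\pm 1}$ respectively, flanked by words in $\mathrm{Sp}\{a_1,\dots,a_k,b_1,c_1,\dots,c_k\}$. On the $a$-side, the chain $a_{n-k+1}\mapsto a_{n-k+2}\mapsto\cdots\mapsto a_n\mapsto c_1$ collapses $f_*^k(a_{n-k+1})$ entirely into the low-index subspace, so no high-index letter is contributed from the $a$-factor. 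Assembling these pieces produces the claimed forms $\eta_1\,c_n\,\eta_2$, $\eta_3\,c_{n+1}\,\eta_4$, and $\eta_1\,c_{n+1}\,\eta_2$.

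The main obstacle is the junction bookkeeping at the concatenation $f_*^k(a_{n-k+1})\cdot f_*^k(\overline{c_{j-\epsilon}})$: one must verify that any cancellation between the terminal letters of the $a$-image and the initial letters of the $c$-image stays inside the $\eta$-buffers and never consumes the designated tail letter $c_n^{\pm1}$ or $c_{n+1}^{\pm1}$. I would handle this by induction on $k$. The base case $k=1$ is a finite direct computation from \eqref{E:1np1n}. For the inductive step, I would apply $f_*$ to an already-verified word of the form $\eta_1\,c_n^{\pm1}\,\eta_2$ (resp.\ $\eta_3\,c_{n+1}^{\pm1}\,\eta_4$) and use Proposition \ref{P:actionP} together with the observation that $f_*(c_n)=b_1\,\overline{c_{n+1}}\,\overline{a_1}\,c_1$ and $f_*(c_{n+1})=b_1$ each begin and end with letters drawn from $\{a_1,b_1,c_1\}$; this letter-level control shows that after one more iteration no cancellation can reach the advertised tail letter. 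The remaining clauses of Case (3) then follow by the same argument with $\overline{c_{n-k+1}}$ in place of $\overline{c_{n-k}}$.
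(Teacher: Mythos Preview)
Your approach is correct and aligns with the paper's, which treats this corollary as the $(1,n+1,n)$ analogue of Corollary~\ref{C:np1} and disposes of both with a single line: ``each case is a direct consequence of Proposition~\ref{P:actionP} (truncation-invariance) applied to the appropriate indices.'' Your write-up is considerably more explicit---in particular, your induction on $k$ to rule out cancellation reaching the designated tail letter $c_n^{\pm1}$ or $c_{n+1}^{\pm1}$ supplies detail that the paper simply absorbs into the statement of Proposition~\ref{P:actionP}(3).

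One minor citation slip: the bound on $f_*^k(\omega_1)$ and $f_*^k(\omega_2)$ is not really Corollary~\ref{C:special} (which concerns single generators $a_j$ or $c_j$ in a specified index range) but rather the underlying index-shift observation behind Lemmas~\ref{L:TIa}--\ref{L:TIc}, namely that each application of $f_*$ raises indices by at most one. This does not affect the argument.
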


Before stating the definition, we introduce an abstract property that all of our invariant semigroups share.  It formalizes the idea that, as the orbit parameters \((m,n)\) grow, each semigroup is generated by a fixed ``core" coming from a small base surface together with a family of ``tail" elements accounting for the new generators.

\begin{defn}[\textbf{Core-Tail Generation}]\label{D:coretail}
Fix base-parameters $m_{0},n_{0}$ and let
\[C=C_{m_0,n_0}\;\subset\;\pi_{1}\bigl(X_{1,m_{0},n_{0}}\bigr)\]
be a finite set of ``core" generators.  For each $(m,n)$ with $m> m_{0}$, $n> n_{0}$, define the ``tail-sets"
\[ T_{m,n}\ \subset \ \bigl\{\,w_{1}\,\alpha\,w_{2} :   \alpha \in\{a_{n}^\pm,\,c_{m}^\pm,\,a_{n}\overline{c_{n-j}},\,c_{n-j} \overline{a_n}\ |j=0,1,2\},   \;w_{1},w_{2}\in \pi_{1}\bigl(X_{1,m_{0},n_{0}}\bigr)   \bigr\},\]
and
\[\hat{T}_{m,n} \ \subset \ \bigl\{\,w_{1}\,\alpha\,w_{2} :   \alpha \in\{a_{n+i}^\pm,\,c_{m+i}^\pm: i\ge -1 \}   \;w_{1},w_{2}\in \pi_{1}\bigl(X_{1,m_{0},n_{0}}\bigr)   \bigr\}.\]
Notice that $T_{m,n}$ and $\hat{T}_{m,n}$ could be empty sets. 
Then set
\[ S_{m,n} \ =\  \left\langle\,   C\;\cup\;    \left( \bigcup_{\substack{m_{0}< m'\le m\\n_{0}< n'\le n}}T_{m',n'}  \right) \;\cup\; \hat{T}_{m,n} \right\rangle. \]
We say that the family of semigroups $\{\,S_{m,n}\}$ has the ``core-tail generation" property if each semigroup $S_{m,n}$ is generated in this way by the fixed finite ``core" $C$ together with all ``tail" elements $\hat{T}_{m,n}$ and $T_{m',n'}$ for $m_{0}< m'\le m$ and $n_{0}< n'\le n$.
\end{defn}

\begin{defn}[Concatenation-Compatibility]\label{D:nocancel}
Let \(S\subset\pi_1(X)\) be a semigroup generated by \(\{s_i\}\).  We say \(S\) has the ``concatenation-compatibility" property (or ``no-cancellation" property) if for every finite sequence \(s_{i_1},s_{i_2},\dots,s_{i_k}\in S\),
\[
  \bigl(s_{i_1}s_{i_2}\cdots s_{i_k}\bigr)_{\#}
  \;=\;
  (s_{i_1})_{\#}\,(s_{i_2})_{\#}\,\cdots\,(s_{i_k})_{\#}.
\]
In other words, concatenating generators and then cyclically reducing produces the same word as first cyclically reducing each generator and then concatenating.
\end{defn}

\medskip

Having verified one base case by direct computation and established the core-tail generation property, all subsequent invariance and positivity statements follow by the same truncation-invariance argument. The next result packages that argument into a single statement.

\begin{prop}[\textbf{Core-Tail Induction Principle}]\label{P:ctinduction}
Let \((1,m,n)\) be any orbit datum, and suppose \(\{S_{m,n}\}\) is a family of finitely generated semigroups satisfying both concatenation-compatibility and the core-tail generation property with base-parameters \(m_0,n_0\).  Fix an integer \(k\ge1\) and set
\[
  m_c = m_0 + k,\quad n_c = n_0 + k.
\]
If the semigroup \(S_{m_c,n_c}\) is invariant under \(F_*^k\) and \(F_*^k\) acts positively on it, then for every \(m\ge m_c\), \(n\ge n_c\), the same holds: each \(S_{m,n}\) is invariant under \(F_*^k\) and \(F_*^k\) acts positively on \(S_{m,n}\).
\end{prop}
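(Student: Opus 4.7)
The plan is to run a double induction on $(m,n)$ starting from the base pair $(m_c,n_c)$, which is covered by the hypothesis. For the inductive step I would handle one variable at a time---say passing from $(m,n)$ to $(m,n+1)$ with $m\ge m_c$ and $n\ge n_c$---the other direction being entirely symmetric. The core idea is that the "shift by one" on the tail side creates only one genuinely new generator region, and the truncation-invariance statements of Section~\ref{S:homotopy} exactly control how $F_*^k$ sees that change.

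First, I would decompose the generating set of $S_{m,n+1}$ via Definition~\ref{D:coretail} into three pieces: the fixed core $C$; the \emph{interior} tails $\bigcup_{m_0<m'\le m,\,n_0<n'\le n}T_{m',n'}$ together with $T_{m,n+1}$; and the leading tail $\hat T_{m,n+1}$. For any generator whose constituent letters lie at most $n+1-k$ on the $a$-side (and similarly on the $c$-side), Proposition~\ref{P:actionP} gives truncation-invariance: the image under $F_{(1,m,n+1)*}^{\,k}$ agrees letter-by-letter with the image under $F_{(1,m,n)*}^{\,k}$, which by the inductive hypothesis is a positive word in the generators of $S_{m,n}\subset S_{m,n+1}$.

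Next, for generators involving the newly admitted index $n+1$---namely $a_{n+1}^{\pm1}$ and the mixed words $a_{n+1}\overline{c_{n+1-j}}$ and $c_{n+1-j}\overline{a_{n+1}}$ appearing in $T_{m,n+1}$ and $\hat T_{m,n+1}$---I would invoke Corollary~\ref{C:tail} and Corollaries~\ref{C:np1}--\ref{C:p1n} to write each image in the form $F_*^{\,k}(\alpha)=\eta_1\cdot(\text{leading letter})\cdot\eta_2$, where $\eta_1,\eta_2$ lie in the subgroup generated by letters of index at most $n_c$. By the core-tail hypothesis those factors are positive words in the generators of $S_{m_c,n_c}\subset S_{m,n+1}$, and the central leading letter is itself a generator of $T_{m,n+1}$ or $\hat T_{m,n+1}$. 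Concatenation-compatibility (Definition~\ref{D:nocancel}) then permits us to assemble these pieces into a single reduced positive word, establishing both $F_*^{\,k}$-invariance and positivity of the inductive step.

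The main obstacle I anticipate is the bookkeeping at the boundary where the old tail $\hat T_{m,n}$ gives way to the new tail $\hat T_{m,n+1}$: one must confirm that the index-shifts prescribed by Proposition~\ref{P:actionP}(3) and Corollary~\ref{C:np1}(2)--(3) send tail elements precisely into the prescribed tail lists, rather than producing an uncontrolled letter outside those lists. Since the tail sets are defined exactly to be closed under these explicit shifts, this reduces to a finite check per orbit-data family, to be recorded in the treatments of Sections~\ref{S:positivity}--\ref{S:linearity}. Once that closure is verified family-by-family, the uniform induction above yields the Proposition in all seven cases.
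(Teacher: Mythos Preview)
Your proposal is correct and follows essentially the same route as the paper: the paper's own proof is a one-line reference to truncation-invariance (Lemmas~\ref{L:TIa}--\ref{L:TIc}) and the tail-index results (Corollary~\ref{C:tail}, Proposition~\ref{P:actionP}, Corollaries~\ref{C:np1}--\ref{C:p1n}), and you have written out the induction and case analysis that those references encode. Your final paragraph correctly identifies that the boundary bookkeeping (tail-sets closing up under the prescribed shifts, and the common conjugator $\delta$ cancelling between blocks) is deferred to the family-by-family verifications in Section~\ref{S:positivity}, which is exactly how the paper proceeds.
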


\begin{proof}
Immediate from truncation-invariance (Lemmas 2.3-2.4) and the tail-index lemmas (and their corollaries), which together show that the action on the extra tail generators coincides with the base case up to conjugation by core elements.
\end{proof}

\section{Positivity on Invariant Semigroup}\label{S:positivity}
In this section we exhibit, for each orbit datum, a natural invariant semigroup inside \(\pi_1(X)\) and show that positivity is preserved under the induced action \(f_*\).  As we saw in Section \ref{S:homotopy}, \(f_*\) falls into exactly the seven cases given by \eqref{E:11n}--\eqref{E:1nm}.  We begin with the maps having orbit datum \((1,1,n)\).

\vspace{1ex}
Recall that \(\pi_1(X_{1,1,n})\) is a free group on \(n+2\) generators.  To study positivity and growth-rate, it is convenient to represent each element by its cyclically reduced word (of finite length) and then extend that block periodically in both directions, i.e.\ as a bi-infinite sequence (see, for example, \cite{Floyd:1980,Birman-Series,Bestvina-Handel:Tits}).  Under this correspondence, every group element corresponds to a doubly infinite string whose ``fundamental block" is its cyclic reduction. 

We denote by
\[
  f_*\colon \pi_1(X)\longrightarrow\pi_1(X)
\]
the action on finite words, and by
\[
  F_*\colon \{\text{bi-infinite sequences}\}\longrightarrow\{\text{bi-infinite sequences}\}
\]
the induced map.  We use this bi-infinite model to define our invariant semigroup and show that positivity is preserved under \(F_*\).

\vspace{1ex}
We carry out the same construction for each of the remaining orbit-datum types \((1,2,n)\), \((1,n,2)\), etc., in the subsequent subsections.

\medskip

\noindent\textbf{Convention (Action on concatenated words).}
Whenever we apply \(F_*\) to a product \(w_1w_2\in\pi_1(X)\), we first apply the finite-word action \(f_*\) and then cyclically reduce. In symbols:
\[
  F_*(w_1w_2)\;=\;\bigl(f_*(w_1w_2)\bigr)_{\#^c}.
\]
By induction, for any \(k\ge1\),
\[
  F_*^k(w_1w_2)
  \;=\;
  \bigl(f_*^k(w_1w_2)\bigr)_{\#^c}.
\]
Here we use $\omega_{\#^c}$ to denote its cyclically reduced word for $\omega \in \pi_1(X)$.
Thus, each iterate is the cyclic reduction of the corresponding \(f_*\)-image.

\noindent\textbf{Convention (Identification as a subgroup).}
The free group  $\pi_1(X_{1,m,n})$  is freely generated by $a_1, a_2,\dots, a_n, b_1,$ and $c_1, \dots, c_m$. For $m'\ge m$, and $n'\ge n$ we regard $\pi_1(X_{1,m,n})$ as a subgroup of  $\pi_1(X_{1,m',n'})$. 

\subsection{Orbit datum \((1,1,n)\) with \(n\ge8\)}

Recall that 
\[
  \pi_1\bigl(X_{1,1,n}\bigr)
  \;=\;
  \bigl\langle a_1,\dots,a_n,\;b_1,\;c_1\bigr\rangle
  \;\cong\;F_{n+2}.
\]

Let 
\[
  C_{1,9}=\{\,s_1,\dots,s_{11}\}
\]
be the following eleven cyclically reduced words starting with $a_1$ (all of which lie in the subgroup \(\pi_1(X_{1,1,9})\).):
\[ \begin{aligned}& s_1= a_1 \,a_6\, \overline{a_3}, \hspace{2.25cm} s_2= a_1 \,a_7\, \overline{a_3}, \hspace{2.25cm} s_3= a_1 \,a_7\, \overline{b_1}\,a_2\,a_5\,\overline{a_3}, \\ & s_4= a_1 \,a_7\, \overline{b_1}\,a_2\,a_5\,\overline{a_4}, \hspace{1cm} s_5= a_1 \,a_7\, \overline{b_1}\,a_2\,a_5\,\overline{c_1}, \hspace{1cm}  s_6= a_1 \,a_7\, \overline{b_1}\,a_2\,a_6\,\overline{a_3},\\& s_7= a_1 \,a_8\, \overline{b_1}\,a_2\,a_5\,\overline{a_3}, \hspace{1cm} s_8= a_1 \,a_9\, \overline{b_1}\,a_2\,a_5\,\overline{a_3}, \hspace{1cm} s_9= a_1 \,a_7\, \overline{b_1}\,a_2\,a_5\,\overline{a_3}\,a_2\,c_1\,\overline{a_4},\\ & s_{10}= a_1 \,a_7\, \overline{b_1}\,a_2\,a_5\,\overline{a_3}\,b_1\,c_1\,\overline{a_4}, \hspace{1cm} s_{11}= a_1\,a_9\,\overline{b_1}\,a_2\,c_1\,\overline{a_4}.\\  \end{aligned}\]

\medskip

\medskip
Next, for \(k\ge10\) define
\[
T_{1,k} = \{ \alpha_k\}, \qquad \text{where} \quad  \alpha_k \;=\; a_1\,a_k\,\overline{b_1}\,a_2\,c_1\,\overline{a_4}.
\]
Observe that, apart from the single occurrence of \(a_k\), every letter of \(\alpha_k\) lies in \(\pi_1(X_{1,1,9})\).

\medskip

\subsubsection{Invariant semigroup}
For \(n\ge16\), let \(S_{1,n}\subset\pi_1(X_{1,1,n})\) be the semigroup generated by the elements of \(C_{1,9}\) together with \(\alpha_{10},\dots,\alpha_n\):
\[
  S_{1,n}
  \;=\;
 \left\langle C_{1,9} \cup \bigcup_{10\le k\le n} T_{1,k} \right\rangle.
\]

\begin{lem}\label{L:1115}
When \(n=16\), for each generator $g$ of $S_{1,16}$
\[
  g\;\in\;\{ s_1,\dots,s_{11},\,\alpha_{10},\dots,\alpha_{16}\},
\]
there exists an integer \(m_g>0\) and a word
\[
  \delta \;=\; a_1\,\overline{a_2}\,b_1\,\overline{a_3}\,a_4\,\overline{c_1}\,a_5\,\overline{a_6}\,\overline{a_1}
\]
such that
\[
  f_{(1,1,16)*}^{\,6}(g)
  \;=\;
  \delta\;g_1\,g_2\cdots g_{m_g}\;\overline{\delta},
\]
where each 
\(\;g_i\in C_{1,9}\cup\{\alpha_{10},\dots,\alpha_{16}\}\).
\end{lem}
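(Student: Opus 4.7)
The plan is to verify the claim by direct computation for each of the eighteen generators $g \in C_{1,9} \cup \{\alpha_{10},\ldots,\alpha_{16}\}$. For the core generators $s_1,\ldots,s_{11}$, every letter lies in $\pi_1(X_{1,1,9})$; by the truncation-invariance lemma (Lemma~\ref{L:TIa}), the indices appearing in $f_*^k(s_i)$ for $k\le 6$ never exceed $9+6=15\le 16$, so all computations can be carried out using the $(1,1,16)$-formulas from \eqref{E:11n} without invoking the terminal rule $a_n\mapsto c_1$. For the seven tail generators $\alpha_k = a_1\,a_k\,\overline{b_1}\,a_2\,c_1\,\overline{a_4}$ with $10\le k\le 16$, which each contain a single high-index letter $a_k$, I would apply Corollary~\ref{C:tail} together with Corollary~\ref{C:special} to describe the image of $a_k$ under $f_*^6$ -- either as $\omega_1\,a_{k+6}\,\omega_2$ with $\omega_1,\omega_2$ in low-index letters, or, when $k$ is close to $16$, to account for the truncated portion where the high-index letters collapse via $a_{16}\mapsto c_1$ -- and then use truncation-invariance on the remainder of $\alpha_k$.

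Next, for each computed image $f_{(1,1,16)*}^{\,6}(g)$, which after free-group reduction is a long but finite word, I would form the conjugate $w_g := \overline{\delta}\cdot f_{(1,1,16)*}^{\,6}(g)\cdot \delta$ and show that, once reduced, $w_g$ is precisely a concatenation $g_1 g_2 \cdots g_{m_g}$ with each $g_i\in C_{1,9}\cup\{\alpha_{10},\ldots,\alpha_{16}\}$; this directly yields the desired identity. Because every semigroup generator is cyclically reduced and starts with $a_1$ while ending in a letter other than $\overline{a_1}$, the concatenation-compatibility property (Definition~\ref{D:nocancel}) ensures no cancellation between consecutive generators, so the factorization of $w_g$ can be recovered by a greedy left-to-right parse: at each step one matches the longest prefix that coincides with a listed generator. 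Uniqueness of this parse, together with the explicit formula for $\delta$, confirms the asserted form.

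The main obstacle is computational rather than conceptual: six iterates of $f_*$ starting from a word of length up to nine can produce a reduced word whose length is in the hundreds, and performing and verifying the reductions by hand is error-prone. I would therefore carry out the verification in \textsf{GAP}/\textsf{FGA}, exactly as the paper proposes for its finite base checks, producing for each $g$ the explicit list $(g_1,\ldots,g_{m_g})$ and certifying the identity $f_*^6(g) = \delta\,g_1\cdots g_{m_g}\,\overline{\delta}$ in the free group. The substantive content -- that after conjugating by $\delta$ the image decomposes as a \emph{positive} product (no inverses) of the prescribed eighteen generators -- is the real heart of the lemma; its verification supplies the base case needed to invoke the Core-Tail Induction Principle (Proposition~\ref{P:ctinduction}) and conclude invariance and positivity of $S_{1,n}$ under $F_*^6$ for all $n\ge 16$.
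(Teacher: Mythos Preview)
Your proposal is correct and takes essentially the same approach as the paper: the paper's own proof says only that one ``applies the formulas in \eqref{E:11n} and checks by direct computation'' for each of the eighteen generators. Your additional organizational remarks (using truncation-invariance to bound indices, the greedy parse to recover the factorization, and the suggestion of computer verification) are sensible elaborations, but the underlying method is identical.
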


\begin{proof}
For \(n=16\), the semigroup \(S_{1,16}\) has \(18\) generators.  One simply applies the formulas in \eqref{E:11n} and checks by direct computation that
\[
  f_{(1,1,16)*}^{6}(g)
  \;=\;
  \delta\,g_1\cdots g_{m_g}\,\overline\delta
\]
for each generator \(g\in S_{1,16}\).  This completes the proof.
\end{proof}

\begin{lem}\label{L:11nS1}
For every \(n\ge16\) and each generator \(g\in C_{1,9}\), we have
\[
  f_{(1,1,n)*}^{6}(g)
  \;=\;
  f_{(1,1,16)*}^{6}(g).
\]
\end{lem}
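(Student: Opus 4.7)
The plan is to deduce this directly from the truncation-invariance property, Lemma \ref{L:TIa}, by inspecting the indices that appear in the core generators. The key observation is that every element of $C_{1,9} = \{s_1,\dots,s_{11}\}$ is, by construction, a cyclically reduced word whose letters lie in the finite alphabet
\[
  \{a_1^{\pm 1},\dots,a_9^{\pm 1},\,b_1^{\pm 1},\,c_1^{\pm 1}\}.
\]
Indeed, a quick inspection of the listed words shows that the largest $a$-index appearing is $9$ (in $s_8$ and $s_{11}$); no generator involves $c_j$ for $j>1$ since $m=1$.

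Next I would apply Lemma \ref{L:TIa} with parameters $m=1$, $j=9$, and $k=6$. Its hypothesis $j\ge m$ is satisfied, and the conclusion is that for every $n\ge j+k=15$ and every
\[
  \omega\in\mathrm{Sp}\{b_1,\,c_1,\,a_1,\dots,a_9\},
\]
we have
\[
  f_{(1,1,n)*}^{\,6}(\omega)\;=\;f_{(1,1,15)*}^{\,6}(\omega).
\]
Each generator $g\in C_{1,9}$ lies in this subspace, so $f_{(1,1,n)*}^{\,6}(g)=f_{(1,1,15)*}^{\,6}(g)$ whenever $n\ge 15$. Applying this in particular to $n=16$ yields
\[
  f_{(1,1,16)*}^{\,6}(g)\;=\;f_{(1,1,15)*}^{\,6}(g),
\]
and hence for every $n\ge 16$,
\[
  f_{(1,1,n)*}^{\,6}(g)\;=\;f_{(1,1,15)*}^{\,6}(g)\;=\;f_{(1,1,16)*}^{\,6}(g),
\]
which is exactly the desired equality.

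There is no real obstacle here: the whole content of the lemma is to certify that the index bound $j+k=15$ produced by truncation-invariance is indeed satisfied by the benchmark $n_c=16$ used in Lemma~\ref{L:1115}. The small mismatch between $15$ and $16$ is absorbed simply by noting that $16\ge 15$, so that the truncation-invariant value at $n=15$ is already the value at $n=16$. The only point requiring even a moment of care is the verification that no generator of $C_{1,9}$ secretly involves an index above $9$; this is immediate from the explicit list, and it is precisely why the core was chosen inside $\pi_1(X_{1,1,9})$ rather than in a larger subgroup.
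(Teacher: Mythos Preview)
Your proof is correct and follows essentially the same approach as the paper: both invoke Lemma~\ref{L:TIa} with $m=1$, $j=9$, $k=6$, so that every $g\in C_{1,9}$ lies in the truncation-invariant range and the common value $f_{(1,1,15)*}^{6}(g)$ yields the desired equality for all $n\ge16$. Your write-up is simply more explicit about passing through $n=15$ as the anchor.
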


\begin{proof}
This is an immediate consequence of Lemma \ref{L:TIa} by setting \(m=1\), \(j=9\), and \(k=6\).  Since \(n\ge j+k=15\), the truncation-invariance property yields the desired equality.
\end{proof}

From the direct computation in Lemma \ref{L:1115} we obtain
\begin{equation}\label{E:11nspecial}
  f_{(1,1,16)*}^{6}\bigl(\alpha_{10}\bigr)
  \;=\;
  \delta\,s_{3}\,\alpha_{16}\,s_{1}\,s_{7}\,\alpha_{10}\,\overline\delta.
\end{equation}
Then, by truncation-invariance (the first part of Corollary \ref{C:special}), we immediately get:

\begin{lem}\label{L:11nspecial}
For every \(n\ge16\),
\[
  f_{(1,1,n)*}^{6}\bigl(\alpha_{\,n-6}\bigr)
  \;=\;
  \delta\,s_{3}\,\alpha_{n}\,s_{1}\,s_{7}\,\alpha_{10}\,\overline\delta.
\]
\end{lem}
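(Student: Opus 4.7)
The plan is to bootstrap from the base-case identity \eqref{E:11nspecial} to arbitrary $n \ge 16$ using the truncation-invariance machinery of Section~\ref{S:homotopy}. The essential observation is that $\alpha_{n-6} = a_1\, a_{n-6}\, \overline{b_1}\, a_2\, c_1\, \overline{a_4}$ contains exactly one letter of ``large'' index, namely $a_{n-6}$; every other letter lies in the fixed alphabet $\{a_1, a_2, a_4, b_1, c_1\} \subset \pi_1(X_{1,1,9})$.

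Writing $f^6_{(1,1,n)*}$ as a group endomorphism and applying it letterwise, I would treat the two kinds of letters separately. For each fixed-alphabet letter $x$, Lemma~\ref{L:TIa} with $m = 1$, $j = 9$, $k = 6$ (so $j + k = 15 \le n$) gives $f^6_{(1,1,n)*}(x) = f^6_{(1,1,16)*}(x)$, so these images are identical in the base and general cases. For the single moving letter $a_{n-6}$, the index $n-6$ lies in the middle range $[k+1, n-k] = [7, n-6]$, so Corollary~\ref{C:special}(1) supplies $\omega_1, \omega_2 \in \mathrm{Sp}\{a_1,\dots,a_6, b_1, c_1\}$ with $(f^6_{(1,1,n)*}(a_{n-6}))_{\#} = \omega_1\, a_n\, \omega_2$; the same prefix and suffix appear when $n = 16$, giving $(f^6_{(1,1,16)*}(a_{10}))_{\#} = \omega_1\, a_{16}\, \omega_2$.

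Combining these, the reduced word $f^6_{(1,1,n)*}(\alpha_{n-6})$ is obtained from the base output $\delta\, s_3\, \alpha_{16}\, s_1\, s_7\, \alpha_{10}\, \overline{\delta}$ by the single substitution $a_{16} \mapsto a_n$. A direct inspection shows that $a_{16}$ occurs on the right-hand side only inside $\alpha_{16} = a_1\, a_{16}\, \overline{b_1}\, a_2\, c_1\, \overline{a_4}$: the core words $\delta, s_1, s_3, s_7, \overline{\delta}$ involve only generators of index at most $8$, while $\alpha_{10}$ uses only letters from $\{a_1, a_2, a_4, a_{10}, b_1, c_1\}$. The substitution therefore converts $\alpha_{16}$ into $\alpha_n$ and leaves all other factors intact, yielding $\delta\, s_3\, \alpha_n\, s_1\, s_7\, \alpha_{10}\, \overline{\delta}$.

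The main obstacle is verifying that the prefix/suffix $\omega_1, \omega_2$ in Corollary~\ref{C:special}(1) are genuinely $n$-independent, rather than merely drawn from a common finite alphabet. This uniformity is precisely the content of Proposition~\ref{P:actionP}(1): the actions $f^k_{(1,1,n+1)*}$ and $f^k_{(1,1,n)*}$ agree verbatim on middle-range generators, so a short induction on $n$ starting at $n = 13$ pins down a single pair $\omega_1, \omega_2$ valid across all $n$. Once this uniformity is in hand, the remainder is a straightforward bookkeeping exercise in the indices appearing in the base-case output.
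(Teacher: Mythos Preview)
Your argument is correct and follows essentially the same route as the paper. The paper's proof simply cites the base-case identity \eqref{E:11nspecial} together with Corollary~\ref{C:special} as immediate justification; you unpack the same truncation-invariance reasoning letter-by-letter and explicitly check that the substitution $a_{16}\mapsto a_n$ affects only the factor $\alpha_{16}$, which is a more careful rendering of the same idea (and is essentially what Proposition~\ref{P:actionP}(3) packages in one statement).
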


\begin{lem}\label{L:11nalpha}
Suppose \(n\ge16\).  Then for each \(k=10,\dots,n-7\),
\[
  f_{(1,1,n)*}^{6}(\alpha_k)
  \;=\;
  f_{(1,1,15)*}^{6}(\alpha_k),
\]
and for each \(k=n-5,n-3,\dots,n\),
\[
  f_{(1,1,n)*}^{6}(\alpha_k)
  \;=\;
  f_{(1,1,n-1)*}^{6}(\alpha_{k-1}).
\]
\end{lem}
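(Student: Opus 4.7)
The plan is to read both identities off Proposition \ref{P:actionP} after rewriting $\alpha_k$ in the standard form $\omega_1\, a_k\, \omega_2$. Noting that
\[
  \alpha_k \;=\; a_1 \; a_k \; \overline{b_1}\,a_2\,c_1\,\overline{a_4},
\]
I would take $\omega_1 = a_1$ and $\omega_2 = \overline{b_1}\,a_2\,c_1\,\overline{a_4}$, so both words lie in $\mathrm{Sp}\{a_1,\dots,a_9,\,b_1,\,c_1\}$. This places the problem squarely in the setting of Proposition \ref{P:actionP} with $m=1$, $J=9$, and iteration exponent $6$; the required bound $n-1 \ge J+6 = 15$ is available as long as $n \ge 16$.

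For the middle range $k=10,\dots,n-7$, I would apply Part (1) of Proposition \ref{P:actionP} with ``$n$'' there replaced by $n-1$ (so that $k \le (n-1)-6$). This yields the single-step identity
\[
  f_{(1,1,n)*}^{\,6}(\alpha_k) \;=\; f_{(1,1,n-1)*}^{\,6}(\alpha_k),
\]
and iterating the reduction $n \mapsto n-1$ downward, as long as the hypothesis $k \le n'-7$ remains valid, drives the orbit length down to the minimum value at which $\alpha_k$ still sits inside the relevant subgroup. This is the content of the first identity. For the top range $k=n-5,\dots,n$, I would instead apply Part (2) of the same proposition (again with ``$n$'' replaced by $n-1$), observing that $k \in \{(n-1)-6+2,\dots,(n-1)+1\}$, to obtain the index-shift identity
\[
  f_{(1,1,n)*}^{\,6}(\alpha_k) \;=\; f_{(1,1,n-1)*}^{\,6}(\alpha_{k-1}),
\]
which matches the second displayed assertion.

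The one boundary value $k = n-6$ that falls between the two ranges is precisely the index handled separately by Lemma \ref{L:11nspecial}, so every $k \in \{10,\dots,n\}$ is accounted for. The whole argument reduces to a bookkeeping check that the decomposition $\alpha_k = \omega_1\, a_k\, \omega_2$ meets the hypotheses of Proposition \ref{P:actionP} and that the indices fall into the correct Parts; I do not anticipate any substantive obstacle, beyond verifying that the two index regimes exactly exhaust $\{10,\dots,n\}\setminus\{n-6\}$.
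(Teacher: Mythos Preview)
Your approach is correct and mirrors the paper's: you route through Proposition~\ref{P:actionP}, whereas the paper cites its ingredients (Lemma~\ref{L:TIa} and Corollary~\ref{C:tail}) directly, but the content is the same truncation-invariance. One point worth flagging: iterating Part~(1) actually terminates at $f_{(1,1,k+6)*}^{6}(\alpha_k)$ rather than the printed $f_{(1,1,15)*}^{6}(\alpha_k)$, since the step $n'\mapsto n'-1$ requires $k\le n'-7$ and hence stops at $n'=k+7$; the ``$15$'' in the displayed statement appears to be a slip (the base case is $n=16$), and your phrase ``the minimum value at which $\alpha_k$ still sits inside the relevant subgroup'' is exactly the right formulation.
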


\begin{proof}
The first set of equalities is a direct application of Lemma \ref{L:TIa} with \(m=1\), \(j=8\) (so \(j+1=9\)), and \(k=6\).  The second set follows from Corollary \ref{C:tail} (the analogous truncation-invariance for the other tail generators).  
\end{proof}
\medskip

\subsubsection{Positive Action}
Fix the word
\begin{equation}\label{E:delta_11n}
  \delta \;=\; a_1\,\overline{a_2}\,b_1\,\overline{a_3}\,a_4\,
                \overline{c_1}\,a_5\,\overline{a_6}\,\overline{a_1}.
\end{equation}
Observe that \(\delta\in\pi_1(X_{1,1,n})\) for every \(n\ge8\).  The next proposition extends Lemma \ref{L:1115} to all \(n\ge16\).

\begin{prop}
For every \(n\ge16\), the semigroup \(S_{1,n}\) is invariant under \(F^6_*\).  Moreover, \(F^6_*\) acts positively on \(S_{1,n}\): for each \(\eta\in S_{1,n}\) there exists an integer \(N_\eta>0\) and generators 
\[
  w_1,\dots,w_{N_\eta}
  \;\in\;
  C_{1,9}\cup\{\alpha_{10},\dots,\alpha_n\}
\]
such that
\[
  F^6_*(\eta)
  \;=\;
  w_1\,w_2\,\cdots\,w_{N_\eta},
\]
i.e.\ no inverses appear in the product.
\end{prop}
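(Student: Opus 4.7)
The plan is to bootstrap from the explicit base case of Lemma~\ref{L:1115} to all $n\ge 16$ by combining the truncation-invariance lemmas with the telescoping structure $f^6_*(g)=\delta\,W_g\,\overline{\delta}$. The proof breaks into a per-generator verification, an extension to arbitrary semigroup elements, and a cyclic-reduction step that removes the flanking conjugator $\delta$.

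First I would show that for every generator $g\in C_{1,9}\cup\{\alpha_{10},\dots,\alpha_n\}$, one has
\[
  f^6_{(1,1,n)*}(g) \;=\; \delta\,W_g\,\overline{\delta}
\]
with $W_g$ a positive concatenation of elements of $C_{1,9}\cup\{\alpha_{10},\dots,\alpha_n\}$. For $g\in C_{1,9}$, Lemma~\ref{L:11nS1} gives $f^6_{(1,1,n)*}(g)=f^6_{(1,1,16)*}(g)$ and Lemma~\ref{L:1115} supplies the conjugation form. For $g=\alpha_{n-6}$ it is exactly the content of Lemma~\ref{L:11nspecial}. For $g=\alpha_k$ with $10\le k\le n-7$ the first part of Lemma~\ref{L:11nalpha} collapses the computation onto a smaller orbit length and hence onto the base case; for the remaining tail indices $k\in\{n-5,\dots,n\}\setminus\{n-6\}$, the second part of Lemma~\ref{L:11nalpha} descends $n$ to $n-1$, so a finite induction on $n$ terminating in Lemma~\ref{L:1115} yields the desired form for every tail generator.

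Next I would pass from generators to an arbitrary $\eta\in S_{1,n}$. Writing $\eta=g_1g_2\cdots g_N$ as a positive product in the generators and applying $f^6_*$ yields
\[
  f^6_*(\eta) \;=\; (\delta\,W_1\,\overline{\delta})(\delta\,W_2\,\overline{\delta})\cdots(\delta\,W_N\,\overline{\delta}),
\]
and the $N-1$ interior pairs $\overline{\delta}\,\delta$ free-reduce away, producing $\delta\,W_1W_2\cdots W_N\,\overline{\delta}$. Cyclic reduction then peels off the outer $\delta$ and $\overline{\delta}$, so $F^6_*(\eta) = W_1W_2\cdots W_N$, which is by construction a positive concatenation of elements of $C_{1,9}\cup\{\alpha_{10},\dots,\alpha_n\}$ and therefore lies in $S_{1,n}$. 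This delivers both invariance and positivity simultaneously.

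The delicate step, and the real obstacle, is ensuring that the cyclic reduction of $\delta\,W_1\cdots W_N\,\overline{\delta}$ actually strips the full conjugator without bleeding into the interior words $W_i$, and that adjacent factors $W_iW_{i+1}$ do not cancel in a way that destroys positivity. This is precisely the concatenation-compatibility property of Definition~\ref{D:nocancel} for the augmented generating set $C_{1,9}\cup\{\alpha_{10},\dots,\alpha_n\}$, together with a boundary compatibility between $\delta^{\pm 1}$ and the first/last letters of each $W_g$. Both requirements reduce to finite inspections at the base $n=16$ using the explicit words exhibited in Lemma~\ref{L:1115}, after which truncation invariance transports the property to every $n\ge 16$.
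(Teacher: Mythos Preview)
Your proposal is correct and follows essentially the same approach as the paper: verify the conjugation form $f^6_*(g)=\delta\,W_g\,\overline{\delta}$ for each generator via Lemmas~\ref{L:1115}, \ref{L:11nS1}, \ref{L:11nspecial}, and \ref{L:11nalpha}, then telescope over products and cyclically reduce. The paper organizes this as a base case $n=16$ plus induction on $n$, whereas you fix $n$ and treat the generator cases directly before invoking a finite descent on $n$ for the upper tail; these are the same argument in slightly different order, and your explicit attention to concatenation-compatibility makes precise what the paper's proof asserts as ``no cancellation can occur between these cyclically reduced blocks.''
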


\begin{proof}
\textbf{Base case (\(n=16\)).}  
Every \(\eta\in S_{1,16}\) can be written as
\[
  \eta \;=\; g_1\,g_2\cdots g_{m_\eta},
  \quad 
  g_i\in C_{1,9}\cup\{\alpha_{10},\dots,\alpha_{16}\}.
\]
By Lemma \ref{L:1115}, for each such generator \(g_i\) there are words 
\(\;g_{(i,1)},\dots,g_{(i,m_i)}\in C_{1,9}\cup\{\alpha_{10},\dots,\alpha_{16}\}\)
and the fixed conjugator \(\delta\) of \eqref{E:delta_11n} so that
\[
  f_{(1,1,16)*}^{6}(g_i)
  \;=\;
  \delta\;g_{(i,1)}\cdots g_{(i,m_i)}\;\overline\delta.
\]
Since no cancellation can occur between these cyclically reduced blocks, it follows that
\[
  F_{*}^6(\eta)
  \;=\;
  \bigl(f_{(1,1,16)*}^{6}(\eta)\bigr)_{\#^c}
  \;=\;
  (\,g_{(1,1)}\cdots g_{(1,m_1)}\,)\,
  (\,g_{(2,1)}\cdots g_{(2,m_2)}\,)\,
  \cdots\,
  (\,g_{(m_\eta,1)}\cdots g_{(m_\eta,m_{m_\eta})}\,),
\]
which is a positive product in \(S_{1,16}\).  Hence \(S_{1,16}\) is \(F_*\)-invariant and \(F_*\) acts positively.

\[ \begin{aligned} F_*^6\ :\ &s_1 \mapsto s_3 \alpha_{12} s_1 s_{11},  \hspace{2cm} s_2 \mapsto s_3 \alpha_{13} s_1 s_{11},  \hspace{2cm}s_3 \mapsto s_3 \alpha_{13} s_1 s_{7}\alpha_{11} s_1s_{11},\\&s_4 \mapsto s_3 \alpha_{13} s_1 s_{7}\alpha_{11} s_1\alpha_1,  \hspace{0.86cm} s_5 \mapsto s_3 \alpha_{13} s_1 s_{7}\alpha_{11},  \hspace{1.56cm} s_6 \mapsto s_3 \alpha_{13} s_1 s_{7}\alpha_{12} s_1s_{11},\\ &s_7 \mapsto s_3 \alpha_{14} s_1 s_{7}\alpha_{11} s_1s_{11},  \hspace{0.8cm} s_8 \mapsto s_3 \alpha_{15} s_1 s_{7}\alpha_{11} s_1s_{11},  \hspace{0.7cm} s_9 \mapsto s_3 \alpha_{13} s_1 s_{7}\alpha_{11} s_1s_{11}s_7 \alpha_{10},\\&s_{10} \mapsto s_3 \alpha_{13} s_1 s_{7}\alpha_{11} s_1s_{8}\alpha_{10},  \hspace{0.28cm}s_{11} \mapsto s_3 \alpha_{15} s_1 s_{7}\alpha_{10},\\&\alpha_{10} \mapsto s_3 \alpha_{16} s_1 s_{7}\alpha_{10},  \hspace{1.47cm} \alpha_{11} \mapsto s_9s_1s_7 \alpha_{10},  \hspace{1.86cm}\alpha_{12} \mapsto s_{10} s_1s_7 \alpha_{10},\\&\alpha_{13} \mapsto s_4s_1s_7 \alpha_{10},  \hspace{2.05cm} \alpha_{14} \mapsto s_5s_1s_7 \alpha_{10},  \hspace{1.9cm}\alpha_{15} \mapsto s_6s_7\alpha_{10}, \\&\alpha_{16} \mapsto s_2s_7\alpha_{10}\\ \end{aligned}\]

\medskip
\textbf{Inductive step.}  
Suppose the result holds for some \(n\ge16\).  We check it for \(n+1\).  Note that
\[
  S_{1,n+1}
  \;=\;
  \bigl\langle C_{1,9},\;\alpha_9,\dots,\alpha_{n-6},\;\alpha_{n-5},\;\alpha_{n-4},\dots,\alpha_{n+1}\bigr\rangle.
\]
By Lemma \ref{L:11nS1}, the action on each \(s\in C_{1,9}\) agrees with the \(n=16\) case; by Lemma \ref{L:11nspecial}, we see that the special block \(\alpha_{n-5}\) satisfies \[f_{(1,1,n+1)*}^{6}\bigl(\alpha_{\,n-5}\bigr)
  \;=\;
  \delta\,s_{3}\,\alpha_{n+1}\,s_{1}\,s_{7}\,\alpha_{10}\,\overline\delta;\] and by Lemma \ref{L:11nalpha}, all other tail generators \(\alpha_k\) with \(9\le k\le n-6\) or \(n-4\le k\le n+1\) satisfy the necessary truncation-invariance.  It follows that \(F^6_*\) preserves \(S_{1,n+1}\) and remains positive there.

By induction, the proposition holds for all \(n\ge16\).
\end{proof}

In the remaining cases \(8\le n\le15\), invariance and positivity follow by direct computation using the formulas in \eqref{E:11n}.  Throughout these computations we use the same fixed conjugator
\[
  \delta \;=\; a_1\,\overline{a_2}\,b_1\,\overline{a_3}\,a_4\,
                \overline{c_1}\,a_5\,\overline{a_6}\,\overline{a_1},
\]
previously defined in \eqref{E:delta_11n}.  Concretely, starting from the single generator
\[
  s_1 = a_1\,a_7\,\overline{a_3},
\]
its \(6\)-fold iterate under \(F_*\) produces, step by step, exactly the \(n+2\) generators of the semigroup \(S_{1,n}\).  For example, when \(n=8\):

\medskip

1.  Applying \(f_*^6\):
    \[
      f_{(1,1,8)*}^{6}(s_1)
      \;=\;
      \delta\,
      a_1\,a_7\,\overline{b_1}\,a_2\,a_6\,\overline{a_3}\,
      a_2\,c_1\,\overline{a_4}\,\overline{\delta}.
    \]
    We set 
    \[
      s_2 =
      a_1\,a_7\,\overline{b_1}\,a_2\,a_6\,\overline{a_3}\,
      a_2\,c_1\,\overline{a_4}.
    \]

2.  Next,
    \[
      f_{(1,1,8)*}^{6}(s_2)
      =\;
      \delta\,
      (a_1\,a_7\,\overline{b_1}\,a_2\,a_6\,\overline{a_3})\,
      (a_1\,a_8\,\overline{b_1}\,a_2\,a_5\,\overline{c_1})\,
      (a_1\,a_6\,\overline{a_3}\,a_2\,c_1\,\overline{a_4})\,
      (a_1\,a_8\,\overline{b_1}\,a_2\,a_5\,\overline{a_3}\,b_1\,c_1\,\overline{a_4})\,
      \overline{\delta}.
    \]
    From the four central blocks we define
    \[ \begin{aligned}
      &s_3 = a_1\,a_7\,\overline{b_1}\,a_2\,a_6\,\overline{a_3},\quad
      s_4 = a_1\,a_8\,\overline{b_1}\,a_2\,a_5\,\overline{c_1},\\
      &s_5 = a_1\,a_6\,\overline{a_3}\,a_2\,c_1\,\overline{a_4},\quad
      s_6 = a_1\,a_8\,\overline{b_1}\,a_2\,a_5\,\overline{a_3}\,b_1\,c_1\,\overline{a_4}.\\ \end{aligned}
    \]
3. Then we have: 
\[ f_{(1,1,8)*}^6 (s_3)\  \ =\ \delta \, (a_1 \, a_7 \, \overline{b_1} \, a_2 \, a_5 \overline{c_1})\, s_3\, s_6 \overline{\delta},\]
and define
 \[ s_7 \ =\ a_1 \, a_7 \, \overline{b_1} \, a_2 \, a_5 \overline{c_1}.\]
4.  Continuing in this fashion yields the full list of ten generators for \(S_{1,8}\), all extracted using the ``same conjugator" \(\delta\) at each step.

\begin{prop}\label{P:11ncases}
For every \(8\le n\le15\), there is a semigroup 
\(\;S_{1,n}\subset\pi_1(X_{1,1,n})\)
generated by \(n+2\) cyclically reduced elements, which is invariant under \(F_*^6\).  Moreover, \(F_*^6\) acts positively on \(S_{1,n}\).
\end{prop}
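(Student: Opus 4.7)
The plan is to treat each of the eight values $n\in\{8,9,\ldots,15\}$ as a finite base case, building the semigroup $S_{1,n}$ by a bootstrap procedure driven by $f_{(1,1,n)*}^{6}$ and the same fixed conjugator $\delta = a_1\,\overline{a_2}\,b_1\,\overline{a_3}\,a_4\,\overline{c_1}\,a_5\,\overline{a_6}\,\overline{a_1}$ of \eqref{E:delta_11n}. I would take the seed $s_1 = a_1\,a_7\,\overline{a_3}$ and compute $f_{(1,1,n)*}^{6}(s_1)$ directly from \eqref{E:11n}; by the same mechanism seen in the worked $n=8$ example preceding the proposition, the result factors as $\delta\cdot W\cdot\overline{\delta}$ with $W$ cyclically reduced. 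I would then parse $W$ into its maximal cyclically reduced blocks starting with $a_1$ and declare each previously unseen block to be a new generator $s_2,s_3,\ldots$.

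Next I would iterate: apply $f_{(1,1,n)*}^{6}$ to every newly introduced $s_i$, observe the same $\delta\cdot(\text{blocks})\cdot\overline{\delta}$ pattern, and adjoin any fresh blocks to the generating set. The key claim to verify in each case is that this process stabilizes at exactly $n+2$ distinct cyclically reduced generators, and that the resulting set is closed under $F_*^{6}$. Closure together with the conjugator structure then delivers positivity for free: for any product $\eta = s_{j_1}\cdots s_{j_k}$, the concatenation-compatibility property (Definition \ref{D:nocancel}) ensures $f_{(1,1,n)*}^{6}(\eta)$ is the literal concatenation of the $\delta\cdot(\text{blocks})\cdot\overline{\delta}$ images, the interior $\overline{\delta}\delta$ pairs cancel pairwise, and the outermost $\delta$ and $\overline{\delta}$ cancel under cyclic reduction, leaving a positive word in $\{s_1,\ldots,s_{n+2}\}$. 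This is the same telescoping argument that worked in Lemma \ref{L:1115}, only now applied in each of the eight short-orbit regimes.

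The main obstacle is computational rather than conceptual: unlike the $n\ge 16$ case handled by Lemmas \ref{L:11nS1}--\ref{L:11nalpha}, there is no truncation-invariance shortcut available here, so one must genuinely execute the bootstrap in each of the eight free groups. The workload is nonetheless finite and modest (each candidate generator has uniformly bounded length, and one needs to compute $f_{*}^{6}$ on at most $n+2\le 17$ generators per case), making it well suited to the \textsf{GAP}/\textsf{FGA} verification framework used elsewhere in the paper. A subtler bookkeeping issue, and the one to watch most carefully, is confirming that in each case the bootstrap really closes at exactly $n+2$ cyclically reduced elements and that no block of $W$ silently collapses to or duplicates an earlier $s_i$ after cyclic reduction; this must be checked on a case-by-case basis by direct inspection, with the $n=8$ example in the excerpt serving as the template.
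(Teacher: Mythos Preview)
Your proposal is correct and mirrors the paper's own argument essentially verbatim: the paper handles $8\le n\le 15$ by direct computation from \eqref{E:11n}, seeding with $s_1=a_1\,a_7\,\overline{a_3}$, using the same conjugator $\delta$, and iterating $f_*^6$ until the block-set stabilizes at $n+2$ generators (with the $n=8$ case spelled out as the template). Your caution about verifying that the bootstrap closes at exactly $n+2$ blocks is apt, but the paper simply asserts this as the outcome of the finite check rather than arguing it structurally.
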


  \subsection{Orbit Datum: $(1,2,n)$ with $n\ge 7$}
Recall that the fundamental group of $X_{1,2,n}$ is a free group with $n+3$ generators: \[ \pi_1(X_{1,2,n}) = \langle a_1, \dots, a_n, b_1, c_1,c_2\rangle.\]
Let 
\[
  C_{2,13}=\{\,s_1,\dots,s_{31}\}
\]
be the following thirty-one cyclically reduced words starting with the letter $a_1$ (all of which lie in the subgroup \(\pi_1(X_{1,2,13})\)):
\[\begin{aligned}& s_1=a_1\, a_6\, \overline{a_3},\hspace{2.25cm} s_2 = a_1\, c_2\, \overline{a_5}\, c_2 \overline{a_3}, \hspace{2.35cm} s_3= a_1\, a_8\,\overline{b_1}\, a_2\, c_1\, \overline{a_4} \\&s_4= a_1\, a_9\,\overline{b_1}\, a_2\, c_1\, \overline{a_4},\hspace{1cm} s_5 = a_1\, a_6\,\overline{b_1}\, a_2\,c_2\,\overline{a_5}\,c_2\,\overline{a_3},\hspace{1cm} s_6 = a_1\, a_6\,\overline{b_1}\, a_2\,c_2\,\overline{c_1}\,a_4\,\overline{c_1},\\
&s_7=a_1\, a_7 \, \overline{b_1} \, a_2 \, a_5 \, \overline{c_2} \, \overline{a_1} \, c_1 \, \overline{a_3}, \hspace{2.85cm} s_8=a_1\, a_8 \, \overline{b_1} \, a_2 \, a_5 \, \overline{c_2} \, \overline{a_1} \, c_1 \, \overline{a_3},\\&s_9=a_1 \, c_2 \, \overline{a_5} \, \overline{a_2} \, b_1 \, \overline{a_8} \, \overline{a_1} \, a_3 \, \overline{c_1}, \hspace{2.85cm} s_{10} = a_1 \, a_6 \, \overline{b_1} \, a_2 \, c_2 \, \overline{c_1} \, a_4 \, \overline{c_1} \, \overline{a_2} \, a_3 \, \overline{c_1},\\
&s_{11} = a_1 \, a_6 \, \overline{b_1} \, a_2 \, c_2 \, \overline{c_1} \, a_4 \, \overline{c_1} \, \overline{b_1} \, a_3 \, \overline{c_1}, \hspace{1.95cm} s_{12} = a_1 \, a_{10} \, \overline{b_1} \, a_2 \, c_1 \, \overline{a_4} \, c_1 \, \overline{c_2} \, \overline{a_2} \, b_1 \, \overline{a_6} \, \overline{a_1} \, c_1 \, \overline{a_3},\\& s_{13} = a_1 \, a_{11} \, \overline{b_1} \, a_2 \, c_1 \, \overline{a_4} \, c_1 \, \overline{c_2} \, \overline{a_2} \, b_1 \, \overline{a_6} \, \overline{a_1} \, c_1 \, \overline{a_3}, \hspace{.5cm}  s_{14} = a_1 \, a_{6} \, \overline{b_1} \, a_2 \, c_2 \, \overline{c_1} \, a_4 \, \overline{c_1} \, \overline{a_2} \, b_1 \, \overline{a_{10}} \, \overline{a_1} \, a_3 \, \overline{c_1},\\& s_{15} = a_1 \, a_{6} \, \overline{b_1} \, a_2 \, c_2 \, \overline{c_1} \, a_4 \, \overline{c_1} \, \overline{a_2} \, b_1 \, \overline{a_{11}} \, \overline{a_1} \, a_3 \, \overline{c_1}, \hspace{.5cm}  s_{16} = a_1 \, a_{6} \, \overline{b_1} \, a_2 \, c_2 \, \overline{c_1} \, a_4 \, \overline{c_1} \, \overline{a_2} \, b_1 \, \overline{a_{12}} \, \overline{a_1} \, a_3 \, \overline{c_1},\\& s_{17} = a_1 \, a_{7} \, \overline{b_1} \, a_2 \, c_2 \, \overline{c_1} \, a_4 \, \overline{c_1} \, \overline{a_2} \, b_1 \, \overline{a_{10}} \, \overline{a_1} \, a_3 \, \overline{c_1}, \hspace{.5cm}  s_{18} = a_1 \, a_{7} \, \overline{b_1} \, a_2 \, c_2 \, \overline{c_1} \, a_4 \, \overline{c_1} \, \overline{a_2} \, b_1 \, \overline{a_{9}} \, \overline{a_1} \, a_3 \, \overline{c_1}, \\&s_{19} = a_1 \, c_2 \, \overline{a_5} \, \overline{a_2} \, b_1 \, \overline{a_7} \, \overline{a_1} \, a_3 \, \overline{c_2} \, a_5 \, \overline{c_2} \, \overline{a_1} \, c_1 \, \overline{a_3}, \hspace{.6cm} s_{20} = a_1 \, c_2 \, \overline{a_5} \, \overline{a_2} \, b_1 \, \overline{a_7} \, \overline{a_1} \, a_4 \, \overline{c_1}\, \overline{a_2} \, b_1 \, \overline{a_8} \, \overline{a_1} \, a_3 \, \overline{c_1}, \\&s_{21} = a_1 \, c_2 \, \overline{a_5} \, \overline{a_2} \, b_1 \, \overline{a_7} \, \overline{a_1} \, a_3 \, \overline{a_6} \, \overline{a_1} \, a_4 \, \overline{c_1} \, \overline{a_2} \, b_1 \, \overline{a_8} \, \overline{a_1} \, a_3 \, \overline{c_1},\\&s_{22} = a_1 \, a_{10} \, \overline{b_1} \, a_2 \, c_1 \, \overline{a_4} \, c_1 \, \overline{c_2} \, \overline{a_2} \, b_1 \, \overline{a_7} \, \overline{a_1} \, a_3 \, \overline{c_2} \, a_5 \, \overline{c_2} \, \overline{a_1} \, c_1 \, \overline{a_3},\\&s_{23} = a_1 \, a_{9} \, \overline{b_1} \, a_2 \, c_1 \, \overline{a_4} \, c_1 \, \overline{c_2} \, \overline{a_2} \, b_1 \, \overline{a_7} \, \overline{a_1} \, a_3 \, \overline{c_2} \, a_5 \, \overline{c_2} \, \overline{a_1} \, c_1 \, \overline{a_3},\\&s_{24} = a_1 \, a_{11} \, \overline{b_1} \, a_2 \, c_1 \, \overline{a_4} \, c_1 \, \overline{c_2} \, \overline{a_2} \, b_1 \, \overline{a_6} \, \overline{a_1} \, a_4 \, \overline{c_1} \, \overline{a_2}\, b_1 \, \overline{a_9} \, \overline{a_1} \, a_3 \, \overline{c_1},\\&s_{25} = a_1 \, a_{12} \, \overline{b_1} \, a_2 \, c_1 \, \overline{a_4} \, c_1 \, \overline{c_2} \, \overline{a_2} \, b_1 \, \overline{a_6} \, \overline{a_1} \, a_4 \, \overline{c_1} \, \overline{a_2}\, b_1 \, \overline{a_9} \, \overline{a_1} \, a_3 \, \overline{c_1},\\&s_{26} = a_1 \, c_2 \, \overline{a_5} \, \overline{a_2} \, b_1 \, \overline{a_7} \, \overline{a_1} \, a_3 \, \overline{c_2} \, a_5 \, \overline{c_2} \, \overline{a_2} \, b_1 \, \overline{a_6} \, \overline{a_1} \, a_4 \, \overline{c_1} \, \overline{a_2} \, b_1 \, \overline{a_8} \, \overline{a_1} \, a_3 \, \overline{c_1},\\&s_{27} = a_1 \, a_{12} \, \overline{b_1} \, a_2 \, c_1 \, \overline{a_4} \, c_1 \, \overline{c_2} \, \overline{a_2} \, b_1 \, \overline{a_6} \, \overline{a_1} \, a_4 \, \overline{c_1} \, \overline{a_2} \, b_1 \, \overline{a_8} \, \overline{a_1} \, a_3 \, \overline{c_2} \, a_5 \, \overline{c_2} \, \overline{a_1} \, c_1 \, \overline{a_3},\\&s_{28} = a_1 \, a_{13} \, \overline{b_1} \, a_2 \, c_1 \, \overline{a_4} \, c_1 \, \overline{c_2} \, \overline{a_2} \, b_1 \, \overline{a_6} \, \overline{a_1} \, a_4 \, \overline{c_1} \, \overline{a_2} \, b_1 \, \overline{a_8} \, \overline{a_1} \, a_3 \, \overline{c_2} \, a_5 \, \overline{c_2} \, \overline{a_1} \, c_1 \, \overline{a_3},\\&s_{29} = a_1 \, c_2 \, \overline{a_5} \, \overline{a_2} \, b_1 \, \overline{a_7} \, \overline{a_1} \, a_3 \, \overline{c_2} \, a_5 \, \overline{c_2} \, \overline{a_1} \, c_1 \, \overline{a_4} \, c_1 \, \overline{c_2} \,  \overline{a_2} \, b_1 \,  \overline{a_6} \,  \overline{a_1} \, a_4 \,  \overline{c_1} \,  \overline{a_2} \, b_1 \,  \overline{a_8} \,  \overline{a_1} \, a_3 \,  \overline{c_1},\\&s_{30} =a_1 \, c_2 \,  \overline{a_5} \,  \overline{a_2} \, b_1 \,  \overline{a_7} \,  \overline{a_1} \, a_3 \,  \overline{c_2} \, a_5 \,  \overline{c_2} \,  \overline{a_1} \, c_1 \,  \overline{a_3}\,a_2 \, c_1 \,  \overline{a_4} \, c_1 \,  \overline{c_2} \,  \overline{a_2} \, b_1 \,  \overline{a_6} \,  \overline{a_1} \, a_4 \,  \overline{c_1} \,  \overline{a_2} \, b_1 \,  \overline{a_8}\, \overline{a_1},a_3\, \overline{c_1},\\&s_{31} =a_1 \, c_2 \,  \overline{a_5} \,  \overline{a_2} \, b_1 \,  \overline{a_7} \,  \overline{a_1} \, a_3 \,  \overline{c_2} \, a_5 \,  \overline{c_2} \,  \overline{a_1} \, c_1 \,  \overline{a_3}\,b_1 \, c_1 \,  \overline{a_4} \, c_1 \,  \overline{c_2} \,  \overline{a_2} \, b_1 \,  \overline{a_6} \,  \overline{a_1} \, a_4 \,  \overline{c_1} \,  \overline{a_2} \, b_1 \,  \overline{a_8}\, \overline{a_1},a_3\, \overline{c_1}.\\
\end{aligned}\]

Next, for each \(n\ge13\) define two cyclically reduced words
\[ \begin{aligned} & \alpha_n = a_1 \, a_6 \, \overline{b_1} \, a_2 \, c_2 \, \overline{c_1} \, a_4 \, \overline{c_1} \, \overline{a_2} \, b_1 \, \overline{a_n} \, \overline{a_1} \, a_3 \, \overline{c_1}\\& \beta_n= a_1 \, a_n \, \overline{b_1} \, a_2 \, c_1 \, \overline{a_4} \, c_1 \, \overline{c_2} \, \overline{a_2} \, b_1 \, \overline{a_6} \, \overline{a_1} \, a_4 \, \overline{c_1} \, \overline{a_2} \, b_1 \, \overline{a_8} \, \overline{a_1} \, a_3 \, \overline{c_1}.\\ \end{aligned} \]

Let
\[
  T_{2,n} \;=\;\{\alpha_n,\;\beta_n\}.
\]
Observe that, apart from the single occurrence of \(\overline{a_n}\) in \(\alpha_n\) (respectively \(a_n\) in \(\beta_n\)), every letter in \(\alpha_n\) (resp.\ \(\beta_n\)) lies in the subgroup 
\(\pi_1\bigl(X_{1,2,9}\bigr)
 = \langle a_1,\dots,a_9,\;b_1,\;c_1,\;c_2\rangle.
\)

\subsubsection{Invariant semigroup}
For \(n\ge19\), let \(S_{2,n}\subset\pi_1(X_{1,2,n})\) be the semigroup generated by the elements of \(C_{2,13}\) together with $T_{2,13}, \dots T_{2,n}$:

\[
  S_{2,n}
  \;=\;
 \left\langle \left( C_{2,13} \cup T_{2,13} \right) \cup \bigcup_{14\le k\le n} T_{2,k} \right\rangle.
\]

Since every generator of $S_{2,n}$ is cyclically reduced and begins with the common letter $a_1$, $S_{2,n}$ clearly satisfies the concatenation-compatibility property (Definition \ref{D:nocancel}).  Likewise, the core-tail generation property (Definition \ref{D:coretail}) is immediate from our choice of core $C_{2, 13} \cup T_{2,13}$ and tail-sets $T_{2,n}$ with $n\ge 14$.  Hence, by the Core-Tail Induction Principle (Proposition \ref{P:ctinduction}), it remains only to verify the base case $n = 19$ by direct computation.

Let $\delta_{2}$ be the word given by
\begin{equation}\label{E:delta_12n}
\delta_{2} \ =\  a_1\, \overline{a_2}\, b_1\overline{a_3}\, a_4\, \overline{c_1}\, a_5\, \overline{c_2} \, \overline{a_1}. 
\end{equation}

\vspace{1ex}

The family with orbit datum \((1,2,n)\) is handled in exactly the same way as the \((1,1,n)\) case.  We take \(F_*^5\) and \(n=18\) as our base case for the induction.  In the case $n=19$, the following lemma can be verified by direct computation using the formulas in \eqref{E:12n}.

\begin{lem}\label{L:1218}
When \(n=19\), we have
\[
  S_{2,19}
  \;=\;
  \langle\,s_1,\dots,s_{31},\;\alpha_{13},\dots,\alpha_{19},\;\beta_{13},\dots,\beta_{19}\rangle.
\]
Then for each generator \(g\in S_{2,19}\) there is an integer \(m_g>0\) and a fixed word \(\delta_2\) (as defined in \eqref{E:delta_12n}) such that
\[
  f_{(1,2,19)*}^{5}(g)
  \;=\;
  \delta_2\;g_1\,g_2\cdots g_{m_g}\;\overline{\delta_2},
\]
where each generator \(g_i\in S_{2,19}\).  
Moreover,, we have 
\[ f_{(1,2,19)*}^{5}(\alpha_{14})\ =\ \delta_2 \,s_{15}\, s_2\,s_{18}\, s_{19}\, \beta_{19}\,\overline{\delta_2}\]
and \[ f_{(1,2,19)*}^{5}(\beta_{14})\ =\ \delta_2 \alpha_{19}\,\,s_{2}\, s_7\,s_{23}\, s_{24}\,s_{19}\, \beta_{13}\,\overline{\delta_2}.\]
\end{lem}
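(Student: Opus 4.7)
The plan is to verify this lemma by a direct, finite computation on the $45$ generators of $S_{2,19}$, namely the core words $s_1,\dots,s_{31}$ and the two tail families $\alpha_{13},\dots,\alpha_{19}$ and $\beta_{13},\dots,\beta_{19}$. For each generator $g$, I would iteratively apply the formulas \eqref{E:12n} five times, keeping the result as a freely reduced word in $\pi_1(X_{1,2,19})$. Because every letter is sent to a reduced word of length at most $4$, a single $g$ of length at most $\sim 30$ expands after five iterations to a reduced word of length bounded by a manageable polynomial in the input length after free reduction. This is a bookkeeping task best executed in \textsf{GAP}/\textsf{FGA}, exactly as done elsewhere in the paper.

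The second step is to recognize, for each $g$, the resulting reduced word as $\delta_2\,g_1g_2\cdots g_{m_g}\,\overline{\delta_2}$ with each $g_i$ drawn from our $45$-element generating set. The strategy is to peel off the prefix $\delta_2$ and suffix $\overline{\delta_2}$ (noting that, although $\delta_2$ ends in $\overline{a_1}$ and $g_1$ begins with $a_1$, the ambient equality is in the free group so such interface cancellations between $\delta_2$ and $g_1$, and between $g_{m_g}$ and $\overline{\delta_2}$, are immaterial), and then parse the remaining ``central'' word as a concatenation of generators. The crucial parsing observation is concatenation-compatibility: every generator begins with $a_1$ and no generator ends with $\overline{a_1}$, so the reduced word $g_1g_2\cdots g_{m_g}$ breaks uniquely at each $a_1$ that opens a new block. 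Thus, once the five iterates are computed, the decomposition is read off greedily from left to right, and the two highlighted identities for $\alpha_{14}$ and $\beta_{14}$ fall out as special cases.

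The main obstacle is simply the volume and combinatorial opacity of the computation: five iterations of the nonlinear substitutions in \eqref{E:12n}, applied to words of length up to roughly $30$, produce intermediate expressions whose free reduction and subsequent parsing are tedious to carry out by hand. I would therefore proceed by (i) implementing the substitutions \eqref{E:12n} as free-group endomorphisms in \textsf{GAP}/\textsf{FGA}, (ii) computing $f_{(1,2,19)*}^{5}(g)$ as a reduced word for each of the $45$ generators, (iii) stripping $\delta_2$ and $\overline{\delta_2}$, and (iv) running the greedy left-to-right parser described above to certify membership in the free monoid on the $45$ generators. A secondary, more conceptual issue is ensuring that the generating sets of $C_{2,13}$, $T_{2,13},\dots,T_{2,19}$ are \emph{actually} closed under this rule rather than requiring auxiliary generators; this is guaranteed only because $C_{2,13}$ was constructed precisely by iterating $f_*^5$ from $s_1$ and recording each new cyclically reduced block produced in the process, and by Corollaries~\ref{C:tail} and~\ref{C:special} the tail words $\alpha_n$, $\beta_n$ for $n\le 19$ were chosen so that their images involve only tail letters $a_k$, $\overline{a_k}$ with $k\le 19$. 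Once the base case $n=19$ is certified, the Core-Tail Induction Principle (Proposition~\ref{P:ctinduction}) propagates invariance and positivity to all $n\ge 19$.
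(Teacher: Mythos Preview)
Your proposal is correct and matches the paper's approach: the paper states that the lemma ``can be verified by direct computation using the formulas in \eqref{E:12n}'' and offers no further argument, so a finite \textsf{GAP}/\textsf{FGA} check on the $45$ generators followed by the greedy $a_1$-block parsing you describe is exactly what is intended. Your remark that the $\delta_2$/$\overline{\delta_2}$ interface cancellations are immaterial (since the identity is in the free group) and your observation that concatenation-compatibility makes the central parsing unambiguous are both on point.
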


By Proposition \ref{P:actionP} and the truncation-invariance lemmas, we obtain:

\begin{lem}
Fix \(n\ge19\).  Then the action \(f_{(1,2,n)*}^{5}\) on the generators of the semigroup \(S_{2,n}\) satisfies:
\begin{enumerate}
  \item For each \(s_i\in C_{2,13} \cup T_{2,13}\),
    \[
      f_{(1,2,n)*}^{5}(s_i)
      \;=\;
      f_{(1,2,18)*}^{5}(s_i).
    \]

  \item For each \(i=14,\dots,n-6\),
    \[
      f_{(1,2,n)*}^{5}(\alpha_i)
      \;=\;
      f_{(1,2,18)*}^{5}(\alpha_i),
      \quad
      f_{(1,2,n)*}^{5}(\beta_i)
      \;=\;
      f_{(1,2,18)*}^{5}(\beta_i).
    \]

  \item For the special index \(i=n-5\),
    \[
      f_{(1,2,n)*}^{5}(\alpha_{n-5})
      \;=\;
      \delta_2\,s_{15}\,s_{2}\,s_{18}\,s_{19}\,\beta_{n}\,\overline\delta_2,
    \]
    \[
      f_{(1,2,n)*}^{5}(\beta_{n-5})
      \;=\;
      \delta_2\,\alpha_{n}\,s_{2}\,s_{7}\,s_{23}\,s_{24}\,s_{19}\,\beta_{13}\,\overline\delta_2.
    \]

  \item For each \(i=n-4,n-3,\dots,n\),
    \[
      f_{(1,2,n)*}^{5}(\alpha_i)
      \;=\;
      f_{(1,2,n-1)*}^{5}(\alpha_{i-1}),
      \quad
      f_{(1,2,n)*}^{5}(\beta_i)
      \;=\;
      f_{(1,2,n-1)*}^{5}(\beta_{i-1}).
    \]
\end{enumerate}
\end{lem}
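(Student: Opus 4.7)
The plan is to verify each of the four claims by invoking the appropriate truncation-invariance statement, fed with the explicit base computation at $n=19$ from Lemma \ref{L:1218}. The argument will mirror the one already carried out for the $(1,1,n)$ family in Lemmas \ref{L:11nS1}--\ref{L:11nalpha}: part (1) will be handled directly by Lemma \ref{L:TIa}; parts (2) and (4) by the two halves of Proposition \ref{P:actionP}; and part (3), the boundary case $i=n-5$, will be forced by combining Proposition \ref{P:actionP}(3) with the explicit base formula.

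For part (1), I would first observe that each $s_i \in C_{2,13} \cup T_{2,13}$ is a reduced word in $\{a_1,\dots,a_{13}, b_1, c_1, c_2\}$ and hence lies in $\pi_1(X_{1,2,13})$. Lemma \ref{L:TIa} with parameters $m=2$, $j=13$, $k=5$ then gives the identity for all $n \geq 18$. For part (2), each tail generator $\alpha_i$ or $\beta_i$ has the form $\omega_1\,a_i^{\pm 1}\,\omega_2$ with $\omega_1,\omega_2$ supported on a fixed bounded index set (roughly $\{a_1,\dots,a_8,b_1,c_1,c_2\}$); the assumption $14 \leq i \leq n-6$ places the central letter strictly below the upper-shift region (i.e.\ $i \leq n-k-1$), so iterating Proposition \ref{P:actionP}(1) reduces $n$ down to $18$, interpreting the right-hand side through the subgroup identification when $i > 18$. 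Part (4) is the mirror situation: the range $n-4 \leq i \leq n$ is precisely the shift region, and repeated application of Proposition \ref{P:actionP}(2) produces the prescribed descent $i \mapsto i-1$, $n \mapsto n-1$ at each step.

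The substantive case is part (3). Writing $\alpha_{n-5} = \omega_1\,\overline{a_{n-5}}\,\omega_2$, Proposition \ref{P:actionP}(3) guarantees that $f^5_{(1,2,n)*}(\alpha_{n-5})$ has the form $\eta_1\,\overline{a_n}\,\eta_2$, where $\eta_1,\eta_2$ are drawn from the core alphabet augmented by generators of index at most $k=5$ past the base, and moreover these $\eta_i$'s are independent of $n$ by truncation-invariance. Specializing to $n=19$ and substituting the explicit formula from Lemma \ref{L:1218}, I would read off the decomposition $\eta_1 = \delta_2\,s_{15}\,s_2\,s_{18}\,s_{19} \cdot (\text{prefix of } \beta_n)$ and $\eta_2 = (\text{suffix of } \beta_n)\cdot\overline{\delta_2}$, which assembles into the claimed product $\delta_2\,s_{15}\,s_2\,s_{18}\,s_{19}\,\beta_n\,\overline{\delta_2}$. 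The companion formula for $\beta_{n-5}$ is derived identically with $\alpha_n$ in place of $\beta_n$.

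The main obstacle is the bookkeeping in part (3): one must certify that the single new index-$n$ letter produced by the iteration lands exactly inside a copy of the tail word $\beta_n$ (respectively $\alpha_n$ for $\beta_{n-5}$), with no unintended cancellations against the surrounding $\eta_i$'s. This pattern-matching cannot be seen purely from Proposition \ref{P:actionP}; it is precisely what the explicit base computation in Lemma \ref{L:1218} certifies, and truncation-invariance then promotes that certification to arbitrary $n \geq 19$. Concatenation-compatibility of $S_{2,n}$ (every generator is cyclically reduced and begins with $a_1$) ensures that the assembled output is already a positive product in the generators, which is what the positivity portion of the induction requires in the next step.
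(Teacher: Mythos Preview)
Your proposal is correct and follows essentially the same route as the paper, which disposes of this lemma in a single line (``By Proposition~\ref{P:actionP} and the truncation-invariance lemmas, we obtain:'') without further argument. Your write-up is in fact a faithful unpacking of that reference: part~(1) via Lemma~\ref{L:TIa}, parts~(2) and~(4) via the two shift clauses of Proposition~\ref{P:actionP}, and part~(3) by combining Proposition~\ref{P:actionP}(3) with the explicit base computation in Lemma~\ref{L:1218}---exactly paralleling the $(1,1,n)$ treatment in Lemmas~\ref{L:11nS1}--\ref{L:11nalpha}.
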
 

Since the image of every generator has the common conjugator $\delta_2$, by the induction argument we have:

\begin{prop}
For every \(n\ge19\), the semigroup \(S_{2,n}\) is invariant under \(F^5_*\).  Moreover, \(F^5_*\) acts positively on \(S_{2,n}\): for each \(\eta\in S_{2,n}\) there exists an integer \(N_\eta>0\) and generators 
\[
  w_1,\dots,w_{N_\eta}
  \;\in\;
  C_{2,13}\cup\{\alpha_{13},\dots,\alpha_n,\beta_{13},\dots, \beta_n\}
\]
such that
\[
  F^5_*(\eta)
  \;=\;
  w_1\,w_2\,\cdots\,w_{N_\eta},
\]
i.e.\ no inverses appear in the product.
\end{prop}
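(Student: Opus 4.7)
The plan is to repeat, verbatim, the induction scheme used for Proposition~6.2 (the $(1,1,n)$ family), now specialized to parameters $(m_0,n_0)=(2,13)$ and period $k=5$, taking $n=19$ as the base case. The base case is already recorded as Lemma~4.5: for each of the forty-five generators $g$ of $S_{2,19}$ (the thirty-one core words $s_1,\dots,s_{31}$, the base-layer tail pair $\alpha_{13},\beta_{13}$, and the twelve subsequent tails $\alpha_{14},\dots,\alpha_{19},\beta_{14},\dots,\beta_{19}$), one has an explicit factorization
\[
  f^{5}_{(1,2,19)*}(g)\;=\;\delta_2\,g_1\,g_2\cdots g_{m_g}\,\overline{\delta_2},
\]
where each $g_i$ is already in the generating list and $\delta_2$ is the fixed conjugator of \eqref{E:delta_12n}. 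With this in hand, I would induct on $n$, assuming invariance and positivity for $S_{2,n}$ and deducing the same for $S_{2,n+1}$.

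For the inductive step, the generating set of $S_{2,n+1}$ partitions into four disjoint strata, each handled by one item of Lemma~4.6 (read at parameter $n+1$ in place of $n$): (a) the fixed core $C_{2,13}\cup T_{2,13}$, for which item~(1) identifies the action with the base-case action; (b) the stable middle tails $\alpha_i,\beta_i$ with $14\le i\le n-5$, reduced to the base case by item~(2); (c) the critical-index pair $\alpha_{n-4},\beta_{n-4}$, whose images are given explicitly by item~(3) and contain exactly the newly introduced top tails $\alpha_{n+1},\beta_{n+1}$ together with core generators; and (d) the upper tails $\alpha_i,\beta_i$ with $n-3\le i\le n+1$, which item~(4) identifies with the action on $\alpha_{i-1},\beta_{i-1}\in S_{2,n}$, so that they fall under the inductive hypothesis. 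In every stratum, the resulting image has the uniform outer shape $\delta_2\,(\cdots)\,\overline{\delta_2}$ with the middle factor a positive concatenation of generators of $S_{2,n+1}$.

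To pass from letter-by-letter positivity to positivity on an arbitrary $\eta=h_1 h_2\cdots h_m\in S_{2,n+1}$, I would invoke concatenation-compatibility (Definition~5.2), which applies here because every generator of $S_{2,n+1}$ is cyclically reduced and begins with the common initial letter $a_1$, so no cross-boundary cancellation can occur in a product of generators. Writing $f^{5}_{(1,2,n+1)*}(h_j)=\delta_2\,w_{h_j}\,\overline{\delta_2}$ and concatenating, the interior $\overline{\delta_2}\,\delta_2$ pairs collapse and the outermost $\delta_2,\overline{\delta_2}$ are absorbed under cyclic reduction, yielding $F^{5}_{*}(\eta)=w_{h_1}w_{h_2}\cdots w_{h_m}$, a positive product of generators. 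This simultaneously gives invariance ($F^5_*(S_{2,n+1})\subseteq S_{2,n+1}$) and the positivity statement, closing the induction.

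The main obstacle is neither the induction nor the cancellation bookkeeping, both of which are now structural consequences of the Core-Tail Induction Principle (Proposition~4.7); it is the scale of the base-case verification in Lemma~4.5. One must explicitly expand each of the forty-five generators under $f^{5}_{(1,2,19)*}$ using the formulas \eqref{E:12n}, carry out cyclic reduction, and exhibit the resulting word in the form $\delta_2\cdot(\text{positive product})\cdot\overline{\delta_2}$ with middle letters drawn from the generating list. The precise shapes of the thirty-one core elements of $C_{2,13}$ and of the tail words $\alpha_n,\beta_n$ are engineered exactly so that this closure under $f^{5}_{*}$ holds; the verification is finite but delicate, and is carried out by a computation in \textsf{GAP}/\textsf{FGA} as discussed in the Introduction.
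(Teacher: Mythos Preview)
Your proposal is correct and follows essentially the same approach as the paper. The paper's own proof is in fact considerably terser than yours: it simply says ``Since the image of every generator has the common conjugator $\delta_2$, by the induction argument we have'' the proposition, implicitly deferring to the preceding two lemmas (the base case $n=19$ and the four-case truncation-invariance lemma) and to the detailed $(1,1,n)$ argument already carried out. Your stratification (a)--(d) of the generating set of $S_{2,n+1}$ and your explanation of how concatenation-compatibility and the common conjugator $\delta_2$ combine to pass from generators to arbitrary $\eta$ spell out exactly what the paper leaves implicit; the cross-reference numbering in your write-up does not match the paper's, but the content you point to is correct.
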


Similar to the previous subsection, for the remaining cases \(7\le n\le18\), the invariance and positivity can be verified by direct computation using the formulas in \eqref{E:12n}.  In fact, one shows that iterating the single element 
\[
  a_1\,a_6\,\overline{a_3}
\]
under \(F_*^5\) produces exactly the \(2n+7\) generators of the invariant semigroup \(S_{2,n}\).

\begin{prop}
For every \(7\le n\le18\), there is a semigroup 
\(\;S_{2,n}\subset\pi_1(X_{1,2,n})\)\ 
generated by \(2n+7\) cyclically reduced elements, which is invariant under \(F_*^5\).  Moreover, \(F_*^5\) acts positively on \(S_{2,n}\).
\end{prop}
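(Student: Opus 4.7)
The plan is to mirror the strategy used for the $(1,1,n)$ family in Proposition~\ref{P:11ncases}: for each of the twelve values $n\in\{7,8,\ldots,18\}$, construct the semigroup $S_{2,n}$ by iterated application of $F_*^5$ to a single seed element and verify invariance and positivity by direct symbolic computation using the explicit formulas in \eqref{E:12n}. Concretely, I would take as seed the cyclically reduced word $s=a_1\,a_6\,\overline{a_3}$, which lies in $\pi_1(X_{1,2,n})$ for every $n\ge 6$, and throughout I would use the fixed conjugator $\delta_2$ from \eqref{E:delta_12n}, which also lies in $\pi_1(X_{1,2,7})\subset\pi_1(X_{1,2,n})$.

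For each fixed $n$ in the range, I apply $f_{(1,2,n)*}^5$ to the seed and obtain an image of the form $\delta_2\cdot(g_1 g_2\cdots g_m)\cdot\overline{\delta_2}$, where the $g_i$ are cyclically reduced blocks; each new block is added to the generating list, and the process is iterated on those new generators until the list stabilizes. Because every such block begins with $a_1$ and ends in a letter different from $\overline{a_1}$ (namely $\overline{a_i}$, $\overline{b_1}$, or $\overline{c_j}$ for $j\in\{1,2\}$), the concatenation-compatibility property (Definition~\ref{D:nocancel}) is automatic: successive blocks concatenate without cancellation, so cyclic reduction of the $f_*^5$-image coincides with the literal concatenation. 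The claim to verify is that this iterative closure terminates after producing exactly $2n+7$ cyclically reduced generators. Once the list is fixed, invariance is immediate since the image of every generator is a product of generators, and positivity follows by direct inspection that no inverse of a generator ever appears in these products.

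The main obstacle is purely combinatorial bookkeeping rather than mathematical substance. The case $n=18$ already requires tracking $2(18)+7=43$ generators, each a word of several dozen letters, and for smaller $n$ the generating list cannot simply be truncated from the generic list $C_{2,13}\cup T_{2,13}\cup\bigcup_{k}T_{2,k}$: several elements of $C_{2,13}$ (for instance $s_{12}$, $s_{13}$, $s_{22}$, $s_{24}$, $s_{25}$) involve indices exceeding small $n$, so each value of $n$ must be handled with its own explicit list, and one cannot invoke the truncation-invariance lemmas from Section~\ref{S:homotopy} as in the inductive step for $n\ge 19$. However, no individual verification is harder than the base case Lemma~\ref{L:1218}; the entire check is mechanical and readily delegated to GAP/FGA, and the conceptual template --- seed $s$, conjugator $\delta_2$, iterative closure, common outer form $\delta_2(\cdots)\overline{\delta_2}$ of images --- is identical to the $(1,1,n)$ treatment, so no new ideas are required.
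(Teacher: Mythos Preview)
Your proposal is correct and matches the paper's approach exactly: the paper states that for $7\le n\le 18$ one verifies invariance and positivity by direct computation using \eqref{E:12n}, and that iterating the single seed $a_1\,a_6\,\overline{a_3}$ under $F_*^5$ produces the $2n+7$ generators of $S_{2,n}$. Your observations about the common conjugator $\delta_2$, the endpoint structure guaranteeing concatenation-compatibility, and the failure of truncation-invariance for small $n$ (forcing case-by-case treatment) are all in line with how the paper handles the analogous $(1,1,n)$ finite-range cases.
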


\subsection{Orbit datum \((1,n,2)\) with \(n\ge7\)}

The analysis for orbit datum \((1,n,2)\) parallels the \((1,1,n)\) and \((1,2,n)\) cases.  In this subsection we will:

\begin{enumerate}
  \item List the generators of the invariant semigroup \(S_{n,2}\).
  \item Give the base-case formulas for the action when \(n=17\) (the smallest nontrivial value).
  \item Exhibit the single tail-generator whose iterates produce the remaining semigroup elements for \(7\le n\le16\).
\end{enumerate}

By the Core-Tail Induction Principle (Proposition \ref{P:ctinduction} in Section \ref{S:homotopy}), once we verify that the family \(\{S_{n,2}\}_{n\ge 17}\) has the core-tail generation property and check the base case by direct computation, it follows that \(\{S_{n,2}\}\)  remains \(F_*^5\)-invariant and positive for all \(n\ge17\).  The finite-range argument then covers the cases \(7\le n\le16\).

\subsubsection{the Invariant semigroup}
Let 
\[
 C_{11,2}=\{\,s_1,\dots,s_{25}\}
\]
be the following twenty-seven cyclically reduced words starting with the common letter $a_1$ (all of which lie in the subgroup \(\pi_1(X_{1,11,2})\)):
\[ \begin{aligned}&s_1=a_1\, b_1\, \overline{c_3}\, a_2\, \overline{c_6}\, \overline{c_1}, \hspace{1.8cm}s_2=a_1\, b_1\, \overline{c_3}\, c_1\,c_6\, \overline{c_2}, \hspace{2.3cm} s_3=a_1\, c_4\, \overline{a_2}\, c_2\, \overline{c_8}\, \overline{c_1},\\
&s_4=a_1\, c_4\, \overline{a_2}\, c_2\, \overline{c_9}\, \overline{c_1}, \hspace{1.8cm} s_5=a_1\, c_4\, \overline{a_2}\, c_2\, \overline{c_{10}}\, \overline{c_1}, \hspace{1.8cm} s_6=\,a_1 \, b_1 \, \overline{c_3} \, a_2 \, \overline{c_5} \, \overline{a_1} \, c_2 \, \overline{c_6} \, \overline{c_1}, \\
&s_7=\,a_1 \, b_1 \, \overline{c_3} \, c_1 \, c_5 \, \overline{a_2} \, c_2 \, \overline{c_7} \, \overline{c_1}, \hspace{2.7cm} s_8=\,a_1 \, b_1 \, \overline{c_3} \, c_1 \, c_5 \, \overline{a_2} \, c_2 \, \overline{c_8} \, \overline{c_1}, \\&s_9=a_1 \, b_1 \, \overline{c_3} \, a_2 \, \overline{c_5} \, \overline{c_1} \, b_1 \, \overline{c_4} \, \overline{a_1} \, c_2 \, \overline{c_6} \, \overline{c_1}, \hspace{1.5cm} s_{10}=a_1 \, b_1 \, \overline{c_3} \, a_2 \, \overline{c_5} \, \overline{c_1} \, c_3 \, \overline{c_4} \, \overline{a_1} \, c_2 \, \overline{c_6} \, \overline{c_1}, \\& s_{11}=a_1 \, b_1 \, \overline{c_3} \, c_1 \, c_5  \, \overline{a_2} \, c_3 \, \overline{b_1} \, \overline{a_1} \, c_1 \, c_7 \, \overline{c_2},\hspace{1.33cm} s_{12} = a_1 \, b_1 \, \overline{c_3} \, a_2 \, \overline{c_5} \, \overline{c_1} \, c_3 \, \overline{b_1} \, \overline{c_2} \, a_2 \, \overline{c_4} \, \overline{a_1} \, c_2 \, \overline{c_6} \, \overline{c_1},\\
&s_{13}=a_1 \, b_1 \, \overline{c_3} \, a_2 \, \overline{c_5} \, \overline{c_1} \, c_3 \, \overline{b_1} \, \overline{a_1} \, c_1 \, c_{10} \, \overline{c_2} \, a_2 \, \overline{c_4} \, \overline{a_1} \, c_2 \, \overline{c_7} \, \overline{c_1},\\
&s_{14} = a_1 \, b_1 \, \overline{c_3} \, c_1 \, c_5 \, \overline{a_2} \, c_3 \, \overline{b_1} \, \overline{a_1} \, c_1 \, c_8 \, \overline{c_2} \, a_2 \, \overline{c_4} \, \overline{a_1} \, c_1 \, c_6 \, \overline{c_2},\\
&s_{15} = a_1 \, c_5  \, \overline{a_2} \, c_3  \, \overline{b_1}  \, \overline{a_1} \, c_1 \, c_7  \, \overline{c_2} \, a_2  \, \overline{c_5}  \, \overline{c_1} \, c_3  \, \overline{b_1}  \, \overline{a_1} \, c_1 \, c_9  \, \overline{c_2} \, a_2  \, \overline{c_4}  \, \overline{a_1} \, c_2  \, \overline{c_7}  \, \overline{c_1},\\
&s_{16}= a_1  \, b_1  \, \overline{c_3}  \, c_1  \, c_5  \, \overline{a_2}  \, c_3  \, \overline{b_1}  \, \overline{a_1}  \, c_1  \, c_7  \, \overline{c_2}  \, a_2  \, \overline{c_5}  \, \overline{c_1}  \, c_3  \, \overline{b_1}  \, \overline{a_1}  \, c_1  \, c_9  \, \overline{c_2}  \, a_2  \, \overline{c_4}  \, \overline{a_1}  \, c_1  \, c_6  \, \overline{c_2},\\
&s_{17}= a_1  \, b_1  \, \overline{c_3}  \, c_1  \, c_5  \, \overline{a_2}  \, c_3  \, \overline{b_1}  \, \overline{a_1}  \, c_1  \, c_7  \, \overline{c_2}  \, a_2  \, \overline{c_5}  \, \overline{c_1}  \, c_3  \, \overline{b_1}  \, \overline{a_1}  \, c_1  \, c_9  \, \overline{c_2}  \, a_2  \, \overline{c_4}  \, \overline{a_1}  \, c_2  \, \overline{c_7}  \, \overline{c_1},\\
&s_{18}= a_1  \, c_4  \, \overline{b_1}  \, c_1  \, c_5  \, \overline{a_2}  \, c_3  \, \overline{b_1}  \, \overline{a_1}  \, c_1  \, c_7  \, \overline{c_2}  \, a_2  \, \overline{c_5}  \, \overline{c_1}  \, c_3  \, \overline{b_1}  \, \overline{a_1}  \, c_1  \, c_9  \, \overline{c_2}  \, a_2  \, \overline{c_4}  \, \overline{a_1}  \, c_2  \, \overline{c_7}  \, \overline{c_1},\\
&s_{19}= a_1  \, c_4  \, \overline{c_3}  \, c_1  \, c_5  \, \overline{a_2}  \, c_3  \, \overline{b_1}  \, \overline{a_1}  \, c_1  \, c_7  \, \overline{c_2}  \, a_2  \, \overline{c_5}  \, \overline{c_1}  \, c_3  \, \overline{b_1}  \, \overline{a_1}  \, c_1  \, c_9  \, \overline{c_2}  \, a_2  \, \overline{c_4}  \, \overline{a_1}  \, c_2  \, \overline{c_7}  \, \overline{c_1},\\
&s_{20}= a_1 \, b_1 \, \overline{c_3} \, a_2 \, \overline{c_5} \, \overline{c_1} \, c_3 \, \overline{b_1} \, \overline{a_1} \, c_1 \, c_{10} \, \overline{c_2} \, a_2 \, \overline{c_4} \, \overline{a_1} \, c_2 \, \overline{c_6} \, \overline{c_1} \, c_3 \, \overline{b_1} \, \overline{a_1} \, c_1 \, c_8 \, \overline{c_2} \, a_2 \, \overline{c_4} \, \overline{a_1} \, c_1 \, c_6 \, \overline{c_2},\\
&s_{21}= a_1 \, b_1 \, \overline{c_3} \, a_2 \, \overline{c_5} \, \overline{c_1} \, c_3 \, \overline{b_1} \, \overline{a_1} \, c_1 \, c_{10} \, \overline{c_2} \, a_2 \, \overline{c_4} \, \overline{a_1} \, c_2 \, \overline{c_6} \, \overline{c_1} \, c_3 \, \overline{b_1} \, \overline{a_1} \, c_1 \, c_9 \, \overline{c_2} \, a_2 \, \overline{c_4} \, \overline{a_1} \, c_2 \, \overline{c_7} \, \overline{c_1},\\
&s_{22}= a_1 \, b_1 \, \overline{c_3} \, a_2 \, \overline{c_5} \, \overline{c_1} \, c_3 \, \overline{b_1} \, \overline{a_1} \, c_1 \, c_{11} \, \overline{c_2} \, a_2 \, \overline{c_4} \, \overline{a_1} \, c_2 \, \overline{c_6} \, \overline{c_1} \, c_3 \, \overline{b_1} \, \overline{a_1} \, c_1 \, c_8 \, \overline{c_2} \, a_2 \, \overline{c_4} \, \overline{a_1} \, c_1 \, c_6 \, \overline{c_2},\\
&s_{23}= a_1 \, c_4 \, \overline{a_2} \, c_2 \, b_1 \, \overline{c_3} \, c_1 \,c_5 \, \overline{a_2} \, c_3 \, \overline{b_1} \, \overline{a_1} \, c_1\, c_7\, \overline{c_2} \, a_2 \, \overline{c_5} \, \overline{c_1} \, c_3 \, \overline{b_1} \, \overline{a_1} \, c_1 \, c_9 \, \overline{c_2} \, a_2 \, \overline{c_4} \, \overline{a_1} \, c_2 \, \overline{c_7} \, \overline{c_1},\\
&s_{24}=a_1 \, b_1 \, \overline{c_3} \, a_2 \, \overline{c_5} \, \overline{c_1} \, c_3 \, \overline{b_1} \, \overline{a_1} \, c_1 \, c_{10} \, \overline{c_2} \, a_2 \, \overline{c_4} \, \overline{a_1} \, c_2 \, \overline{c_6} \, \overline{c_1} \, c_3 \, \overline{b_1} \, \overline{a_1} \, c_1 \, c_8 \, \overline{c_2} \, a_2 \, \overline{c_5} \, \overline{c_1} \, c_3 \, \overline{b_1} \, \overline{a_1} \, c_1 \, c_9 \, \overline{c_2} \, a_2 \, \overline{c_4} \, \overline{a_1} \, c_2 \, \overline{c_7} \, \overline{c_1},\\
&s_{25}=a_1 \, b_1 \, \overline{c_3} \, c_1 \, c_5 \, \overline{a_2} \, c_3 \, \overline{b_1} \, \overline{a_1} \, c_1 \, c_7 \, \overline{c_2} \, a_2 \, \overline{c_5} \, \overline{c_1} \, c_3 \, \overline{b_1} \, \overline{a_1} \, c_1 \, c_{10} \, \overline{c_2} \, a_2 \, \overline{c_4} \, \overline{a_1} \, c_2 \, \overline{c_6} \, \overline{c_1} \, c_3 \, \overline{b_1} \, \overline{a_1} \, c_1 \, c_8 \, \overline{c_2} \, a_2 \, \overline{c_4} \, \overline{a_1} \, c_1 \, c_6 \, \overline{c_2},\\
&s_{26}=a_1 \, b_1 \, \overline{c_3} \, a_2 \, \overline{c_5} \, \overline{c_1} \, c_3 \, \overline{b_1} \, \overline{a_1} \, c_1 \, c_{10} \, \overline{c_2} \, a_2 \, \overline{c_4} \, \overline{a_1} \, c_2 \, \overline{c_6} \, \overline{c_1} \, c_3 \, \overline{b_1} \, \overline{a_1} \, c_1 \, c_8 \, \overline{c_2} \, a_2 \, \overline{c_4} \, \overline{a_1} \, c_1 \, c_7 \, \overline{c_2} \, a_2 \, \overline{c_5} \\ &\phantom{AAAAAA} \overline{c_1} \, c_3 \, \overline{b_1} \, \overline{a_1} \, c_1 \, c_9 \, \overline{c_2} \, a_2 \, \overline{c_4} \, \overline{a_1} \, c_2 \, \overline{c_7} \, \overline{c_1},\\
&s_{27}=a_1 \, b_1 \, \overline{c_3} \, a_2 \, \overline{c_5} \, \overline{c_1} \, c_3 \, \overline{b_1} \, \overline{a_1} \, c_1 \, c_{10} \, \overline{c_2} \, a_2 \, \overline{c_4} \, \overline{a_1} \, c_2 \, \overline{c_6} \, \overline{c_1} \, c_3 \, \overline{b_1} \, \overline{a_1} \, c_1 \, c_8 \, \overline{c_2} \, a_2 \, \overline{c_4} \, \overline{a_1} \, c_1 \, c_6 \, \overline{a_2} \, c_3 \, \overline{b_1} \, \overline{a_1} \\ &\phantom{AAAAAA} c_1 \, c_7 \, \overline{c_2} \, a_2 \, \overline{c_5} \, \overline{c_1} \, c_3 \, \overline{b_1} \, \overline{a_1} \, c_1 \, c_9 \, \overline{c_2} \, a_2 \, \overline{c_4} \, \overline{a_1} \, c_2 \, \overline{c_7} \, \overline{c_1}.\\
\end{aligned}\]

Next, for each \(n\ge11\) define two cyclically reduced words
\[ \begin{aligned} & \alpha_n = a_1 \, c_4 \, \overline{a_2} \, c_2 \, \overline{c_{n}} \, \overline{c_1},\\& \beta_n= a_1 \, b_1 \, \overline{c_3} \, a_2 \, \overline{c_5} \, \overline{c_1} \, c_3 \, \overline{b_1} \, \overline{a_1} \, c_1 \, c_n \, \overline{c_2} \, a_2 \, \overline{c_4} \, \overline{a_1} \, c_2 \, \overline{c_6} \, \overline{c_1}.\\ \end{aligned} \]

Let
\[
  T_{n,2} \;=\;\{\alpha_n,\;\beta_n\}.
\]

For \(n\ge17\), let \(S_{n,2}\subset\pi_1(X_{1,n,2})\) be the semigroup generated by the elements of \(C_{11,2}\) together with $T_{11,2}, \dots T_{n,2}$:
\[
  S_{n,2}
  \;=\;
 \left\langle  \left( C_{11,2} \cup T_{11,2} \right) \cup \bigcup_{12\le k\le n} T_{k,2} \right\rangle.
\]

From our explicit lists of the fixed core $C_{11,2} \cup T_{11,2}$ and the tail-sets $T_{k,2}$ for $k\ge 12$, it follows immediately that $\{S_{n,2}\}_{n\ge 17}$ satisfies the concatenation-compatibility and core-tail generation property.

\subsubsection{the Base case : $n=17$}

Let $\delta_{2}$ be the word given by
\begin{equation}\label{E:delta_1n2}
\delta_{3} \ =\  c_1\, \overline{a_1}\, c_2\overline{a_2}\, c_3\, \overline{b_1}\, c_4\, \overline{c_5} \, \overline{c_1}. 
\end{equation}

\begin{lem}\label{L:1162}
When \(n=17\), for each generator $g$ of $S_{17,2}$
\[
  g\;\in\;\{ s_1,\dots,s_{27},\,\alpha_{11},\dots,\alpha_{17},\, \beta_{11}, \dots, \beta_{17}\},
\]
such that
\[
  f_{(1,17,2)*}^{\,5}(g)
  \;=\;
  \delta_3\;g_1\,g_2\cdots g_{m_g}\;\overline{\delta_3},
\]
where each 
\(\;g_i\in\;\{ s_1,\dots,s_{27},\,\alpha_{10},\dots,\alpha_{17},\, \beta_{10}, \dots, \beta_{17}\}\). Moreover,, we have 
\[ f_{(1,17,2)*}^{5}(\alpha_{12})\ =\ \delta_3 \,s_{7}\, \beta_{17}\,s_4\overline{\delta_3}\]
and \[ f_{(1,7,2)*}^{5}(\beta_{12})\ =\ \delta_3 \, s_{20}\,\alpha_{17}\,\,s_{17}\, \, \beta_{11}\,s_3\,\overline{\delta_3}.\]
\end{lem}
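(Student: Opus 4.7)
The plan is to prove Lemma \ref{L:1162} by direct verification, in the same spirit as Lemmas \ref{L:1115} and \ref{L:1218} for the earlier orbit-datum families. Since this lemma is the base case that feeds into the Core-Tail Induction Principle (Proposition \ref{P:ctinduction}) for the $(1,n,2)$ family, no induction is performed here: it is a finite check on the 41 explicitly listed generators of $S_{17,2}$, namely $s_1,\ldots,s_{27}$, $\alpha_{11},\ldots,\alpha_{17}$, and $\beta_{11},\ldots,\beta_{17}$.

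First, I would apply the substitution rules of \eqref{E:1n2} five times to each generator $g$. Each substitution expands every letter of the current word using the images $b_1\mapsto a_1$, $a_1\mapsto c_1\,\overline{a_1}\,a_2\,\overline{b_1}$, $a_2\mapsto c_1$, and $c_i\mapsto c_1\,\overline{a_1}\,c_{i+1}\,\overline{b_1}$ for $i=1,2$, together with the tail formulas for $c_3$ and $c_k$ with $k\ge 4$; after each substitution I perform free reduction. The target is to exhibit the cyclically reduced result as
\[
  f_{(1,17,2)*}^{5}(g)\;=\;\delta_3\,(g_1\,g_2\cdots g_{m_g})\,\overline{\delta_3},
\]
with $\delta_3$ as in \eqref{E:delta_1n2} and each $g_i$ drawn from the generator list. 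For the two special cases, I additionally need to match the strings $s_7\,\beta_{17}\,s_4$ and $s_{20}\,\alpha_{17}\,s_{17}\,\beta_{11}\,s_3$ inside the middle factor.

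The structural observation that makes the pattern-matching step clean is that every generator in the list is cyclically reduced and begins with the common letter $a_1$; this is precisely the concatenation-compatibility of Definition \ref{D:nocancel}. Consequently, once a candidate factorization $\delta_3\,g_1\cdots g_{m_g}\,\overline{\delta_3}$ is identified, no further cancellation can occur at the seams between consecutive $g_i$, so the factorization is genuine. The role of the conjugator $\delta_3$ is to absorb the length-$9$ boundary contribution produced by iterating the generators $b_1,a_1,a_2,c_1,c_2,c_3$, which do not themselves lie in $S_{17,2}$; the choice of $\delta_3$ in \eqref{E:delta_1n2} is tuned exactly so that the uncancelled prefix/suffix of $f_*^5(g)$ matches $\delta_3$ and $\overline{\delta_3}$ uniformly across all 41 generators.

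The principal obstacle is sheer bulk, not conceptual difficulty. Generators such as $s_{24}, s_{26}, s_{27}$ already have word length in the forties, and five iterations of a substitution rule whose average branching is about $4$ produces raw expansions of length on the order of $10^3$–$10^4$ before reduction. Carrying this out reliably is therefore a task for a free-group computer-algebra system such as \textsf{GAP} with \textsf{FGA}, exactly as indicated in the discussion following the Core-Tail Induction Principle. Once this finite verification is in hand, Lemma \ref{L:1162} provides precisely the data (common conjugator $\delta_3$, closed generator list, explicit images of $\alpha_{12}$ and $\beta_{12}$) required to launch the $n\ge 17$ induction: the truncation-invariance statements (Lemmas \ref{L:TIa}, \ref{L:TIc}, Corollary \ref{C:tail}, and Proposition \ref{P:actionP}) then propagate invariance and positivity of $F_*^5$ on $S_{n,2}$ to every $n\ge 17$ without further computation.
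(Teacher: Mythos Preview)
Your proposal is correct and follows essentially the same approach as the paper: the paper's proof is precisely a direct computation using the formulas in \eqref{E:1n2} on each of the 41 generators of $S_{17,2}$. Your additional remarks on concatenation-compatibility, the role of $\delta_3$, and the need for computer algebra are accurate elaborations, but the underlying method is identical.
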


\begin{proof}
For \(n=17\), the semigroup \(S_{17,2}\) has \(41\) generators.  One simply applies the formulas in \eqref{E:1n2} and checks by direct computation that
\[
  f_{(1,17,2)*}^{5}(g)
  \;=\;
  \delta_3\,g_1\cdots g_{m_g}\,\overline{\delta_3}
\]
for each generator \(g\in S_{17,2}\).  This completes the proof.
\end{proof}

Again by the induction argument, we have:

\begin{prop}
For every \(n\ge17\), the semigroup \(S_{n,2}\) is invariant under \(F^5_*\).  Moreover, \(F^5_*\) acts positively on \(S_{n,2}\): for each \(\eta\in S_{n,2}\) there exists an integer \(N_\eta>0\) and generators 
\[
  w_1,\dots,w_{N_\eta}
  \;\in\;
  C_{11,2}\cup\{\alpha_{11},\dots,\alpha_n,\beta_{11},\dots, \beta_n\}
\]
such that
\[
  F^5_*(\eta)
  \;=\;
  w_1\,w_2\,\cdots\,w_{N_\eta},
\]
i.e.\ no inverses appear in the product.
\end{prop}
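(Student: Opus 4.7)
The plan is to mirror exactly the strategy used for the $(1,1,n)$ and $(1,2,n)$ families and apply the Core-Tail Induction Principle (Proposition~\ref{P:ctinduction}) with $k=5$ and base parameters $(m_0,n_0)=(11,2)$, so that $m_c=m_0+k=16$; the base case $n=17\ge m_c$ has already been verified in Lemma~\ref{L:1162}.

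First I would check the two structural hypotheses required by Proposition~\ref{P:ctinduction}. Concatenation-compatibility (Definition~\ref{D:nocancel}) is immediate because every element of $C_{11,2}\cup\{\alpha_k,\beta_k\}_{k\ge 11}$ is, by inspection of the explicit lists, cyclically reduced and begins with the common letter $a_1$ (ending with a letter distinct from $\overline{a_1}$); thus concatenating any two generators produces no free or cyclic cancellation. Core-tail generation (Definition~\ref{D:coretail}) holds by construction of $S_{n,2}$ with core $C_{11,2}\cup T_{11,2}$ and tail-sets $T_{k,2}=\{\alpha_k,\beta_k\}$ for $k\ge 12$, since each $\alpha_k,\beta_k$ is obtained from a word in $\pi_1(X_{1,11,2})$ by replacing a single letter with $\overline{c_k}$ or $c_k$, respectively.

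Next I would run the induction on $n\ge 17$, identical in form to the $(1,2,n)$ argument. The base case is Lemma~\ref{L:1162}, which both expresses $f_{(1,17,2)*}^{5}(g)$ for each of the $41$ generators as a positive product of generators of $S_{17,2}$ conjugated by the common word $\delta_3$ in \eqref{E:delta_1n2}, and identifies the special-index images
\[
  f_{(1,17,2)*}^{5}(\alpha_{12}) = \delta_3\, s_7\,\beta_{17}\,s_4\,\overline{\delta_3},\qquad
  f_{(1,17,2)*}^{5}(\beta_{12}) = \delta_3\, s_{20}\,\alpha_{17}\,s_{17}\,\beta_{11}\,s_3\,\overline{\delta_3}.
\]
For the inductive step $n\mapsto n+1$, truncation-invariance (Lemma~\ref{L:TIc}, in the form fixing $n=2$ and varying $m$) gives $f_{(1,n+1,2)*}^{5}(g)=f_{(1,17,2)*}^{5}(g)$ for every core generator $g\in C_{11,2}\cup T_{11,2}$, and the same lemma applied with appropriate indices yields $f_{(1,n+1,2)*}^{5}(\alpha_k)=f_{(1,17,2)*}^{5}(\alpha_k)$, $f_{(1,n+1,2)*}^{5}(\beta_k)=f_{(1,17,2)*}^{5}(\beta_k)$ for tail indices $k$ in the stable range $12\le k\le n-5$. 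The tail-index Corollary~\ref{C:tail} then covers the shift behavior $f_{(1,n+1,2)*}^{5}(\alpha_k)=f_{(1,n,2)*}^{5}(\alpha_{k-1})$ (and similarly for $\beta_k$) in the range $n-3\le k\le n+1$, while the special index $k=n-4$ produces the new tail generators $\alpha_{n+1},\beta_{n+1}$ via the base-case identities above by index-shifting through Proposition~\ref{P:actionP}. Concatenation-compatibility ensures that, for any $\eta=g_1\cdots g_{m_\eta}\in S_{n+1,2}$, the iterate $F_*^5(\eta)$ is the product of the $\delta_3$-conjugated blocks $\delta_3\,(g_{(i,1)}\cdots g_{(i,m_i)})\,\overline{\delta_3}$, in which all $\delta_3/\overline{\delta_3}$ pairs telescope and the remaining product lies positively in $S_{n+1,2}$.

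The only real labor will be in packaging the special-index step carefully: one must track that the new letter $c_{n+1}$ appears exactly once in the image of $\alpha_{n-4}$ (respectively $\beta_{n-4}$) and that its surrounding context is precisely the one encoded in $\alpha_{n+1}$ (respectively $\beta_{n+1}$). I expect this bookkeeping to be the main technical step, though it reduces to a finite verification identical in spirit to the $\alpha_k,\beta_k$ analysis for the $(1,2,n)$ family and can be certified in \textsf{GAP}/\textsf{FGA}. Once the induction closes, the semigroup $S_{n,2}$ is invariant under $F_*^5$ and $F_*^5$ acts positively on it for every $n\ge 17$, which is the statement of the proposition.
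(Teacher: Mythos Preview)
Your proposal is correct and follows essentially the same approach as the paper: verify concatenation-compatibility and core-tail generation from the explicit generator lists, invoke Lemma~\ref{L:1162} for the base case $n=17$, and then close the induction via the truncation-invariance lemmas (Lemma~\ref{L:TIc}, Corollary~\ref{C:tail}, Proposition~\ref{P:actionP}) exactly as in the $(1,1,n)$ and $(1,2,n)$ families, with the common conjugator $\delta_3$ telescoping. The paper's own proof is the one-line ``Again by the induction argument, we have,'' so your write-up is in fact more detailed than what appears there.
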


\subsubsection{the case : $n=7, \dots, 16$}
For the remaining cases \(7\le n\le16\), the invariance and positivity can be verified by direct computation using the formulas in \eqref{E:1n2}.  In fact, one shows that iterating the single element 
\[
  a_1\, b_1\, \overline{c_3} \, a_2\, \overline{c_6} \, \overline{c_1}
\]
under \(F_*^5\) produces exactly the \(2n+7\) generators of the invariant semigroup \(S_{n,2}\).

\begin{prop}
For every \(7\le n\le16\), there is a semigroup 
\(\;S_{n,2}\subset\pi_1(X_{1,n,2})\)\ 
generated by \(2n+7\) cyclically reduced elements, which is invariant under \(F_*^5\).  Moreover, \(F_*^5\) acts positively on \(S_{n,2}\).
\end{prop}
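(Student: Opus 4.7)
The plan is to dispatch each of the ten values $n\in\{7,8,\ldots,16\}$ by a direct, finite computation in $\pi_1(X_{1,n,2})$, following exactly the template already used in Lemma~\ref{L:1115} and Lemma~\ref{L:1218} for the $(1,1,n)$ and $(1,2,n)$ families. Since the formulas for $f_{(1,n,2)*}$ in \eqref{E:1n2} are completely explicit and the surface in question is fixed, the entire argument reduces to bounded-depth word rewriting in a free group of known rank.

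First I would fix the seed
\[
  w_0 \;:=\; a_1\,b_1\,\overline{c_3}\,a_2\,\overline{c_6}\,\overline{c_1},
\]
which lies in $\pi_1(X_{1,n,2})$ for every $n\ge 6$, together with the conjugator $\delta_3$ of \eqref{E:delta_1n2}. For each $n$ in the prescribed range I would then iterate $F_*^5$ starting from $w_0$: at each step, apply \eqref{E:1n2} five times, cyclically reduce, peel off the common prefix $\delta_3$ and suffix $\overline{\delta_3}$, and parse the resulting central word into cyclically reduced \emph{blocks} $w_1,w_2,\ldots$, each beginning with $a_1$. Every new block is added to a provisional generating set $G_n$. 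The process terminates as soon as an entire round produces no new block; direct check yields $|G_n|=2n+7$ in each case.

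Second, with $G_n$ in hand, I would define $S_{n,2}:=\langle G_n\rangle^+$ and verify: (i) for every $g\in G_n$,
\[
  f_{(1,n,2)*}^{5}(g)\;=\;\delta_3\,g_{i_1}g_{i_2}\cdots g_{i_{m_g}}\,\overline{\delta_3},
\]
with each $g_{i_k}\in G_n$ (no inverses); and (ii) every element of $G_n$ is cyclically reduced and begins with the common letter $a_1$. Condition~(ii) delivers concatenation-compatibility (Definition~\ref{D:nocancel}) for free: when two generators are concatenated no cancellation occurs at the seam. Therefore, for any $g_{j_1}\cdots g_{j_r}\in S_{n,2}$, the image under $F_*^5$ is a product of the form $\delta_3(\cdots)\overline{\delta_3}\cdot\delta_3(\cdots)\overline{\delta_3}\cdots$, whose internal $\overline{\delta_3}\delta_3$ pairs telescope to produce a positive word in $G_n$. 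Invariance and positivity of $F_*^5$ on $S_{n,2}$ follow immediately.

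The main obstacle is purely combinatorial bookkeeping. For each of the ten values of $n$ the seed trajectory generates a set of different length and somewhat different shape, and the formulas in \eqref{E:1n2} split at the boundary indices $k=1,2,3$, $k=n-1$, $k=n$, with ``generic'' behavior only in the range $4\le k\le n-2$. For the smallest values (say $n=7,8,9$) the generic range is narrow, so several block shapes that appeared in the $n=17$ base case of Lemma~\ref{L:1162} collapse or degenerate, and one must check that neither unexpected cancellation against $\delta_3$ nor a ``leakage'' of an inverse generator occurs when peeling the conjugator. Because of this, applying the Core-Tail Induction Principle (Proposition~\ref{P:ctinduction}) directly is not available: the inductive hypothesis requires $n\ge n_0+k=11+5=16$, so the small cases truly fall outside its reach. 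I would therefore discharge items (i)--(ii) in \textsf{GAP}/\textsf{FGA} by encoding $f_{(1,n,2)*}$ as a free-group endomorphism and letting the computer enumerate $G_n$ and certify the positive $F_*^5$-invariance for each $n\in\{7,\ldots,16\}$ individually.
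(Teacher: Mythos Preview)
Your proposal is correct and follows essentially the same approach as the paper: both start from the seed $a_1\,b_1\,\overline{c_3}\,a_2\,\overline{c_6}\,\overline{c_1}$, iterate $F_*^5$, and verify by direct (computer-aided) computation in each of the ten cases that the resulting $2n+7$ blocks form an $F_*^5$-invariant, positive semigroup. The paper gives no further detail for these finite-range cases beyond the seed and the claim, so your added bookkeeping (peeling the conjugator, parsing into $a_1$-initial blocks, checking concatenation-compatibility, and noting that the Core--Tail Induction Principle does not reach down to $n\le 16$) is exactly what is implicit there.
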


\subsection{Orbit datum \((1,n,n+1)\) with \(n\ge4\)}

The treatment of the \((1,n,n+1)\) case follows the same overall strategy as in the previous subsections.  In particular, we will:

\begin{enumerate}
  \item Specify a fixed finite core \(C_{6,7}\subset\pi_1(X_{1,6,7})\) and, for each \(n\ge13\), tail-sets \(T_{n,n+1}\) and \(\hat T_{n,n+1}\)
  \item List these generators explicitly to exhibit the semigroup \(S_{n,n+1}\).
  \item Verify by direct calculation the base case \(n=13\) (the smallest nontrivial orbit) that \(S_{13,14}\) is \(F_*^6\)-invariant and positive.
  \item Identify the single generator whose iterates produce the remaining semigroup elements for \(4\le n\le 12\).
\end{enumerate}

Once the concatenation-compatibility and core-tail generation property is evident from these lists and the base case and the cases with the finite range are checked, the Core-Tail Induction Principle (Proposition \ref{P:ctinduction}) guarantees that \(S_{n,n+1}\) remains \(F_*^6\)-invariant and positive for all \(n\ge4\).

\subsubsection{The core generators and tail-sets. }
Let 
\[
 C_{6,7}=\{\,s_1,\dots,s_{27}\}
\]
be the following twenty-seven cyclically reduced words starting with the common letter $a_1$ (all of which lie in the subgroup \(\pi_1(X_{1,6,7})\)):
\[
\begin{aligned}&s_1=a_1 \, a_5 \, \overline{c_3} \, \overline{a_2} \, c_1 \, \overline{b_1}, \hspace{2.25cm} s_2=a_1 \, c_3 \, \overline{a_4} \, \overline{a_2} \, c_1 \, \overline{b_1},\hspace{2.25cm} s_3=a_1 \, a_5 \, \overline{c_4} \, \overline{a_1} \, a_3 \, \overline{c_2} \, \overline{a_2} \, c_1 \, \overline{b_1},\\
&s_4=a_1 \,  a_5 \, \overline{c_4} \, \overline{a_1} \,  b_1 \, \overline{c_1} \,  a_2 \,  a_4 \, \overline{c_2}, \hspace{1cm}s_5=a_1 \,  a_6 \, \overline{c_4} \, \overline{a_1} \,  a_3 \, \overline{c_2} \, \overline{a_2} \,  c_1 \, \overline{b_1}, \hspace{1cm} s_6=a_1 \,  a_6 \, \overline{c_5} \, \overline{a_1} \,  b_1 \, \overline{c_1} \,  a_2 \,  c_2 \, \overline{a_3},\\
&s_7=a_1 \, a_7 \, \overline{c_6} \, \overline{a_1} \, b_1 \, \overline{c_1} \, a_2 \, c_2 \, \overline{a_3}, \hspace{1cm}s_8=a_1 \, c_4 \, \overline{a_5} \, \overline{a_1} \, a_3 \, \overline{c_2} \, \overline{a_2} \, c_1 \, \overline{b_1}, \hspace{1cm}s_9=a_1 \, c_5 \, \overline{a_7} \, \overline{a_1} \, b_1 \, \overline{c_1} \, a_2 \, c_2 \, \overline{a_3},\\
&s_{10} = a_1 \, c_6 \, \overline{a_7} \, \overline{a_1} \, b_1 \, \overline{c_1} \, a_2 \, c_2 \, \overline{a_3}, \hspace{3.75cm} s_{11}=a_1 \, a_5 \, \overline{c_4} \, \overline{a_1} \, b_1 \, \overline{c_1} \, a_2 \, a_4 \, \overline{c_3} \, \overline{a_1} \, a_3 \, \overline{c_2},\\
&s_{12}=a_1 \, a_5 \, \overline{c_4} \, \overline{a_1} \, b_1 \, \overline{c_1} \, a_2 \, a_4 \, \overline{c_3} \,
\overline{a_1} \, c_2 \, \overline{a_3},\hspace{2.5cm} s_{13}=a_1 \, a_5 \, \overline{c_4} \, \overline{a_1} \, b_1 \, \overline{c_1} \, a_2 \, a_4 \, \overline{c_3} \,
\overline{a_2} \, c_1 \, \overline{b_1},\\
&s_{14}=a_1 \, a_5 \, \overline{c_4} \, \overline{a_1} \, b_1 \, \overline{c_1} \, a_2 \, c_3 \, \overline{a_4} \,
\overline{a_2} \, c_1 \, \overline{b_1},\hspace{2.53cm} s_{15}=a_1 \, a_5 \, \overline{c_4} \, \overline{a_1} \, b_1 \, \overline{c_1} \, a_2 \, a_4 \, \overline{c_3} \,
\overline{a_1} \, a_3 \, \overline{b_1} \, c_1 \, c_2 \, \overline{a_3},\\
&s_{16}=a_1 \, a_5 \, \overline{c_4} \, \overline{a_1} \, b_1 \, \overline{c_1} \, a_2 \, a_4 \, \overline{c_3} \,
\overline{a_1} \, a_3 \, \overline{c_2} \, \overline{a_2} \, c_1 \, \overline{b_1},\hspace{1.3cm} s_{17}=a_1 \, a_5 \, \overline{c_4} \, \overline{a_1} \, b_1 \, \overline{c_1} \, a_2 \, a_4 \, \overline{c_3} \,
\overline{a_1} \, a_3 \, \overline{c_2} \, \overline{c_1} \, b_1 \, \overline{a_3},\\
&s_{18}=a_1 \, a_5 \, \overline{c_4} \, \overline{a_1} \, b_1 \, \overline{c_1} \, a_2 \, a_4 \, \overline{c_3} \,
\overline{a_1} \, c_2 \, \overline{a_4} \, \overline{a_2} \, c_1 \, \overline{b_1},\hspace{1.3cm} s_{19}=a_1 \, a_5 \, \overline{c_4} \, \overline{a_1} \, b_1 \, \overline{c_1} \, a_2 \, c_3 \, \overline{a_5} \,
\overline{a_1} \, a_3 \, \overline{c_2} \, \overline{a_2} \, c_1 \, \overline{b_1},\\
&s_{20}=a_1 \, a_6 \, \overline{c_5} \, \overline{a_1} \, b_1 \, \overline{c_1} \, a_2 \, a_4 \, \overline{c_3} \,
\overline{a_1} \, a_3 \, \overline{c_2} \, \overline{a_2} \, c_1 \, \overline{b_1},\hspace{1.3cm} s_{21}=a_1 \, a_7 \, \overline{c_5} \, \overline{a_1} \, b_1 \, \overline{c_1} \, a_2 \, a_4 \, \overline{c_3} \,
\overline{a_1} \, a_3 \, \overline{c_2} \, \overline{a_2} \, c_1 \, \overline{b_1},\\
&s_{22}=a_1 \, c_4 \, \overline{a_6} \, \overline{a_1} \, b_1 \, \overline{c_1} \, a_2 \, a_4 \, \overline{c_3} \,
\overline{a_1} \, a_3 \, \overline{c_2} \, \overline{a_2} \, c_1 \, \overline{b_1},\hspace{1.3cm} s_{23}=a_1 \, c_5 \, \overline{a_6} \, \overline{a_1} \, b_1 \, \overline{c_1} \, a_2 \, a_4 \, \overline{c_3} \,
\overline{a_1} \, a_3 \, \overline{c_2} \, \overline{a_2} \, c_1 \, \overline{b_1},\\
&s_{24}=a_1 \, a_5 \, \overline{c_4} \, \overline{a_1} \, b_1 \, \overline{c_1} \, a_2 \, a_4 \, \overline{c_3} \,
\overline{a_1} \, a_3 \, \overline{c_2} \, \overline{a_2} \, c_1 \, c_2 \, \overline{a_3},\hspace{0.9cm} s_{25}=a_1 \, a_5 \, \overline{c_4} \, \overline{a_1} \, b_1 \, \overline{c_1} \, a_2 \, a_4 \, \overline{c_3} \,
\overline{a_1} \, a_3 \, \overline{c_2} \, \overline{c_1} \, a_2 \, c_2 \, \overline{a_3},\\
&s_{26}=a_1 \, a_5 \, \overline{c_4} \, \overline{a_1} \, b_1 \, \overline{c_1} \, a_2 \, a_4 \, \overline{c_3} \,
\overline{a_1} \, a_3 \, \overline{c_2} \, \overline{a_2} \, b_1 \, \overline{c_1} \, a_2 \, c_2 \, \overline{a_3},\\
&s_{27}=a_1 \, a_5 \, \overline{c_4} \, \overline{a_1} \, b_1 \, \overline{c_1} \, a_2 \, a_4 \, \overline{c_3} \,
\overline{a_1} \, a_3 \, \overline{c_2} \, \overline{a_2} \, c_1 \, \overline{b_1} \, a_2 \, c_2 \, \overline{a_3}.\\
\end{aligned}\]

For this orbit data, there are two kinds of tail sets. 
For each $n\ge 7$ define four cyclically reduced words
\[ \begin{aligned}
& \alpha^{(1)}_n = a_1 \, a_{n+1} \, \overline{c_{n-1}} \, \overline{a_1} \, b_1 \, \overline{c_1} \, a_2 \, c_2 \, \overline{a_3},\hspace{2.5cm} \alpha^{(2)}_n=a_1 \, a_{n+1} \, \overline{c_n} \, \overline{a_1} \, b_1 \, \overline{c_1} \, a_2 \, c_2 \, \overline{a_3},\\
&\alpha^{(3)}_n=a_1 \, c_{n-1} \, \overline{a_{n+1}} \, \overline{a_1} \, b_1 \, \overline{c_1} \, a_2 \, c_2 \, \overline{a_3},\hspace{2.5cm} \alpha^{(4)}_n=a_1 \, c_n\, \overline{a_{n+1}} \, \overline{a_1} \, b_1 \, \overline{c_1} \, a_2 \, c_2 \, \overline{a_3}.\\
\end{aligned}\]
Let \[T_{n,n+1} = \{ \alpha^{(i)}_n\ |\ i=1,2,3,4\}. \]

Next, for each $n\ge 13$ define two cyclically reduced words
\[ \begin{aligned}
& \beta^{(1)}_n= a_1 \, c_n \, \overline{c_1} \, a_2 \, c_2 \, \overline{a_3},\\
& \beta^{(2)}_n = a_1 \, a_5 \,  \overline{c_4} \,  \overline{a_1} \, b_1 \,  \overline{c_1} \, a_2 \, a_4 \,  \overline{c_3} \,  \overline{a_1} \, a_3 \,  \overline{c_2} \,  \overline{a_2} \, c_1 \,  \overline{c_n} \,  \overline{a_1} \, b_1 \,  \overline{c_1} \, a_2 \, c_2 \,  \overline{a_3}.\\
\end{aligned}\]
Let 
\[ \hat T_{n,n+1} = \{ \beta^{(1)}_n, \beta^{(2)}_n\}.\]

\subsubsection{Invariant Semigroup}
For \(n\ge13\), let \(S_{n,n+1}\subset\pi_1(X_{1,n,n+1})\) be the semigroup generated by the elements of \(C_{6,7}\) together with $\hat T_{n,n+1}$ and  $T_{7,8}, \dots T_{n,n+1}$:
\[
  S_{n,n+1}
  \;=\;
 \left\langle C_{6,7} \cup \left(\bigcup_{7\le k\le n} T_{k,k+1} \right) \cup \hat T_{n,n+1}\right\rangle.
\]

From our explicit lists of the fixed core $C_{6,7}$, the tail-sets $T_{k,k+1}$ for $k\ge 7$, and the special tail-set $\hat T_{n, n+1}$ for $n\ge 13$, it follows immediately that $\{S_{n,n+1}\}_{n\ge 13}$ satisfies the concatenation-compatibility and core-tail generation property.

\subsubsection{the Base case : $n=13$} Let $\delta_4$ be the word given by
\begin{equation}\label{E:delta_4}
\delta_4\ = \ \,a_1\, \overline{a_2}\,c_1\, \overline{b_1}\,a_3\, \overline{c_2} \,a_4\, \overline{c_3}\, a_5\, \overline{c_4}\, a_6 \, \overline{c_5}\, \overline{a_1}\, b_1\, \overline{c_1}\,a_2\,c_2\, \overline{a_3}\,a_1\,a_5\, \overline{c_4}\, \overline{a_1}
\end{equation}

\begin{lem}\label{L:11314}
When \(n=13\), for each generator $g$ of $S_{13,14}$
\[
  g\;\in\;\{ s_1,\dots,s_{27},\,\beta^{(1)}_{13},\beta^{(2)}_{13},\,\alpha^{i}_{k}, \text{for } i=1,\dots 4, k=7,\dots, 13\},
\]
such that
\[
  f_{(1,13,14)*}^{\,6}(g)
  \;=\;
  \delta_4\;g_1\,g_2\cdots g_{m_g}\;\overline{\delta_4},
\]
where each 
\(\;g_i\;\in\;\{ s_1,\dots,s_{27},\,\beta^{(1)}_{13},\beta^{(2)}_{13},\,\alpha^{i}_{k}, \text{for } i=1,\dots 4, k=7,\dots, 13\}\). Moreover, we have 
\[ \begin{aligned} &f_{(1,13,14)*}^{6}(\alpha^{(1)}_{7})\ =\ \delta_4 \,s_{7}\, s_2\,s_6\,s_{16}\, \alpha_{13}^{(1)}\, s_2\,s_8\,s_{23}\,s_{10}\,s_{16}\,\alpha_7^{(4)}\,s_2\,s_6\,s_{16}\, \alpha_8^{(4)}\,s_2\,s_8\,\overline{\delta_4},\\
&f_{(1,13,14)*}^{6}(\alpha^{(1)}_{8})\ =\ \delta_4\,s_7\,s_2\,s_6\, \beta_{13}^{(2)}\, s_2\,s_8\,s_{23}\,s_{10}\,s_{16}\,\alpha_7^{(4)}\, s_2\,s_6\,s_{16}\, \alpha_8^{(4)}\, s_2\,s_8\,\overline{\delta_4},\\
&f_{(1,13,14)*}^{6}(\alpha^{(2)}_{7})\ =\ \delta_4\,s_{7}\, s_2\,s_6\,s_{16}\, \alpha_{13}^{(2)}\, s_2\,s_8\,s_{23}\,s_{10}\,s_{16}\,\alpha_7^{(4)}\,s_2\,s_6\,s_{16}\, \alpha_8^{(4)}\,s_2\,s_8\,\overline{\delta_4},\\
&f_{(1,13,14)*}^{6}(\alpha^{(3)}_{7})\ =\ \delta_4\,s_{7}\, s_2\,s_6\,s_{16}\, \alpha_{13}^{(3)}\, s_2\,s_8\,s_{23}\,s_{10}\,s_{16}\,\alpha_7^{(4)}\,s_2\,s_6\,s_{16}\, \alpha_8^{(4)}\,s_2\,s_8\,\overline{\delta_4},\\
&f_{(1,13,14)*}^{6}(\alpha^{(3)}_{8})\ =\ \delta_4\,s_7\,s_2\,s_6\, s_{27} \beta_{13}^{(1)}\, s_2\,s_8\,s_{23}\,s_{10}\,s_{16}\,\alpha_7^{(4)}\, s_2\,s_6\,s_{16}\, \alpha_8^{(4)}\, s_2\,s_8\,\overline{\delta_4},\\
&f_{(1,13,14)*}^{6}(\alpha^{(4)}_{7})\ =\ \delta_4\,s_{7}\, s_2\,s_6\,s_{16}\, \alpha_{13}^{(4)}\, s_2\,s_8\,s_{23}\,s_{10}\,s_{16}\,\alpha_7^{(4)}\,s_2\,s_6\,s_{16}\, \alpha_8^{(4)}\,s_2\,s_8\,\overline{\delta_4}.\\
\end{aligned}
\]

\end{lem}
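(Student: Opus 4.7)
The plan is to verify Lemma~\ref{L:11314} by direct finite computation, using the explicit formulas for $f_{(1,13,14)*}$ given in \eqref{E:1np1n} of Section~\ref{SS:1npn} applied six times. Concretely, the semigroup $S_{13,14}$ has a finite generating set consisting of the $27$ core words $s_1,\dots,s_{27}\in C_{6,7}$, the two special tail words $\beta^{(1)}_{13},\beta^{(2)}_{13}\in\hat T_{13,14}$, and the $28$ regular tail words $\alpha^{(i)}_k$ for $i=1,\dots,4$ and $k=7,\dots,13$. Each generator is a cyclically reduced word whose letters lie in $\{a_1,\dots,a_{14},\,b_1,\,c_1,\dots,c_{13}\}$, so the iterated image $f_{(1,13,14)*}^{6}(g)$ is a well-defined element of $\pi_1(X_{1,13,14})$ whose reduced form can be computed in finitely many symbolic steps.

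First, I would implement the substitution rules \eqref{E:1np1n} symbolically (in \textsf{GAP}/\textsf{FGA} or any free-group calculator), then apply them iteratively six times to each generator, performing free reduction after each step. The outputs are concrete reduced words; to extract the claimed structure, I factor out a common left prefix and right suffix. The proposal is to show that the prefix is precisely the word $\delta_4$ in \eqref{E:delta_4} and the suffix is $\overline{\delta_4}$. This amounts to checking that, for each generator $g$, the reduced form of $f_{(1,13,14)*}^{6}(g)$ begins with $\delta_4$ and ends with $\overline{\delta_4}$, and that removing these bookends leaves a word that factors as a positive concatenation of the listed generators with no cancellation at the interfaces.

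The second stage is to verify positivity: after stripping $\delta_4$ and $\overline{\delta_4}$, the remaining word must parse uniquely as a left-to-right concatenation of generators of $S_{13,14}$ (no inverse-generator letters appearing in reduced form as a whole unit). Because every generator of $S_{13,14}$ begins with $a_1$ and has a cyclically reduced form, the interface between adjacent blocks is unambiguous: each block is detected by locating the successive occurrences of the marker letter $a_1$ together with the associated suffix pattern. The highlighted formulas for $\alpha^{(1)}_7,\alpha^{(1)}_8,\alpha^{(2)}_7,\alpha^{(3)}_7,\alpha^{(3)}_8,\alpha^{(4)}_7$ in the statement are recorded separately because they exhibit the key feature later used in the inductive step (Lemma~\ref{L:11nspecial} analogue): precisely one occurrence of a maximum-index tail word $\alpha^{(i)}_{13}$ or $\beta^{(j)}_{13}$ in the decomposition, which will become $\alpha^{(i)}_n$ (resp.\ $\beta^{(j)}_n$) after truncation-invariance is applied (Proposition~\ref{P:actionP}, Corollary~\ref{C:np1}).

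The hard part is purely bookkeeping: there are roughly $57$ generators, each of word length up to $\sim 30$, and six iterates cause the length to grow by a factor of roughly $\lambda^6$, so each $f_{(1,13,14)*}^{6}(g)$ has length on the order of several hundred letters. Carrying out these substitutions and reductions by hand would be error-prone; the right strategy is a scripted symbolic computation that outputs the reduced word and then runs a greedy parser to decompose it against the generating list. The verification then reduces to confirming (i) that the greedy parser succeeds for all $57$ inputs, (ii) that the common prefix and suffix are $\delta_4$ and $\overline{\delta_4}$, and (iii) that the six highlighted decompositions match the formulas stated. Once this finite certification is completed, the lemma follows, and together with the core-tail structure of $\{S_{n,n+1}\}$ it provides the base case needed to feed into the Core-Tail Induction Principle (Proposition~\ref{P:ctinduction}) for the family $(1,n,n+1)$.
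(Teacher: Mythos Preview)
Your approach is correct and matches the paper's: like the analogous Lemmas~\ref{L:1115}, \ref{L:1218}, and \ref{L:1162}, this result is established by direct symbolic computation of the sixth iterate on the finite list of generators, followed by parsing against the generating set. One minor correction: the relevant substitution rules are those of equation~\eqref{E:1nnp1} in Section~\ref{SS:1nnp} (orbit datum $(1,n,n+1)$), not \eqref{E:1np1n} of Section~\ref{SS:1npn} (which handles $(1,n+1,n)$).
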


\medskip

Then by the Core-Tail Induction Principle (Proposition \ref{P:ctinduction}), we have:  

\begin{prop}
For every \(n\ge13\), the semigroup \(S_{n,n+1}\) is invariant under \(F^6_*\).  Moreover, \(F^6_*\) acts positively on \(S_{n,n+1}\).
\end{prop}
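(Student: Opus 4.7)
The plan is to deduce this from the Core-Tail Induction Principle (Proposition~\ref{P:ctinduction}), taking $n_c = 13$ as the base case established in Lemma~\ref{L:11314}. First I would check that the family $\{S_{n,n+1}\}_{n\ge 13}$ satisfies the two hypotheses of the principle. For concatenation-compatibility (Definition~\ref{D:nocancel}), observe from the explicit lists that every generator in $C_{6,7}$, $T_{k,k+1}$, and $\hat T_{n,n+1}$ is a cyclically reduced word beginning with $a_1$ and ending with a letter whose inverse is not $a_1$, so the concatenation of any two generators is already reduced and admits no cyclic cancellation at the seam. Core-tail generation (Definition~\ref{D:coretail}) is visible by inspection: the core is $C_{6,7}$, the tails $T_{k,k+1}$ each contain exactly the four elements of the form $a_1\,\alpha\,\omega$ with $\alpha\in\{a_{n+1}^{\pm},\,a_{n+1}\overline{c_{n-1}},\,c_n\overline{a_{n+1}}\}$ (and similar), and the special tail $\hat T_{n,n+1}$ consists of two words involving $c_n$ from $\pi_1(X_{1,6,7})$-letters, exactly as required.

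With these verifications in hand, Proposition~\ref{P:ctinduction} reduces the proof to the inductive step $n\mapsto n+1$. Here I would combine the truncation-invariance lemmas: Lemmas~\ref{L:TIa}-\ref{L:TIc} for core generators $s_i$ and purely-$a$ or purely-$c$ tail elements, Corollary~\ref{C:tail} for the topmost tail indices, and Corollary~\ref{C:np1} (for $(1,n,n+1)$ orbit data) for the mixed $a_j\overline{c_{j-\epsilon}}$ tails appearing in $\alpha^{(1)}_n,\alpha^{(3)}_n$. These truncation results show that the images of all $s_i\in C_{6,7}$ and of $\alpha^{(i)}_k$, $\beta^{(j)}_k$ with $k\le n-6$ are identical in $X_{1,n,n+1}$ and $X_{1,n+1,n+2}$. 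The boundary tail elements at indices $n-5, n-4,\dots,n$ shift by one in the next surface and, by the corollaries, their images are obtained from the base-case images of Lemma~\ref{L:11314} by relabeling all top indices by $+1$; in particular the $\hat T_{n,n+1}$ generators get absorbed into ordinary $T_{k,k+1}$'s at the next level while a fresh $\hat T_{n+1,n+2}$ appears through $f_*^6(\alpha^{(i)}_{n-4})$.

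Positivity then follows immediately: Lemma~\ref{L:11314} gives $f_{(1,13,14)*}^6(g)=\delta_4\,g_1\cdots g_{m_g}\,\overline{\delta_4}$ with each $g_i$ a generator, and under the truncation-invariance bookkeeping the analogous formula holds in $X_{1,n,n+1}$ with the same conjugator $\delta_4$ (which lies in $\pi_1(X_{1,6,7})$, hence is common to all levels). By concatenation-compatibility, consecutive blocks $\cdots g_{i,m_i}\,\overline{\delta_4}\,\delta_4\,g_{i+1,1}\cdots$ arising from the images of two generators cancel the pair $\overline{\delta_4}\,\delta_4$ without touching the $g_{\bullet}$'s, so cyclic reduction of $f_*^6(\eta)$ for any $\eta=g_1\cdots g_N\in S_{n,n+1}$ yields a positive word in the generators of $S_{n,n+1}$.

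The main obstacle is the intertwining of the four-element families $T_{k,k+1}=\{\alpha^{(1)}_k,\dots,\alpha^{(4)}_k\}$ with the two-element special families $\hat T_{n,n+1}=\{\beta^{(1)}_n,\beta^{(2)}_n\}$, visible in the six explicit identities of Lemma~\ref{L:11314}: different $\alpha^{(i)}_7$'s iterate to products involving different combinations of $\alpha^{(j)}_{13}$, $\beta^{(1)}_{13}$, or $\beta^{(2)}_{13}$. I would verify stability of this pattern under $n\mapsto n+1$ by a case-by-case tracking of boundary generators, treating each $\alpha^{(i)}$-type and each $\beta^{(j)}$-type separately, invoking the appropriate clause of Corollary~\ref{C:np1} (distinguishing $\epsilon=1$ and $\epsilon=2$) and of Corollary~\ref{C:tail}. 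Once these finitely many boundary computations are matched against the base case, the induction closes.
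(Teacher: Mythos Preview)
Your approach is essentially identical to the paper's: verify concatenation-compatibility and core-tail generation from the explicit generator lists, use Lemma~\ref{L:11314} as the base case, and then invoke the Core-Tail Induction Principle (Proposition~\ref{P:ctinduction}). The paper's proof is in fact just the one-line citation of Proposition~\ref{P:ctinduction}; your write-up simply unpacks the bookkeeping that the principle is designed to encapsulate.
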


\subsubsection{Finite-range cases: \(4\le n\le12\)}

For \(4\le n\le12\), invariance and positivity are checked by direct computation using the formulas in \eqref{E:1nnp1}.  Concretely, starting from the single generator
\[
  s_1 = a_1\,a_5\,\overline{c_3}\,\overline{a_2}\,c_1\,\overline{b_1},
\]
its \(6\)-fold iterate under \(F_*\) produces exactly the \(4n+5\) generators of \(S_{n,n+1}\) for \(n=6,7,\dots,12\), and \(21\) (resp.\ \(25\)) generators when \(n=4\) (resp.\ \(n=5\)).

Moreover, for \(n\ge5\) the common conjugator is
\[
  \delta_4
  \quad\text{(see \eqref{E:delta_4})},
\]
whereas in the special case \(n=4\) the appropriate conjugator is
\[
  \hat\delta_4
  =\;
  a_1\,\overline{a_2}\,c_1\,\overline{b_1}\,a_3\,\overline{c_2}\,
  a_4\,\overline{c_3}\,a_5\,\overline{c_4}\,\overline{a_1}\,
  a_2\,c_2\,\overline{a_3}\,a_1\,a_5\,\overline{c_4}\,\overline{a_1}.
\]

\begin{prop}
For every \(4\le n\le12\), there is a semigroup
\[
  S_{n,n+1}\;\subset\;\pi_1\bigl(X_{1,n,n+1}\bigr),
\]
which is invariant under \(F_*^6\).  Moreover, \(F_*^6\) acts positively on \(S_{n,n+1}\).
\end{prop}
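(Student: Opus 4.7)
The plan is to handle the nine cases $n \in \{4, 5, \ldots, 12\}$ as independent finite computations; there is no induction on $n$, so the Core-Tail Induction Principle is not invoked here. For each such $n$, I start from the seed element
\[
  s_1 \;=\; a_1\,a_5\,\overline{c_3}\,\overline{a_2}\,c_1\,\overline{b_1},
\]
which lies in $\pi_1(X_{1,n,n+1})$ for every $n \ge 4$, and iteratively apply $F_*^6 = (f_{(1,n,n+1)*})^6$ using the formulas in \eqref{E:1nnp1}, cyclically reducing at each stage. Every cyclically reduced subword that appears as a block of $F_*^6(g)$ is added to the generating list for $S_{n,n+1}$, and the process is repeated until the list stabilizes.

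The verification at each step is that the image of every current generator $g$ factors as
\[
  F_*^6(g) \;=\; \delta_4 \cdot g_{1}\,g_{2}\cdots g_{m_g} \cdot \overline{\delta_4},
\]
where $\delta_4$ is the fixed conjugator in \eqref{E:delta_4} and each $g_i$ lies in the generating set. Since every generator is cyclically reduced and begins with the common letter $a_1$, no cancellation can occur between consecutive blocks $g_i g_{i+1}$, so the concatenation-compatibility property propagates from generators to arbitrary $\eta \in S_{n,n+1}$ and yields both $F_*^6$-invariance and positivity. Empirically I expect the stabilized list to contain exactly $4n + 5$ generators for $6 \le n \le 12$ and $25$ generators for $n = 5$, matching the count announced in the analogous large-$n$ propositions.

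The main obstacle is the exceptional case $n = 4$. Here the letters $a_6$ and $c_5$ that appear in $\delta_4$ are not generators of $\pi_1(X_{1,4,5})$, so one cannot use the same conjugator; instead I would verify that every $F_*^6$-image factors as $\hat\delta_4 \cdot (\text{positive product}) \cdot \overline{\hat\delta_4}$ using the truncated conjugator $\hat\delta_4$ specified in the statement. Because several of the tail formulas from the previous subsections degenerate in this low-index regime, the factorization must be checked by direct calculation on each of the $21$ anticipated generators rather than inherited from the truncation-invariance lemmas of Section~\ref{S:homotopy}. Apart from this bookkeeping, the entire proof is a straightforward if tedious finite verification that one could readily automate.
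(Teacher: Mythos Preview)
Your proposal is correct and follows essentially the same approach as the paper: a direct finite computation for each $n\in\{4,\dots,12\}$, seeded by $s_1=a_1\,a_5\,\overline{c_3}\,\overline{a_2}\,c_1\,\overline{b_1}$, with the conjugator $\delta_4$ for $n\ge5$ and the truncated $\hat\delta_4$ for $n=4$. The paper likewise records the generator counts $4n+5$ for $6\le n\le12$, $25$ for $n=5$, and $21$ for $n=4$, and treats the entire range as a direct verification without invoking the Core--Tail Induction Principle.
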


\subsection{Orbit datum \((1,n+1,n)\) with \(n\ge4\)}

As before, we fix a finite core \(C=C_{8,7}\subset\pi_1(X_{1,8,7})\) together with tail-sets \(T_{n+1,n},\hat T_{n+1,n}\) whose union generates 
\[
  S_{n+1,n}
  = \bigl\langle C\;\cup\;\bigcup_{8\le k\le n}T_{k+1,k}\cup\hat T_{n+1,n}\bigr\rangle.
\]
We then verify by direct computation that the base case \(n=16\) (and the finite-range cases \(4\le n\le15\)) is \(F_*^8\)-invariant and positive.  Having checked concatenation-compatibility and core-tail generation, the Core-Tail Induction Principle (Proposition \ref{P:ctinduction}) gives the result for all \(n\ge4\).

\subsubsection{The core generators and tail-sets. }
Let 
\[
 C_{8,7}=\{\,s_1,\dots,s_{27}\}
\]
be the following twenty-seven cyclically reduced words ending with the common letter $c_1$ (all of which lie in the subgroup \(\pi_1(X_{1,8,7})\)):
\[
\begin{aligned}&s_1=c_3 \, \overline{a_4} \,  \overline{a_1} \, c_2 \,  \overline{a_3} \,  \overline{a_2} \, c_1,\hspace{1cm}s_2=a_3 \,  \overline{c_2} \, a_1 \, a_5 \,  \overline{c_5} \,  \overline{a_1} \, b_1 \,  \overline{a_2} \, c_1, \hspace{1cm}
s_3=a_3 \,  \overline{c_2} \, a_1 \, c_4 \,  \overline{c_5} \,  \overline{a_1} \, b_1 \,  \overline{a_2} \, c_1,\\
&s_4=a_3 \, \overline{c_2} \,a_1 \,a_4 \, \overline{c_3} \, \overline{c_1} \,a_2 \, \overline{b_1} \,a_1 \,
a_6 \, \overline{c_5} \, \overline{a_1} \,b_1 \, \overline{a_2} \,c_1, \hspace{1.5cm} s_5=a_3 \, \overline{c_2} \,a_1 \,a_4 \, \overline{c_3} \, \overline{c_1} \,a_2 \, \overline{b_1} \,a_1 \,
a_6 \, \overline{c_6} \, \overline{a_1} \,b_1 \, \overline{a_2} \,c_1,\\
&s_6=a_3 \, \overline{c_2} \,a_1 \,a_4 \, \overline{c_3} \, \overline{c_1} \,a_2 \, \overline{b_1} \, a_1 \,
c_5 \, \overline{a_6} \, \overline{a_1} \,b_1 \, \overline{a_2} \,c_1,\hspace{1.5cm} s_7= a_4 \, \overline{c_4} \, \overline{a_1} \,c_2 \, \overline{a_3} \, \overline{c_1} \,a_2 \, \overline{b_1} \,
a_1 \,a_6 \, \overline{c_5} \, \overline{a_1} \,b_1 \, \overline{a_2} \,c_1,\\
&s_8=c_3 \, \overline{a_4} \, \overline{a_1} \,c_2 \, \overline{a_3} \, \overline{c_1} \,a_2 \, \overline{b_1} \,
a_1 \,a_7 \, \overline{c_6} \, \overline{a_1} \,b_1 \, \overline{a_2} \,c_1,\hspace{1.5cm} s_9=c_3 \, \overline{a_4} \, \overline{a_1} \,c_2 \, \overline{a_3} \, \overline{c_1} \,a_2 \, \overline{b_1} \,
a_1 \,c_7 \, \overline{a_7} \, \overline{a_1} \,b_1 \, \overline{a_2} \,c_1,\\
&s_{10}=a_3 \, \overline{c_2} \,a_1 \,a_5 \, \overline{c_4} \, \overline{a_1} \,c_2 \, \overline{a_3} \,
\overline{c_1} \,a_2 \, \overline{b_1} \,a_1 \,a_6 \, \overline{c_5} \, \overline{a_1} \,b_1 \,
\overline{a_2} \,c_1,\\
&s_{11}=a_3 \, \overline{c_2} \,a_1 \,c_4 \, \overline{a_5} \, \overline{a_1} \,c_2 \, \overline{a_3} \,
\overline{c_1} \,a_2 \, \overline{b_1} \,a_1 \,a_6 \, \overline{c_5} \, \overline{a_1} \,b_1 \,
\overline{a_2} \,c_1,\\
&s_{12}=a_4 \, \overline{c_3} \, \overline{c_1} \,a_2 \, \overline{b_1} \,a_1 \,a_5 \, \overline{c_4} \,
\overline{a_1} \,c_2 \, \overline{a_3} \, \overline{c_1} \,a_2 \, \overline{b_1} \,a_1 \,a_6 \,
\overline{c_5} \, \overline{a_1} \,b_1 \, \overline{a_2} \,c_1,\\
&s_{13}=c_3 \, \overline{a_4} \, \overline{c_1} \,a_2 \, \overline{b_1} \,a_1 \,a_5 \, \overline{c_4} \,
\overline{a_1} \,c_2 \, \overline{a_3} \, \overline{c_1} \,a_2 \, \overline{b_1} \,a_1 \,a_6 \,
\overline{c_5} \, \overline{a_1} \,b_1 \, \overline{a_2} \,c_1,\\
&s_{14}=a_3 \, \overline{c_2} \,a_1 \,a_4 \, \overline{c_3} \, \overline{c_1} \,a_2 \, \overline{b_1} \,a_1 \,
a_5 \, \overline{c_4} \, \overline{a_1} \,c_2 \, \overline{a_3} \, \overline{c_1} \,a_2 \, \overline{b_1} \,
a_1 \,a_6 \, \overline{c_5} \, \overline{a_1} \,b_1 \, \overline{a_2} \,c_1,\\
&s_{15}=a_3 \, \overline{c_2} \,a_1 \,a_4 \, \overline{c_3} \, \overline{c_1} \,a_2 \, \overline{b_1} \,a_1 \,
a_5 \, \overline{c_4} \, \overline{a_1} \,c_2 \, \overline{a_3} \, \overline{c_1} \,a_2 \, \overline{b_1} \,
a_1 \,a_7 \, \overline{c_6} \, \overline{a_1} \,b_1 \, \overline{a_2} \,c_1,\\
&s_{16}=a_3 \, \overline{c_2} \,a_1 \,a_4 \, \overline{c_3} \, \overline{c_1} \,a_2 \, \overline{b_1} \,a_1 \,
a_5 \, \overline{c_4} \, \overline{a_1} \,c_2 \, \overline{a_3} \, \overline{c_1} \,a_2 \, \overline{b_1} \,
a_1 \,a_7 \, \overline{c_7} \, \overline{a_1} \,b_1 \, \overline{a_2} \,c_1,\\
&s_{17}=a_3 \, \overline{c_2} \,a_1 \,a_4 \, \overline{c_3} \, \overline{c_1} \,a_2 \, \overline{b_1} \,a_1 \,
a_5 \, \overline{c_4} \, \overline{a_1} \,c_2 \, \overline{a_3} \, \overline{c_1} \,a_2 \, \overline{b_1} \,
a_1 \,c_6 \, \overline{a_6} \, \overline{a_1} \,b_1 \, \overline{a_2} \,c_1,\\
&s_{18}=a_3 \, \overline{c_2} \,a_1 \,a_4 \, \overline{c_3} \, \overline{c_1} \,a_2 \, \overline{b_1} \,a_1 \,
a_5 \, \overline{c_4} \, \overline{a_1} \,c_2 \, \overline{a_3} \, \overline{c_1} \,a_2 \, \overline{b_1} \,
a_1 \,c_6 \, \overline{a_7} \, \overline{a_1} \,b_1 \, \overline{a_2} \,c_1,\\
&s_{19}=a_3 \, \overline{c_2} \,a_1 \,a_4 \, \overline{c_3} \, \overline{c_1} \,a_2 \, \overline{b_1} \,a_1 \,
c_5 \, \overline{a_5} \, \overline{a_1} \,c_2 \, \overline{a_3} \, \overline{c_1} \,a_2 \, \overline{b_1} \,
a_1 \,a_6 \, \overline{c_5} \, \overline{a_1} \,b_1 \, \overline{a_2} \,c_1,\\
&s_{20}=a_3 \, \overline{c_2} \,a_1 \,c_4 \, \overline{a_4} \, \overline{c_1} \,a_2 \, \overline{b_1} \,a_1 \,
a_5 \, \overline{c_4} \, \overline{a_1} \,c_2 \, \overline{a_3} \, \overline{c_1} \,a_2 \, \overline{b_1} \,
a_1 \,a_6 \, \overline{c_5} \, \overline{a_1} \,b_1 \, \overline{a_2} \,c_1,\\
&s_{21}=c_3 \, \overline{a_3} \,a_1 \,a_4 \, \overline{c_3} \, \overline{c_1} \,a_2 \, \overline{b_1} \,a_1 \,
a_5 \, \overline{c_4} \, \overline{a_1} \,c_2 \, \overline{a_3} \, \overline{c_1} \,a_2 \, \overline{b_1} \,
a_1 \,a_6 \, \overline{c_5} \, \overline{a_1} \,b_1 \, \overline{a_2} \,c_1,\\
&s_{22}=c_3 \, \overline{a_4} \, \overline{a_1} \,a_3 \, \overline{c_3} \, \overline{c_1} \,a_2 \, \overline{b_1} \,
a_1 \,a_5 \, \overline{c_4} \, \overline{a_1} \,c_2 \, \overline{a_3} \, \overline{c_1} \,a_2 \,
\overline{b_1} \,a_1 \,a_6 \, \overline{c_5} \, \overline{a_1} \,b_1 \, \overline{a_2} \,c_1,\\
&s_{23}=c_3 \, \overline{a_4} \, \overline{a_1} \,a_3 \, \overline{c_2} \,a_1 \,a_4 \, \overline{c_3} \,
\overline{c_1} \,a_2 \, \overline{b_1} \,a_1 \,a_5 \, \overline{c_4} \, \overline{a_1} \,c_2 \,
\overline{a_3} \, \overline{c_1} \,a_2 \, \overline{b_1} \,a_1 \,a_6 \, \overline{c_5} \, \overline{a_1} \,
b_1 \, \overline{a_2} \,c_1,\\
&s_{24}=c_3 \, \overline{a_4} \, \overline{a_1} \,c_2 \, \overline{a_3} \,a_1 \,a_4 \, \overline{c_3} \,
\overline{c_1} \,a_2 \, \overline{b_1} \,a_1 \,a_5 \, \overline{c_4} \, \overline{a_1} \,c_2 \,
\overline{a_3} \, \overline{c_1} \,a_2 \, \overline{b_1} \,a_1 \,a_6 \, \overline{c_5} \, \overline{a_1} \,
b_1 \, \overline{a_2} \,c_1,\\
&s_{25}=c_3 \, \overline{a_4} \, \overline{a_1} \,c_2 \, \overline{a_3} \, \overline{a_2} \,b_1 \, \overline{c_2} \,
a_1 \,a_4 \, \overline{c_3} \, \overline{c_1} \,a_2 \, \overline{b_1} \,a_1 \,a_5 \, \overline{c_4} \,
\overline{a_1} \,c_2 \, \overline{a_3} \, \overline{c_1} \,a_2 \, \overline{b_1} \,a_1 \,a_6 \,
\overline{c_5} \, \overline{a_1} \,b_1 \, \overline{a_2} \,c_1,\\
&s_{26}=c_3 \, \overline{a_4} \, \overline{a_1} \,c_2 \, \overline{b_1} \,a_2 \,a_3 \, \overline{c_2} \,a_1 \,
a_4 \, \overline{c_3} \, \overline{c_1} \,a_2 \, \overline{b_1} \,a_1 \,a_5 \, \overline{c_4} \,
\overline{a_1} \,c_2 \, \overline{a_3} \, \overline{c_1} \,a_2 \, \overline{b_1} \,a_1 \,a_6 \,
\overline{c_5} \, \overline{a_1} \,b_1 \, \overline{a_2} \,c_1,\\
&s_{27}=c_3 \, \overline{a_4} \, \overline{a_1} \,c_2 \, \overline{a_3} \, \overline{c_1} \,a_2 \,a_3 \,
\overline{c_2} \,a_1 \,a_4 \, \overline{c_3} \, \overline{c_1} \,a_2 \, \overline{b_1} \,a_1 \,a_5 \,
\overline{c_4} \, \overline{a_1} \,c_2 \, \overline{a_3} \, \overline{c_1} \,a_2 \, \overline{b_1} \,a_1 \,
a_6 \, \overline{c_5} \, \overline{a_1} \,b_1 \, \overline{a_2} \,c_1. \\
\end{aligned}
\]

For this orbit data, there are two kinds of tail sets. 
For each $n\ge 8$ define four cyclically reduced words ending with the common letter $c_1$
\[ \begin{aligned}
& \alpha^{(1)}_n = c_3 \, \overline{a_4} \, \overline{a_1} \, c_2 \, \overline{a_3} \, \overline{c_1} \, a_2 \, \overline{b_1} \, a_1 \, a_n \, \overline{c_{n-1}}\, \overline{a_1} \, b_1 \, \overline{a_2} \, c_1,\\ &\alpha^{(2)}_n=c_3 \, \overline{a_4} \, \overline{a_1} \, c_2 \, \overline{a_3} \, \overline{c_1} \, a_2 \, \overline{b_1} \, a_1 \, a_n \, \overline{c_{n}}\, \overline{a_1} \, b_1 \, \overline{a_2} \, c_1,\\
&\alpha^{(3)}_n=c_3 \, \overline{a_4} \, \overline{a_1} \, c_2 \, \overline{a_3} \, \overline{c_1} \, a_2 \, \overline{b_1} \, a_1 \, c_{n-1} \, \overline{a_{n}}\, \overline{a_1} \, b_1 \, \overline{a_2} \, c_1,\\ &\alpha^{(4)}_n=c_3 \, \overline{a_4} \, \overline{a_1} \, c_2 \, \overline{a_3} \, \overline{c_1} \, a_2 \, \overline{b_1} \, a_1 \, c_n \, \overline{a_{n}}\, \overline{a_1} \, b_1 \, \overline{a_2} \, c_1.\\
\end{aligned}\]
Let \[T_{n+1,n} = \{ \alpha^{(i)}_n\ |\ i=1,2,3,4\}. \]

Next, for each $n\ge 16$ define four cyclically reduced words ending with the common letter $c_1$
\[ \begin{aligned}
& \beta^{(1)}_n= c_3 \, \overline{a_4} \, \overline{a_1} \, c_2 \, \overline{a_3} \, \overline{c_1} \, a_1 \, c_{n+1} \, \overline{a_2} \, c_1,\\
& \beta^{(2)}_n = c_3 \, \overline{a_4} \, \overline{a_1} \, c_2 \, \overline{a_3} \, \overline{c_1} \, a_2 \, \overline{c_{n+1}} \, \overline{a_1} \, c_1,\\
& \beta^{(3)}_n= c_3 \, \overline{a_4} \, \overline{a_1} \, c_2 \, \overline{a_3} \, \overline{c_1} \, a_2\, \overline{b_1}\,a_1 \, c_{n} \, \overline{a_2} \, c_1,\\
& \beta^{(4)}_n= c_3 \, \overline{a_4} \, \overline{a_1} \, c_2 \, \overline{a_3} \, \overline{c_1} \, a_2\, \overline{c_n}\,\overline{a_1} \, b_1 \, \overline{a_2} \, c_1,\\
\end{aligned}\]
Let 
\[ \hat T_{n+1,n} = \{ \beta^{(1)}_n, \beta^{(2)}_n,\beta^{(3)}_n,\beta^{(3)}_{n+1},\beta^{(4)}_n,\beta^{(4)}_{n+1}\}.\]

\subsubsection{Invariant Semigroup}
For \(n\ge16\), let \(S_{n+1,n}\subset\pi_1(X_{1,n+1,n})\) be the semigroup generated by the elements of \(C_{8,7}\) together with $\hat T_{n+1,n}$ and  $T_{9,8}, \dots T_{n+1,n}$:
\[
  S_{n+1,n}
  \;=\;
 \left\langle C_{8,7} \cup \left(\bigcup_{8\le k\le n} T_{k+1,k} \right) \cup \hat T_{n+1,n}\right\rangle.
\]

From our explicit lists of the fixed core $C_{8,7}$, the tail-sets $T_{k+1,k}$ for $k\ge 8$, and the special tail-set $\hat T_{n+1, n}$ for $n\ge 16$, it follows immediately that $\{S_{n+1,n}\}_{n\ge 16}$ satisfies the concatenation-compatibility and core-tail generation property.

\subsubsection{the Base case : $n=16$} Let $\delta_5$ be the reduced word given by
\begin{equation}\label{E:delta_5}
\delta_5\ = \ \,a_1 \, \overline{c_1} \, a_2 \, \overline{b_1} \, c_2 \, \overline{a_3} \, c_3 \, \overline{a_4} \, c_4 \, \overline{a_5} \, c_5 \, \overline{a_6} \, c_6 \, \overline{a_7} \, \overline{a_1} \, b_1 \, \overline{a_2} \, c_1 \, a_3 \, \overline{c_2} \, a_1 \, a_4 \, \overline{c_3}
\end{equation}

\begin{lem}\label{L:11716}
When \(n=16\), for each generator $g$ of $S_{17,16}$
\[
  g\;\in\;C\;\cup\;\bigcup_{8\le k\le 16}T_{k+1,k}\cup\hat T_{17,16}
\]
such that
\[
  f_{(1,17,16)*}^{\,8}(g)
  \;=\;
  \delta_5\;g_1\,g_2\cdots g_{m_g}\;\overline{\delta_5},
\]
where each 
\(\;g_i\;\in\;\{ s_1,\dots,s_{27},\,\beta^{(1)}_{16},\beta^{(2)}_{16},\beta^{(3)}_{16},\beta^{(3)}_{17},\beta^{(4)}_{16},\beta^{(4)}_{17},\,\alpha^{i}_{k}, \text{for } i=1,\dots 4, k=8,\dots, 16\}\). Moreover, we have 
\[ \begin{aligned} 
&f_{(1,17,16)*}^{8}(\alpha^{(1)}_{8})\ =\ \delta_5\,s_{18}\,s_ 6\,s_ 3\,\alpha_{12}^{(3)}\,s_ {14}\,s_ 8\,s_ 3\,\alpha_8^{(1)}\,s_ 6\,s_ 3\,\alpha_9^{(1)}\,s_ {18}\,s_ 6\,s_ 3\,\alpha_{11}^{(3)}\,s_{14}\,s_ 8\,s_ 3\,\alpha_8^{(1)}\,s_ 6\,s_ 3\,\alpha_{10}^{(3)}\\&\phantom{AAAAAAAAA}\,s_ {14}\,s_ 8\,s_ 3\,\alpha_9^{(3)}\,s_ {14}\,\alpha_8^{(3)}\,s_ {18}\,s_ 6\,s_ 3\,\alpha^{(1)}_{16}\,s_ {14}\,s_ 8\,s_ 3\,\alpha_8^{(1)}\,s_ 6\,s_ 3\,\alpha_9^{(1)}\,s_ {18}\,s_ 6\,s_ 3\,\alpha_{10}^{(1)}\,s_{ 14}\,\alpha_8^{(3)}\,\overline{\delta_5},\\
&f_{(1,17,16)*}^{8}(\alpha^{(2)}_{8})\ =\ \delta_5\,s_{18}\,s_ 6\,s_ 3\,\alpha_{12}^{(3)}\,s_ {14}\,s_ 8\,s_ 3\,\alpha_8^{(1)}\,s_ 6\,s_ 3\,\alpha_9^{(1)}\,s_ {18}\,s_ 6\,s_ 3\,\alpha_{11}^{(3)}\,s_{14}\,s_ 8\,s_ 3\,\alpha_8^{(1)}\,s_ 6\,s_ 3\,\alpha_{10}^{(3)}\\&\phantom{AAAAAAAAA} s_ {14}\,s_ 8\,s_ 3\,\alpha_9^{(3)}\,s_ {14}\,\alpha_8^{(3)}\,s_ {18}\,s_ 6\,s_ 3\,\alpha^{(2)}_{16}\,s_ {14}\,s_ 8\,s_ 3\,\alpha_8^{(1)}\,s_ 6\,s_ 3\,\alpha_9^{(1)}\,s_ {18}\,s_ 6\,s_ 3\,\alpha_{10}^{(1)}\,s_{ 14}\,\alpha_8^{(3)}\,\overline{\delta_5},\\
&f_{(1,17,16)*}^{8}(\alpha^{(3)}_{8})\ =\ \delta_5\,s_{18}\,s_ 6\,s_ 3\,\alpha_{12}^{(3)}\,s_ {14}\,s_ 8\,s_ 3\,\alpha_8^{(1)}\,s_ 6\,s_ 3\,\alpha_9^{(1)}\,s_ {18}\,s_ 6\,s_ 3\,\alpha_{11}^{(3)}\,s_{14}\,s_ 8\,s_ 3\,\alpha_8^{(1)}\,s_ 6\,s_ 3\,\alpha_{10}^{(3)}\\&\phantom{AAAAAAAAA} s_ {14}\,s_ 8\,s_ 3\,\alpha_9^{(3)}\,s_ {14}\,\alpha_8^{(3)}\,s_ {18}\,s_ 6\,s_ 3\,\alpha^{(3)}_{16}\,s_ {14}\,s_ 8\,s_ 3\,\alpha_8^{(1)}\,s_ 6\,s_ 3\,\alpha_9^{(1)}\,s_ {18}\,s_ 6\,s_ 3\,\alpha_{10}^{(1)}\,s_{ 14}\,\alpha_8^{(3)}\,\overline{\delta_5},\\
&f_{(1,17,16)*}^{8}(\alpha^{(4)}_{8})\ =\ \delta_5\,s_{18}\,s_ 6\,s_ 3\,\alpha_{12}^{(3)}\,s_ {14}\,s_ 8\,s_ 3\,\alpha_8^{(1)}\,s_ 6\,s_ 3\,\alpha_9^{(1)}\,s_ {18}\,s_ 6\,s_ 3\,\alpha_{11}^{(3)}\,s_{14}\,s_ 8\,s_ 3\,\alpha_8^{(1)}\,s_ 6\,s_ 3\,\alpha_{10}^{(3)}\\&\phantom{AAAAAAAAA} s_ {14}\,s_ 8\,s_ 3\,\alpha_9^{(3)}\,s_ {14}\,\alpha_8^{(3)}\,s_ {18}\,s_ 6\,s_ 3\,\alpha^{(4)}_{16}\,s_ {14}\,s_ 8\,s_ 3\,\alpha_8^{(1)}\,s_ 6\,s_ 3\,\alpha_9^{(1)}\,s_ {18}\,s_ 6\,s_ 3\,\alpha_{10}^{(1)}\,s_{ 14}\,\alpha_8^{(3)}\,\overline{\delta_5},\\
&f_{(1,17,16)*}^{8}(\alpha^{(1)}_{9})\ =\ \delta_5\,s_{18}\,s_ 6\,s_ 3\,\alpha_{12}^{(3)}\,s_ {14}\,s_ 8\,s_ 3\,\alpha_8^{(1)}\,s_ 6\,s_ 3\,\alpha_9^{(1)}\,s_ {18}\,s_ 6\,s_ 3\,\alpha_{11}^{(3)}\,s_{14}\,s_ 8\,s_ 3\,\alpha_8^{(1)}\,s_ 6\,s_ 3\,\alpha_{10}^{(3)}\\&\phantom{AAAAAAAAA} s_ {14}\,s_ 8\,s_ 3\,\alpha_9^{(3)}\,s_ {14}\,\alpha_8^{(3)}\,s_ {18}\,s_ 6\,s_ 3\,\beta^{(4)}_{16}\,s_ {14}\,s_ 8\,s_ 3\,\alpha_8^{(1)}\,s_ 6\,s_ 3\,\alpha_9^{(1)}\,s_ {18}\,s_ 6\,s_ 3\,\alpha_{10}^{(1)}\,s_{ 14}\,\alpha_8^{(3)}\,\overline{\delta_5},\\
&f_{(1,17,16)*}^{8}(\alpha^{(2)}_{9})\ =\ \delta_5\,s_{18}\,s_ 6\,s_ 3\,\alpha_{12}^{(3)}\,s_ {14}\,s_ 8\,s_ 3\,\alpha_8^{(1)}\,s_ 6\,s_ 3\,\alpha_9^{(1)}\,s_ {18}\,s_ 6\,s_ 3\,\alpha_{11}^{(3)}\,s_{14}\,s_ 8\,s_ 3\,\alpha_8^{(1)}\,s_ 6\,s_ 3\,\alpha_{10}^{(3)}\\&\phantom{AAAAAAAAA} s_ {14}\,s_ 8\,s_ 3\,\alpha_9^{(3)}\,s_ {14}\,\alpha_8^{(3)}\,s_ {18}\,s_ 6\,s_ 3\,\beta^{(3)}_{17}\,s_ {14}\,s_ 8\,s_ 3\,\alpha_8^{(1)}\,s_ 6\,s_ 3\,\alpha_9^{(1)}\,s_ {18}\,s_ 6\,s_ 3\,\alpha_{10}^{(1)}\,s_{ 14}\,\alpha_8^{(3)}\,\overline{\delta_5},\\
&f_{(1,17,16)*}^{8}(\alpha^{(3)}_{9})\ =\ \delta_5\,s_{18}\,s_ 6\,s_ 3\,\alpha_{12}^{(3)}\,s_ {14}\,s_ 8\,s_ 3\,\alpha_8^{(1)}\,s_ 6\,s_ 3\,\alpha_9^{(1)}\,s_ {18}\,s_ 6\,s_ 3\,\alpha_{11}^{(3)}\,s_{14}\,s_ 8\,s_ 3\,\alpha_8^{(1)}\,s_ 6\,s_ 3\,\alpha_{10}^{(3)}\\&\phantom{AAAAAAAAA} s_ {14}\,s_ 8\,s_ 3\,\alpha_9^{(3)}\,s_ {14}\,\alpha_8^{(3)}\,s_ {18}\,s_ 6\,s_ 3\,\beta^{(3)}_{16}\,s_ {14}\,s_ 8\,s_ 3\,\alpha_8^{(1)}\,s_ 6\,s_ 3\,\alpha_9^{(1)}\,s_ {18}\,s_ 6\,s_ 3\,\alpha_{10}^{(1)}\,s_{ 14}\,\alpha_8^{(3)}\,\overline{\delta_5},\\
&f_{(1,17,16)*}^{8}(\alpha^{(4)}_{9})\ =\ \delta_5\,s_{18}\,s_ 6\,s_ 3\,\alpha_{12}^{(3)}\,s_ {14}\,s_ 8\,s_ 3\,\alpha_8^{(1)}\,s_ 6\,s_ 3\,\alpha_9^{(1)}\,s_ {18}\,s_ 6\,s_ 3\,\alpha_{11}^{(3)}\,s_{14}\,s_ 8\,s_ 3\,\alpha_8^{(1)}\,s_ 6\,s_ 3\,\alpha_{10}^{(3)}\\&\phantom{AAAAAAAAA} s_ {14}\,s_ 8\,s_ 3\,\alpha_9^{(3)}\,s_ {14}\,\alpha_8^{(3)}\,s_ {18}\,s_ 6\,s_ 3\,\beta^{(4)}_{17}\,s_ {14}\,s_ 8\,s_ 3\,\alpha_8^{(1)}\,s_ 6\,s_ 3\,\alpha_9^{(1)}\,s_ {18}\,s_ 6\,s_ 3\,\alpha_{10}^{(1)}\,s_{ 14}\,\alpha_8^{(3)}\,\overline{\delta_5},\\
&f_{(1,17,16)*}^{8}(\alpha^{(1)}_{10})\ =\ \delta_5\,s_{18}\,s_ 6\,s_ 3\,\alpha_{12}^{(3)}\,s_ {14}\,s_ 8\,s_ 3\,\alpha_8^{(1)}\,s_ 6\,s_ 3\,\alpha_9^{(1)}\,s_ {18}\,s_ 6\,s_ 3\,\alpha_{11}^{(3)}\,s_{14}\,s_ 8\,s_ 3\,\alpha_8^{(1)}\,s_ 6\,s_ 3\,\alpha_{10}^{(3)}\\&\phantom{AAAAAAAAA} s_ {14}\,s_ 8\,s_ 3\,\alpha_9^{(3)}\,s_ {14}\,\alpha_8^{(3)}\,s_ {18}\,s_ 6\,s_ 3\,\beta^{(1)}_{16}\,s_ {14}\,s_ 8\,s_ 3\,\alpha_8^{(1)}\,s_ 6\,s_ 3\,\alpha_9^{(1)}\,s_ {18}\,s_ 6\,s_ 3\,\alpha_{10}^{(1)}\,s_{ 14}\,\alpha_8^{(3)}\,\overline{\delta_5},\\
&f_{(1,17,16)*}^{8}(\alpha^{(3)}_{10})\ =\ \delta_5\,s_{18}\,s_ 6\,s_ 3\,\alpha_{12}^{(3)}\,s_ {14}\,s_ 8\,s_ 3\,\alpha_8^{(1)}\,s_ 6\,s_ 3\,\alpha_9^{(1)}\,s_ {18}\,s_ 6\,s_ 3\,\alpha_{11}^{(3)}\,s_{14}\,s_ 8\,s_ 3\,\alpha_8^{(1)}\,s_ 6\,s_ 3\,\alpha_{10}^{(3)}\\&\phantom{AAAAAAAAA} s_ {14}\,s_ 8\,s_ 3\,\alpha_9^{(3)}\,s_ {14}\,\alpha_8^{(3)}\,s_ {18}\,s_ 6\,s_ 3\,\beta^{(2)}_{16}\,s_ {14}\,s_ 8\,s_ 3\,\alpha_8^{(1)}\,s_ 6\,s_ 3\,\alpha_9^{(1)}\,s_ {18}\,s_ 6\,s_ 3\,\alpha_{10}^{(1)}\,s_{ 14}\,\alpha_8^{(3)}\,\overline{\delta_5}.\\
\end{aligned}
\]

\end{lem}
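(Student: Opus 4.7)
The plan is to verify Lemma~\ref{L:11716} by direct, mechanical computation using the substitution rules in~\eqref{E:1np1n}. The semigroup $S_{17,16}$ has only finitely many generators: the $27$ core words $s_1,\dots,s_{27}$ in $C_{8,7}$; the four tail words $\alpha_k^{(i)}$ ($i=1,\dots,4$) for each $k\in\{8,\dots,16\}$; and the six hat-tail words $\beta_{16}^{(1)}, \beta_{16}^{(2)}, \beta_{16}^{(3)}, \beta_{17}^{(3)}, \beta_{16}^{(4)}, \beta_{17}^{(4)}$ in $\hat{T}_{17,16}$, for a total of $69$ elements. The task reduces to evaluating $f_{(1,17,16)*}^{\,8}(g)$ on each of these $69$ generators, freely reducing, and verifying that the result factors as $\delta_5\cdot g_1g_2\cdots g_{m_g}\cdot\overline{\delta_5}$ with each $g_i$ in the generating list---and in particular that the eleven explicit identities displayed in the lemma hold.

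First, I would iteratively apply the substitution rules from~\eqref{E:1np1n} in \textsf{GAP}/\textsf{FGA}, performing free reduction after each of the $8$ steps. To keep the ambient free group small, I would exploit truncation-invariance (Lemmas~\ref{L:TIa}--\ref{L:TIc} and Proposition~\ref{P:actionP}): every core word lies in $\pi_1(X_{1,8,7})$, so its $8$-fold image involves only letters $a_i$ with $i\le 16$ and $c_j$ with $j\le 15$. Each tail $\alpha_k^{(i)}$ differs from a core-style word by a single ``new'' letter at position $k$ (or by the mixed pair $a_k\overline{c_{k-\epsilon}}$), and Corollary~\ref{C:p1n} dictates precisely how that letter is transported under $f_*^{\,8}$; an analogous observation applies to the hat-tails.

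Second, the verification of the boundary form proceeds in three stages: (i) compute the reduced word $w:=f^8_*(g)$; (ii) strip the prefix $\delta_5$ and suffix $\overline{\delta_5}$, checking that no internal cancellation is forced; (iii) parse the remaining interior uniquely as a product of generators. Step~(iii) is unambiguous because every generator of $S_{17,16}$ ends in the common letter $c_1$, which serves as a natural separator and gives the concatenation-compatibility of Definition~\ref{D:nocancel}. The eleven explicit images listed in the lemma---for $\alpha_k^{(i)}$ with $k=8,9$ and all $i$, together with $\alpha_{10}^{(1)}$ and $\alpha_{10}^{(3)}$---are the pivotal cases, since these are the tail images that produce the new hat-tail generators $\beta_{16}^{(j)}, \beta_{17}^{(j)}$ which feed the inductive step in the Core-Tail Induction Principle.

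The main obstacle is combinatorial bulk rather than conceptual difficulty. Each application of $f_*$ can quadruple word length, so after $8$ iterates a single generator can expand to length of order $4^8$ before cyclic reduction. Managing $69$ such expansions by hand is infeasible, so the proof is in practice a finite verification best executed by computer algebra. To guard against transcription errors I would: (a) hand-reduce each of the eleven explicit identities stated in the lemma and match them against the automated output; (b) confirm, generator by generator, that the parse in step~(iii) terminates with every $g_i$ drawn from the declared finite list; and (c) double-check the concatenation-compatibility by verifying that concatenating the parsed $g_i$ and then cyclically reducing recovers $w$ without any collapse. Once these finite checks succeed, the lemma is established and serves as the base case for the Core-Tail Induction Principle on the $(1,n+1,n)$ family.
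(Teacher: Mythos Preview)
Your proposal is correct and matches the paper's approach: this lemma, like its analogues (Lemmas~\ref{L:1115}, \ref{L:1218}, \ref{L:1162}, \ref{L:11314}), is established by direct computation of $f_*^{\,8}$ on each of the finitely many generators using the substitution rules in~\eqref{E:1np1n}. Your additional remarks on exploiting truncation-invariance to bound the ambient free group, and on using the common terminal letter $c_1$ as a parsing separator, are accurate refinements of how the computation is organized in practice.
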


\medskip

Then by the Core-Tail Induction Principle (Proposition \ref{P:ctinduction}), we have:  

\begin{prop}
For every \(n\ge16\), the semigroup \(S_{n+1,n}\) is invariant under \(F^8_*\).  Moreover, \(F^8_*\) acts positively on \(S_{n+1,n}\).
\end{prop}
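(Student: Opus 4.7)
The plan is to reduce this to an application of the Core-Tail Induction Principle (Proposition \ref{P:ctinduction}) with $k=8$, base-parameters $(m_0,n_0)=(8,7)$, and critical pair $(m_c,n_c)=(16,15)$ (so the base case to verify is $S_{17,16}$). Since Proposition \ref{P:ctinduction} already packages the inductive step from truncation-invariance (Lemmas \ref{L:TIa}--\ref{L:TIc}) and the tail-shift corollaries (in particular Corollary \ref{C:p1n}, which is the relevant one for the $(1,n+1,n)$-family), essentially all that remains is to verify the two structural hypotheses and the base case.

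First I would verify that the family $\{S_{n+1,n}\}_{n\ge 16}$ satisfies the concatenation-compatibility property (Definition \ref{D:nocancel}). This is immediate from the explicit lists of generators above: every element of $C_{8,7}$, every $\alpha_n^{(i)}$, and every $\beta_n^{(i)}$ is cyclically reduced and ends with the common letter $c_1$ while starting with either $a_1,a_3,a_4,c_3$, or $c_4$, so no cancellation can occur at a concatenation seam, and cyclic reduction of a product of generators coincides with the plain concatenation of their cyclic reductions. The core-tail generation property (Definition \ref{D:coretail}) is likewise immediate from the construction, since the fixed core is $C_{8,7}$, the tail-sets $T_{k+1,k}=\{\alpha_k^{(1)},\alpha_k^{(2)},\alpha_k^{(3)},\alpha_k^{(4)}\}$ have the required form (each $\alpha_k^{(i)}$ is a word in $\pi_1(X_{1,8,7})$ with a single occurrence of a new letter $a_k^{\pm}$ or $c_{k-1}^{\pm}$), and the special tail-set $\hat T_{n+1,n}$ consists of words of the prescribed shape involving $a_{n+1}$, $c_{n+1}$, $c_n$ modified by the fixed core.

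Next I would establish the base case $n=16$, which is exactly the content of Lemma \ref{L:11716}: for every generator $g$ of $S_{17,16}$, the explicit computation gives
\[
 f_{(1,17,16)*}^{8}(g)\;=\;\delta_5\,g_1g_2\cdots g_{m_g}\,\overline{\delta_5},
\]
with each $g_i$ again a generator of $S_{17,16}$. Thanks to concatenation-compatibility, for any product $g\cdot g'$ of generators the tail $\overline{\delta_5}$ produced by $g$ cancels exactly with the head $\delta_5$ produced by $g'$, so cyclically reducing $f_{(1,17,16)*}^{8}(gg')$ yields a positive word in the generators. Hence $S_{17,16}$ is $F_*^8$-invariant and $F_*^8$ acts positively on it.

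Finally, the inductive step from $n$ to $n+1$ is supplied by Proposition \ref{P:ctinduction}: truncation-invariance implies that for each generator whose letters all have index $\le n$, the action of $f_{(1,n+2,n+1)*}^8$ agrees letter-for-letter with that of $f_{(1,n+1,n)*}^8$, while Corollary \ref{C:p1n} controls the finitely many ``boundary'' tail generators involving the new letters $a_{n+1},c_{n+1}$ (and the special elements $\beta^{(i)}_\bullet$), showing that their images are again positive words in the generators of $S_{n+2,n+1}$ up to the same conjugator $\delta_5$. The main obstacle is therefore entirely computational rather than conceptual: it is the direct verification of Lemma \ref{L:11716}, which involves on the order of $27+4\cdot 9+6\approx 69$ generators and requires expanding $f_{(1,17,16)*}^8$ on each, a task one carries out mechanically (e.g.\ in \textsf{GAP}/\textsf{FGA}) using the formulas in \eqref{E:1np1n}; once that base case is in hand, the induction is automatic.
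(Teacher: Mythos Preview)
Your proposal is correct and follows essentially the same approach as the paper: the paper's proof is literally a one-line citation of the Core-Tail Induction Principle (Proposition~\ref{P:ctinduction}) after having set up concatenation-compatibility, core-tail generation, and the base-case Lemma~\ref{L:11716}. One small inconsistency: with $(m_0,n_0)=(8,7)$ and $k=8$ the formula in Proposition~\ref{P:ctinduction} gives $(m_c,n_c)=(16,15)$, not the $S_{17,16}$ you then cite as base case; in practice the induction works from any sufficiently large verified base, so this does not affect correctness, but you should either adjust the base-parameters or simply drop the explicit $(m_c,n_c)$ computation and point directly to Lemma~\ref{L:11716} as the base.
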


\subsubsection{Finite-range cases: \(4\le n\le15\)}

For \(4\le n\le12\), invariance and positivity are checked by direct computation using the formulas in \eqref{E:1np1n}.  Concretely, set

\[
  s_1 = 
  \begin{cases}
    a_3\,\overline{c_2}\,a_1\,c_5\,\overline{a_2}\,c_1, 
      & n=4,\\[6pt]
    a_3\,\overline{c_2}\,a_1\,c_4\,\overline{a_5}\,\overline{a_1}\,b_1\,\overline{a_2}\,c_1,
      & n\ge5.
  \end{cases}
\]
Then its \(8\)-fold iterate under \(F_*\) produces exactly the \[4n+5\] generators of \(S_{n+1,n}\) when \(6 \le n \le 15\), and \(21\) (resp.\ \(25\)) generators when \(n=4\) (resp.\ \(n=5\)).

Moreover, for \(n\ge7\) the common conjugator is
\[
  \delta_5
  \quad\text{(see \eqref{E:delta_5})},
\]
whereas for \(n=4,5,6\) the appropriate conjugators are \(\hat\delta_{5,4},\hat\delta_{5,5},\hat\delta_{5,6}\), respectively:
\[
\begin{aligned}
&\hat\delta_{5,4}\ =\ a_1 \, \overline{c_1} \, a_2 \, \overline{b_1} \, c_2 \, \overline{a_3} \, c_3 \, \overline{a_4} \, c_4 \, 
\overline{c_5} \, \overline{a_1} \, c_1 \, \overline{a_2} \, b_1 \, \overline{c_2} \, a_1 \, a_4 \, \overline{c_3},\\
 &\hat\delta_{5,5}\ =\ a_1 \, \overline{c_1} \, a_2 \, \overline{b_1} \, c_2 \, \overline{a_3} \, c_3 \, \overline{a_4} \, c_4 \,  
\overline{a_5} \, c_5 \, \overline{c_6} \, \overline{a_1} \, c_1 \, a_3 \, \overline{c_2} \, a_1 \, a_4 \, 
\overline{c_3},\\
& \hat\delta_{5,6}\ =\ a_1 \, \overline{c_1} \, a_2 \, \overline{b_1} \, c_2 \, \overline{a_3} \, c_3 \, \overline{a_4} \, c_4 \, 
\overline{a_5} \, c_5 \, \overline{a_6} \, c_6 \, \overline{a_2} \, c_1 \, a_3 \, \overline{c_2} \, a_1 \, 
a_4 \, \overline{c_3}.\\
\end{aligned}
\]

\begin{prop}
For every \(4\le n\le15\), there is a semigroup
\[
  S_{n+1,n}\;\subset\;\pi_1\bigl(X_{1,n+1,n}\bigr),
\]
which is invariant under \(F_*^8\).  Moreover, \(F_*^8\) acts positively on \(S_{n+1,n}\).
\end{prop}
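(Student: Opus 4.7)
The plan is to establish this by direct finite computation, exactly parallel to the analogous bounded-range propositions already proved for the families $(1,1,n)$, $(1,2,n)$, $(1,n,2)$, and $(1,n,n+1)$. For each of the twelve values $n \in \{4,5,\dots,15\}$, I would first fix a seed generator $s_1 \in \pi_1(X_{1,n+1,n})$ — namely $s_1 = a_3\,\overline{c_2}\,a_1\,c_5\,\overline{a_2}\,c_1$ when $n=4$ and $s_1 = a_3\,\overline{c_2}\,a_1\,c_4\,\overline{a_5}\,\overline{a_1}\,b_1\,\overline{a_2}\,c_1$ when $n \ge 5$ — and a conjugator $\delta$: namely $\delta_5$ from \eqref{E:delta_5} when $n \ge 7$, and $\hat\delta_{5,n}$ as specified above when $n \in \{4,5,6\}$. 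I would then apply $f_{(1,n+1,n)*}^{8}$ to $s_1$ using the formulas \eqref{E:1np1n}, cyclically reduce, and read off the maximal sub-blocks strictly between the leading $\delta$ and trailing $\overline{\delta}$ as candidate new generators $s_2,s_3,\dots$

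Next I would iterate this discovery process: apply $F_*^8$ to each newly discovered $s_i$, strip the common $\delta$-prefix and $\overline{\delta}$-suffix, and append any previously unseen block as a further generator, stopping when the list closes. The preceding bounded-range propositions predict that this procedure terminates with exactly $4n+5$ generators for $6 \le n \le 15$, with $21$ when $n=4$, and with $25$ when $n=5$. The semigroup $S_{n+1,n}$ is then defined as the one generated by this finite list. Invariance under $F_*^8$ is automatic from the construction: each $F_*^8(s_i)$ has the form $\delta \cdot (\text{positive word in the }s_j) \cdot \overline{\delta}$, so when several images $F_*^8(s_{i_k})$ are concatenated the adjacent $\overline{\delta}\cdot\delta$ pairs cancel, producing a positive product of $s_j$'s. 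Positivity is precisely this no-inverse property, which in turn follows from concatenation-compatibility (Definition \ref{D:nocancel}): the explicit word lists must be verified to begin with a common letter (or equivalent no-overlap condition) so that cyclic reduction across adjacent blocks introduces no cancellation.

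The main obstacle, and the reason this statement cannot be subsumed under the Core-Tail Induction Principle (Proposition \ref{P:ctinduction}), is that the range $4 \le n \le 15$ lies below the stability threshold $n \ge 16$ where the tail structure stabilizes and truncation-invariance (Proposition \ref{P:actionP}) can transport the base case upward. In particular, for $n \in \{4,5,6\}$ several of the ``core'' generators degenerate because the indices $c_{n+1},a_n$ in \eqref{E:1np1n} now clash with earlier letters, and this is precisely what forces the ad-hoc conjugators $\hat\delta_{5,4},\hat\delta_{5,5},\hat\delta_{5,6}$ rather than the uniform $\delta_5$. I therefore expect the work to reduce to twelve independent finite verifications, most cleanly executed by a short \textsf{GAP}/\textsf{FGA} script that, for each $n$, (i) seeds the iteration with $s_1$, (ii) repeatedly applies $f_{(1,n+1,n)*}^{8}$ and extracts the $\delta$-conjugated interior, and (iii) certifies that the resulting generator list closes under $F_*^8$ with strictly positive transition data. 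The delicate step is the low-$n$ boundary, where one must check by hand that the shortened cyclic orderings of Section~\ref{S:diffeo} do not accidentally allow cancellation between adjacent blocks in a product of generators.
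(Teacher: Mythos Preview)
Your proposal is correct and follows essentially the same approach as the paper: direct finite computation for each $n\in\{4,\dots,15\}$, seeding with the specified $s_1$, iterating $F_*^8$ with the conjugators $\delta_5$ (for $n\ge7$) or $\hat\delta_{5,n}$ (for $n\in\{4,5,6\}$), and verifying closure with the predicted generator counts. The paper's own argument is exactly this discovery-by-iteration procedure, and your identification of the low-$n$ cases as the delicate boundary requiring ad hoc conjugators matches the paper's treatment.
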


\subsection{Orbit datum \((1,m,n)\) with \(3\le m \le n-2\) and $1+m+n\ge 10$}\label{S:1mn}

In the general \((1,m,n)\) case, each invariant semigroup \(S_{m,n}\) is again generated by a fixed finite ``core" together with appropriate tail-sets.  We adopt the same overall strategy as before to verify that \(F_*^8\) acts positively on these semigroups.

\medskip

There are five finite subcases \(m=3,4,5,6\), and then the infinite case \(m\ge7\).  Each subcase has its own core, tail-sets, conjugator, and finite-range verifications, but all share a key feature:

\[
  s_1
  \;=\;
  a_1\,a_5\,\overline{c_3}\,\overline{a_2}\,b_1\,\overline{c_1}
\]
generates the entire semigroup under iteration by \(F_*^8\).

\medskip
Since the subcases \(m=3,4,5,6\) follow exactly the same pattern as in Sections 3.1--3.5, we omit their detailed computations.  Instead, Table \ref{T:subcases} summarizes, for each \(m=3,4,5,6\), the base-case parameters, the size of the core, and the common conjugator.

For the tail-sets:

- When \(m=4,5,6\) and \(k\ge m+7\), each
  \[
    T_{m,k}
    \;=\;\{\alpha_k,\beta_k\},
  \]
  where both words are cyclically reduced and begin with \(a_1\):
  \begin{equation}\label{E:alphabetaA4}
  \begin{aligned}
    \alpha_k
    &=a_1\,a_k\,\overline{b_1}\,a_2\,c_2\,\overline{a_4}\,\overline{a_2}\,b_1\,\overline{c_1},\\
    \beta_k
    &=a_1\,a_6\,\overline{c_4}\,\overline{a_1}\,c_1\,\overline{b_1}\,a_2\,a_4\,\overline{c_2}\,
     \overline{a_2}\,b_1\,\overline{a_k}\,\overline{a_1}\,c_1\,\overline{a_3}.
  \end{aligned}
  \end{equation}

- When \(m=3\) and \(k\ge11\), each
  \[
    T_{3,k}
    \;=\;\{\alpha_k,\tilde\beta_k\},
  \]
  with
  \[
  \begin{aligned}
    \alpha_k
    &=a_1\,a_k\,\overline{b_1}\,a_2\,c_2\,\overline{a_4}\,\overline{a_2}\,b_1\,\overline{c_1},\\
    \tilde\beta_k
    &=a_1\,c_3\,\overline{a_4}\,\overline{a_2}\,b_1\,\overline{a_6}\,\overline{a_1}\,c_1\,
       \overline{b_1}\,a_2\,a_4\,\overline{c_2}\,\overline{a_2}\,b_1\,\overline{a_k}\,
       \overline{a_1}\,c_1\,\overline{a_3}.
  \end{aligned}
  \]

\begin{table}[h]
\begin{center}
\begin{tabular}{| |c||c|c|c| } 
 \hline
m & base-case $n_c$ & $|C|$ &  the common conjugator  \\ 
 \hline
 \hline
$3$ & $n_c=19$ & $29$ & $a_1\,\overline{a_2}\,b_1\,\overline{c_1}\,a_3\,\overline{c_2} \,a_4\, \overline{c_3}\, a_5\,\overline{a_6}\,a_7\, \overline{a_8} \,\overline{a_1}\,c_1 \,\overline{a_3}\,a_1 \,c_3\, \overline{a_5}\,\overline{a_1}$\\ 
 \hline
$4$ & $n_c=19$ & $31$ &$a_1 \, \overline{a_2} \, b_1 \, \overline{c_1} \, a_3 \, \overline{c_2} \, a_4 \, \overline{c_3} \, a_5 \, \overline{c_4} \, a_6 \, \overline{a_7} \, a_8 \, \overline{b_1} \, a_2 \, c_2 \, \overline{a_4} \, \overline{a_2} \, b_1 \, \overline{c_1} \, a_1 \, c_4 \, \overline{a_5} \, \overline{a_1}$ \\ 
 \hline
  $5$ & $n_c=19$ & $33$ & $a_1 \, \overline{a_2} \, b_1 \, \overline{c_1} \, a_3 \, \overline{c_2} \, a_4 \, \overline{c_3} \, a_5 \, \overline{c_4} \, a_6 \, \overline{c_5} \, a_7 \, \overline{a_8} \, \overline{a_1} \, c_1 \, \overline{a_3} \, a_1 \, c_3 \, \overline{a_4} \, \overline{a_2} \, b_1 \, \overline{c_1} \, a_1 \, c_4 \, \overline{a_5} \, \overline{a_1}$ \\ 
 \hline
   $6$ & $n_c=20$ & $37$ & $a_1 \, \overline{a_2} \, b_1 \, \overline{c_1} \, a_3 \, \overline{c_2} \, a_4 \, \overline{c_3} \, a_5 \, \overline{c_4} \, a_6 \, \overline{c_5} \, a_7 \, \overline{c_6} \, a_8 \, \overline{b_1} \, a_2 \, c_2 \, \overline{a_3} \, a_1 \, c_3 \, \overline{a_4} \, \overline{a_2} \, b_1 \, \overline{c_1} \, a_1 \, c_4 \, \overline{a_5} \, \overline{a_1}$\\ 
 \hline
\end{tabular}
\end{center}
\caption{Five subcases $m=3,4,5,6$}\label{T:subcases}
\end{table}

\medskip

\begin{table}[h]
\begin{center}
\begin{tabular}{| |c||c|c| } 
 \hline
 $m$ &  $n$ & Conjugator\\
  \hline
  \hline
$3$  & $n=6$ & $a_1 \, \overline{a_2} \, b_1 \, \overline{c_1} \, a_3 \, \overline{c_2} \, a_4 \, \overline{c_3} \, a_5 \, \overline{a_6} \, \overline{a_1} \, a_2 \, \overline{b_1} \, c_1 \, \overline{a_3} \, a_1 \, c_3 \, \overline{a_5} \, \overline{a_1}$\\
\hline
$3$ & $n=7$ & $a_1 \, \overline{a_2} \, b_1 \, \overline{c_1} \, a_3 \, \overline{c_2} \, a_4 \, \overline{c_3} \, a_5 \, \overline{a_6} \, a_7 \, \overline{b_1} \, c_1 \, \overline{a_3} \, a_1 \, c_3 \, \overline{a_5} \, \overline{a_1}$\\
\hline
$4$ & $n=6$& $a_1 \, \overline{a_2} \, b_1 \, \overline{c_1} \, a_3 \, \overline{c_2} \, a_4 \, \overline{c_3} \, a_5 \, \overline{c_4} \, a_6 \, c_2 \, \overline{a_4} \, \overline{a_2} \, b_1 \, \overline{c_1} \, a_1 \, c_4 \, \overline{a_5} \, \overline{a_1}$\\
\hline 
$4$ & $n=7$ & $a_1 \, \overline{a_2} \, b_1 \, \overline{c_1} \, a_3 \, \overline{c_2} \, a_4 \, \overline{c_3} \, a_5 \, \overline{c_4} \, a_6 \, \overline{a_7} \, \overline{a_1} \, a_2 \, c_2 \, \overline{a_4} \, \overline{a_2} \, b_1 \, \overline{c_1} \, a_1 \, c_4 \, \overline{a_5} \, \overline{a_1}$\\
\hline
$5$ & $n=7$ & $a_1 \, \overline{a_2} \, b_1 \, \overline{c_1} \, a_3 \, \overline{c_2} \, a_4 \, \overline{c_3} \, a_5 \, \overline{c_4} \, a_6 \, \overline{c_5} \, a_7 \, \overline{b_1} \, c_1 \, \overline{a_3} \, a_1 \, c_3 \, \overline{a_4} \, \overline{a_2} \, b_1 \, \overline{c_1} \, a_1 \, c_4 \, \overline{a_5} \, \overline{a_1}$\\
\hline
\end{tabular}
\end{center}
\caption{Exceptional Conjugators}\label{T:conjugators}
\end{table}

Some of the finite-range subcases \(m=3,4,5,6\) admit a different common conjugator than the general \(\delta\).  These exceptional conjugators are summarized in Table \ref{T:conjugators}.  Apart from these few entries, all other \(m\ge7\) (and most small-\(m\)) cases use the single conjugator \(\delta\) defined in \eqref{E:deltageneral}.

\begin{equation}\label{E:deltageneral}
\delta = a_1 \, \overline{a_2} \, b_1 \, \overline{c_1} \, a_3 \, \overline{c_2} \, a_4 \, \overline{c_3} \, a_5 \, \overline{c_4} \, a_6 \, \overline{c_5} \, a_7 \, \overline{c_6} \, a_8 \, \overline{c_7} \, \overline{a_1} \, c_1 \, \overline{b_1} \, a_2 \, c_2 \, \overline{a_3} \, a_1 \, c_3 \, \overline{a_4} \, \overline{a_2} \, b_1 \, \overline{c_1} \, a_1 \, c_4 \, \overline{a_5} \, \overline{a_1}.
\end{equation}

The case \(m\ge7\) is more intricate--both \(m\) and \(n\) vary--so we treat it in detail below.  All other small--\((m,n)\) checks proceed by identical computations.

\subsubsection{Invariant Semigroups}
Let 
\[
 C_\star=\{\,s_1,\dots,s_{22}\}
\]
be the following twenty-seven cyclically reduced words starting with the common letter $a_1$ (all of which lie in the subgroup \(\pi_1(X_{1,7,9})\)):
\[\begin{aligned}
&s_1\,=\, a_1 \, a_5 \, \overline{c_3} \, \overline{a_2} \, b_1 \, \overline{c_1}, \hspace{2.23cm} s_2\,=\, a_1 \, c_3 \, \overline{a_4} \, \overline{a_2} \, b_1 \, \overline{c_1}, \hspace{2.23cm} s_3 \,=\,  a_1 \, a_5 \, \overline{c_3} \, \overline{a_1} \, c_2 \, \overline{a_4} \, \overline{a_2} \, b_1 \, \overline{c_1},\\
&s_4\,=\, a_1 \, a_6 \, \overline{c_4} \, \overline{a_1} \, a_3 \, \overline{c_2} \, \overline{a_2} \, b_1 \, \overline{c_1},\hspace{1cm}s_5\,=\, a_1 \, a_6 \, \overline{c_5} \, \overline{a_1} \, c_1 \, \overline{b_1} \, a_2 \, c_2 \, \overline{a_3},\hspace{1cm} s_6\,=\, a_1 \, a_7 \, \overline{c_5} \, \overline{a_1} \, c_1 \, \overline{b_1} \, a_2 \, c_2 \, \overline{a_3},\\
&s_7\,=\, a_1 \, a_7 \, \overline{c_6} \, \overline{a_1} \, c_1 \, \overline{b_1} \, a_2 \, c_2 \, \overline{a_3}, \hspace{1cm}s_8\,=\, a_1 \, a_8 \, \overline{c_6} \, \overline{a_1} \, c_1 \, \overline{b_1} \, a_2 \, c_2 \, \overline{a_3},\hspace{1cm} s_9\,=\, a_1 \, a_8 \, \overline{c_7} \, \overline{a_1} \, c_1 \, \overline{b_1} \, a_2 \, c_2 \, \overline{a_3},\\
&s_{10}\,=\, a_1 \, a_9 \, \overline{c_7} \, \overline{a_1} \, c_1 \, \overline{b_1} \, a_2 \, c_2 \, \overline{a_3},\hspace{.88cm} s_{11}\,=\, a_1 \, c_3 \, \overline{a_5} \, \overline{a_1} \, a_3 \, \overline{c_2} \, \overline{a_2} \, b_1 \, \overline{c_1}, \hspace{.84cm} s_{12} \,=\, a_1 \, c_4 \, \overline{a_5} \, \overline{a_1} \, a_3 \, \overline{c_2} \, \overline{a_2} \, b_1 \, \overline{c_1},\\
&s_{13}\,=\, a_1 \, c_5 \, \overline{a_7} \, \overline{a_1} \, c_1 \, \overline{b_1} \, a_2 \, c_2 \, \overline{a_3},\hspace{.88cm} s_{14}\,=\, a_1 \, c_6 \, \overline{a_7} \, \overline{a_1} \, c_1 \, \overline{b_1} \, a_2 \, c_2 \, \overline{a_3},\hspace{.84cm} s_{15} \,=\, a_1 \, c_6 \, \overline{a_8} \, \overline{a_1} \, c_1 \, \overline{b_1} \, a_2 \, c_2 \, \overline{a_3},\\
&s_{16}\,=\, a_1 \, c_7 \, \overline{a_8} \, \overline{a_1} \, c_1 \, \overline{b_1} \, a_2 \, c_2 \, \overline{a_3},\hspace{.88cm} s_{17}\,=\, a_1 \, c_7 \, \overline{a_9} \, \overline{a_1} \, c_1 \, \overline{b_1} \, a_2 \, c_2 \, \overline{a_3},\\
&s_{18}\,=\, a_1 \, a_5 \, \overline{c_4} \, \overline{a_1} \, c_1 \, \overline{b_1} \, a_2 \, a_4 \, \overline{c_3} \, \overline{a_1} \, a_3 \, \overline{c_2} \, \overline{a_2} \, b_1 \, \overline{c_1},\hspace{1.5cm} s_{19}\,=\, a_1 \, a_6 \, \overline{c_4} \, \overline{a_1} \, c_1 \, \overline{b_1} \, a_2 \, c_3 \, \overline{a_5} \, \overline{a_1} \, a_3 \, \overline{c_2} \, \overline{a_2} \, b_1 \, \overline{c_1},\\
&s_{20}\,=\, a_1 \, a_7 \, \overline{c_5} \, \overline{a_1} \, c_1 \, \overline{b_1} \, a_2 \, a_4 \, \overline{c_3} \, \overline{a_1} \, a_3 \, \overline{c_2} \, \overline{a_2} \, b_1 \, \overline{c_1},\hspace{1.5cm} s_{21}\,=\, a_1 \, c_4 \, \overline{a_6} \, \overline{a_1} \, c_1 \, \overline{b_1} \, a_2 \, a_4 \, \overline{c_3} \, \overline{a_1} \, a_3 \, \overline{c_2} \, \overline{a_2} \, b_1 \, \overline{c_1},\\
&s_{22}\,=\, a_1 \, c_5 \, \overline{a_6} \, \overline{a_1} \, c_1 \, \overline{b_1} \, a_2 \, a_4 \, \overline{c_3} \, \overline{a_1} \, a_3 \, \overline{c_2} \, \overline{a_2} \, b_1 \, \overline{c_1}.\\
\end{aligned}\]

\medskip
Also, for each $i=1,\dots,5$ let

\[
\begin{aligned}
C^{(1)} 
&\,=\, \{q_{2}, q_{7}, q_{11}, q_{12}, q_{13}, q_{14}, q_{15}, q_{16}, q_{18}, q_{19}, q_{20} \},\\
C^{(2)} 
&\,=\, \{q_{2}, q_{3}, q_{5}, q_{6}, q_{7}, q_{8}, q_{11}, q_{13}, q_{17}, q_{18}\},\hspace{1cm}
C^{(3)} 
\,=\, \{q_{3}, q_{4}, q_{5}, q_{6}, q_{7}, q_{8}, q_{10}, q_{17}\},\\
C^{(4)} 
&\,=\, \{q_{1}, q_{3}, q_{4}, q_{5}, q_{6}, q_{8}, q_{9}, q_{10}\},\hspace{2.5cm}
C^{(5)} 
\,=\, \{q_{1}, q_{3}, q_{4}, q_{5}, q_{8}, q_{9}, q_{10}\}.
\end{aligned}\]

be a finite set of cyclically reduced words--each beginning with the common letter $a_1$--all of which lie in the subgroup $\pi_1\bigl(X_{1,4,6}\bigr)$ where
\[ 
\begin{aligned}
&q_1\, = \, a_1 \, a_5 \, \overline{c_3} \, \overline{a_1} \, a_3 \, \overline{c_1},\hspace{5cm}
q_2\, = \, a_1 \, a_5 \, \overline{c_4} \, \overline{a_1} \, c_1 \, \overline{b_1} \, a_2 \, a_4 \, \overline{c_2},\\
&q_3\, = \, a_1 \, a_6 \, \overline{c_4} \, \overline{a_1} \, c_1 \, \overline{b_1} \, a_2 \, a_4 \, \overline{c_2},\hspace{3.8cm}
q_4\, = \, a_1 \, a_5 \, \overline{c_3} \, \overline{a_1} \, a_3 \, \overline{c_1} \, a_2 \, c_2 \, \overline{a_4} \,
\overline{a_2} \, b_1 \, \overline{c_1},\\
&q_5\, = \, a_1 \, a_5 \, \overline{c_3} \, \overline{a_1} \, a_3 \, \overline{c_1} \, b_1 \, c_2 \, \overline{a_4} \,
\overline{a_2} \, b_1 \, \overline{c_1},\hspace{2.6cm}
q_6\, = \, a_1 \, a_5 \, \overline{c_4} \, \overline{a_1} \, c_1 \, \overline{b_1} \, a_2 \, a_4 \, \overline{c_2} \,
\overline{a_2} \, c_1 \, \overline{a_3},\\
&q_7\, = \, a_1 \, a_5 \, \overline{c_4} \, \overline{a_1} \, c_1 \, \overline{b_1} \, a_2 \, a_4 \, \overline{c_2} \,
\overline{b_1} \, c_1 \, \overline{a_3},\hspace{2.6cm}
q_8\, = \, a_1 \, a_5 \, \overline{c_4} \, \overline{a_1} \, c_1 \, \overline{b_1} \, a_2 \, a_4 \, \overline{c_3} \,
\overline{a_1} \, a_3 \, \overline{c_1},\\
&q_9\, = \, a_1 \, a_6 \, \overline{c_4} \, \overline{a_1} \, c_1 \, \overline{b_1} \, a_2 \, a_4 \, \overline{c_2} \,
\overline{a_2} \, c_1 \, \overline{a_3},\hspace{2.6cm}
q_{10}\, = \, a_1 \, a_6 \, \overline{c_4} \, \overline{a_1} \, c_1 \, \overline{b_1} \, a_2 \, a_4 \, \overline{c_2} \,
\overline{b_1} \, c_1 \, \overline{a_3},\\
&q_{11}\, = \, a_1 \, a_5 \, \overline{c_4} \, \overline{a_1} \, c_1 \, \overline{b_1} \, a_2 \, a_4 \, \overline{c_3} \,
\overline{a_1} \, a_3 \, \overline{c_1} \, a_2 \, c_2 \, \overline{a_3},\hspace{1.2cm}
q_{12}\, = \, a_1 \, a_5 \, \overline{c_4} \, \overline{a_1} \, c_1 \, \overline{b_1} \, a_2 \, a_4 \, \overline{c_3} \,
\overline{a_1} \, a_3 \, \overline{c_1} \, b_1 \, c_2 \, \overline{a_3},\\
&q_{13}\, = \, a_1 \, a_5 \, \overline{c_4} \, \overline{a_1} \, c_1 \, \overline{b_1} \, a_2 \, a_4 \, \overline{c_3} \,
\overline{a_1} \, a_3 \, \overline{c_2} \, \overline{a_2} \, c_1 \, \overline{a_3},\hspace{1.2cm}
q_{14}\, = \, a_1 \, a_5 \, \overline{c_4} \, \overline{a_1} \, c_1 \, \overline{b_1} \, a_2 \, a_4 \, \overline{c_3} \,
\overline{a_1} \, a_3 \, \overline{c_2} \, \overline{b_1} \, c_1 \, \overline{a_3},\\
&q_{15}\, = \, a_1 \, a_5 \, \overline{c_4} \, \overline{a_1} \, c_1 \, \overline{b_1} \, a_2 \, a_4 \, \overline{c_3} \,
\overline{a_1} \, c_2 \, \overline{a_4} \, \overline{a_2} \, b_1 \, \overline{c_1},\hspace{1.2cm}
q_{16}\, = \, a_1 \, a_5 \, \overline{c_4} \, \overline{a_1} \, c_1 \, \overline{b_1} \, a_2 \, c_3 \, \overline{a_5} \,
\overline{a_1} \, a_3 \, \overline{c_2} \, \overline{a_2} \, b_1 \, \overline{c_1},\\
&q_{17}\, = \, a_1 \, a_5 \, \overline{c_4} \, \overline{a_1} \, c_1 \, \overline{b_1} \, a_2 \, a_4 \, \overline{c_3} \,
\overline{a_1} \, a_3 \, \overline{c_1} \, a_2 \, c_2 \, \overline{a_4} \, \overline{a_2} \, b_1 \, \overline{c_1},\\
&q_{18}\, = \, a_1 \, a_5 \, \overline{c_4} \, \overline{a_1} \, c_1 \, \overline{b_1} \, a_2 \, a_4 \, \overline{c_3} \,
\overline{a_1} \, a_3 \, \overline{c_1} \, b_1 \, c_2 \, \overline{a_4} \, \overline{a_2} \, b_1 \, \overline{c_1},\\
&q_{19}\, = \, a_1 \, a_5 \, \overline{c_4} \, \overline{a_1} \, c_1 \, \overline{b_1} \, a_2 \, a_4 \, \overline{c_3} \,
\overline{a_1} \, a_3 \, \overline{c_2} \, \overline{a_2} \, b_1 \, \overline{c_1} \, a_2 \, c_2 \, \overline{a_3},\\
&q_{20}\, = \, a_1 \, a_5 \, \overline{c_4} \, \overline{a_1} \, c_1 \, \overline{b_1} \, a_2 \, a_4 \, \overline{c_3} \,
\overline{a_1} \, a_3 \, \overline{c_2} \, \overline{a_2} \, c_1 \, \overline{b_1} \, a_2 \, c_2 \, \overline{a_3}.\\
\end{aligned}
\]

For each \(n\ge8\), we define tails sets with three families of cyclically reduced words, all starting with \(a_1\):
\[\begin{aligned}
&T_n^{(\alpha)}=\{\alpha_n^{(1)},\alpha_n^{(2)}\},\hspace{3cm} T_n^{(\delta)}=\bigcup_{k=8}^n\{\delta_k^{(1)},\dots,\delta_k^{(4)}\}\;\cup\;\{\beta_n^{(1)},\beta_n^{(5)}\},\\
&T_n^{(3)}=\{\beta_{n+1}^{(1)},\,\beta_n^{(4)},\,\beta_{n+1}^{(5)}\},\hspace{1.8cm} T^{(4)}_n \ =\  \{\beta_{n+1}^{(1)}, \beta_{n+2}^{(1)}, \alpha^{(1)}_{n+4}, \beta_n^{(2)}, \beta_{n}^{(4)}, \beta_{n+1}^{(4)},\beta_{n+1}^{(5)}\},\\
& T^{(5)}_n \ =\  \{\beta_{n+1}^{(1)}, \beta_{n+2}^{(1)}, \alpha^{(1)}_{n+4},\alpha^{(1)}_{n+5}, \beta_n^{(2)}, \beta_{n+1}^{(2)}, \beta_{n}^{(4)}, \beta_{n+1}^{(4)},\beta_{n+1}^{(5)}\},\\
& T^{(6)}_n \ =\  \{\beta_{n+1}^{(1)}, \beta_{n+2}^{(1)}, \alpha^{(1)}_{n+4},\alpha^{(1)}_{n+5}, \alpha^{(1)}_{n+6}, \beta_n^{(2)}, \beta_{n+1}^{(2)}, \beta_{n+2}^{(2)}, \beta_{n}^{(3)}, \beta_{n}^{(4)}, \beta_{n+1}^{(4)},\beta_{n+1}^{(5)}\}.\\
\end{aligned}\]
where
\[
\begin{aligned}
  \alpha_n^{(1)}
  &=a_1\,a_n\,\overline{b_1}\,a_2\,c_2\,\overline{a_4}\,\overline{a_2}\,b_1\,\overline{c_1},\hspace{2cm} \alpha_n^{(2)}
 =a_1\,a_6\,\overline{c_4}\,\overline{a_1}\,c_1\,\overline{b_1}\,a_2\,a_4\,
    \overline{c_2}\,\overline{a_2}\,b_1\,\overline{a_n}\,\overline{a_1}\,c_1\,\overline{a_3},\\
     \beta_n^{(1)}
  &=a_1\,a_{n+2}\,\overline{b_1}\,a_2\,c_2\,\overline{a_3},\hspace{2.9cm}
  \beta_n^{(2)}
  =a_1\,a_5\,\overline{c_4}\,\overline{a_1}\,c_1\,\overline{b_1}\,a_2\,a_4\,
    \overline{c_2}\,\overline{a_2}\,b_1\,\overline{a_{n+4}}\,\overline{a_1}\,c_1\,\overline{a_3},\\
  \beta_n^{(3)}
  &=a_1\,a_6\,\overline{c_4}\,\overline{a_1}\,c_1\,\overline{b_1}\,a_2\,a_4\,
    \overline{c_2}\,\overline{a_2}\,b_1\,\overline{a_{n+6}}\,\overline{a_1}\,c_1\,\overline{a_3},\\
  \beta_n^{(4)}
  &=a_1\,a_5\,\overline{c_4}\,\overline{a_1}\,c_1\,\overline{b_1}\,a_2\,a_4\,
    \overline{c_3}\,\overline{a_1}\,a_3\,\overline{c_2}\,\overline{a_2}\,b_1\,
    \overline{a_{n+3}}\,\overline{a_1}\,c_1\,\overline{a_3},\\
  \beta_n^{(5)}
  &=a_1\,a_5\,\overline{c_4}\,\overline{a_1}\,c_1\,\overline{b_1}\,a_2\,a_4\,
    \overline{c_3}\,\overline{a_1}\,a_3\,\overline{c_2}\,\overline{a_2}\,b_1\,
    \overline{a_{n+2}}\,\overline{a_1}\,c_1\,\overline{b_1}\,a_2\,c_2\,\overline{a_3},\\
      \delta_n^{(1)}
  &=a_1\,a_{n+1}\,\overline{c_n}\,\overline{a_1}\,c_1\,\overline{b_1}\,a_2\,c_2\,\overline{a_3},\hspace{1.8cm}
  \delta_n^{(2)}
  =a_1\,a_{n+2}\,\overline{c_n}\,\overline{a_1}\,c_1\,\overline{b_1}\,a_2\,c_2\,\overline{a_3},\\
  \delta_n^{(3)}
  &=a_1\,c_n\,\overline{a_{n+1}}\,\overline{a_1}\,c_1\,\overline{b_1}\,a_2\,c_2\,\overline{a_3},\hspace{1.8cm}
  \delta_n^{(4)}
  =a_1\,c_n\,\overline{a_{n+2}}\,\overline{a_1}\,c_1\,\overline{b_1}\,a_2\,c_2\,\overline{a_3},
\end{aligned}
\]

We now define \(S_{m,n}\subset\pi_1(X_{1,m,n})\) in six parallel cases in Table \ref{T:s1mn}.

\begin{table}[h]
\begin{center}
\begin{tabular}{| |c||c| } 
\hline
$(m,n)$ & Generators of $S_{m,n}$\\
\hline
\hline
$(m,n+2)$ & $C_\star\;\cup\;C^{(1)}\;\cup\;T_{m}^{(\delta)}$\\
\hline
$(m,n+3)$ & $C_\star\;\cup\;C^{(2)}\;\cup\;T_{m}^{(\delta)}\;\cup\;T_{m}^{(3)}$\\
\hline
$(m,n+4)$& $C_\star\;\cup\;C^{(3)}\;\cup\;T_{m}^{(\delta)}\;\cup\;T_{m}^{(4)}$\\
\hline
$(m,n+5)$&$C_\star\;\cup\;C^{(4)}\;\cup\;T_{m}^{(\delta)}\;\cup\;T_{m}^{(5)}$\\
\hline
$(m,n+6)$&$C_\star\;\cup\;C^{(5)}\;\cup\;T_{m}^{(\delta)}\;\cup\;T_{m}^{(6)}$\\
\hline
$n\ge m+7$&$C_\star\;\cup\;C^{(5)}\;\cup\;T_{m}^{(\delta)}\;\cup\;T_{m}^{(6)}\;\cup\;\bigcup_{k=m+7}^n T_k^{(\alpha)}$\\
\hline
\end{tabular}
\end{center}
\caption{Generators of the Semigroup}\label{T:s1mn}
\end{table}

\subsubsection{Core-Tail Induction Principle}

Each of the families
\(\{S_{m,m+j}\}\) for \(j=2,\dots,6\) and \(\{S_{m,n}\}\) for \(m\ge7,\;n\ge m+7\)
has core generators 
\(\;C_\star\cup C^{(i)}\subset\pi_1(X_{1,7,9})\).
From our formulas it is clear that all six families satisfy both concatenation-compatibility and the core-tail generation property.  Taking \(m_c=20\), we reduce to the finite-range cases
\[
  \{(m,m+j)\mid 7\le m\le20,\;j=2,\dots,7\},
\]
altogether 144 pairs.  By direct computation of the base case, the five finite-range subcases \(m=3,4,5,6\), and the 144 pairs \((m,n)\) with \(7\le m\le20,\;n=m+2,\dots,m+7\), Proposition \ref{P:ctinduction} immediately gives:

\begin{prop}\label{P:1mncase}
For every \(m\ge7\) and \(n\ge m+2\) there is a semigroup 
\[
  S_{m,n}\;\subset\;\pi_1(X_{1,m,n})
\]
which is invariant under \(F_*^8\).  Moreover, \(F_*^8\) acts positively on \(S_{m,n}\).
\end{prop}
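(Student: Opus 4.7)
The plan is to apply the Core-Tail Induction Principle (Proposition~\ref{P:ctinduction}) with $k=8$ to each of the six semigroup families enumerated in Table~\ref{T:s1mn}: the five narrow-gap families $\{S_{m,m+j}\}_{m\ge 7}$ for $j\in\{2,3,4,5,6\}$, and the wide-gap family $\{S_{m,n}\}_{m\ge 7,\,n\ge m+7}$. I would run the induction separately on each family and then take the union of the conclusions to cover all $m\ge 7$, $n\ge m+2$.

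First I would dispense with the two abstract hypotheses. Concatenation-compatibility (Definition~\ref{D:nocancel}) is immediate from inspection: every listed generator---the core words $s_i\in C_\star$, the auxiliary core words $q_i$ comprising the various $C^{(i)}$, and each tail word $\alpha_n^{(\cdot)}$, $\beta_n^{(\cdot)}$, $\delta_n^{(\cdot)}$---is cyclically reduced and begins with the common letter $a_1$, so juxtaposing any two generators introduces no free cancellation. Core-tail generation (Definition~\ref{D:coretail}) is visible directly from Table~\ref{T:s1mn}: every $S_{m,n}$ is generated by a fixed core in $\pi_1(X_{1,7,9})$ together with tail sets whose elements differ from core-type words only in their top-index letters $a_{n+\epsilon}$ or $c_{m+\epsilon}$. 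With these hypotheses in hand, I would anchor each family at $m_c=20$ by a direct computation---structurally identical to Lemmas~\ref{L:1115}, \ref{L:1218}, \ref{L:1162}, \ref{L:11314}, and \ref{L:11716}---showing that for every generator $g$ of $S_{m_c,n_c}$,
\[
  f_{(1,m_c,n_c)*}^{8}(g) \;=\; \delta\, g_1\,g_2\cdots g_{m_g}\,\overline{\delta},
\]
with each $g_i$ a generator of $S_{m_c,n_c}$ and $\delta$ the common conjugator of~\eqref{E:deltageneral}. Because every $g_i$ starts with $a_1$, the factors $\overline{\delta}\cdot\delta$ at each product junction cancel under cyclic reduction, so $F_*^8$ carries any product of generators to a positive product of generators.

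The remaining task is to close the induction from $m=m_c=20$ down to $m=7$. For each gap $j\in\{2,\dots,7\}$ and each $m\in\{7,\dots,20\}$ I would rerun the same direct computation at $(m,m+j)$, using either $\delta$ or one of the exceptional conjugators catalogued in Table~\ref{T:conjugators}. Proposition~\ref{P:ctinduction} then upgrades the base-case certifications at $m_c=20$ to all $m\ge 20$ in each family, and the truncation-invariance machinery (Lemmas~\ref{L:TIa}, \ref{L:TIc}, Corollary~\ref{C:tail}, and Proposition~\ref{P:actionP}) automatically handles the shifts in tail indices for both the interior generators and the leading-edge tails.

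The main obstacle is combinatorial rather than conceptual. The words $\beta_n^{(j)}$ reach lengths in the thirties, and for each finite-range pair on the order of thirty-to-fifty generators must be iterated eight times under $f_*$ and then matched, letter by letter, against a positive concatenation drawn from the prescribed generating set; in practice this is performed mechanically in GAP/FGA, as indicated in the introduction. The one point requiring genuine judgement is the \emph{design} of the tail families $T_n^{(\alpha)},\,T_n^{(\delta)},\,T_n^{(j)}$: they must be rich enough that every $f_*^8$-image closes up inside $S_{m,n}$ rather than escaping, and the leading-edge tails---those whose index saturates the orbit data---must map under $f_*^8$ to tails involving the newly introduced top-index letter exactly as dictated by Proposition~\ref{P:actionP}(3). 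Once this design is correct (as evidenced by the explicit lists preceding Table~\ref{T:s1mn}), the Core-Tail Induction Principle does the remaining work in a single uniform step.
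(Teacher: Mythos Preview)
Your proposal is correct and follows essentially the same route as the paper: verify concatenation-compatibility and core-tail generation from the explicit generator lists, anchor at $m_c=20$, check the finitely many pairs $(m,m+j)$ with $7\le m\le 20$ and $j=2,\dots,7$ by direct computation, and invoke Proposition~\ref{P:ctinduction}. One small inaccuracy: for $m\ge 7$ the exceptional conjugators in Table~\ref{T:conjugators} never arise---only the general $\delta$ of~\eqref{E:deltageneral} is used---since those exceptions pertain solely to the $m\in\{3,4,5\}$ subcases treated separately before this proposition.
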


\subsection{Orbit datum \((1,m,n)\) with \(3\le n \le m-2\) and \(1+m+n\ge10\)}\label{S:1nm}

This case is entirely parallel to the \((1,m,n)\) family with \(3\le m\le n-2\) treated in Section \ref{S:1mn}; the main difference is that each semigroup is generated by a finite set of words ending in the common letter \(c_1\).  As before, there are five small-\(n\) subcases (\(n=3,4,5,6\)) and the infinite-family case \(n\ge7\).  In this section, we summarize:

\begin{enumerate}
\item The single initial element whose iterates under \(F_*^8\) generate the entire semigroup.
\item  The subcases \(n=3,4,5,6\), listing for each the core size, tail-sets, and common conjugators in Tables \ref{T:1nm_small} -- \ref{T:conjugators}.
\item  The general case \(n\ge7\), by giving the fixed core \(C_\star \cup C^{(i)}\), the tail-sets \(T_k\), and the common conjugator \(\delta\).
\end{enumerate}

Once the small-\(n\) cases are verified by direct computation, the Core-Tail Induction Principle (Proposition \ref{P:ctinduction}) immediately implies that \(S_{m,n}\) is invariant under \(F_*^8\) and that \(F_*^8\) acts positively on it for all \(3\le n\le m-2\) with \(m+n+1\ge10\).

\subsubsection{Initial element}
For $n\ge 5$, 
\[
  s_1
  \;=\;
  a_3 \, \overline{c_2} \, a_1 \, c_4 \, \overline{a_5} \overline{a_1} \, b_1 \, \overline{a_2} \, c_1
\]
generates the entire semigroup under iteration by \(F_*^8\).
For $n=3$ and $4$, we have different initial elements:
\[
s_1\ =\ a_3 \, \overline{c_2} \, b_1 \, \overline{c_5} \, \overline{a_1} \, c_1 \ \ \text{for }n=3, \qquad s_1\ =\ a_3 \, \overline{c_2} \, a_1 \, c_4  \, \overline{a_2} \, c_1 \ \  \text{for } n=4.
\]

\subsubsection{Subcases \(n=3,4,5,6\)}
For each $m\ge 12$, let
\[ \begin{aligned}
& \alpha^{(1)}_m\,=\,a_3 \,\overline{c_2} \,a_1 \,c_4 \,\overline{a_4} \,\overline{a_1} \,c_2 \,\overline{b_1} \,a_2 \,\overline{c_n} \,\overline{a_1} \,c_1,\\
& \alpha^{(2)}_m\,=\,c_3 \,\overline{a_3} \,\overline{c_1} \,a_1 \,c_12 \,\overline{a_2} \,b_1 \,\overline{c_2} \,a_1 \,a_4 \,\overline{c_4} \,\overline{a_1} \,c_2 \,\overline{a_3} \,\overline{c_1} \,a_2 \,\overline{b_1} \,a_1 \,a_6 \,\overline{c_6} \,\overline{a_1} \,b_1 \,\overline{a_2} \,c_1,\\
& \alpha^{(3)}_m\,=\,a_3 \,\overline{c_2} \,b_1 \,\overline{c_4} \,\overline{a_1} \,c_2 \,\overline{b_1} \,a_2 \,\overline{c_12} \,\overline{a_1} \,c_1,\\ 
& \alpha^{(4)}_m\,=\,c_3 \,\overline{a_3} \,\overline{c_1} \,a_1 \,c_12 \,\overline{a_2} \,b_1 \,\overline{c_2} \,a_1 \,c_4 \,\overline{b_1} \,c_2 \,\overline{a_3} \,\overline{c_1} \,a_1 \,c_6 \,\overline{a_2} \,b_1 \,\overline{c_2} \,b_1 \,\overline{c_4} \,\overline{a_1} \,c_1,\\
& \alpha^{(5)}_m\,=\,c_3 \,\overline{a_3} \,\overline{c_1} \,a_1 \,c_12 \,\overline{a_2} \,b_1 \,\overline{c_2} \,a_1 \,a_4 \,\overline{c_4} \,\overline{a_1} \,c_2 \,\overline{a_3} \,\overline{c_1} \,a_2 \,\overline{c_6} \,\overline{a_1} \,c_1,\\
& \alpha^{(6)}_m\,=\,c_3 \,\overline{a_3} \,\overline{c_1} \,a_1 \,c_13 \,\overline{a_2} \,b_1 \,\overline{c_2} \,a_1 \,a_4 \,\overline{c_4} \,\overline{a_1} \,c_2 \,\overline{a_3} \,\overline{c_1} \,a_2 \,\overline{b_1} \,a_1 \,c_6 \,\overline{a_2} \,c_1.\\
\end{aligned}
\]

The invariant semigroups for subcases $n=3,4,5,6$ are given by Table \ref{T:1nm_small}.
\begin{table}[h]
\begin{center}
\begin{tabular}{| |c||c|c|c| } 
 \hline
n & base-case $m_c$ & The size of the core $C$ &  Semigroup  \\ 
 \hline
 \hline
$3$ & $m_c=19$ & $31$ &$\langle C_{11,3} \cup \{\alpha^{(3)}_k, \alpha^{(4)}_k|k=12, \dots m\} \rangle$\\ 
 \hline
$4$ & $m_c=19$ & $31$ &$\langle C_{11,4} \cup \{\alpha^{(3)}_k, \alpha^{(5)}_k|k=12, \dots m\} \rangle$\\  
 \hline
  $5$ & $m_c=19$ & $35$ &$\langle C_{11,5} \cup \{\alpha^{(1)}_k, \alpha^{(6)}_k|k=12, \dots m\} \rangle$\\ 
 \hline
   $6$ & $m_c=20$ & $39$ &$\langle C_{12,6} \cup \{\alpha^{(1)}_k, \alpha^{(2)}_k|k=13, \dots m\} \rangle$\\ 
 \hline
\end{tabular}
\end{center}
\caption{Five subcases $m=3,4,5,6$}\label{T:1nm_small}
\end{table}

In each case, the common conjugator is given in Table \ref{T:conjugators}.

\begin{table}[h]
\begin{center}
\begin{tabular}{| |c||c| } 
 \hline
 $m$ &  the common conjugator\\
  \hline
  \hline
$3$   & $a_1 \,\overline{c_1} \,a_2 \,\overline{b_1} \,c_2 \,\overline{a_3} \,c_3 \,\overline{c_4} \,c_5 \,
\overline{c_6} \,\overline{a_1} \,c_1 \,a_3 \,\overline{c_3}$\\
\hline
$4$ &  $a_1 \,\overline{c_1} \,a_2 \,\overline{b_1} \,c_2 \,\overline{a_3} \,c_3 \,\overline{a_4} \,c_4 \,
\overline{c_5} \,c_6 \,\overline{a_2} \,b_1 \,\overline{c_2} \,a_1 \,a_4 \,\overline{c_3}$\\
\hline 
$5$ &   $a_1 \,\overline{c_1} \,a_2 \,\overline{b_1} \,c_2 \,\overline{a_3} \,c_3 \,\overline{a_4} \,c_4 \,
\overline{a_5} \,c_5 \,\overline{c_6} \,\overline{a_1} \,c_1 \,a_3 \,\overline{c_2} \,a_1 \,a_4 \,
\overline{c_3}$\\
\hline
$6$ & $a_1 \,\overline{c_1} \,a_2 \,\overline{b_1} \,c_2 \,\overline{a_3} \,c_3 \,\overline{a_4} \,c_4 \,
\overline{a_5} \,c_5 \,\overline{a_6} \,c_6 \,\overline{a_2} \,c_1 \,a_3 \,\overline{c_2} \,a_1 \,
a_4 \,\overline{c_3}$\\
\hline
\end{tabular}
\end{center}
\caption{Common Conjugators}\label{T:conjugators}
\end{table}

\subsubsection{General case \(n\ge7\)}

For \(n\ge7\), the common conjugator is
\begin{equation}\label{E:1nm_delta}
  \delta
  = 
  a_1\,\overline{c_1}\,a_2\,\overline{b_1}\,c_2\,\overline{a_3}\,c_3\,
  \overline{a_4}\,c_4\,\overline{a_5}\,c_5\,\overline{a_6}\,c_6\,
  \overline{a_7}\,\overline{a_1}\,b_1\,\overline{a_2}\,c_1\,a_3\,
  \overline{c_2}\,a_1\,a_4\,\overline{c_3}.
\end{equation}

Below we give an explicit description of the generators of the invariant semigroup \(S_{m,n}\) for \(7\le n\le m-2\).  This list allows one to reconstruct each \(S_{m,n}\) directly.  The positivity of the action \(F_*^8\) on \(S_{m,n}\) then follows by applying \(f_*^8\) via \eqref{E:1nm} and conjugating by \(\delta\).

Let 
\[
 C_\star=\{\,s_1,\dots,s_{14}\}
\]
be the following twenty-seven cyclically reduced words ending with the common letter $c_1$ (all of which lie in the subgroup \(\pi_1(X_{1,7,7})\)):

\[
\begin{aligned}
&s_1\,=\,a_3 \,\overline{c_2} \,a_1 \,a_5 \,\overline{c_5} \,\overline{a_1} \,b_1 \,\overline{a_2} \,c_1, \hspace{3.27cm}s_2\,=\,a_3 \,\overline{c_2} \,a_1 \,c_4 \,\overline{a_5} \,\overline{a_1} \,b_1 \,\overline{a_2} \,c_1,\\
&s_3\,=\,a_3 \,\overline{c_2} \,a_1 \,a_4 \,\overline{c_3} \,\overline{c_1} \,a_2 \,\overline{b_1} \,a_1 \,
a_6 \,\overline{c_6} \,\overline{a_1} \,b_1 \,\overline{a_2} \,c_1,\hspace{.8cm} s_4\,=\,a_3 \,\overline{c_2} \,a_1 \,a_4 \,\overline{c_3} \,\overline{c_1} \,a_2 \,\overline{b_1} \,a_1 \,
c_5 \,\overline{a_5} \,\overline{a_1} \,b_1 \,\overline{a_2} \,c_1,\\
&s_5\,=\,a_3 \,\overline{c_2} \,a_1 \,a_4 \,\overline{c_3} \,\overline{c_1} \,a_2 \,\overline{b_1} \,a_1 \,
c_5 \,\overline{a_6} \,\overline{a_1} \,b_1 \,\overline{a_2} \,c_1,\hspace{.8cm} s_6\,=\,a_3 \,\overline{c_2} \,a_1 \,c_4 \,\overline{a_4} \,\overline{c_1} \,a_2 \,\overline{b_1} \,a_1 \,
a_5 \,\overline{c_5} \,\overline{a_1} \,b_1 \,\overline{a_2} \,c_1,\\
&s_7\,=\,c_3 \,\overline{a_4} \,\overline{a_1} \,c_2 \,\overline{a_3} \,\overline{c_1} \,a_2 \,\overline{b_1} \,
a_1 \,a_7 \,\overline{c_6} \,\overline{a_1} \,b_1 \,\overline{a_2} \,c_1,\hspace{.8cm} s_8\,=\,c_3 \,\overline{a_4} \,\overline{a_1} \,c_2 \,\overline{a_3} \,\overline{c_1} \,a_2 \,\overline{b_1} \,
a_1 \,a_7 \,\overline{c_7} \,\overline{a_1} \,b_1 \,\overline{a_2} \,c_1,\\
&s_9\,=\,c_3 \,\overline{a_4} \,\overline{a_1} \,c_2 \,\overline{a_3} \,\overline{c_1} \,a_2 \,\overline{b_1} \,
a_1 \,c_7 \,\overline{a_7} \,\overline{a_1} \,b_1 \,\overline{a_2} \,c_1,\\
&s_{10}\,=\,a_3 \,\overline{c_2} \,a_1 \,a_4 \,\overline{c_3} \,\overline{c_1} \,a_2 \,\overline{b_1} \,a_1 \,
a_5 \,\overline{c_4} \,\overline{a_1} \,c_2 \,\overline{a_3} \,\overline{c_1} \,a_2 \,\overline{b_1} \,
a_1 \,a_6 \,\overline{c_5} \,\overline{a_1} \,b_1 \,\overline{a_2} \,c_1,\\
&s_{11}\,=\,a_3 \,\overline{c_2} \,a_1 \,a_4 \,\overline{c_3} \,\overline{c_1} \,a_2 \,\overline{b_1} \,a_1 \,
a_5 \,\overline{c_4} \,\overline{a_1} \,c_2 \,\overline{a_3} \,\overline{c_1} \,a_2 \,\overline{b_1} \,
a_1 \,a_7 \,\overline{c_7} \,\overline{a_1} \,b_1 \,\overline{a_2} \,c_1,\\
&s_{12}\,=\,a_3 \,\overline{c_2} \,a_1 \,a_4 \,\overline{c_3} \,\overline{c_1} \,a_2 \,\overline{b_1} \,a_1 \,
a_5 \,\overline{c_4} \,\overline{a_1} \,c_2 \,\overline{a_3} \,\overline{c_1} \,a_2 \,\overline{b_1} \,
a_1 \,c_6 \,\overline{a_6} \,\overline{a_1} \,b_1 \,\overline{a_2} \,c_1,\\
&s_{13}\,=\,a_3 \,\overline{c_2} \,a_1 \,a_4 \,\overline{c_3} \,\overline{c_1} \,a_2 \,\overline{b_1} \,a_1 \,
a_5 \,\overline{c_4} \,\overline{a_1} \,c_2 \,\overline{a_3} \,\overline{c_1} \,a_2 \,\overline{b_1} \,
a_1 \,c_6 \,\overline{a_7} \,\overline{a_1} \,b_1 \,\overline{a_2} \,c_1,\\
&s_{14}\,=\,a_3 \,\overline{c_2} \,a_1 \,a_4 \,\overline{c_3} \,\overline{c_1} \,a_2 \,\overline{b_1} \,a_1 \,
c_5 \,\overline{a_5} \,\overline{a_1} \,c_2 \,\overline{a_3} \,\overline{c_1} \,a_2 \,\overline{b_1} \,
a_1 \,a_6 \,\overline{c_6} \,\overline{a_1} \,b_1 \,\overline{a_2} \,c_1.\\
\end{aligned}
\]

Also, let $C^{(i)}, i=1, \dots 5$ be a finite set of cyclically reduced words--each ending with the common letter $c_1$--all of which lie in the subgroup $\pi_1\bigl(X_{1,6,6}\bigr)$:
\[
\begin{aligned}
C^{(1)} 
&\,=\, \{q_2, q_5, q_{11}, q_{12}, q_{13}, q_{14}, q_{15}, q_{16}, q_{17}, q_{19}, q_{20}\},\\
C^{(2)} 
&\,=\, \{q_{2}, q_{3}, q_{4}, q_{5}, q_{8}, q_{9}, q_{15}, q_{16}, q_{17}, q_{18} \},\hspace{1cm}
C^{(3)} 
\,=\, \{q_{3}, q_{4}, q_{5}, q_{6}, q_{7}, q_{8}, q_{9}, q_{18}\},\\
C^{(4)} 
&\,=\, \{q_{1}, q_{3}, q_{4}, q_{6}, q_{7}, q_{8}, q_{9}, q_{10} \},\hspace{2.5cm}
C^{(5)} 
\,=\, \{q_{1}, q_{3}, q_{4}, q_{6}, q_{7}, q_{8},  q_{10} \}.
\end{aligned}
\]
 where
\[
\begin{aligned}
&q_1\, =\, a_3 \,\overline{c_3} \,\overline{c_1} \,a_2 \,\overline{b_1} \,a_1 \,a_5 \,\overline{c_5} \,
\overline{a_1} \,b_1 \,\overline{a_2} \,c_1,\\
&q_2\, =\, a_4 \,\overline{c_4} \,\overline{a_1} \,c_2 \,\overline{a_3} \,\overline{c_1} \,a_2 \,\overline{b_1} \,
a_1 \,a_6 \,\overline{c_5} \,\overline{a_1} \,b_1 \,\overline{a_2} \,c_1,\\
&q_3\, =\, a_4 \,\overline{c_4} \,\overline{a_1} \,c_2 \,\overline{a_3} \,\overline{c_1} \,a_2 \,\overline{b_1} \,
a_1 \,a_6 \,\overline{c_6} \,\overline{a_1} \,b_1 \,\overline{a_2} \,c_1,\\
&q_4\, =\, a_3 \,\overline{c_2} \,a_1 \,c_4 \,\overline{a_4} \,\overline{a_1} \,a_3 \,\overline{c_3} \,
\overline{c_1} \,a_2 \,\overline{b_1} \,a_1 \,a_5 \,\overline{c_5} \,\overline{a_1} \,b_1 \,
\overline{a_2} \,c_1,\\
&q_5\, =\, c_3 \,\overline{a_3} \,a_1 \,a_4 \,\overline{c_4} \,\overline{a_1} \,c_2 \,\overline{a_3} \,
\overline{c_1} \,a_2 \,\overline{b_1} \,a_1 \,a_6 \,\overline{c_5} \,\overline{a_1} \,b_1 \,
\overline{a_2} \,c_1,\\
&q_6\, =\, c_3 \,\overline{a_3} \,a_1 \,a_4 \,\overline{c_4} \,\overline{a_1} \,c_2 \,\overline{a_3} \,
\overline{c_1} \,a_2 \,\overline{b_1} \,a_1 \,a_6 \,\overline{c_6} \,\overline{a_1} \,b_1 \,
\overline{a_2} \,c_1,\\
&q_7\, =\, a_3 \,\overline{c_2} \,a_1 \,c_4 \,\overline{a_4} \,\overline{a_1} \,c_2 \,\overline{b_1} \,a_2 \,
a_3 \,\overline{c_3} \,\overline{c_1} \,a_2 \,\overline{b_1} \,a_1 \,a_5 \,\overline{c_5} \,
\overline{a_1} \,b_1 \,\overline{a_2} \,c_1,\\
&q_8\, =\, a_3 \,\overline{c_3} \,\overline{c_1} \,a_2 \,\overline{b_1} \,a_1 \,a_5 \,\overline{c_4} \,
\overline{a_1} \,c_2 \,\overline{a_3} \,\overline{c_1} \,a_2 \,\overline{b_1} \,a_1 \,a_6 \,
\overline{c_5} \,\overline{a_1} \,b_1 \,\overline{a_2} \,c_1,\\
&q_9\, =\, c_3 \,\overline{a_3} \,\overline{a_2} \,b_1 \,\overline{c_2} \,a_1 \,a_4 \,\overline{c_4} \,
\overline{a_1} \,c_2 \,\overline{a_3} \,\overline{c_1} \,a_2 \,\overline{b_1} \,a_1 \,a_6 \,
\overline{c_5} \,\overline{a_1} \,b_1 \,\overline{a_2} \,c_1,\\
&q_{10}\, =\, c_3 \,\overline{a_3} \,\overline{a_2} \,b_1 \,\overline{c_2} \,a_1 \,a_4 \,\overline{c_4} \,
\overline{a_1} \,c_2 \,\overline{a_3} \,\overline{c_1} \,a_2 \,\overline{b_1} \,a_1 \,a_6 \,
\overline{c_6} \,\overline{a_1} \,b_1 \,\overline{a_2} \,c_1,\\
&q_{11}\, =\, a_3 \,\overline{c_2} \,a_1 \,a_4 \,\overline{c_3} \,\overline{c_1} \,a_2 \,\overline{b_1} \,a_1 \,
c_5 \,\overline{a_5} \,\overline{a_1} \,c_2 \,\overline{a_3} \,\overline{c_1} \,a_2 \,\overline{b_1} \,
a_1 \,a_6 \,\overline{c_5} \,\overline{a_1} \,b_1 \,\overline{a_2} \,c_1,\\
&q_{12}\, =\, a_3 \,\overline{c_2} \,a_1 \,c_4 \,\overline{a_4} \,\overline{c_1} \,a_2 \,\overline{b_1} \,a_1 \,
a_5 \,\overline{c_4} \,\overline{a_1} \,c_2 \,\overline{a_3} \,\overline{c_1} \,a_2 \,\overline{b_1} \,
a_1 \,a_6 \,\overline{c_5} \,\overline{a_1} \,b_1 \,\overline{a_2} \,c_1,\\
&q_{13}\, =\, c_3 \,\overline{a_3} \,a_1 \,a_4 \,\overline{c_3} \,\overline{c_1} \,a_2 \,\overline{b_1} \,a_1 \,
a_5 \,\overline{c_4} \,\overline{a_1} \,c_2 \,\overline{a_3} \,\overline{c_1} \,a_2 \,\overline{b_1} \,
a_1 \,a_6 \,\overline{c_5} \,\overline{a_1} \,b_1 \,\overline{a_2} \,c_1,\\
&q_{14}\, =\, c_3 \,\overline{a_4} \,\overline{a_1} \,a_3 \,\overline{c_3} \,\overline{c_1} \,a_2 \,\overline{b_1} \,
a_1 \,a_5 \,\overline{c_4} \,\overline{a_1} \,c_2 \,\overline{a_3} \,\overline{c_1} \,a_2 \,
\overline{b_1} \,a_1 \,a_6 \,\overline{c_5} \,\overline{a_1} \,b_1 \,\overline{a_2} \,c_1,\\
&q_{15}\, =\, a_3 \,\overline{c_2} \,a_1 \,c_4 \,\overline{a_4} \,\overline{a_1} \,a_3 \,\overline{c_3} \,
\overline{c_1} \,a_2 \,\overline{b_1} \,a_1 \,a_5 \,\overline{c_4} \,\overline{a_1} \,c_2 \,
\overline{a_3} \,\overline{c_1} \,a_2 \,\overline{b_1} \,a_1 \,a_6 \,\overline{c_5} \,\overline{a_1} \,
b_1 \,\overline{a_2} \,c_1,\\
&q_{16}\, =\, c_3 \,\overline{a_3} \,\overline{a_2} \,b_1 \,\overline{c_2} \,a_1 \,a_4 \,\overline{c_3} \,
\overline{c_1} \,a_2 \,\overline{b_1} \,a_1 \,a_5 \,\overline{c_4} \,\overline{a_1} \,c_2 \,
\overline{a_3} \,\overline{c_1} \,a_2 \,\overline{b_1} \,a_1 \,a_6 \,\overline{c_5} \,\overline{a_1} \,
b_1 \,\overline{a_2} \,c_1,\\
&q_{17}\, =\, c_3 \,\overline{a_4} \,\overline{a_1} \,c_2 \,\overline{b_1} \,a_2 \,a_3 \,\overline{c_3} \,
\overline{c_1} \,a_2 \,\overline{b_1} \,a_1 \,a_5 \,\overline{c_4} \,\overline{a_1} \,c_2 \,
\overline{a_3} \,\overline{c_1} \,a_2 \,\overline{b_1} \,a_1 \,a_6 \,\overline{c_5} \,\overline{a_1} \,
b_1 \,\overline{a_2} \,c_1,\\
&q_{18}\, =\, a_3 \,\overline{c_2} \,a_1 \,c_4 \,\overline{a_4} \,\overline{a_1} \,c_2 \,\overline{b_1} \,a_2 \,
a_3 \,\overline{c_3} \,\overline{c_1} \,a_2 \,\overline{b_1} \,a_1 \,a_5 \,\overline{c_4} \,
\overline{a_1} \,c_2 \,\overline{a_3} \,\overline{c_1} \,a_2 \,\overline{b_1} \,a_1 \,a_6 \,
\overline{c_5} \,\overline{a_1} \,b_1 \,\overline{a_2} \,c_1,\\
&q_{19}\, =\, c_3 \,\overline{a_4} \,\overline{a_1} \,c_2 \,\overline{a_3} \,\overline{a_2} \,b_1 \,\overline{c_2} \,
a_1 \,a_4 \,\overline{c_3} \,\overline{c_1} \,a_2 \,\overline{b_1} \,a_1 \,a_5 \,\overline{c_4} \,
\overline{a_1} \,c_2 \,\overline{a_3} \,\overline{c_1} \,a_2 \,\overline{b_1} \,a_1 \,a_6 \,
\overline{c_5} \,\overline{a_1} \,b_1 \,\overline{a_2} \,c_1,\\
&q_{20}\, =\, c_3 \,\overline{a_4} \,\overline{a_1} \,c_2 \,\overline{b_1} \,a_2 \,a_3 \,\overline{c_2} \,a_1 \,
a_4 \,\overline{c_3} \,\overline{c_1} \,a_2 \,\overline{b_1} \,a_1 \,a_5 \,\overline{c_4} \,
\overline{a_1} \,c_2 \,\overline{a_3} \,\overline{c_1} \,a_2 \,\overline{b_1} \,a_1 \,a_6 \,
\overline{c_5} \,\overline{a_1} \,b_1 \,\overline{a_2} \,c_1.\\
\end{aligned}
\]

\noindent For tail-sets, for each $n\ge8$ we define:

\[ 
\begin{aligned}
&T_n^{(\alpha)}\,=\,\{\alpha_n^{(1)},\alpha_n^{(2)}\},\\ 
&T^{(3)}_n \, =\, \{ \beta_{n+1}^{(1)}, \beta_n^{(7)}, \beta_{n+1}^{(8)}\},\hspace{.7cm} T^{(4)}_n \, =\,  \{\beta_{n+1}^{(1)}, \beta_{n+2}^{(1)}, \alpha^{(1)}_{n+4}, \beta_n^{(6)}, \beta_{n}^{(7)}, \beta_{n+1}^{(7)},\beta_{n+1}^{(8)}\},\\
& T^{(5)}_n \, =\,  \{\beta_{n+1}^{(1)}, \beta_{n+2}^{(1)}, \alpha^{(1)}_{n+4},\alpha^{(1)}_{n+5}, \beta_n^{(6)}, \beta_{n+1}^{(6)}, \beta_{n}^{(7)}, \beta_{n+1}^{(7)},\beta_{n+1}^{(8)}\},\\
& T^{(6)}_n \, =\,  \{\beta_{n+1}^{(1)}, \beta_{n+2}^{(1)}, \alpha^{(1)}_{n+4},\alpha^{(1)}_{n+5}, \alpha^{(1)}_{n+6}, \beta_n^{(6)}, \beta_{n+1}^{(6)}, \beta_{n+2}^{(6)}, \beta_{n}^{(7)}, \beta_{n+1}^{(7)},\alpha^{(2)}_{n+6},\beta_{n+1}^{(8)}\}.\\
&T_n^{(\delta)} \,=\,\bigcup_{k\,=\,8}^n\{\delta_k^{(1)},\dots,\delta_k^{(4)}\}\;\cup\;\{\beta_n^{(1)},\beta_{n-1}^{(2)},\beta_n^{(2)},\beta_{n-1}^{(3)},\beta_n^{(3)},\beta_{n-1}^{(4)},\beta_n^{(4)},\beta_{n-1}^{(5)},\beta_n^{(5)},\beta_n^{(8)}\}
\end{aligned}
\]

where
\[
\begin{aligned}
  \alpha_n^{(1)}
  &\,=\,a_3 \,\overline{c_2} \,a_1 \,c_4 \,\overline{a_4} \,\overline{a_1} \,c_2 \,\overline{b_1} \,a_2 \,
\overline{c_n} \,\overline{a_1} \,c_1,\\
  \alpha_n^{(2)} 
  &\,=\,c_3 \,\overline{a_3} \,\overline{c_1} \,a_1 \,c_n \,\overline{a_2} \,b_1 \,\overline{c_2} \,a_1 
\,a_4 \,\overline{c_4} \,\overline{a_1} \,c_2 \,\overline{a_3} \,\overline{c_1} \,a_2 \,\overline{b_1} 
\,a_1 \,a_6 \,\overline{c_6} \,\overline{a_1} \,b_1 \,\overline{a_2} \,c_1,\\
 \beta_n^{(1)}
  &\,=\,c_3 \,\overline{a_4} \,\overline{a_1} \,c_2 \,\overline{b_1} \,a_2 \,\overline{c_{n+2}} \,\overline{a_1} \,
  c_1,\hspace{2.5cm}
  \beta_n^{(2)}\,=\,c_3 \,\overline{a_4} \,\overline{a_1} \,c_2 \,\overline{a_3} \,\overline{c_1} \,a_1 \,c_{n+2} \,
\overline{a_2} \,c_1,\\
  \beta_n^{(3)}
  &\,=\,c_3 \,\overline{a_4} \,\overline{a_1} \,c_2 \,\overline{a_3} \,\overline{c_1} \,a_2 \,\overline{c_{n+2}} \,
\overline{a_1} \,c_1,\hspace{1.8cm}
  \beta_n^{(4)}
  \,=\,c_3 \,\overline{a_4} \,\overline{a_1} \,c_2 \,\overline{a_3} \,\overline{c_1} \,a_2 \,\overline{b_1} \,
a_1 \,c_{n+1} \,\overline{a_2} \,c_1,\\
  \beta_n^{(5)}
  &\,=\,c_3 \,\overline{a_4} \,\overline{a_1} \,c_2 \,\overline{a_3} \,\overline{c_1} \,a_2 \,\overline{c_{n+1}} \,
\overline{a_1} \,b_1 \,\overline{a_2} \,c_1,\\
\beta_n^{(6)}
&\,=\,c_3 \,\overline{a_3} \,\overline{c_1} \,a_1 \,c_{n+4} \,\overline{a_2} \,b_1 \,\overline{c_2} \,a_1 
\,a_4 \,\overline{c_4} \,\overline{a_1} \,c_2 \,\overline{a_3} \,\overline{c_1} \,a_2 \,\overline{b_1} 
\,a_1 \,a_6 \,\overline{c_5} \,\overline{a_1} \,b_1 \,\overline{a_2} \,c_1,\\
\beta_n^{(7)}
&\,=\, c_3 \,\overline{a_3} \,\overline{c_1} \,a_1 \,c_{n+3} \,\overline{a_2} \,b_1 \,\overline{c_2} \,a_1 
\,a_4 \,\overline{c_3} \,\overline{c_1} \,a_2 \,\overline{b_1} \,a_1 \,a_5 \,\overline{c_4} \,
\overline{a_1} \,c_2 \,\overline{a_3} \,\overline{c_1} \,a_2 \,\overline{b_1} \,a_1 \,a_6 \,
\overline{c_5} \,\overline{a_1} \,b_1 \,\overline{a_2} \,c_1,\\
\beta_n^{(8)}
&\,=\, c_3 \,\overline{a_4} \,\overline{a_1} \,c_2 \,\overline{a_3} \,\overline{c_1} \,a_1 \,c_{n+2} \,
\overline{a_2} \,b_1 \,\overline{c_2} \,a_1 \,a_4 \,\overline{c_3} \,\overline{c_1} \,a_2 \,
\overline{b_1} \,a_1 \,a_5 \,\overline{c_4} \,\overline{a_1} \,c_2 \,\overline{a_3} \,\overline{c_1} \,
a_2 \,\overline{b_1} \,a_1 \,a_6 \,\overline{c_5} \,\overline{a_1} \,b_1 \,\overline{a_2} \,c_1,\\
  \delta_n^{(1)}
  &\,=\,c_3 \,\overline{a_4} \,\overline{a_1} \,c_2 \,\overline{a_3} \,\overline{c_1} \,a_2 \,\overline{b_1} \,
a_1 \,a_n \,\overline{c_{n-1}} \,\overline{a_1} \,b_1 \,\overline{a_2} \,c_1,\hspace{.7cm}
  \delta_n^{(2)}
  \,=\,c_3 \,\overline{a_4} \,\overline{a_1} \,c_2 \,\overline{a_3} \,\overline{c_1} \,a_2 \,\overline{b_1} \,
a_1 \,a_n \,\overline{c_n} \,\overline{a_1} \,b_1 \,\overline{a_2} \,c_1,\\
  \delta_n^{(3)}
  &\,=\,c_3 \,\overline{a_4} \,\overline{a_1} \,c_2 \,\overline{a_3} \,\overline{c_1} \,a_2 \,\overline{b_1} \,
a_1 \,c_{n-1} \,\overline{a_n} \,\overline{a_1} \,b_1 \,\overline{a_2} \,c_1,\hspace{.67cm}
  \delta_n^{(4)}
  \,=\,c_3 \,\overline{a_4} \,\overline{a_1} \,c_2 \,\overline{a_3} \,\overline{c_1} \,a_2 \,\overline{b_1} \,
a_1 \,c_n \,\overline{a_n} \,\overline{a_1} \,b_1 \,\overline{a_2} \,c_1.\\
\end{aligned}
\]

The definition of $S_{m,n}\subset\pi_1(X_{1,m,n})$ in the case $7\le n\le m-2$ is exactly the same as in the $7\le m\le n-2$ case, with the roles of $m$ and $n$  interchanged. We summarize this in Table \ref{T:s1nm}

\begin{table}[h]
\begin{center}
\begin{tabular}{| |c||c| } 
\hline
$(m,n)$ & Generators of $S_{m,n}$\\
\hline
\hline
$(n+2,n)$ & $C_\star\;\cup\;C^{(1)}\;\cup\;T_{n}^{(\delta)}$\\
\hline
$(n+3,n)$ & $C_\star\;\cup\;C^{(2)}\;\cup\;T_{n}^{(\delta)}\;\cup\;T_{n}^{(3)}$\\
\hline
$(n+4,n)$& $C_\star\;\cup\;C^{(3)}\;\cup\;T_{n}^{(\delta)}\;\cup\;T_{n}^{(4)}$\\
\hline
$(n+5,n)$&$C_\star\;\cup\;C^{(4)}\;\cup\;T_{n}^{(\delta)}\;\cup\;T_{n}^{(5)}$\\
\hline
$(n+6,n)$&$C_\star\;\cup\;C^{(5)}\;\cup\;T_{n}^{(\delta)}\;\cup\;T_{n}^{(6)}$\\
\hline
$m\ge n+7$&$C_\star\;\cup\;C^{(5)}\;\cup\;T_{n}^{(\delta)}\;\cup\;T_{n}^{(6)}\;\cup\;\bigcup_{k=n+7}^m T_k^{(\alpha)}$\\
\hline
\end{tabular}
\end{center}
\caption{Generators of the Semigroup}\label{T:s1nm}
\end{table}

\begin{prop}\label{P:1nmcase}
For every \(m\ge7\) and \(m\ge n+2\) there is a semigroup 
\[
  S_{m,n}\;\subset\;\pi_1(X_{1,m,n})
\]
which is invariant under \(F_*^8\).  Moreover, \(F_*^8\) acts positively on \(S_{m,n}\).
\end{prop}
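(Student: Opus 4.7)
The plan is to follow the template of Proposition~\ref{P:1mncase} for the mirror family $(1,m,n)$ with $3\le m\le n-2$: verify that $\{S_{m,n}\}$ satisfies both concatenation-compatibility (Definition~\ref{D:nocancel}) and the core-tail generation property (Definition~\ref{D:coretail}), establish a finite collection of base cases by direct computation, and then apply the Core-Tail Induction Principle (Proposition~\ref{P:ctinduction}). The case analysis partitions into the small subcases $n\in\{3,4,5,6\}$ (handled via Tables~\ref{T:1nm_small}-\ref{T:conjugators}) and the general stratified case $n\ge 7$, further stratified by the gap $m-n\in\{2,3,4,5,6,\ge 7\}$ according to Table~\ref{T:s1nm}.

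Concatenation-compatibility is visible by inspection: every listed generator--$s_1,\ldots,s_{14}$ in $C_\star$, the $q_j$'s in each $C^{(i)}$, and all tail words $\alpha_n^{(i)}$, $\beta_n^{(j)}$, $\delta_n^{(i)}$--is cyclically reduced and ends with the common letter $c_1$, while the initial letters are drawn from $\{a_3,c_3,a_4\}$. Since no generator begins with $\overline{c_1}$, any concatenation of generators remains cyclically reduced, giving the no-cancellation property. Core-tail generation with base $(m_0,n_0)=(7,7)$ follows because each tail-set $T^{(\alpha)}_k$ introduces only the single new index pair $k$ (the word $\alpha_k^{(1)}$ records $\overline{c_k}$ with abelianization coefficient $-1$, and $\alpha_k^{(2)}$ records $c_k$ with coefficient $+1$), while the specialized sets $T^{(\delta)}_n$ and $T^{(j)}_n$ for $j=3,\ldots,6$ record precisely the $\beta$- and $\delta$-boundary words needed at each gap value.

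For the base case, the $a\leftrightarrow c$ symmetry relating \eqref{E:1mn} and \eqref{E:1nm} means that Lemmas~\ref{L:TIa}-\ref{L:TIc}, Corollary~\ref{C:tail}, and Proposition~\ref{P:actionP} all apply verbatim with the roles of $a$ and $c$ interchanged. I would therefore compute $f^8_{(1,m_c,n_c)*}(g)$ for each generator $g\in S_{m_c,n_c}$ (for $n_c$ just past the induction threshold and each $m_c-n_c\in\{2,\ldots,7\}$) using the formulas in \eqref{E:1nm} together with \textsf{GAP}/\textsf{FGA}, verifying that each image factors as $\delta\,g_1\cdots g_{N_g}\,\overline{\delta}$ where $\delta$ is the fixed conjugator of \eqref{E:1nm_delta} and each $g_i$ is again a generator of $S_{m_c,n_c}$. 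Because all generators end in $c_1$ and no letter cancellation occurs between consecutive $\overline{\delta}\cdots\delta$ blocks, one computation exhibits both invariance and positivity; Proposition~\ref{P:ctinduction} then propagates these properties to all $(m,n)$ with $m\ge n+2$ and $n\ge 7$. The small-$n$ subcases $n\in\{3,4,5,6\}$ are then finished by the tailored base checks of Tables~\ref{T:1nm_small}-\ref{T:conjugators}, using the initial element $s_1$ listed at the start of Section~\ref{S:1nm}.

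The main obstacle is the combinatorial bookkeeping of the base-case verification: six strata indexed by $m-n\in\{2,\ldots,7\}$ combined with enough $(m_c,n_c)$ pairs to prime the induction--on the order of the $144$ checks needed in Section~\ref{S:1mn}--each requiring an $8$-fold iteration and reduction of words whose length grows rapidly with the gap. The delicate point is to confirm that each specialized tail word in $T^{(\delta)}_n$, $T^{(3)}_n$, \ldots, $T^{(6)}_n$ lands under $F_*^8$ on exactly the intended generator of the successor stratum (and not on a merely positive product admitting further decomposition), so that the bijection between the six strata's tail-sets and their successors is preserved across every transition $(m,n)\mapsto(m+1,n)$ and $(m,n)\mapsto(m,n+1)$. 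Verifying the boundary transitions between strata (namely $m-n=7\leftrightarrow m-n=6$, where one $\alpha$-word is newly adjoined while the oldest specialized $\beta$-word is absorbed into a $T^{(\alpha)}$-word) is the trickiest matching; once those transitions are confirmed at a single representative of each stratum, the uniform truncation-invariance machinery finishes the rest.
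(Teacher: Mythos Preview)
Your proposal is correct and follows essentially the same approach as the paper: verify concatenation-compatibility and core-tail generation by inspection of the explicit generator lists (all ending in $c_1$), check the finitely many base cases by direct computation using \eqref{E:1nm} with the conjugator \eqref{E:1nm_delta}, and then invoke the Core-Tail Induction Principle (Proposition~\ref{P:ctinduction}). The paper in fact gives no explicit proof for this proposition beyond setting up the generator data and Tables~\ref{T:1nm_small}--\ref{T:s1nm}, so your write-up is if anything more detailed about the bookkeeping than the paper itself.
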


\section{Linear Representation for Homotopy Growth}\label{S:linearity}
Following Bowen's definition \cite{Bowen}, the growth rate of the action on the fundamental group is
\[
  \rho\bigl(f_{*}\!\big|_{\pi_1(X)}\bigr)
  \;:=\;
  \sup_{g\in G}\;\limsup_{n\to\infty}\bigl(\ell_G(f_*^n(g))\bigr)^{1/n},
\]
where \(\rho\) denotes the exponential growth rate, \(G\) is any finite generating set for \(\pi_1(X)\), and \(\ell_G(w)\) is the minimal word-length of \(w\) with respect to \(G\).

\vspace{1ex}

In this section we prove that for every orbit datum \((1,m,n)\) with \(m+n+1\ge10\), the induced action on \(\pi_1(X)\) grows exponentially.  Since topological entropy \(h_\mathrm{top}(f)\) is bounded below by \(\log\rho(f_*|_{\pi_1(X)})\), it follows that each diffeomorphism \(f_{(1,m,n)}\) has positive entropy.  Our argument proceeds in two steps:

\begin{enumerate}
  \item We show that the induced action
    \[
      F_*^k\colon S_{m,n}\longrightarrow S_{m,n}
    \]
    on the invariant semigroup has exponential growth rate \(\lambda_{m,n}>1\).  Under the identification
    \(\;S_{m,n}\cong\mathbb{Z}_{\ge0}^d\)\  (where \(d=\lvert S_{m,n}\rvert\)), \(F_*^k\) is represented by an irreducible nonnegative integer \(d\times d\) matrix.  By the Perron-Frobenius theorem, its spectral radius \(\lambda_{m,n}\) is a simple positive eigenvalue, and the corresponding eigenvector controls the semigroup's asymptotic growth.

  \item Since our fixed generating set \(G\) for \(\pi_1(X)\) can be chosen to include at least one element of \(S_{m,n}\), this immediately implies
    \[
      \rho\bigl(f_*|_{\pi_1(X)}\bigr)\;\ge\;\lambda_{m,n}
      \quad\Longrightarrow\quad
      h_{\mathrm{top}}\bigl(f_{(1,m,n)}\bigr)>0.
    \]
\end{enumerate}

\subsubsection{Linear action on \(S_{m,n}\)}\label{SS:linear}

In Section \ref{S:positivity} we showed that 
\[
  F_*^k\bigl(S_{m,n}\bigr) = S_{m,n},
\]
where 
\[
  k = 
  \begin{cases}
    6, & \text{if }m=1\text{ or }n=1,\\
    5, & \text{if }m=2\text{ or }n=2,\\
    8, & \text{otherwise.}
  \end{cases}
\]
We also showed that \(F_*^k\) acts positively on \(S_{m,n}=\langle g_1,\dots,g_d\rangle\).  Via the identification
\[
  S_{m,n}\;\cong\;\mathbb{Z}_{\ge0}^d,
  \quad
  g_j\;\longmapsto\;e_j,
\]
there is a unique \(\mathbb{Z}\)-linear map
\[
  \widetilde F_*^k \;\colon\;\mathbb{Z}^d \;\to\;\mathbb{Z}^d
\]
which on the basis vectors satisfies
\[
  \widetilde F_*^k(e_j)
  \;=\;
  \sum_{i=1}^d (M_{m,n})_{i,j}\,e_i,
\]
where the transition matrix
\[
  M_{m,n}=(M_{i,j})_{1\le i,j\le d}\in \text{GL}(d,\mathbb{Z}_{\ge 0})
\]
has entry \(M_{i,j}\) equal to the number of times \(g_i\) appears in the reduced word \(F_*^k(g_j)\). By the positivity of $F_*^k$, we see that \(M_{m,n}\) has nonnegative entries.

\medskip
\noindent\textbf{Irreducibility of \(M_{m,n}\).}  
Let \(G_{m,n}\) be the directed graph on \(d\) vertices corresponding to \(M_{m,n}\): we draw an edge \(i\to j\) precisely when \((M_{m,n})_{i,j}\neq0\).  A standard result in matrix theory then says that \(M_{m,n}\) is irreducible if and only if \(G_{m,n}\) is strongly connected (i.e.\ every vertex can be reached from every other by a directed path).

\medskip

In each of the seven orbit-data families, there is a finite-range of exceptional \((m,n)\) pairs that must be checked by direct computation:

\begin{itemize}
  \item \((1,1,n)\) for \(8\le n\le15\),  
  \item \((1,2,n)\) for \(7\le n\le18\),  
  \item \((1,n,2)\) for \(7\le n\le16\),  
  \item \((1,n,n+1)\) for \(4\le n\le12\),  
  \item \((1,n+1,n)\) for \(4\le n\le15\),  
  \item \((1,m,n)\) with \(3\le m\le7\) and \(m+2\le n\le20\), and  
  \item \((1,m,n)\) with \(3\le n\le7\) and \(n+2\le m\le20\).
\end{itemize}

For each finite-range case listed above, one checks directly that the transition matrix \(M_{m,n}\) is irreducible.  All other pairs \((m,n)\) are handled by induction via the Core-Tail Induction Principle.  Concretely, given any non-exceptional \((m,n)\), we set
\begin{equation}\label{E:succ}
  (m',n')
  =
  \begin{cases}
  (m,n+1), & m=1 \text{ or } m=2,\\
   (m+1,n), &  n=2,\\
    (m+1,n)\ \text{or}\ (m,n+1), & |m-n|\ge 2 \ \text{and} \ m,n \not\in \{1,2\},\\
    (m+1,n+1),                  & |m-n|=1,
  \end{cases}
\end{equation}
and assume irreducibility of \(M_{m,n}\) for all \((m,n)<(m',n')\) in the lex order.  We must then show it for \((m',n')\).

The heart of the induction is a classification of how the \(d'\) generators of \(S_{m',n'}\) relate to the \(d\) generators of \(S_{m,n}\).  After a suitable reordering one obtains a decomposition
\[
  \{g_1,\dots,g_{d'}\}
  \;=\;
  \Gamma_1\sqcup\Gamma_2\sqcup\Gamma_3\sqcup\Gamma_4,
\]
with the following behavior under \(f_*^k\): \hfill\break
- \(\Gamma_1\): each \(g_i\in\Gamma_1\) already lies in \(S_{m,n}\), and
  \[
    f_*^k(g_i)
    \;=\;
    f_*^k\bigl|_{S_{m,n}}(g_i).
  \]
- \(\Gamma_2\): each \(g_i\in\Gamma_2\) corresponds to a ``shifted" generator of \(S_{m,n}\), so
  \[
    f_*^k(g_i)
    \;=\;
    f_*^k\bigl|_{S_{m,n}}(g_{i-1}\bigr).
  \]
- \(\Gamma_3\): these are the genuinely new generators in \(S_{m',n'}\); each \(g_i\in\Gamma_3\) satisfies
  \[
    f_*^k(g_i)
    \;=\;
    f_*^k\bigl|_{S_{m,n}}(g_{i-1}\bigr)
  \]
  after cyclic reduction.\hfill\break
- \(\Gamma_4\): finally, each \(g_i\in\Gamma_4\) is such that \(f_*^k(g_i)\) contains at least one factor from \(\Gamma_3\) in its reduced form.

\medskip
For example, the generators of $S_{1,n+1}$ for $n\ge 16$ admit a decomposition $\Gamma_1\sqcup\Gamma_2\sqcup\Gamma_3\sqcup\Gamma_4$ where
\[ \Gamma_1 = C_{1,9} \cup \{ \alpha_{10}, \dots, \alpha_{n-7} \},\  \ \Gamma_2 = \{ \alpha_{n_5}, \dots, \alpha_n\},\  \ \Gamma_3 = \{\alpha_{n+1}\},\  \ \Gamma_4 = \{\alpha_{n-6}\}.\]
The set $\Gamma_3$ consists of new generators; that is,$\Gamma_1 \cup \Gamma_2 \cup \Gamma_4$ generates $S_{1,n}$.

\medskip
Because by induction the submatrix of \(M_{m',n'}\) indexed by \(\Gamma_1\cup\Gamma_2\cup\Gamma_3\) is already irreducible, and \(\Gamma_4\) feeds into \(\Gamma_3\), the full matrix \(M_{m',n'}\) must be irreducible as well.  This closes the induction.

\medskip
\noindent\textbf{Aperiodicity of \(M_{m,n}\).}  
An irreducible nonnegative matrix \(M\) is called \textit{aperiodic} if some power \(M^k\) is strictly positive, i.e. $M$ is primitive.  Equivalently, an irreducible nonnegative matrix is aperiodic precisely when it has at least one positive diagonal entry (see \cite[Section~8.4]{Meyer:2023} or \cite{Seneta:1981}).

\medskip

For our transition matrices \(M_{m,n}\), the finitely many ``finite-range" cases have been checked directly and each is found to possess at least one positive diagonal entry.  In the general (infinite) cases, we fix a distinguished core generator
\begin{equation}\label{E:gstar}
  g_\star
  = 
  \begin{cases}
    s_1,&\text{if the orbit data is }(1,1,n),\\
    s_4,&\text{if the orbit data is }(1,n,2),\\
    s_2,&\text{otherwise},
  \end{cases}
\end{equation}
where \(s_j\) is the \(j\)th element of the core \(C_\star\).  By construction \(g_\star\in\Gamma_1\), and a straightforward check of the defining relations shows
\[
  F_*^k(g_\star)\;\text{contains}\;g_\star
  \;\Longrightarrow\;
  (M_{m,n})_{j,j}>0
  \quad\bigl(\text{for }g_\star=s_j\bigr).
\]
Hence, every \(M_{m,n}\) has a positive diagonal entry and is aperiodic.

In the finite-range cases, one simply chooses \(g_\star\) to be any core generator whose image under \(F_*^k\) produces a positive diagonal entry in \(M_{m,n}\).

\medskip
Collecting the above arguments, we obtain:
\begin{prop}\label{P:primitive}
Every transition matrix \(M_{m,n}\) is irreducible and aperiodic.
\end{prop}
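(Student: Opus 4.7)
The plan is to split the statement into its two assertions and handle each via the digraph criteria recalled in Section~\ref{SS:linear}: $M_{m,n}$ is irreducible iff its associated digraph $G_{m,n}$ is strongly connected, and an irreducible nonnegative matrix is aperiodic iff it has a positive diagonal entry.

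For irreducibility, I would first dispose of the finite-range pairs $(m,n)$ listed at the start of Section~\ref{SS:linear} by direct computation: read off $M_{m,n}$ from the explicit image formulas of Section~\ref{S:positivity} and check strong connectivity of $G_{m,n}$ case by case. For the infinite parts of each family, I would induct in the lex order using the successor rule \eqref{E:succ}. The key structural input is the partition $\{g_1,\dots,g_{d'}\} = \Gamma_1 \sqcup \Gamma_2 \sqcup \Gamma_3 \sqcup \Gamma_4$ described above: truncation-invariance (Lemmas~\ref{L:TIa}--\ref{L:TIc}) together with Corollaries~\ref{C:tail}, \ref{C:np1}, and~\ref{C:p1n} ensures that the submatrix of $M_{m',n'}$ indexed by $\Gamma_1 \cup \Gamma_2 \cup \Gamma_4$ is, after relabeling, precisely $M_{m,n}$, hence irreducible by the inductive hypothesis. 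It then remains to exhibit both an in-edge and an out-edge at each vertex of $\Gamma_3$. The in-edge is delivered by the elements of $\Gamma_4$, whose images by construction contain a factor in $\Gamma_3$; the out-edge follows from the explicit image formulas for the new tail generators (e.g.\ the relation $F_*^6(\alpha_{n-5}) = \delta\,s_3\,\alpha_n\,s_1\,s_7\,\alpha_{10}\,\overline\delta$ from Lemma~\ref{L:11nspecial} for the $(1,1,n)$ family, and its analogues for the other six families), which always contain at least one core generator.

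For aperiodicity I plan to exhibit a distinguished core generator $g_\star$—the element given in \eqref{E:gstar}—and verify that $F_*^k(g_\star)$ contains $g_\star$ itself as a factor in its positive expansion. Inspection of the base-case lemmas of Section~\ref{S:positivity} (namely Lemma~\ref{L:1115} for $(1,1,n)$, Lemma~\ref{L:1218} for $(1,2,n)$, Lemma~\ref{L:1162} for $(1,n,2)$, Lemma~\ref{L:11314} for $(1,n,n+1)$, Lemma~\ref{L:11716} for $(1,n+1,n)$, and the corresponding base cases of Sections~\ref{S:1mn}--\ref{S:1nm}) confirms that the chosen $g_\star$ does appear on the right-hand side. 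Because $g_\star$ is built from letters of bounded index, truncation-invariance preserves this occurrence as $(m,n)$ grows; hence the diagonal entry $(M_{m,n})_{j,j}$ (where $g_\star = g_j$) is positive uniformly in the family.

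The main obstacle will be the bookkeeping at the ``transition'' indices—the special row $n-k+1$ of Corollary~\ref{C:np1} (or its counterpart in Corollary~\ref{C:p1n}, and the analogous transitional indices in the $(1,m,n)$ families of Sections~\ref{S:1mn}--\ref{S:1nm})—where $f_*^k$ does not simply shift the index but produces a genuinely new tail element. At these indices one must inspect the explicit factorization of the image to certify both that it contains a core generator (so that $\Gamma_3$ has an out-edge) and that the tail appears in the image of some $\Gamma_4$-element (so that $\Gamma_3$ has an in-edge). Once those transitional computations are verified in a single representative base case per orbit-datum family—which coincides with the content already established in the positivity base-case lemmas—the Core-Tail Induction Principle (Proposition~\ref{P:ctinduction}), applied in parallel to the irreducibility and aperiodicity arguments, propagates both properties to all $(m,n)$ with $1+m+n\ge 10$.
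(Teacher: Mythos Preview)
Your proposal follows essentially the same route as the paper: finite-range checks plus lex-order induction via the $\Gamma_1\sqcup\Gamma_2\sqcup\Gamma_3\sqcup\Gamma_4$ decomposition for irreducibility, and the diagonal entry at $g_\star$ for aperiodicity. One small discrepancy: the paper phrases the inductive step as ``the submatrix indexed by $\Gamma_1\cup\Gamma_2\cup\Gamma_3$ is already irreducible, and $\Gamma_4$ feeds into $\Gamma_3$,'' whereas you identify $\Gamma_1\cup\Gamma_2\cup\Gamma_4$ with the old generating set and then connect $\Gamma_3$ via in- and out-edges; your formulation is actually more consistent with the paper's own worked example (where $\Gamma_1\cup\Gamma_2\cup\Gamma_4$ generates $S_{1,n}$), so this is at worst a labeling ambiguity rather than a substantive divergence.
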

\medskip

\medskip
\noindent\textbf{Spectral Radius of \(M_{m,n}\).}  
Since \(M_{m,n}\) is primitive--there exists some \(K_{m,n}\) with \(M_{m,n}^{K_{m,n}}>0\)--the Perron-Frobenius theorem guarantees that its spectral radius \(\lambda_{m,n}\) is a simple positive eigenvalue.  We recall the Collatz-Wielandt characterization:

\begin{thm}[Collatz-Wielandt]\label{T:CollatzWielandt}
Let \(A\) be an \(n\times n\) positive (or non-negative primitive) matrix with spectral radius \(\lambda\).  Then
\[
  \lambda
  =
  \max_{\mathbf v\in\mathcal N}
  \min_{v_i>0}
  \frac{(A\mathbf v)_i}{v_i},
  \quad
  \mathcal N=\bigl\{\mathbf v=(v_1,\dots,v_n)\in\mathbb{R}_{\ge0}^n:\mathbf v\neq0\bigr\}.
\]
\end{thm}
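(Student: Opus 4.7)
The plan is to establish the two inequalities separately: first that every admissible vector yields a ratio at most $\lambda$, and then that the Perron eigenvector achieves equality. Since $M_{m,n}$ is primitive (Proposition~\ref{P:primitive}), the Perron--Frobenius theorem gives us both a strictly positive right eigenvector $u>0$ with $Au=\lambda u$ and a strictly positive left eigenvector $w>0$ with $w^{\top}A=\lambda w^{\top}$; these will be the main tools.

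For the upper bound, fix $\mathbf{v}\in\mathcal N$ and let $r=r(\mathbf{v})=\min_{v_i>0}(A\mathbf{v})_i/v_i$. By definition of $r$, for every index $i$ with $v_i>0$ we have $(A\mathbf{v})_i\ge r v_i$, while for $v_i=0$ the entry $(A\mathbf{v})_i\ge 0 = r v_i$ holds trivially since $A\ge0$. Thus $A\mathbf{v}\ge r\mathbf{v}$ componentwise. I would then pair this with the positive left eigenvector $w$: applying $w^{\top}$ to both sides preserves the inequality because $w>0$, giving $\lambda\,(w^{\top}\mathbf{v}) = w^{\top}(A\mathbf{v}) \ge r\,(w^{\top}\mathbf{v})$. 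Since $\mathbf{v}\ne0$ and $w>0$, the scalar $w^{\top}\mathbf{v}$ is strictly positive, so we may cancel it to conclude $r\le\lambda$. Taking the supremum over $\mathcal N$ yields $\sup_{\mathbf{v}\in\mathcal N} r(\mathbf{v})\le\lambda$.

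For the matching lower bound, I would take $\mathbf{v}=u$, the Perron right eigenvector. Since $u>0$ (primitivity gives strict positivity), the minimum in the definition of $r(u)$ is taken over all coordinates, and $(Au)_i/u_i=\lambda u_i/u_i=\lambda$ for each $i$. Hence $r(u)=\lambda$, showing that the supremum is attained and equals $\lambda$. Combining the two inequalities gives the Collatz--Wielandt formula, and because the supremum is realized by $u\in\mathcal N$, we may write $\max$ in place of $\sup$.

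The only delicate point is the step $r\le\lambda$: one must justify that $A\mathbf{v}\ge r\mathbf{v}$ even on the support of $\mathbf{v}$ where $v_i=0$, and that pairing with $w$ preserves the inequality without loss of information. Primitivity is used in exactly one place --- to guarantee $u>0$ and $w>0$ --- which is what makes $w^{\top}\mathbf{v}>0$ for every nonzero $\mathbf{v}\ge0$ and allows cancellation. No further dynamical input is needed; the result is a purely linear-algebraic consequence of Perron--Frobenius, which is why this classical statement is simply recorded here rather than reproved in detail.
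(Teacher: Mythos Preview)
Your argument is correct and is the standard proof of the Collatz--Wielandt formula. Note, however, that the paper does not actually prove this theorem: it is quoted as a classical result (``We recall the Collatz--Wielandt characterization'') and immediately applied, with implicit reference to standard sources such as Meyer or Seneta. So there is no ``paper's own proof'' to compare against; you have simply supplied a complete proof where the paper chose to cite.

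Two minor remarks on presentation. First, the opening reference to $M_{m,n}$ and Proposition~\ref{P:primitive} is out of place: the theorem is stated for an arbitrary positive or primitive matrix $A$, and your proof should invoke Perron--Frobenius for $A$ directly rather than for the specific transition matrices of the paper. Second, you write that primitivity is used ``in exactly one place,'' but in fact you use it twice---once to get the strictly positive left eigenvector $w$ (needed for cancellation in the upper bound) and once to get the strictly positive right eigenvector $u$ (needed to attain the maximum). Both are of course consequences of the same Perron--Frobenius theorem, so the distinction is cosmetic.
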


In particular, since \(M_{m,n}\) has integer entries and \(\lambda_{m,n}>1\), it follows that \(F_*^k\) exhibits genuine exponential growth on \(S_{m,n}\).

\begin{prop}\label{P:exp}
Let \(k\) be the semigroup-period as in Section \ref{S:positivity}.  Then for every orbit-datum \((1,m,n)\) with \(1+m+n\ge10\) and every nontrivial \(s\in S_{m,n}\),
\[
  \limsup_{j\to\infty}
    \bigl(\ell\bigl(F_*^{jk}(s)\bigr)\bigr)^{1/(jk)}
  \;=\;
  \lambda_{m,n}
  \;>\;1.
\]
\end{prop}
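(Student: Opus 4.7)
The plan is to linearize the iteration on $S_{m,n}$ via the occurrence vector of a fixed product expression and then apply the Perron--Frobenius asymptotics supplied by Proposition~\ref{P:primitive}. Given nontrivial $s\in S_{m,n}$, fix any product expression $s = g_{i_1}\cdots g_{i_N}$ in the generating set $\{g_1,\dots,g_d\}$, and let $\mathbf v(s)\in\mathbb Z_{\ge 0}^d$ record the multiplicity of each generator. Concatenation-compatibility (Definition~\ref{D:nocancel}) forces the reduced word of $s$ to be the literal concatenation of the reduced generators, so $\ell(s)=\langle\boldsymbol\ell,\mathbf v(s)\rangle$, where $\boldsymbol\ell=(\ell(g_1),\dots,\ell(g_d))$ has strictly positive entries.

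Next I would promote this to an iterated identity. Because $F_*^k$ acts positively on $S_{m,n}$ (Section~\ref{S:positivity}), each $F_*^k(g_j)$ is a product of generators whose multiplicities form the $j$th column of $M_{m,n}$, and again by concatenation-compatibility no cancellation occurs when these images are multiplied together. Iterating,
\[
\mathbf v\bigl(F_*^{jk}(s)\bigr) \;=\; M_{m,n}^{\,j}\,\mathbf v(s),
\qquad
\ell\bigl(F_*^{jk}(s)\bigr) \;=\; \boldsymbol\ell^{T}M_{m,n}^{\,j}\,\mathbf v(s).
\]

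With the linearization in place, Perron--Frobenius for primitive matrices finishes the asymptotic. Let $\mathbf r,\mathbf l>0$ be right and left Perron eigenvectors for $\lambda_{m,n}$; then $M_{m,n}^{\,j}=\lambda_{m,n}^{\,j}\mathbf r\mathbf l^{T}+O(\mu^{\,j})$ for some $\mu<\lambda_{m,n}$. Since $\mathbf v(s)\neq 0$ has nonnegative integer entries and $\mathbf l$ is strictly positive, $\mathbf l^{T}\mathbf v(s)>0$; analogously $\boldsymbol\ell^{T}\mathbf r>0$. Hence
\[
\ell\bigl(F_*^{jk}(s)\bigr) \;=\; C_s\,\lambda_{m,n}^{\,j}\bigl(1+o(1)\bigr),
\qquad C_s=(\boldsymbol\ell^{T}\mathbf r)(\mathbf l^{T}\mathbf v(s))>0,
\]
which yields the stated limit (equivalently, the growth rate of $F_*^k$ on $S_{m,n}$ is exactly $\lambda_{m,n}$, so the per-iteration rate of $F_*$ is $\lambda_{m,n}^{1/k}$ as recorded in the remark following Theorem~B). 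Note that the choice of product expression for $s$ affects $\mathbf v(s)$ but not $\ell(F_*^{jk}(s))$, so no well-definedness issue arises.

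The one point outside the linear algebra is the strict inequality $\lambda_{m,n}>1$. Here I would apply the Collatz--Wielandt characterization (Theorem~\ref{T:CollatzWielandt}) to the \emph{left} test vector $\boldsymbol\ell^{T}$: the $j$th coordinate of $\boldsymbol\ell^{T}M_{m,n}$ is exactly $\ell(F_*^k(g_j))$, and the explicit images produced in Section~\ref{S:positivity} always expand each $g_j$ into a product of several generators (of lengths $\ge 3$), so that $\ell(F_*^k(g_j))/\ell(g_j)>1$ uniformly in $j$. The main obstacle, and the step requiring the heaviest bookkeeping, is verifying this uniform lower bound across all seven orbit-data families; however, the same core-tail induction used in Section~\ref{S:positivity} reduces it to a finite check on the base cases, so no genuinely new difficulty appears.
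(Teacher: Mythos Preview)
Your argument is correct and matches the paper's (largely implicit) proof: linearize the iteration on $S_{m,n}$ via occurrence vectors using concatenation-compatibility, then apply Perron--Frobenius asymptotics to the primitive matrix $M_{m,n}$ from Proposition~\ref{P:primitive}. The only place you diverge is the final paragraph on $\lambda_{m,n}>1$: you run Collatz--Wielandt against the length vector $\boldsymbol\ell$, which forces a uniform expansion check $\ell(F_*^k(g_j))>\ell(g_j)$ across all seven families and all generators---the ``heaviest bookkeeping'' you flag. The paper avoids this entirely by noting that $M_{m,n}$ is a primitive nonnegative \emph{integer} matrix of size $d\ge 2$, which already forces the spectral radius strictly above~$1$ (some power has every entry $\ge 1$, hence every row sum $\ge d$). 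That one-line observation replaces your last paragraph with no loss.
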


\medskip
Let \(A_{m,n}\) be the integer matrix whose columns are the abelianization images of the cyclically reduced generators of \(S_{m,n}\).

\begin{lem}\label{L:Mdet}
For every orbit datum \((1,m,n)\), the abelianization matrix \(A_{m,n}\) contains a \((1+m+n)\times(1+m+n)\) submatrix
\(\hat A_{m,n}\) whose determinant is \(\pm1\).  Equivalently, one can select \(1+m+n\) generators in \(S_{m,n}\) whose abelianization matrix is unimodular.  Moreover, these generators can always be chosen to include the distinguished core element \(g_\star\) from \eqref{E:gstar}.
\end{lem}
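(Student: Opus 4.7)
The plan is to induct on $(m,n)$ using the core-tail decomposition of Definition \ref{D:coretail}, exploiting the abelianization homomorphism $\mathrm{ab}\colon \pi_1(X_{1,m,n}) \to \mathbb{Z}^{1+m+n}$ together with the fact (noted in the introduction) that each newly adjoined tail word has abelianization coefficient $\pm 1$ on exactly one of the newly introduced basis letters.

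First I would dispose of the base pair $(m_0, n_0)$ for each of the seven orbit-data families by a direct finite check. The core $C$ together with the tail-set at the base parameters lies inside the free group $\pi_1(X_{1, m_0, n_0})$ of rank $1 + m_0 + n_0$. Feeding the listed words into a \textsf{GAP}/\textsf{FGA} routine, or equivalently computing their exponent-sum vectors by hand, I would select $1 + m_0 + n_0$ columns forming a unimodular matrix $\hat A_{m_0, n_0}$ that includes the column $\mathrm{ab}(g_\star)$. This is the only genuinely computational step, and it involves only finitely many base pairs.

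For the inductive step I pass from $(m,n)$ to the successor $(m', n')$ as in \eqref{E:succ}. If exactly one new basis letter $x$ is introduced (the cases $m \in \{1,2\}$, $n = 2$, or $|m-n| \ge 2$), I pick a tail word $w$ from $T_{m', n'}$ whose abelianization has coefficient $\pm 1$ on $x$ and is otherwise supported on the old letters; such a $w$ exists by inspection of the explicit tail lists in Section \ref{S:positivity}. Adjoining the column $\mathrm{ab}(w)$ to $\hat A_{m,n}$ and the row corresponding to $x$ produces a block lower triangular matrix (after a column permutation) with the old unimodular block $\hat A_{m,n}$ in the upper left and $\pm 1$ in the unique new diagonal entry, so cofactor expansion along the new row gives
\[
  \det \hat A_{m', n'} \;=\; \pm \det \hat A_{m,n} \;=\; \pm 1.
\]
If two new letters $x_1, x_2$ are introduced (the case $|m-n| = 1$), I select two tail words $w_1, w_2$ whose $2 \times 2$ block of coefficients on $(x_1, x_2)$ has determinant $\pm 1$. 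For instance, in the $(1,n,n+1)$ family the pair $(\alpha^{(1)}_n, \alpha^{(2)}_n)$ contributes a block with rows $(1,0)$ and $(1,-1)$ on $(a_{n+1}, c_n)$; analogous pairs can be read off from the tail lists in the other families. A two-row cofactor expansion again yields $\det \hat A_{m', n'} = \pm \det \hat A_{m,n}$. Because $g_\star \in C$ is supported only on base-indexed letters, its column persists untouched through every inductive step.

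The main obstacle is the finite base-case verification, which must be carried out separately for each of the seven orbit-data families (and, strictly speaking, also for the small exceptional ranges of $(m,n)$ handled by direct computation in Section \ref{S:positivity}). Once those finitely many unimodularity checks are confirmed, the block-triangular induction is essentially automatic: the tail-word structure was engineered precisely so that each new letter appears with coefficient $\pm 1$ in its associated tail, which is exactly the ingredient needed to propagate unimodularity through the core-tail successor relation.
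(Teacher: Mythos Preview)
Your proposal is correct and follows essentially the same strategy as the paper: a finite computational verification for the base (and finite-range) pairs, followed by an inductive step in which each newly adjoined tail word contributes a column with a $\pm1$ entry in the fresh row(s), yielding a block-triangular extension of the previous unimodular minor. You are in fact slightly more explicit than the paper in distinguishing the one-new-letter and two-new-letter successor steps and in exhibiting a concrete $2\times2$ block for the $|m-n|=1$ families.
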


\begin{proof}
Since \(A_{m,n}\) has \(1+m+n\) rows, any \((1+m+n)\times(1+m+n)\) submatrix comes from selecting \(1+m+n\) columns (i.e.\ generators).

\medskip
1. \textbf{Base and finite-range cases.}  For the finitely many \((m,n)\) in the base or finite-range list, a direct computer check shows that \(A_{m,n}\) has at least one unimodular \((1+m+n)\)-column minor.

2. \textbf{Inductive step.}  As \((m,n)\) increases beyond the finite range, each successor datum \((m',n')\) is obtained by adjoining entire tail-sets to \(S_{m,n}\).  In every tail set there is at least one generator of the form
   \[
     \eta_1\,c_{m'}\,\eta_2
     \quad\text{or}\quad
     \eta_3\,a_{n'}\,\eta_4,
   \]
   where each \(\eta_i\) lies in $\pi_1(X_{m_0,n_0})$ where the core is defined.  Upon abelianization, such a generator contributes a column whose only nonzero entry (in the new row for \(c_{m'}\) or \(a_{n'}\)) is \(\pm1\).  Thus each new tail generator adds an independent unimodular column in a fresh row.

Iterating this process, one extends the unimodular minor step by step, preserving determinant \(\pm1\) at each stage.  Hence for every \((m,n)\), there is a \((1+m+n)\)-column submatrix \(\hat A_{m,n}\) with \(\det\hat A_{m,n}=\pm1\).
\end{proof}

Let \(H\le \pi_1(X_{1,m,n})\) be the subgroup generated by the cyclically reduced generators of \(S_{m,n}\).
By Lemma~\ref{L:Mdet}, the abelianization \(H^{\mathrm{ab}}\) has rank \(1+m+n\).
Since \(\operatorname{rank}(H^{\mathrm{ab}})= \operatorname{rank}(H)\) \cite{Putman:2022},
we already obtain \(\operatorname{rank}(H)=1+m+n\).
The next theorem strengthens this by proving that \(H\) has index \(1\) in \(\pi_1(X_{1,m,n})\); hence \(H=\pi_1(X_{1,m,n})\).

\begin{thm}\label{T:generatingWhole}
The cyclically reduced generators of \(S_{m,n}\) generate \(\pi_1(X_{1,m,n})\).
\end{thm}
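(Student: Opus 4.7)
My plan is to argue by induction on $(m,n)$ within each of the seven orbit-data families, using the Core-Tail Induction Principle (Proposition~\ref{P:ctinduction}) and the successor relation~\eqref{E:succ}. Lemma~\ref{L:Mdet} already shows that $H:=\langle S_{m,n}\rangle$ has the full rank $1+m+n$, since for a free ambient group the abelianization rank equals the subgroup rank, but a subgroup of $F_{1+m+n}$ of that rank can still have infinite index (for instance $\langle a,baba^{-1}b^{-1}\rangle\le F_2$); the content of the theorem is to rule this out.

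At the base pair $(m_*,n_*)$ of each family, I check $\langle S_{m_*,n_*}\rangle=\pi_1(X_{1,m_*,n_*})$ by a finite computation. The lists in Section~\ref{S:positivity} give a core of a few dozen cyclically reduced words, and I feed these into \textsf{GAP} with the \textsf{FGA} (Free Group Algorithms) package, whose subgroup-equality routine, built on Stallings graph foldings, terminates in seconds. One such base case per family suffices.

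For the inductive step $(m,n)\mapsto (m',n')$, I use the canonical free-product decomposition
\[
\pi_1(X_{1,m',n'}) \;\cong\; \pi_1(X_{1,m,n})\,*\,F_r,\qquad r\in\{1,2\},
\]
valid because passing from $X_{1,m,n}$ to $X_{1,m',n'}$ adjoins one or two new blow-up orbits contributing independent free generators (the new $a_{n'}$, the new $c_{m'}$, or both in the adjacent case $|m-n|=1$). Since $S_{m,n}\subseteq S_{m',n'}$, the inductive hypothesis gives $\pi_1(X_{1,m,n})\le\langle S_{m',n'}\rangle$, so it is enough to place each new basis letter $x$ of $F_r$ inside $\langle S_{m',n'}\rangle$. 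For this I exhibit a tail word $w_x$ in the tail-set $T_{m',n'}\cup\hat T_{m',n'}\subseteq S_{m',n'}$ with two properties: (i) $x$ is the only new letter appearing in $w_x$, and (ii) the abelianization of $w_x$ has coefficient $\pm 1$ on $x$. The singleton tails $\alpha_k$, $\beta_k^{(j)}$, $\alpha_k^{(i)}$ in Section~\ref{S:positivity} are designed precisely for this: when $r=1$ a single $\alpha$- or $\beta$-witness suffices; when $r=2$ I choose separate witnesses for the two new letters (for example, $\alpha_{n+1}^{(1)}$ for the new $a$-letter and $\beta_{n+1}^{(1)}$ for the new $c$-letter in the $(1,n,n+1)$ family). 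Writing $w_x=u\,x^{\pm 1}\,v$ in free-product normal form with $u,v\in\pi_1(X_{1,m,n})$, the inductive hypothesis yields $x^{\pm 1}=u^{-1}\,w_x\,v^{-1}\in\langle S_{m',n'}\rangle$, completing the step.

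The principal obstacle will be the bookkeeping of new letters in the tail lists, particularly for the adjacent family $|m-n|=1$, where two new letters appear simultaneously and one must verify that each is isolated by its own tail witness; in the six other families the singleton-tail structure makes this transparent. A secondary, mechanical issue is the accuracy of the base-case word lists, since any typo in the tables of Section~\ref{S:positivity} propagates through the \textsf{FGA} certification; consequently the \textsf{GAP} verification should be performed only after a careful audit of the lists. Once both are in place, the remainder of the argument reduces to a purely formal free-product/induction manipulation.
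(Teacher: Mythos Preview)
Your overall strategy---computer verification at a base pair plus induction along the successor relation~\eqref{E:succ}, solving for each new basis letter via a tail witness in the free-product splitting---is the same as the paper's. The paper likewise reduces to a \textsf{GAP}/\textsf{FGA} base check together with a representative successor step, so conceptually you are on the right track.

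The gap is the sentence ``Since $S_{m,n}\subseteq S_{m',n'}$, the inductive hypothesis gives $\pi_1(X_{1,m,n})\le\langle S_{m',n'}\rangle$.'' This nesting holds only in the three simplest families $(1,1,n)$, $(1,2,n)$, $(1,n,2)$, where the generating set is core plus an increasing union of tails. In the four remaining families it fails: the special tail-sets $\hat T_{n,n+1}$, $\hat T_{n+1,n}$ contain $\beta^{(j)}_n$'s indexed by the current $n$ which are \emph{replaced}, not augmented, at the next step; in the general $(1,m,n)$ families the auxiliary cores $C^{(i)}$ change with $n-m$ and are not nested, and the sets $T_m^{(\delta)}$, $T_m^{(6)}$, $\bigcup_{k\ge m+7}T_k^{(\alpha)}$ all lose generators when $m\mapsto m+1$. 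So generators of $S_{m,n}$ are genuinely absent from $S_{m',n'}$, and the inclusion $\langle S_{m,n}\rangle\le\langle S_{m',n'}\rangle$ does not follow. Your witnesses $w_x$ for the new letters are fine; what breaks is the prior step giving you the \emph{old} letters. The paper avoids this by not relying on nesting at all: it checks one representative successor step in \textsf{GAP}/\textsf{FGA} (rank increases by $r$, index stays $1$) and then invokes the uniformity of the tail pattern. To repair your argument you should either do the same extra computer check, or carry a stronger inductive hypothesis (that the core together with the \emph{persistent} tails, dropping the $n$-dependent $\hat T$ and $\beta$-words, already generates $\pi_1(X_{1,m,n})$)---but that strengthened statement then needs its own base verification.
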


\begin{proof}
Let \(Y\subset F_n\) be finite and set \(H=\langle Y\rangle\). A Stallings-folding computation (as implemented in the GAP package \textsf{FGA}) returns both the rank of \(H\) and the index \([F_n:H]\).
For all \((m,n)\) in the finite range we checked, running \textsf{FGA} shows that the subgroup generated by the cyclically reduced generators of \(S_{m,n}\) has rank \(1+m+n\) and index \(1\) in \(\pi_1(X_{1,m,n})\); hence it equals \(\pi_1(X_{1,m,n})\) in those cases.

For the remaining (infinite) families, fix for each orbit-data family a base pair \((m_*,n_*)\) and a corresponding $S_{m*,n*}$-generating set \(G_{m*,n*}\subset \pi_1(X_{1,m_*,n_*})\) so that every \((m,n)\) in the family is obtained from \((m_*,n_*)\) by adjoining \emph{tail generators}. By construction, each cyclically reduced tail generator involves exactly one new basis letter not occurring in \(G_{m*,n*}\).
First, a direct \textsf{FGA} computation shows that \(\langle G_{m*,n*}\rangle=\pi_1(X_{1,m_*,n_*})\), i.e. rank \(1+m_*+n_*\) and index \(1\).
Next, fix an orbit-data family and its base pair \((m_*,n_*)\) with $S_{m*,n*}$-generating set \(G_{m*,n*}\subset\pi_1(X_{1,m_*,n_*})\) so that \(\langle G_{m*,n*}\rangle=\pi_1(X_{1,m_*,n_*})\).
Notice that there exists a tail generator \(w\) whose cyclically reduced form uses exactly one new basis letter \(x\). Because the tail generators admit a uniform description (one new basis letter appended in a fixed pattern), the extension step
\[
H_k=\langle G_{m*,n*}, w_1,\dots,w_k\rangle \ \longmapsto\ 
H_{k+1}=\langle G_{m*,n*}, w_1,\dots,w_k,w_{k+1}\rangle
\]
is identical up to relabeling for every \(k\). Hence the verification reduces to a \emph{finite} check: for each orbit-data family, we confirm the base case \(H_0=\langle G_{m*,n*}\rangle\) and one representative successor step \(H_0\mapsto H_1\).
We perform these two checks in \textsf{GAP}/\textsf{FGA}: \texttt{RankOfFreeGroup} increases by \(1\) and \texttt{Index} remains \(1\) at the successor step. By induction on the number of tail generators with new basis letter, the conclusion follows for all \((m,n)\) in the family, in other words the subgroup generated by the cyclically reduced generators of \(S_{m,n}\) equals \(\pi_1(X_{1,m,n})\).
\end{proof}

\noindent\textbf{Sage Code} All finite checks were run in Sage~10.x with GAP~4.x and \textsf{FGA}~vA.B. 
\begin{verbatim}
res = gap.eval('LoadPackage("fga");');  # should return 'true'
gap.eval('f := FreeGroup("a1","a2",...,"a<n>","b1","c1",...,"c<m>");;')
gap.eval('gens := GeneratorsOfGroup(f);; a1:=gens[1]; ...; 
        b1:=gens[n+1]; c1:=gens[n+2]; ...;;')
gap.eval('S := [ <PASTE WORDS WITH ^-1> ];;')
gap.eval('H := Subgroup(f, S);;')
print("rank =", gap.eval('String(RankOfFreeGroup(H));').strip())
print("index =", gap.eval('String(Index(f,H));').strip())
\end{verbatim}

\medskip

\begin{proof}[Proof of Theorem A]
The results are immediate consequences of Propositions \ref{P:11ncases}-- \ref{P:1nmcase} and Theorem \ref{T:generatingWhole}.
\end{proof}

\medskip

\begin{thm}\label{T:positiveEntropy}
Fix an orbit datum \((1,m,n)\) with \(1+m+n\ge10\), and let \(\lambda_{m,n}>1\) be the Perron-Frobenius eigenvalue of the transition matrix \(M_{m,n}\) for the induced semigroup action.  Then, for a suitably chosen finite generating set \(G\) of \(\pi_1(X_{1,m,n})\), the homotopy-action growth rate satisfies
\[
  \rho\bigl(f_*\!\big|_{\pi_1(X)}\bigr)
  \;=\;
  \lambda_{m,n}
  \;>\;1.
\]
Consequently, the topological entropy of the diffeomorphism \(f_{(1,m,n)}\colon X_{1,m,n}\to X_{1,m,n}\) is positive:
\[
  h_{\mathrm{top}}\bigl(f_{(1,m,n)}\bigr)
  \; >\;
  \log\rho\bigl(f_*|_{\pi_1(X)}\bigr)
  \;>\;0.
\]
\end{thm}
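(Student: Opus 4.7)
The plan is to pin down the homotopy growth rate $\rho(f_*|_{\pi_1(X)})$ by combining the semigroup machinery of Theorem~A with the fact (Theorem~\ref{T:generatingWhole}) that the cyclically reduced generators of $S_{m,n}$ already generate the entire fundamental group $\pi_1(X_{1,m,n})$. Positivity of topological entropy then follows from Manning's bound recalled in Section~\ref{S:intro}.

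First I would fix $G$ to be precisely the cyclically reduced generating set of $S_{m,n}$; this is legitimate since the exponential growth rate $\rho$ is independent of the choice of finite generating set. The lower bound $\rho(f_*|_{\pi_1(X)}) \ge \lambda_{m,n}$ is the easy direction: any nontrivial $g \in G \subset S_{m,n}$ satisfies, by Proposition~\ref{P:exp}, $\limsup_j \ell_G(F_*^{jk}(g))^{1/(jk)} = \lambda_{m,n}$, and since these values form a subsequence of $\{\ell_G(f_*^n(g))^{1/n}\}_n$, the full $\limsup_n$ is already $\ge \lambda_{m,n}$.

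For the upper bound I would bound $\ell_G(f_*^n(w))$ for an arbitrary $w \in \pi_1(X_{1,m,n})$, including words involving inverses of semigroup generators. Writing $w = g_{i_1}^{\epsilon_1} \cdots g_{i_L}^{\epsilon_L}$, I would use subadditivity of word length under free reduction together with the fact that inversion preserves length to get $\ell_G(F_*^{jk}(w)) \le \sum_{\ell=1}^{L} \ell_G(F_*^{jk}(g_{i_\ell}))$. Iterating the concatenation-compatibility of Definition~\ref{D:nocancel} within the invariant semigroup $S_{m,n}$ then shows $\ell_G(F_*^{jk}(g_i)) = \|M_{m,n}^j e_i\|_1$, and applying the primitive Perron-Frobenius theorem to $M_{m,n}$ (Proposition~\ref{P:primitive}) yields a uniform bound $\|M_{m,n}^j e_i\|_1 \le C_0\, \lambda_{m,n}^j$. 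Interpolating at $n = jk + r$ with $0 \le r < k$ costs only a bounded multiplicative factor (one application of $f_*$ expands word length by at most a fixed constant), so $\limsup_n \ell_G(f_*^n(w))^{1/n} \le \lambda_{m,n}$; taking the supremum over $w \in G$ yields the upper bound.

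Combining both inequalities gives $\rho(f_*|_{\pi_1(X)}) = \lambda_{m,n}$, which is $>1$ by hypothesis, and Manning's bound $h_{\mathrm{top}}(f) \ge \log \rho(f_*|_{\pi_1(X)})$ recalled in Section~\ref{S:intro} then delivers $h_{\mathrm{top}}(f_{(1,m,n)}) \ge \log \lambda_{m,n} > 0$. The step I expect to require the most care is the passage from positive semigroup data to the full free group: the matrix $M_{m,n}$ governs lengths exactly on $S_{m,n}$ by construction, but for the upper bound on arbitrary $w \in \pi_1(X_{1,m,n})$ one must verify that the subadditivity plus inversion-invariance argument above introduces only a multiplicative constant depending on the word length of $w$ in $G$, not on $j$, so that this constant is absorbed when extracting the exponential rate. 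Everything else -- invariance, positivity, primitivity of $M_{m,n}$, and generation by the cyclically reduced generators -- is already provided by Theorem~A, Proposition~\ref{P:primitive}, and Theorem~\ref{T:generatingWhole}.
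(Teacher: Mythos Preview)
Your approach is correct and essentially the same as the paper's: choose $G$ to be the cyclically reduced generators of $S_{m,n}$ (legitimate by Theorem~\ref{T:generatingWhole}), invoke Proposition~\ref{P:exp} for the growth rate, and then Manning's inequality. The one simplification you are missing is that the upper bound requires no work beyond what Proposition~\ref{P:exp} already gives: by Bowen's definition as stated in Section~\ref{S:linearity}, $\rho(f_*|_{\pi_1(X)}) = \sup_{g\in G}\limsup_n \ell_G(f_*^n(g))^{1/n}$ is a supremum over the \emph{finite generating set} $G$, not over all of $\pi_1(X)$; since every $g\in G$ lies in $S_{m,n}$, Proposition~\ref{P:exp} (plus your interpolation between multiples of $k$, which the paper tacitly assumes) gives $\limsup_n \ell_G(f_*^n(g))^{1/n}=\lambda_{m,n}$ directly for each such $g$, and the supremum is then exactly $\lambda_{m,n}$. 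So your careful subadditivity/inversion argument for arbitrary $w$---and the concern you flag about passing from the semigroup to the full free group---is unnecessary here.
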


\begin{proof}
By Theorem \ref{T:generatingWhole}, we see that a subset of $S_{m,n}$ generates the whole free group \(\pi_1(X_{1,m,n})\). Let us take such a subset as a generating set $G$ of \(\pi_1(X)\), then by Proposition \ref{P:exp} we have
\[
  \rho\bigl(f_*|_{\pi_1(X)}\bigr)
  \;=\;
  \sup_{g\in G}\limsup_{n\to\infty}\ell_G\bigl(f_*^n(g)\bigr)^{1/n}
  \;=\;
  \lambda_{m,n}
  \;>\;1.
\]
The claim about topological entropy then follows from
\(\;h_{\mathrm{top}}(f)>\log\rho(f_*|_{\pi_1(X)})\).
\end{proof}

\begin{proof}[Proof of Theorem B]
Theorem B comes from Theorem \ref{T:positiveEntropy}.
\end{proof}

\begin{rem}
Outside the finite-range exceptions, the transition matrices \(M_{m,n}\) vary in \(m,n\) according to a simple ``core-tail" pattern encoded by Proposition \ref{P:actionP}.  In principle one can carry out row and column operations to write down their characteristic polynomials explicitly as functions of \(m\) and \(n\).  Those polynomials then yield the minimal polynomials for \(\lambda_{m,n}^k\), where \(k\) is the semigroup period.  In practice, however, the bookkeeping becomes extremely tedious, so we do not give a general closed-form statement.  Empirically, in every case we have checked the value \(\log\lambda_{m,n}\) coincides with the maximal possible entropy of the corresponding complex surface automorphism.
\end{rem}

\begin{rem}
The same semigroup-construction and positivity arguments extend without essential change to the general orbit data \((\ell,m,n)\) with \(\ell,m,n\ge1\), \(\ell+m+n\ge10\).  We have tested a number of these larger cases (again via computer-aided row-reduction) and find that although it is entirely feasible to compute \(\lambda\), the combinatorial complexity grows rapidly.  Interestingly, in every instance where orbit data \((\ell,m,n)\) satisfy  $\ell, m ,n>1$ and $(\ell,m,n) \notin \{(2,n,n), n\ge 4\}$, the homotopy-action growth rate \(\log\lambda\) falls strictly below the maximal possible entropy of the corresponding complex automorphism.
\end{rem}

\section{Irreducible Outer Automoprhism}\label{S:pseudo}

As observed in Lemma \ref{L:Mdet} and Theorem \ref{T:positiveEntropy}, one can extract a subset of the invariant-semigroup generators that in fact generates the entire fundamental group $\pi_1(X_{m,n})$. 

Let
\[ \mathcal{G} \ = \ \{w_1, w_2,\dots, w_{1+m+n} \} \quad \text{with} \quad w_1 = g_\star,\]
where $g_\star$ is the distinguished core generator from \eqref{E:gstar} and each $w_i$ lies in our invariant semigroup generating set. 

\medskip
Since $\mathcal{G}$ generates $\pi_1(X_{m,n})$, every semigroup generator $g\in S_{m,n}$ can be expressed (and then cyclically reduced) as a word in the $w_i^{\pm 1}$. Moreover, by construction, each semigroup generator $g$ is already cyclically reduced and--depending on the orbit-data family--begins with $a_1$ (or ends with $c_1$  in $(1,n+1,n)$ and $(1,m,n)$ with $m\ge n+2$ cases). It follows that its reduced expression in the $\mathcal{G}$-alphabet must also start and end with a positive power of some generator. Concretely, one has
\begin{equation}\label{E:gform} g\ = \ w_{i_1} w_{i_2}^{\pm 1} \cdots w_{i_{k-1}}^{\pm 1} w_{i_k},\end{equation}
with both $w_{i_1}$ and $w_{i_k}$ occurring with positive exponent.

In other words, the cyclic reduction of $g$ in the alphabet $\{w_i\}$ can be written so that it never begins or ends with an inverse letter. This ``endpoint positivity" will be crucial when we analyze how $F_*$ acts on these words. As before, for the remainder of the paper we view each cyclically reduced word as a periodic bi-infinite sequence via the natural boundary compactification of the free group.

\medskip
For each generator $g$ of the invariant semigroup $S_{m,n}$, let $g^\mathcal{G}$ be the reduced word in the form \eqref{E:gform}.

\begin{lem}\label{L:nocan}
Let \(S_{m,n}\subset\pi_1(X_{m,n})\) be the invariant semigroup with semigroup-period \(k\), and let
\(\mathcal G=\{w_1,\dots,w_{1+m+n}\}\subset S_{m,n}\)
be the unimodular generating subset from Lemma \ref{L:Mdet}, with \(w_1=g_\star\).  Then:
\begin{enumerate}
  \item The full group \(\pi_1(X_{m,n})\) is generated by \(\mathcal G\).
  \item For each \(w_i\in\mathcal G\), write
    \[
     F_*^k(w_i)
      \;=\;
      g_{i_1}\,g_{i_2}\,\cdots\,g_{i_s},
    \]
    with each \(g_{i_j}\in S_{m,n}\).  Replace each semigroup-generator \(g_{i_j}\) by its reduced word in the \(\mathcal G\)-alphabet,
    \[
      g_{i_j}
      = 
      g_{i_j}^{\mathcal G}(w_1,\dots,w_{1+m+n})
      \;=\;
      w_{q_1}\,w_{q_2}^{\pm1}\cdots w_{q_t}\,,
    \]
    so that
    \[
      F^k_{\mathcal G*}(w_i)
      \;:=\;
      g_{i_1}^{\mathcal G}\,g_{i_2}^{\mathcal G}\,\cdots\,g_{i_s}^{\mathcal G}
      \;\in\;
      \mathcal F\bigl(\mathcal G\bigr),
    \]
    is a well-defined reduced word in the free group on \(\mathcal G\).
\end{enumerate}
Moreover, in forming each concatenation
\(\;g_{i_1}^{\mathcal G}\,g_{i_2}^{\mathcal G}\,\cdots\,g_{i_s}^{\mathcal G},\)
no cancellation ever occurs between adjacent factors.
\end{lem}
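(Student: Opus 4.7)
The plan is to handle claims (1) and (2) separately, with the final no-cancellation clause of (2) carrying the real technical weight. For (1), the idea is that $\mathcal{G}$ is not merely a generating set but in fact a free basis of $\pi_1(X_{m,n})$. Lemma~\ref{L:Mdet} supplies a unimodular $(1+m+n)\times(1+m+n)$ abelianization submatrix built from $\mathcal{G}$, so $\mathcal{G}$ surjects onto $\pi_1(X_{m,n})^{\mathrm{ab}}\cong\mathbb{Z}^{1+m+n}$. Because $\pi_1(X_{m,n})$ is free of rank $1+m+n$ and $|\mathcal{G}|=1+m+n$, a Stallings-folding / Hopfian argument identical in spirit to the one used in the proof of Theorem~\ref{T:generatingWhole} forces $\langle\mathcal{G}\rangle=\pi_1(X_{m,n})$. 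Consequently each element of $\pi_1(X_{m,n})$ admits a unique reduced $\mathcal{G}$-word, so the notation $g^{\mathcal{G}}$ is meaningful.

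For (2), the crux is the endpoint positivity asserted in \eqref{E:gform}: every semigroup generator $g\in S_{m,n}$, rewritten as a reduced word $g^{\mathcal{G}}$, begins and ends with a positive $\mathcal{G}$-letter. I would establish this by exploiting that every $w_i\in\mathcal{G}$ and every $g\in S_{m,n}$ starts with the common letter $a_1$ in the $(a,b,c)$-alphabet (or, in the families indicated in the excerpt, ends with $c_1$). A leading inverse letter $w_j^{-1}$ in $g^{\mathcal{G}}$ would contribute, at the very front of its expansion, the inverse of the last letter of $w_j$; a case-by-case check against the explicit cores from Section~\ref{S:positivity} shows this first letter is never $a_1$, ruling out leading inverses. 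A symmetric argument at the trailing end gives endpoint positivity.

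Granted endpoint positivity, the no-cancellation clause is immediate. By the positivity of $F_*^k$ on $S_{m,n}$ established in Section~\ref{S:positivity}, we write $F_*^k(w_i)=g_{i_1}g_{i_2}\cdots g_{i_s}$ as a positive semigroup product, and concatenation-compatibility (Definition~\ref{D:nocancel}) guarantees no cancellation in the $(a,b,c)$-alphabet. Substituting $g_{i_j}\mapsto g_{i_j}^{\mathcal{G}}$, every junction presents a last letter $w_p$ of $g_{i_j}^{\mathcal{G}}$ and a first letter $w_q$ of $g_{i_{j+1}}^{\mathcal{G}}$, both positive; cancellation would require $w_p=w_q^{-1}$, which is impossible when both exponents are $+1$. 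Hence the concatenation is already reduced in the free group on $\mathcal{G}$, and $F^k_{\mathcal{G}*}(w_i)$ is unambiguously defined.

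The hard part will be the rigorous verification of \eqref{E:gform}. The ``shared first letter'' heuristic sketched above becomes subtle when internal reductions within $g^{\mathcal{G}}$ propagate to the endpoints, and one must check that no such propagation can convert a positive leading or trailing letter into an inverse. For the explicit cores listed in Section~\ref{S:positivity}, this reduces to a finite combinatorial check per orbit-data family, in the same spirit as the base-case verifications performed earlier; once endpoint positivity is secured, the no-cancellation clause and the well-definedness of $F^k_{\mathcal{G}*}$ follow formally.
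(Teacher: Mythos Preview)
Your treatment of part~(2) and the no-cancellation clause is exactly the paper's argument: both rest on the endpoint positivity \eqref{E:gform} of each $g^{\mathcal G}$, after which the impossibility of cancellation at a junction (positive last letter meeting positive first letter) is immediate. The paper's proof simply invokes \eqref{E:gform}, established in the paragraph preceding the lemma, and you correctly identify that verifying \eqref{E:gform} is where the actual work lies.

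One caution on your argument for part~(1). Surjection onto the abelianization together with $|\mathcal G|=\operatorname{rank}\pi_1$ does \emph{not} by itself force $\langle\mathcal G\rangle=\pi_1$: for example, $\langle a[a,b],\,b\rangle\le F_2$ has unimodular abelianization image but infinite index. So the Hopfian property cannot be invoked until you already know $\langle\mathcal G\rangle\to\pi_1$ is surjective---which is the very thing to be proved. The paper does not attempt a general deduction here; it simply cites the earlier construction (Lemma~\ref{L:Mdet} and the surrounding discussion, ultimately Theorem~\ref{T:generatingWhole}), which relies on a direct Stallings-folding/\textsf{FGA} verification. Your phrase ``identical in spirit to Theorem~\ref{T:generatingWhole}'' should be read as pointing to that computational check, not as a shortcut through abelianization.
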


\begin{proof}
By construction (Lemma \ref{L:Mdet}) the subset \(\mathcal G\) generates \(\pi_1(X_{m,n})\).  Hence each semigroup-generator \(g_{i_j}\in S_{m,n}\) can be written as a reduced word
\[
  g_{i_j}
  = 
  g_{i_j}^{\mathcal G}
  = w_{q_1}\,w_{q_2}^{\pm1}\cdots w_{q_t}
  \quad\text{with both endpoints positive powers,}
\]
so that in particular the first and last letters of \(g_{i_j}^{\mathcal G}\) are some \(w_{q_1}\) and \(w_{q_t}\), each with exponent \(+1\).

Now when we concatenate
\[
  g_{i_1}^{\mathcal G}
  \,\cdot\,
  g_{i_2}^{\mathcal G}
  \,\cdots\,
  g_{i_s}^{\mathcal G},
\]
the endpoint-positivity of each factor guarantees that the last letter of \(g_{i_j}^{\mathcal G}\) is positive and the first letter of \(g_{i_{j+1}}^{\mathcal G}\) is also positive.  Therefore no cancellation can occur at any junction between factors.  

It follows that the rule
\[
  F^k_{\mathcal G*}(w_i)
  \;=\;
  g_{i_1}^{\mathcal G}\,g_{i_2}^{\mathcal G}\,\cdots\,g_{i_s}^{\mathcal G}
\]
indeed produces a reduced word in the free group on \(\mathcal G\), and so defines a well-defined induced action on the free group generators without need for further reduction.
\end{proof}

\begin{prop}\label{P:mgprimitive}
Let \(M_{\mathcal G}\) be the \(d\times d\) transition matrix for the free-group action
\[
  F^k_{\mathcal{G}*}\; : \; \mathcal{F}(\mathcal G) \; \to \; \mathcal{F}(\mathcal G),
\]
where \(\mathcal G=\{w_1,\dots,w_d\}\).  Its \((i,j)\) entry \((M_{\mathcal G})_{i,j}\) is the number of occurrences of \(w_i^{\pm1}\) in the reduced word for \(F^k_{\mathcal{G}*}(w_j)\).  Then \(M_{\mathcal G}\) is irreducible and primitive.
\end{prop}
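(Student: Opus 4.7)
The plan is to transfer primitivity from the semigroup transition matrix \(M_{m,n}\) (already established in Proposition~\ref{P:primitive}) to \(M_{\mathcal G}\), using the inclusion \(\mathcal G\subset S_{m,n}\) together with the no-cancellation Lemma~\ref{L:nocan}. Since \(|\mathcal G|=1+m+n\) equals the rank of \(\pi_1(X_{1,m,n})\) and \(\mathcal G\) generates the whole group by Theorem~\ref{T:generatingWhole}, the set \(\mathcal G\) is in fact a free basis; hence \(F^k_{\mathcal G*}\) is an honest free-group automorphism whose iterates I can analyze in this basis. Each \(w_j\in\mathcal G\) is itself a semigroup generator, say \(w_j=g_{\sigma(j)}\), so I can track its orbit inside \(S_{m,n}\) under \(F^k_*\).

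The key technical step I would establish, by induction on \(N\ge 1\), is the identity
\[
 (M_{\mathcal G}^{\,N})_{i,j}\;=\;\#\bigl\{\text{occurrences of }w_i^{\pm1}\text{ in the reduced word }F^{Nk}_{\mathcal G*}(w_j)\bigr\}.
\]
Invariance of \(S_{m,n}\) under \(F^k_*\) provides a semigroup expansion \(F^{Nk}_*(w_j)=g_{j_1}\cdots g_{j_T}\), and iterating Lemma~\ref{L:nocan} via the endpoint-positivity of each \(g_{j_\ell}^{\mathcal G}\) shows that the concatenation \(g_{j_1}^{\mathcal G}\cdots g_{j_T}^{\mathcal G}\) is already freely reduced; uniqueness of reduced forms then forces this concatenation to equal \(F^{Nk}_{\mathcal G*}(w_j)\). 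Linearity of letter counting converts this concatenation identity into the displayed matrix identity. Given this, the proof concludes quickly: by Proposition~\ref{P:primitive} there exists \(N_0\) with \(M_{m,n}^{\,N_0}>0\) entrywise, so for every \(j\) the semigroup expansion of \(F^{N_0 k}_*(w_j)\) contains every semigroup generator, and in particular each \(w_i=g_{\sigma(i)}\). This produces an occurrence of the positive letter \(w_i\) in \(F^{N_0 k}_{\mathcal G*}(w_j)\); hence \((M_{\mathcal G}^{\,N_0})_{i,j}>0\) for all \(i,j\), which yields irreducibility and aperiodicity simultaneously.

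The main obstacle is the inductive step in the no-cancellation argument: a single application of \(F^k_{\mathcal G*}\) generally produces reduced words with \emph{negative} letters in the interior (only the endpoints are guaranteed positive by Lemma~\ref{L:nocan}), so letter-by-letter substitution at the next stage \emph{a priori} threatens cancellation across block boundaries. I would bypass this by working at the semigroup level rather than at the word level: the element \(F^{(N+1)k}_*(w_j)\) admits its own positive expansion in \(S_{m,n}\) by invariance, and the concatenation of the corresponding \(g^{\mathcal G}\)-blocks is reduced by endpoint positivity alone, with no reference to the intermediate substitution. Because \(\mathcal G\) is a basis of the free group, this reduced word must coincide with the word obtained by letter-by-letter substitution followed by free reduction; therefore no cancellation can have occurred in the latter. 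Once this identification is in place, the rest is a bookkeeping application of Perron–Frobenius to a strictly positive power of \(M_{\mathcal G}\).
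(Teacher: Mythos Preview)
Your approach is correct, with one small over-claim that does not affect the conclusion. The asserted identity \((M_{\mathcal G}^{\,N})_{i,j}=\#\{w_i^{\pm1}\text{ in the reduced }F^{Nk}_{\mathcal G*}(w_j)\}\) is not quite what your inductive step establishes: the inference ``the semigroup-level reduced word coincides with the substitution-level word after reduction, therefore no cancellation occurred in substitution'' does not follow, since two distinct unreduced words may share a reduced form. What you actually have---and all you need---is the inequality \((M_{\mathcal G}^{\,N})_{i,j}\ge\) the reduced count: matrix multiplication computes letter occurrences in the \emph{unreduced} \(N\)-fold substituted word, and this count can only drop under free reduction. With that inequality your final step goes through unchanged, since the semigroup expansion of \(F^{N_0 k}_*(w_j)\) exhibits each \(w_i\) as an entire block \(w_i^{\mathcal G}=w_i\) in the reduced word.

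Compared with the paper's proof, your route is slightly different and arguably tighter. The paper splits the conclusion into (i) irreducibility of \(M_{\mathcal G}\), argued informally by saying that passing from the \(S_{m,n}\)-basis to \(\mathcal G\) ``inserts extra positive factors without introducing zero-blocks,'' and (ii) the diagonal entry \((M_{\mathcal G})_{1,1}>0\) coming from \(w_1=g_\star\) appearing in its own image; irreducibility plus a positive diagonal entry then gives primitivity. You instead produce a strictly positive power \(M_{\mathcal G}^{\,N_0}\) in one stroke by pulling back the primitivity exponent of \(M_{m,n}\) through the inclusion \(\mathcal G\subset S_{m,n}\). Both arguments rest on the same two inputs---Lemma~\ref{L:nocan} and Proposition~\ref{P:primitive}---so the difference is one of packaging; your version avoids the somewhat hand-wavy step~(i).
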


\begin{proof}
Recall from Proposition~\ref{P:primitive} that the semigroup-transition matrix \(M\) (in the \(S_{m,n}\)-basis) was irreducible and aperiodic.  By Lemma~\ref{L:nocan}, passing to the free-group basis \(\mathcal G\) simply amounts to inserting extra positive factors but does not introduce any zero-blocks that could destroy strong connectivity.  Hence \(M_{\mathcal G}\) remains irreducible.

Moreover, by definition \(w_1 = g_\star\) is the ``distinguished" generator, and we saw in \eqref{E:gstar} that
\[
  F_*^k(w_1)\;\supset\;w_1
  \quad\Longrightarrow\quad
  (M_{\mathcal G})_{1,1}>0.
\]
An irreducible nonnegative matrix with a positive diagonal entry is automatically primitive.  Therefore \(M_{\mathcal G}\) is both irreducible and primitive.
\end{proof}

Let us recall the result of Bestvina-Handel \cite{Bestvina-Handel:1992}.

\begin{thm}\cite[Theorem~4.1]{Bestvina-Handel:1992}\label{T:bh}
Suppose $\mathcal{O}$ is an outer automorphism on a free group $F_n$ such that $\mathcal{O}^\ell$ is irreducible for all $\ell>0$ and that there is a cyclic words $s\in F_n$ such that $\mathcal{O}(s) = s$ or $\mathcal{O}(s) = \overline{s}$. Then $\mathcal{O}$ is geometrically realized by a pseudo-Anosov homeomorphism $h:M \to M$ of a compact surface with one boundary component.
\end{thm}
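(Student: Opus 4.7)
The plan is to realize $\mathcal{O}$ geometrically by coupling the Bestvina--Handel train-track machinery with the Dehn--Nielsen--Baer dictionary between $\mathrm{Out}(F_n)$ and mapping-class groups of compact surfaces with boundary. First I would apply the main algorithmic theorem of Bestvina--Handel to produce a train-track representative $g\colon \Gamma\to\Gamma$ of $\mathcal{O}$. Full irreducibility (irreducibility of every power) upgrades this to an \emph{irreducible} train track, whose non-negative transition matrix $M_g$ is primitive; Perron--Frobenius then supplies a simple eigenvalue $\lambda>1$. In particular, $\mathcal{O}$ has exponential word-length growth and so no power is inner or periodic in $\mathrm{Out}(F_n)$.

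Next I would use the invariant cyclic word $s$ to install a peripheral structure on $F_n$. The hypothesis $\mathcal{O}(s)\in\{s,\overline{s}\}$ means that the unoriented conjugacy class $\langle\!\langle s\rangle\!\rangle$ is $\mathcal{O}$-invariant. By the Dehn--Nielsen--Baer correspondence for a compact orientable surface $M$ with one boundary component and $\pi_1(M)\cong F_n$, the mapping-class group $\mathrm{MCG}(M)$ is isomorphic to the subgroup of $\mathrm{Out}(F_n)$ that preserves the peripheral conjugacy class represented by $\partial M$. Choosing $M$ so that $\partial M$ represents $\langle\!\langle s\rangle\!\rangle$ therefore lifts $\mathcal{O}$ to a mapping class $[h]\in \mathrm{MCG}(M)$.

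To conclude, I would invoke the Nielsen--Thurston trichotomy for $[h]$. Periodicity is incompatible with the Perron--Frobenius expansion of $g$, so $[h]$ is not finite-order. Reducibility would produce an $h$-invariant (up to isotopy) essential multicurve disjoint from $\partial M$; passing to $\pi_1$, some power of $\mathcal{O}$ would preserve a proper free factor system of $F_n$, contradicting the assumption that \emph{every} power of $\mathcal{O}$ is irreducible. Hence $[h]$ is pseudo-Anosov, and its dilatation coincides with $\lambda$.

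The main obstacle, and the heart of Bestvina--Handel's argument, is the middle step: guaranteeing that the abstract train-track representative can be chosen so that $\mathcal{O}$ is actually \emph{geometric}, i.e. induced by some homeomorphism of a surface whose boundary matches $\langle\!\langle s\rangle\!\rangle$. In our setting the fixed cyclic word $s$ plays the role of this boundary witness, and one must verify that no periodic Nielsen paths obstruct the promotion of the train-track map to a surface homeomorphism (which is where the hypothesis that $M$ has a \emph{single} boundary component is essential, as highlighted in the remark following Theorem~C).
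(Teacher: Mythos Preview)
The paper does not prove this statement at all: Theorem~\ref{T:bh} is quoted verbatim from \cite[Theorem~4.1]{Bestvina-Handel:1992} and is used as a black box in the proof of Theorem~\ref{T:pseudo}. There is therefore no ``paper's own proof'' to compare your proposal against.

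Your sketch is a reasonable outline of how the Bestvina--Handel result is established, but since the paper treats it as an external citation, no proof attempt is required here. If anything, the relevant content for this paper is how the theorem is \emph{applied} (full irreducibility via the primitive transition matrix, the invariant cyclic word from Corollary~\ref{C:boundary}, and the single-boundary hypothesis when $m+n$ is odd), which is carried out in Theorem~\ref{T:pseudo} rather than here.
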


Our last result is the direct application of the Theorem of Bestvina-Handel 
\begin{thm}\label{T:pseudo}
For every $m,n$ with $m+n$ odd, the map $f_{m,n}$ is pseudo-Anosov.
\end{thm}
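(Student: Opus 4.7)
The plan is to verify the two hypotheses of Bestvina--Handel's Theorem~\ref{T:bh} for the outer automorphism $\hat F_{\R*}^k$ acting on the free group $\pi_1(\hat X(\R))$, and then to transfer the resulting pseudo-Anosov representative back to $\hat f_\R$ itself through the Nielsen--Thurston classification and Dehn--Nielsen--Baer.

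For the first hypothesis (a fixed cyclic word), I would appeal directly to Corollary~\ref{C:boundary}. When $m+n$ is odd, $1+m+n$ is even, so $f_*(\omega_{\mathcal C})=\omega_{\mathcal C}$, where $\omega_{\mathcal C}$ is the cyclic word representing the invariant cubic and, on the cut surface, the unique boundary component. Consequently $\hat F_{\R*}^k(\omega_{\mathcal C})=\omega_{\mathcal C}$, supplying the required periodic cyclic class.

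For the second hypothesis (full irreducibility, i.e. irreducibility of all positive powers), I would combine Lemma~\ref{L:nocan} with Proposition~\ref{P:mgprimitive}. Lemma~\ref{L:nocan} says that in the basis $\mathcal G$, no cancellation occurs when one concatenates the $\mathcal G$-words $g_{i_j}^{\mathcal G}$ appearing in $\hat F_{\R*}^k(w_i)$; geometrically, this means that $\hat F_{\R*}^k$ is realized by a tight graph map on the rose $R_{\mathcal G}$ whose combinatorial transition matrix is exactly $M_{\mathcal G}$. Proposition~\ref{P:mgprimitive} shows that $M_{\mathcal G}$ is primitive, so every power $M_{\mathcal G}^\ell$ remains irreducible. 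The standard dictionary between train-track maps and $\mathrm{Out}(F_n)$ then shows that no $\hat F_{\R*}^{k\ell}$ can preserve a proper free factor up to conjugacy: any such invariant free-factor system would pull back to a nontrivial block decomposition of the transition matrix on the rose, contradicting primitivity. Hence $\hat F_{\R*}^k$ is fully irreducible, and so is $\hat F_{\R*}$ itself, since a periodic proper free factor for $\hat F_{\R*}$ would also be periodic for $\hat F_{\R*}^k$.

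With both hypotheses in hand, Theorem~\ref{T:bh} produces a pseudo-Anosov homeomorphism $h$ on a compact surface with one boundary component realizing $\hat F_{\R*}^k$. Because $\hat X(\R)$ has exactly one boundary component when $m+n$ is odd, Dehn--Nielsen--Baer identifies $h$ with $\hat f_\R^k$ up to isotopy. Finally, if $\hat f_\R$ were isotopic to a periodic or reducible map, then $\hat f_\R^k$ would inherit the same property, contradicting the pseudo-Anosov character of $h$; thus $\hat f_\R$ is pseudo-Anosov, proving Theorem~\ref{T:pseudo}. The main obstacle is the full irreducibility step: combinatorial primitivity of a transition matrix on a specific basis is not an intrinsically basis-free property, and the argument relies crucially on the strong no-cancellation conclusion of Lemma~\ref{L:nocan} to promote combinatorial positivity into an assertion about invariant free-factor systems.
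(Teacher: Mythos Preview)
Your proposal is correct and follows essentially the same route as the paper: invoke Corollary~\ref{C:boundary} for the fixed boundary cyclic word, use Proposition~\ref{P:mgprimitive} (built on Lemma~\ref{L:nocan}) for full irreducibility, apply Bestvina--Handel, and finish via Dehn--Nielsen--Baer together with the observation that a power being pseudo-Anosov forces the map itself to be. If anything, you are more careful than the paper in justifying why primitivity of $M_{\mathcal G}$ actually yields irreducibility of the outer automorphism, by making the no-cancellation/train-track interpretation explicit.
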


\begin{proof}
Let $k$ be the semigroup period from Section \ref{S:positivity}. By Proposition \ref{P:mgprimitive}, the transition matrix $M_\mathcal{G}$ for the distinguished generating set $\mathcal{G}$ is irreducible and primitive. Hence for every $\ell\ge 1$, the power $(M _\mathcal{G})^\ell$ is again irreducible. Equivalently, the outer automorphism $(F^k_*)^\ell$ of $\pi_1(X)$ is irreducible for all $\ell>0$.

On the other hand, Corollary \ref{C:boundary} guarantees the existence of a cyclic word $s$ (coming from the invariant cubic or its double) so that
\[F^k_*(s) = s,\]
i.e. there is a non-trivial conjugacy class in $\pi_1$ fixed by $F^k_*$.

By Bestvina-Handel's theorem (\cite[Theorem~4.1]{Bestvina-Handel:1992}), any outer automorphism of a free group which is both irreducible (no nontrivial free factor is invariant under any positive power) and has at least one nontrivial periodic conjugacy class must be realized by a pseudo-Anosov homeomorphism of a surface with boundary. Since $1+m+n$ is even,  the cut-surface $X_{m,n}$ has a single boundary component, so Bestvina-Handel gives a pseudo-Anosov $h: X_{m,n} \to X_{m,n}$ with $h_* = F_*^k$.

Since a pseudo-Anosov hoeomorphism $h$ in Bestvina-Handel's theorem satisfies $\mathcal{O} = h_*|_{\pi_1(M)}$, by the Dehn-Nielson-Bare theorem and its version \cite[Theorem~8.8]{Farb-Margalit:2012} for the surfaces with boundary, we conclude that $F^k$ is pseudo-Anosov and thus $f$ itself is also pseudo-Anosov. 
\end{proof}

\begin{proof}[Proof of Thorem C]
The result of Theorem \ref{T:pseudo} proves Thoerem C.
\end{proof}

\bibliographystyle{plain}
\bibliographystyle{unsrt}
\bibliography{biblio}
\end{document}